\newcommand{\blup}{\mathrm{Blup}}
\newcommand{\Partial}[1]{\frac{\partial}{\partial #1}}
\newcommand{\homology}{\mathrm{H}}
\newcommand{\Homology}[3]{\homology^{#2}(#1#3)}
\newcommand{\laover}{\Rightarrow}
\newcommand{\blowdown}[3]{#1 \colon \blup(#2,#3) \to #2 }
\newcommand{\foliation}[1]{\mathcal{#1}}
\newcommand{\filtration}[1]{\mathcal{#1}}
\newcommand{\vfield}{\mathfrak{X}}
\newcommand{\DNC}{\mathrm{DNC}}
\newcommand{\vanishing}{\mathcal{I}}
\def\Ddots{\mathinner{\mkern1mu\raise\p@
\vbox{\kern7\p@\hbox{.}}\mkern2mu
\raise4\p@\hbox{.}\mkern2mu\raise7\p@\hbox{.}\mkern1mu}}
\begin{document}
	\selectlanguage{english}

\title{The blow-down map in Lie algebroid cohomology}
\date{}
\author{
 Andreas Sch\"{u}\ss ler
}

\newcommand{\Addresses}{{
  \bigskip
  \footnotesize

 A.~Sch\"{u}\ss ler, \textsc{KU Leuven, Department of Mathematics, Celestijnenlaan 200B box 2400, BE-3001 Leuven, Belgium,} \textit{email:} \texttt{a.schuessler@math.ru.nl}
}}

\maketitle
\abstract{
We study the blow-down map in cohomology in the context of real projective blowups of Lie algebroids. 
Using the blow-down map in cohomology we compute the Lie algebroid cohomology of the blowup of transversals of arbitrary codimension, generalising the Mazzeo-Melrose theorem on b-cohomology. 
To prove the result we develop a Gysin sequence for Lie algebroids. 
As another example we use the developed tools to compute the Lie algebroid cohomology of the action Lie algebroid $\liealg{so}(3)\ltimes \field R^3$, a result known in Poisson geometry literature. 
Moreover, we use similar techniques to compute the de Rham cohomology of real projective blowups. 
}
\vskip12pt

\setcounter{tocdepth}{2}
\tableofcontents

\section{Introduction}
	\renewcommand{\pr}{\mathrm{pr}}
	Blowup constructions are well-known in algebraic geometry (see e.g.\ \cite{hartshorne:1977a}). The idea is to replace a point or subvariety by all lines normal to it. A famous result by Hironaka \cite{hironaka:1964a,hironaka:1964b} states that in characteristic zero, one can always desingularise algebraic varieties, i.e.\ obtain a smooth variety, by a sequence of blowups. In the context of smooth manifolds, singularities often arise from additional geometric structures (a Poisson structure, a foliation, etc.). 
    
    There are various kinds of blowup constructions for smooth manifolds known in the literature. Spherical blowup, which replaces a submanifold by the sphere bundle of its normal bundle, has been used in Melrose's b-calculus (see \cite{melrose:1993a}, in particular Chapter 4). Furthermore, it has been used to resolve singularities in the context of Lie groupoids \cite{albin.melrose:2011a,nistor:2019a}.

    In this paper we focus on real projective blowups of smooth manifolds, in which the submanifold is replaced by the projectivisation  of its normal bundle. Of particular interest are blowups of Lie algebroids, for which the construction is given in \cite{debord.Skandalis:2017a} (\cite{gualtieri.li:2014a} for a base of codimension 1), see also \cite{obster:2021a}. Given a Lie algebroid $A\laover M$ and a Lie subalgebroid $B\laover N$ over a submanifold $N\subseteq M$, blowing up yields a new Lie algebroid $ \blup(A,B) $ over the base $ \blup(M,N) $ together with a Lie algebroid morphism back to $A$, the blow-down map,
    \begin{equation*}
	\begin{tikzpicture}[baseline=(current bounding box.center)]
	\node (1) at (0,1.5) {$ \blup(A,B) $};
	\node (q) at (0,0) {$ \blup(M,N) $};
	\node (2) at (2.3,1.5) {$ A $};
	\node (w) at (2.3,0) {$ M $};
	
	\path[->]
	(1) edge node[above]{$ p_A $} (2)
	(q) edge node[above]{$ p $} (w);

    \draw[-Implies,double equal sign distance]
	(1) -- (q);
	\draw[-Implies,double equal sign distance]
	(2) -- (w);
	
	\end{tikzpicture}
	\end{equation*}
    Note that there exists a corresponding construction for Lie groupoids \cite{debord.Skandalis:2017a}, see also \cite{obster:2021a}. The projective blowup construction has been used to desingularise proper groupoids \cite{posthuma.tang.wang:2017a, wang:2018a}, and in the context of Lie algebroids blowing up has been shown to recover interesting Lie algebroids (a construction called \textit{elementary modification} in \cite[Definition 2.11]{gualtieri.li:2014a}), like log- or scattering tangent bundles (see also \cite{klaasse:2017a, lanius:2021a}). 
    
    In this paper we seek to gain insights into Lie algebroid cohomologies using real projective blowups. The blow-down map induces a map between the respective cohomology groups
	\begin{equation}\label{eq:p_A_in_cohomology}
	p_A^\ast\colon \Homology{A}{\bullet}{}\to \Homology{\blup(A,B)}{\bullet}{}.
	\end{equation}
	It is this blow-down map in cohomology \eqref{eq:p_A_in_cohomology} we aim to study in this paper: Since it relates two cohomology groups, understanding of the map allows to gain information about one cohomology group via the other. This works in two ways: We either start with a known Lie algebroid and are interested in the cohomology of the blowup, or it is the cohomology of the original Lie algebroid we are interested in. In this case, we choose the Lie subalgebroid in such a way that the cohomology of the blowup is easier to compute than that of $A$, e.g.\ such that $\blup(A,B)$ is a regular Lie algebroid. Ultimately, we hope that the combination of desingularisation using blowups and understanding of the blow-down map in cohomology leads to a new way of computing Lie algebroid cohomologies.
	
	In general, we can fit the blow-down map into the short exact sequence
	\begin{equation*}
	\begin{tikzpicture}[baseline=(current bounding box.center)]
	\node (1) at (0,0) {$ 0 $};
	\node (2) at (1.6,0) {$ \Omega^\bullet(A) $};
	\node (3) at (4.6,0) {$ \Omega^\bullet(\blup(A,B)) $};
	\node (4) at (7.9,0) {$ \frac{\Omega^\bullet(\blup(A,B))}{p_A^\ast\Omega^\bullet(A)} $};
	\node (5) at (10,0) {$ 0 $};
	
	\path[->]
	(1) edge node[]{$  $} (2)
	(2) edge node[above]{$ p_A^\ast $} (3)
	(3) edge node[above]{$  $} (4)
	(4) edge node[]{$  $} (5);
	
	\end{tikzpicture}
	\end{equation*}
	of cochain complexes, resulting in a long exact sequence in cohomology \eqref{intro:eq:les_for_blup} with the following property, see Theorem~\ref{theorem:flatiso_and_consequences} for details.
	\begin{itheorem}[The blow-down map in cohomology]\label{intro:theorem:p_A_in_cohomology}
		In the long exact sequence
		\begin{equation}\label{intro:eq:les_for_blup}
		\begin{tikzpicture}[baseline=(current bounding box.center)]
		\node (1) at (0,0) {$ \dots $};
		\node (2) at (1.9,0) {$ \Homology{A}{\bullet}{} $};
		\node (3) at (4.8,0) {$ \Homology{\blup(A,B)}{\bullet}{} $};
		\node (4) at (8.6,0) {$ \homology^\bullet\left(\frac{\Omega^\bullet(\blup(A,B))}{p_A^\ast\Omega^\bullet(A)}\right)$};
		\node (5) at (11.6,0) {$ \Homology{A}{\bullet+1}{} $};
		\node (6) at (13.7,0) {$ \dots $};
		
		\path[->]
		(1) edge node[]{$  $} (2)
		(2) edge node[above]{$ p_A^\ast $} (3)
		(3) edge node[above]{$ f $} (4)
		(4) edge node[above]{$  $} (5)
		(5) edge node[]{$  $} (6);
		
		\end{tikzpicture}
		\end{equation}
		the map $ f $ only depends on local data around $ N $ and $ \field P=p^{-1}(N) $. 
	\end{itheorem}

The locality in Theorem \ref{intro:theorem:p_A_in_cohomology} can be understood in the following way: If $ \iota\colon U\hookrightarrow \blup(M,N) $ is an open neighbourhood of $ \field P $ then $ f = f_U\circ\homology(\iota^\ast) $. Here, $ \homology(\iota^\ast)\colon \homology^\bullet(\blup(A,B))\to \homology^\bullet(\blup(A,B)_U) $ denotes the map induced in cohomology by the inclusion and $ f_U $ is the map corresponding to $ f $ in the long exact sequence~\eqref{intro:eq:les_for_blup} for $ A_{p(U)} $, see again Theorem~\ref{theorem:flatiso_and_consequences} for more details. Another way to view the locality of the blow-down map in cohomology is by use of jets of forms. For a Lie algebroid $A\laover M$ and a submanifold $N\subseteq M$ we write 
\begin{equation}
    \mathscr{J}_N^\infty\Omega^\bullet(A)=\Omega^\bullet(A) / \bigcap_{k\in \field N}\vanishing_N^k\Omega^\bullet(A)
\end{equation}
for $\infty$-jets of forms of $A$ along $N$, where $\vanishing_N$ denotes the vanishing ideal of $N$. Then the locality in Theorem~\ref{intro:theorem:p_A_in_cohomology} originates from an isomorphism
\begin{equation}
    \homology^\bullet\left(\frac{\Omega^\bullet(\blup(A,B))}{p_A^\ast\Omega^\bullet(A)}\right) \simeq \homology^\bullet\left(\frac{ \mathscr{J}_{\field P}^\infty\Omega^\bullet(\blup(A,B))}{p_A^\ast\mathscr{J}_N^\infty\Omega^\bullet(A)}\right).
\end{equation}

As a first application of Theorem \ref{intro:theorem:p_A_in_cohomology} we express the cohomology of the blowup of a transversal in terms of $\Homology{A}{\bullet}{}$. The proof uses that transversals admit a simple normal form \cite{BursztynLimaMeinrenken2019}.

\begin{itheorem}[The blowup of transversals]\label{intro:theorem:transversals}
	Let $ \iota\colon N\hookrightarrow M $ be a closed transversal of $ A\laover M $ and denote the projection of the projective bundle $\field P\subseteq \blup(M,N)$ by $ \pi_{\field P}\colon \field P\to N $. Let $\iota^! A\laover N$ be the pullback of $A$ to $N$.
	\begin{enumerate}
		\item If $ \codim N $ is odd then we have an isomorphism
        \begin{equation}\label{intro:theorem_eq:odd_identif}
		\Homology{\blup(A,\iota^! A)}{\bullet}{}\simeq\Homology{A}{\bullet}{}\oplus \Homology{\iota^!A}{\bullet-1}{}
		\end{equation}
		and, under \eqref{intro:theorem_eq:odd_identif}, $p_A^\ast$ becomes the isomorphism $ p_A^\ast\colon \Homology{A}{\bullet}{}\xrightarrow{\simeq}\Homology{A}{\bullet}{}\oplus 0  $.
		\item\label{intro:theorem:transversals:item:even} If $ \codim N $ is even, there exists a tubular neighbourhood $ E\to N $ such that $ \homology^\bullet(\blup(A,\iota^! A)) $ fits into a long exact sequence
\begin{equation*}
    \dots\to \homology^{\bullet}(A)\xrightarrow{p_A^\ast} \homology^\bullet(\blup(A,\iota^! A)) \to \homology^{\bullet+1}_\mathrm{cv}(A_E)\oplus \homology^{\bullet-1}(\pi^!_{\field P} A) \xrightarrow{g} \homology^{\bullet+1}(A)\to \dots
\end{equation*}
  
		
		where $ g=i\circ \mathrm{pr}_{\homology^{\bullet+1}_\mathrm{cv}(A_E)}$. Here, by $ \homology^{\bullet}_\mathrm{cv}(A_E) $ we denote compact vertical cohomology and by $ i\colon \homology^{\bullet}_\mathrm{cv}(A_E)\to \Homology{A}{\bullet}{} $ the natural map. 
	\end{enumerate}
\end{itheorem}

Recall that by \cite[Section 2.4.1]{gualtieri.li:2014a} we can write the b-tangent bundle associated to a closed hypersurface $N\subseteq M$ by 
\begin{equation*}
    T_N^b M=\blup(TM,TN),
\end{equation*}
which is a blowup of a codimension $1$ transversal. Theorem \ref{intro:theorem:transversals} then reproduces the Mazzeo-Melrose decomposition for b-cohomology \cite{guillemin.miranda.pires:2014a, marcut.osornotorres:2014a}, see \cite{melrose:1993a} for the original result. In this sense, Theorem \ref{intro:theorem:transversals} can be seen as a generalisation of Mazzeo-Melrose as it allows for arbitrary Lie algebroids and transversals of arbitrary codimension. 

One of the ingredients needed for proving the second part of Theorem~\ref{intro:theorem:transversals} is the existence of a Gysin-like long exact sequence for the cohomology of the pullback of a Lie algebroid to a sphere bundle.

\begin{itheorem}[Gysin sequence for Lie algebroids]\label{intro:theorem:gysin}
    Let $ B\laover N $ be a Lie algebroid with anchor $ \rho $, $ \pi\colon \field S\to N $ a sphere bundle of rank $ k $, and $\pi^! B\laover \field S$ the pullback Lie algebroid. There exists a long exact sequence
		\begin{equation}
		\begin{tikzpicture}[baseline=(current bounding box.center)]
		\node (1) at (0,0) {$ \dots $};
		\node (2) at (1.7,0) {$ \homology^\bullet(B) $};
		\node (3) at (4,0) {$ \homology^\bullet( \pi^! B ) $};
		\node (4) at (7,0) {$ \homology^{\bullet-k}(B,o(\field S)) $};
		\node (5) at (10.1,0) {$ \homology^{\bullet+1}(B) $};
		\node (6) at (12,0) {$ \dots $};
		
		\path[->]
		(1) edge node[above]{$  $} (2)
		(2) edge node[above]{$ (\pi^!)^\ast $} (3)
		(3) edge node[above]{$ (\pi^!)_\ast $} (4)
		(4) edge node[above]{$ \wedge \rho^\ast e $} (5)
		(5) edge node[above]{$  $} (6)
		
		;
		\end{tikzpicture}
		\end{equation} 
		Here, $ (\pi^!)_\ast  $ denotes fibre integration and $ e\in \homology^{k+1}(N,o(\field S)) $ is the Euler class of the sphere bundle.
\end{itheorem}

A result we obtain while proving Theorem~\ref{intro:theorem:transversals} is on the de Rham cohomology of real projective blowups. For complex projective blowups there exist results on the de Rham cohomology, see e.g.\ \cite{griffith.harris:1978a}, but for real projective blowups we could not find the statement of Theorem \ref{intro:theorem:de_rham_of_blup} in the literature.

\begin{itheorem}[de Rham cohomology of real projective blowups]\label{intro:theorem:de_rham_of_blup}
		Let $ N\subseteq M $ be a closed submanifold.
	\begin{enumerate}
		\item If $ \codim N $ is odd, then we have an isomorphism
		\begin{equation}
		p^\ast\colon\Homology{M}{\bullet}{}\xrightarrow{\simeq}\Homology{\blup(M,N)}{\bullet}{}.
		\end{equation}
		\item If $ \codim N $ is even, let $ E\to N $ be a tubular neighbourhood of $ N $ in $ M $. Then $ \Homology{\blup(M,N)}{\bullet}{} $ fits into a long exact sequence
\begin{equation}
    \dots\to \homology^\bullet(M)\xrightarrow{p^\ast} \homology^\bullet(\blup(M,N))\xrightarrow{h}\homology^{\bullet+1}_\mathrm{cv}(E)\xrightarrow{i}\mathrm{H}^{\bullet+1}(M)\to \dots
\end{equation}
  
		
		
		Here, $ h $ first restricts a form to $ \field P $, then fibre-integrates and applies the Thom isomorphism.
	\end{enumerate}
\end{itheorem}

On the other hand, in some cases blowing up orbits of Lie algebroids with singular orbit foliation leads to regular Lie algebroids, whose cohomology is easier to compute. 
%

A particular example is the action Lie algebroid $\liealg{so}(3)\ltimes \field R^3$ with singular orbit given by the origin. We compute its cohomology by blowing up, reproducing the result on cohomology obtained by averaging in \cite{ginzburg.weinstein:1992a}.

\begin{itheorem}[The action Lie algebroid $\liealg{so}(3)\ltimes \field R^3$]\label{intro:theorem:so3}
	Let $ A=\liealg{so}(3)\ltimes \field R^3 $ and denote by $ A_{\{0\}}=\liealg{so}(3) $ its restriction to the origin.
	\begin{enumerate}
		\item The blowup $ \blup(A,A_{\{0\}}) $ is a regular Lie algebroid with cohomology given by
		\begin{equation*}
			\Homology{\blup(A,A_{\{0\}})}{k}{}\simeq\begin{cases}
			\{ f\in \Cinfty(\field R^3)\colon f \text{ only depends on the radius} \}&\text{ if }k=0,3\\
			0&\text{ otherwise}.
			\end{cases}
		\end{equation*}
		\item The blow-down map in cohomology
		\begin{equation*}
			p_A^\ast\colon \Homology{A}{\bullet}{}\xrightarrow{\simeq}\Homology{\blup(A,A_{\{0\}})}{\bullet}{}
		\end{equation*}
		is an isomorphism.
	\end{enumerate}
\end{itheorem}

\subsubsection*{Organization of the paper}

After fixing basic definitions and notations in Section~\ref{section:lie_algebroids_and_pull_backs} we briefly describe the known constructions of real projective blowups of submanifolds, (anchored) vector bundles, and Lie algebroids in Section~\ref{section:blow_up_of_LA}. In Section~\ref{section:blow_down_map_in_cohomology} we prove Theorem \ref{intro:theorem:p_A_in_cohomology} while studying the blow-down map in cohomology~\eqref{eq:p_A_in_cohomology}. 
Next, we use Theorem~\ref{intro:theorem:p_A_in_cohomology} to compute the cohomology of the blowup of transversals in Section~\ref{section:blow_up_of_transversals}, including Theorem \ref{intro:theorem:transversals} and \ref{intro:theorem:de_rham_of_blup}. 
In Section~\ref{section:invariant_submanifolds} we investigate two particular cases of blowups of invariant submanifolds. We prove
Theorem~\ref{intro:theorem:so3} and conclude the section by showing that, by repeatedly blowing up restrictions of an action Lie algebroid to orbits one does not always obtain a regular Lie algebroid using $ A=\liealg{sl}_2(\field R)\ltimes \field R^3 $ in Section \ref{sec:sl2r}.

Finally, in the Appendix we discuss fibre integration for Lie algebroids to be able to formulate and prove Theorem~\ref{intro:theorem:gysin}.
	
\subsubsection*{Acknowledgements}
I would like to thank Ioan M\u{a}rcu\textcommabelow{t} for guidance and many useful discussions throughout this project. I would like to thank Aldo Witte for suggesting to study b-tangent bundles from the blowup point of view. Further, I would like to thank Ioan M\u{a}rcu\textcommabelow{t} and Marco Zambon for their feedback on the paper.

The author was financially supported by Methusalem grant METH/21/03 – long term structural funding of the Flemish Government. 	
	
	\section{Lie algebroids and pullbacks}\label{section:lie_algebroids_and_pull_backs}
	In this section we fix notations and prove an identification for the Lie algebroid cohomology of the pullback of a Lie algebroid to a double cover of its base in Lemma~\ref{lemma:lapullback_cohomology}, which we use repeatedly throughout this paper. The material presented in this section is standard, see e.g.\ \cite{mackenzie:2005a} for the general theory of Lie algebroids.
	
	\subsection{Lie algebroid cohomology and representations}
	For a Lie algebroid $ A\laover M $ we denote the bracket on sections by $ [\argument,\argument  ]\colon \Gamma^\infty(A)\times \Gamma^\infty(A)\to \Gamma^\infty(A) $ and the anchor by $ \rho\colon A\to TM $. Occasionally, we add a subscript $ \argument_A $ to avoid confusion. For examples of Lie algebroids see e.g.~\cite[Chapter 8.1.1]{dufour.zung:2005a}.
	
	Since the bracket of a Lie algebroid is defined on sections of a vector bundle, it is not straightforward to define morphisms of Lie algebroids. However, Lie algebroids always come with a cochain complex, which allows for a simple definition of morphisms. To include coefficients in the definition of this cochain complex we need the notion of a representation of Lie algebroids on vector bundles \cite[Definition 2.6]{higgins.mackenzie:1990a}.
	
	\begin{definition}
		Let $ A\laover M $ be a Lie algebroid and $ E\to M $ a vector bundle. An $ \field R $-bilinear map $ \nabla\colon \Gamma^\infty(A)\times\Gamma^\infty(E)\to \Gamma^\infty(E) $ is called \textup{representation of $ A $ on $ E $} if the following holds:
		\begin{enumerate}
			\item $ \nabla $ is $ \Cinfty(M) $-linear in $ \Gamma^\infty(A) $ and satisfies the Leibniz rule
			\begin{equation}
			\nabla_a fs=f\nabla_a s+ \rho(a)(f)s
			\end{equation}
			for $ f\in \Cinfty(M) $, $ a\in \Gamma^\infty(A) $, and $ s\in \Gamma^\infty(E) $.
			\item $ \nabla $ is \textup{flat}, i.e.
			\begin{equation}
			\nabla_a\nabla_b-\nabla_b\nabla_a=\nabla_{[a,b]}
			\end{equation}
			for $ a,b\in \Gamma^\infty(A) $.
		\end{enumerate}
	\end{definition}
	Fix a Lie algebroid $ A\laover M $ and a representation $ \nabla $ on $ E\to M $. Then on $ \Omega^\bullet(A,E) $, the $ E $-valued forms on $ A $, one defines a differential $ \D^\nabla \colon \Omega^\bullet(A,E)\to \Omega^{\bullet+1}(A,E) $ by
	\begin{equation}
	\begin{aligned}
	\D^\nabla \omega (a_0,\dots,a_k)=&\sum_{i=0}^k (-1)^i \nabla_{a_i}\omega(a_0,\dots,\stackrel{i}{\wedge},\dots,a_k)\\
	&+\sum_{\mathclap{0\leq i<j\leq k}} (-1)^{i+j} \omega([a_i,a_j],a_0,\dots,\stackrel{i}{\wedge},\dots,\stackrel{j}{\wedge},\dots,a_k  ),
	\end{aligned}
	\end{equation}
	see e.g.~\cite[Definition 7.1.1]{mackenzie:2005a}. The corresponding \textit{Lie algebroid cohomology of $ A $ with coefficients in $ E $} is denoted by $ \Homology{A}{\bullet}{,E} $. If $ E=M\times \field R $ is the trivial line bundle with representation $ \nabla_a=\rho(a) $ we simply write $ (\Omega^\bullet(A),\D) $ and $ \Homology{A}{\bullet}{} $ for the cochain complex and the cohomology associated to $ A $, respectively.
	
	\begin{definition}
		Let $ A\laover M $ and $ B\laover N $ be Lie algebroids, and $ \Phi\colon A\to B $ a morphism of vector bundles. Then $ \Phi $ is a \textup{morphism of Lie algebroids} if the pullback $\Phi^\ast\colon \Omega^\bullet(B)\to \Omega^\bullet(A)$ defined by
		\begin{equation}
		(\Phi^\ast \omega)(a_1,\dots,a_k):= \big( M\ni p\mapsto \omega_{\phi(p)} (\Phi(a_1(p)),\dots,\Phi(a_k(p)))\big)
		\end{equation}
  for $\omega\in \Omega^k(B)$ and $a_1,\ldots,a_k\in \Gamma^\infty(A)$ is a chain map.
	\end{definition}
	
	If $ \nabla^B $ is a representation of $ B\laover N $ on $ E\to N $ and $ \Phi\colon A\to B $ a Lie algebroid morphism over its base map $ \phi\colon M\to N $, there is an induced representation of $ A $ on the pullback vector bundle $ \phi^\sharp E\to M $. For $s\in \Gamma^\infty(E)$ we denote by $\phi^\sharp s\in \Gamma^\infty(\phi^\sharp E)$ the corresponding pullback section and write $\phi^\sharp\colon \phi^\sharp E\to E$ for the canonical vector bundle morphism.
	
	\begin{proposition}
		Let $ \Phi\colon A\to B $ be Lie algebroid morphism over $ \phi\colon M\to N $ and $ (E\to N,\nabla^B) $ a representation of $ B $.
		\begin{enumerate}
			\item There is a representation of $ A $ on $ \phi^\sharp E $ on pullback sections given by
			\begin{equation}
			\big(\nabla^A_a\phi^\sharp s\big)_p=\nabla^B_{\Phi(a(p))} s\at{\phi(p)}\in \phi^\sharp E_p,
			\end{equation}
			where $ s\in \Gamma^\infty(E) $ and $ a\in \Gamma^\infty(A) $.
			\item The map $ \Phi^\ast\colon \Omega^\bullet(B,E)\to \Omega^\bullet(A,\phi^\sharp E) $ defined by
			\begin{equation}
			(\Phi^\ast \omega)(a_1,\dots,a_k):= \big( M\ni p\mapsto \omega_{\phi(p)} (\Phi(a_1(p)),\dots,\Phi(a_k(p))) \in \phi^\sharp E_p\big)
			\end{equation}
			for $ \omega\in \Omega^k(B,E) $ and $ a_1,\dots,a_k\in \Gamma^\infty(A) $ is a morphism of cochain complexes. 
		\end{enumerate}
	\end{proposition}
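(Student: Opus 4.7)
The plan is to define $\nabla^A$ pointwise on pullback sections, extend to arbitrary sections of $\phi^\sharp E$ by the Leibniz rule via local frames, verify the representation axioms (the main obstacle being flatness, which uses the Lie algebroid morphism property of $\Phi$), and conclude part (2) by expanding $\D^{\nabla^A}\Phi^\ast$ and $\Phi^\ast\D^{\nabla^B}$ in a local pullback frame and matching the two expressions term by term.

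For part (1), the formula $(\nabla^A_a\phi^\sharp s)_p := \nabla^B_{\Phi(a(p))}s|_{\phi(p)}$ is meaningful because $\nabla^B$ is $\Cinfty(N)$-linear in its first slot, so for each $q\in N$ it descends to a bilinear operation $B_q\times \Gamma^\infty(E)\to E_q$ depending on $s$ only through its germ at $q$. This makes $\nabla^A$ manifestly $\Cinfty(M)$-linear in $a$ on pullback sections. To extend to arbitrary sections, I pick a local frame $s_1,\dots,s_r$ of $E$; then $\phi^\sharp s_1,\dots,\phi^\sharp s_r$ is a local frame of $\phi^\sharp E$, any $\sigma\in \Gamma^\infty(\phi^\sharp E)$ can be written as $\sigma=\sum_\alpha f_\alpha \phi^\sharp s_\alpha$, and I set
$$\nabla^A_a\sigma := \sum_\alpha \rho_A(a)(f_\alpha)\,\phi^\sharp s_\alpha + f_\alpha \nabla^A_a\phi^\sharp s_\alpha.$$
Independence of the chosen frame is routine: a change of frame on $E$ pulls back to a change of pullback frame on $\phi^\sharp E$ with $\phi$-pulled-back transition functions, and the Leibniz-rule correction terms produced by the two frames coincide.

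The main obstacle is flatness. Writing $\Phi\circ a = \sum_j h_j(a'_j\circ\phi)$ and $\Phi\circ b = \sum_i g_i(b'_i\circ\phi)$ locally and unwinding the definitions, a direct computation gives
\begin{align*}
[\nabla^A_a,\nabla^A_b]\phi^\sharp s = &\ \sum_i \rho_A(a)(g_i)\,\phi^\sharp(\nabla^B_{b'_i}s) - \sum_j \rho_A(b)(h_j)\,\phi^\sharp(\nabla^B_{a'_j}s) \\
&\ + \sum_{i,j} g_i h_j\,\phi^\sharp(\nabla^B_{[a'_j,b'_i]_B}s).
\end{align*}
On the other hand, the condition that $\Phi$ is a Lie algebroid morphism translates, using these same local decompositions, into the bracket identity
$$\Phi\circ [a,b]_A = \sum_{i,j} h_j g_i\,([a'_j,b'_i]_B\circ\phi) + \sum_i \rho_A(a)(g_i)(b'_i\circ\phi) - \sum_j \rho_A(b)(h_j)(a'_j\circ\phi),$$
and substituting this into $\nabla^A_{[a,b]}\phi^\sharp s|_p = \nabla^B_{\Phi([a,b](p))}s|_{\phi(p)}$ reproduces the expression above. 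Flatness on arbitrary sections then follows by a short Leibniz-rule calculation using that the anchor $\rho_A$ is a bracket morphism.

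For part (2), both $\Phi^\ast\D^{\nabla^B}\omega$ and $\D^{\nabla^A}\Phi^\ast\omega$ lie in $\Omega^{k+1}(A,\phi^\sharp E)$ and are determined by their values on tuples $(a_0,\dots,a_k)$ of sections of $A$. I would expand both sides using the Koszul formula for the differentials. The summands of the form $\nabla^A_{a_i}\bigl(\Phi^\ast\omega(a_0,\dots,a_k)\bigr)$ match the corresponding summands on the other side via the pointwise definition of $\nabla^A$, while the bracket summands match via the same Lie algebroid morphism identity used for flatness. This is precisely the coefficient-twisted version of the compatibility $\Phi^\ast\D = \D\Phi^\ast$ already built into the definition of a Lie algebroid morphism; once the expansion is carried out in a local pullback frame, the two sides coincide term by term.
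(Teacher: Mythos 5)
Your proof is correct. Note that the paper states this proposition without proof (it is part of the standard background, with Mackenzie and Higgins--Mackenzie cited), so there is no argument of the paper to compare against; your verification is the standard one. Two small points are worth making explicit. First, since the paper \emph{defines} a Lie algebroid morphism by the chain-map property of $\Phi^\ast$ on trivial-coefficient forms, the local bracket identity you invoke for $\Phi\circ[a,b]_A$ is not a restatement of the definition but the (standard) equivalent characterisation of Higgins--Mackenzie; it should be cited or derived by applying the chain-map condition to functions (giving $\rho_B\circ\Phi=T\phi\circ\rho_A$, which you also use tacitly when checking that your Leibniz-rule extension is consistent on pullback sections) and to $1$-forms. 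Second, for part (2) a slightly cleaner route than the full Koszul expansion is to observe that your pointwise definition says exactly $\nabla^A(\phi^\sharp s)=\Phi^\ast(\nabla^B s)$ as elements of $\Omega^1(A,\phi^\sharp E)$, write $\omega$ locally as $\sum_\mu \eta_\mu\otimes s_\mu$ with $\eta_\mu\in\Omega^k(B)$, and combine the scalar chain-map property $\D\Phi^\ast\eta_\mu=\Phi^\ast\D\eta_\mu$ (the definition) with the derivation property of $\D^\nabla$ on such decomposable forms; this avoids the term-by-term bookkeeping, which as you note otherwise requires the bracket identity again to absorb the anchor correction terms.
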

	
	To conclude we introduce some notation for later on.
	
	\begin{notation}
		\begin{itemize}
			\item A \textit{pair of manifolds $ (M,N) $} is a manifold $ M $ together with a closed, embedded submanifold $ N\subseteq M $ with $ \codim N\geq 1 $.
			\item Let $ B\to N $ be a vector subbundle of a vector bundle $A\to M$ with $ (M,N) $ a pair of manifolds. Then we call $ (A,B) $ a \textit{pair of vector bundles}.
			\item If $ (A,B) $ is a pair of vector bundles over $ (M,N) $ we write
			\begin{equation}
			\Gamma^\infty(A,B)=\{ a\in \Gamma^\infty(A)\colon a\at{N}\in \Gamma^\infty(B) \}
			\end{equation}
			for sections of $A$ that restrict to sections of $B$.
            \item Let $ B\laover N $ be a Lie subalgebroid of $ A\laover M $ such that $ (A,B) $ is a pair of vector bundles. Then we call $ (A,B) $ a \textit{pair of Lie algebroids}.
		\end{itemize}
	\end{notation}
	
	\subsection{Pullbacks of Lie algebroids}\label{sec:pullbacks_of_Lie_algebroids}
	
	Pullbacks of Lie algebroids, called inverse image Lie algebroids in~\cite[Section 1]{higgins.mackenzie:1990a}, are a useful tool to construct Lie (sub-)algebroids and trivialise representations.
	
	\begin{definition}
		Let $ A\laover M $ be a Lie algebroid, $ N $ a manifold, and $ \phi\colon N\to M $ a map transverse to the anchor. Then the\textup{ pullback Lie algebroid $ \phi^! A\laover N $} is given by the following:
		\begin{enumerate}
			\item For $ x\in N $, the fibres are given by
			\begin{equation}\label{eq:definition_fibre_of_pull_back}
			\phi^! A_x=\{ (a,v)\in A_{\phi(p)}\times T_xN\colon \rho(a)=T_x\phi v \}.
			\end{equation}
			\item The anchor of $ \phi^! A $ is given by the projection onto the second factor.
			\item The bracket is uniquely defined by
			\begin{equation}
			[(f (a\circ \phi), X ), (g(b\circ \phi)),Y ]=( fg([a,b]\circ \phi)+ X(g)(b\circ \phi)-gY(f)(a\circ \phi),[X,Y] )
			\end{equation}
			for $ f,g\in \Cinfty(N) $, $ a,b\in \Gamma^\infty(A) $, and $ X,Y\in \mathfrak{X}(N) $ such that~\eqref{eq:definition_fibre_of_pull_back} is satisfied.
		\end{enumerate}
	\end{definition}
	Note that in general there is no canonical way to assign meaning to the notion of a pullback section $ \phi^! a $, where $ a\in \Gamma^\infty(A) $. Regardless, the canonical map 
	\begin{equation}
	\begin{aligned}
	\phi^!\colon \phi^! A&\to A\\
	(a,v)&\mapsto a,
	\end{aligned}
	\end{equation}
 is a morphism of Lie algebroids. If $ \iota\colon N\hookrightarrow M $ is a submanifold of $ M $ such that the inclusion is transverse to the anchor, $ N $ is called a \textit{transversal} and we can consider $ \iota^! A $ as a Lie subalgebroid of $ A $.
	
The case where $ \phi\colon \tilde{M}\to M  $ is a double cover is of particular interest to us. 
Writing $ \{p^\pm\}=\phi^{-1}(p) $, the map $ T_{p^\pm}\phi $ is a bijection. 
Thus the $ TN $-part of a point in $ \phi^! A $ is uniquely determined by the point in $ A $, hence as a vector bundle $ \phi^! A=\phi^\sharp A $ is just the pullback vector bundle. 
Recall that for any vector bundle $ E\to M $ the pullback bundle $ \phi^\sharp E $ carries a canonical $ \field Z_2 $-action which sends $ v_{p^\pm} $ to $ v_{p^\mp} $, i.e.\ exchanges the base point without changing the fibre. 
The induced $ \field Z_2 $-action on $ \Omega^\bullet(\phi^! A,\phi^\sharp E) $ is given by
	\begin{equation}
	\big(\hat{1}. \tilde{\omega}\big)(\tilde{a}_1,\dots,\tilde{a}_k)=\hat{1}. \big( \tilde{\omega}( \hat{1}. \tilde{a}_1,\dots,\hat{1}. \tilde{a}_k ) \big),
	\end{equation}
	where $ \tilde{\omega}\in \Omega^k(\phi^! A,\phi^\sharp E) $, $ \tilde{a}_1,\dots,\tilde{a}_k\in \Gamma^\infty(\phi^! A) $, and $\hat{1}\in \field{Z}_2$ denotes the nontrivial element.
		\begin{lemma}\label{lemma:lapullback_cohomology}
		Let $ A\laover M $ be a Lie algebroid with a representation on a vector bundle $ E\to M $ and $ \phi\colon \tilde{M}\to M $ a double cover.
		\begin{enumerate}
			\item The action of $ \field Z_2 $  on $ \Omega^\bullet(\phi^! A,\phi^\sharp E) $ is by chain maps. The $ +1 $ eigenspace of $ \hat{1}\in \field Z_2 $ is given by
			\begin{equation}
			\Omega^\bullet(\phi^! A,\phi^\sharp E)_+=(\phi^!)^\ast\Omega^\bullet(A,E)\simeq \Omega^\bullet(A,E).
			\end{equation}
			\item Let $ L=\tilde{M}\times_{\field Z_2} \field R\to M $, where $\hat{1}.(p^\pm,\lambda)=(p^\mp, -\lambda)$. Consider the representation of $A$ on $L$ defined by flatness of locally constant sections. 
            Then the $ -1 $ eigenspace of $ \hat{1}\in \field Z_2 $ is
			\begin{equation}
			\Omega^\bullet(\phi^! A,\phi^\sharp E)_-=\Omega^\bullet(A,E\tensor L).
			\end{equation}
		\end{enumerate}
	\end{lemma}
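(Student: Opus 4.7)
My strategy is to reduce part (2) to part (1) applied to the twisted representation $E\otimes L$, so the main content lies in part (1). I would start by checking that the stated $\field Z_2$-action on $\Omega^\bullet(\phi^!A,\phi^\sharp E)$ originates from a Lie algebroid automorphism of $\phi^!A$ preserving the pulled-back representation on $\phi^\sharp E$. This follows from naturality of the pullback: the deck transformation $\tau\colon \tilde M\to \tilde M$ satisfies $\phi\circ\tau=\phi$ and hence lifts to an automorphism $\tau^!\colon \phi^!A\to \phi^!A$ together with a compatible vector bundle automorphism of $\phi^\sharp E$. That the induced action on forms commutes with the differential is then immediate from the defining formula.

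For the eigenspace identification in part (1), the inclusion $(\phi^!)^\ast\Omega^\bullet(A,E)\subseteq \Omega^\bullet(\phi^!A,\phi^\sharp E)_+$ follows from $\phi^!\circ\tau^!=\phi^!$. For the converse, given an invariant $\tilde\omega$ and $p\in M$ with preimages $p^\pm$, I would exploit that $\phi$ is a local diffeomorphism to obtain canonical identifications $\phi^!A_{p^\pm}\simeq A_p$ and $\phi^\sharp E_{p^\pm}\simeq E_p$. Invariance of $\tilde\omega$ ensures that $\tilde\omega_{p^+}$ and $\tilde\omega_{p^-}$ transport to the same element, which we declare to be $\omega_p$. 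Smoothness of $\omega$ follows by working with a local section of $\phi$, and $(\phi^!)^\ast\omega=\tilde\omega$ holds by construction; that $(\phi^!)^\ast$ is injective is clear since $\phi^!$ is fibrewise a bijection.

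For part (2), I would apply part (1) to the representation $E\otimes L$ and reduce to identifying the various $\field Z_2$-actions. The canonical map $(q,[q,\lambda])\mapsto (q,\lambda)$ trivializes $\phi^\sharp L\simeq \tilde M\times \field R$; under this trivialization the canonical action
\begin{equation*}
\hat 1.(q,[q,\lambda])=(\hat 1.q,[q,\lambda])=(\hat 1.q,[\hat 1.q,-\lambda])
\end{equation*}
becomes $(q,\lambda)\mapsto(\hat 1.q,-\lambda)$. Moreover, locally flat sections of $L$ correspond under this trivialization to locally constant functions on $\tilde M$, so the pulled-back representation on $\phi^\sharp L$ is the trivial one. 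Hence $\phi^\sharp(E\otimes L)\simeq \phi^\sharp E$ as pullback representations, but with the $\field Z_2$-action twisted by a global sign. Applying part (1) to $E\otimes L$ then yields
\begin{equation*}
\Omega^\bullet(A,E\otimes L)\simeq \Omega^\bullet(\phi^!A,\phi^\sharp(E\otimes L))_+\simeq \Omega^\bullet(\phi^!A,\phi^\sharp E)_-,
\end{equation*}
as desired.

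The main care-point throughout is the bookkeeping in part (2): one must correctly track the sign twist in the $\field Z_2$-action on the trivialized $\phi^\sharp L$ and verify that the flat connection on $L$ pulls back to the standard connection on $\tilde M\times \field R$, so that the eigenspace identifications above are genuinely identifications of representations rather than merely of vector bundles.
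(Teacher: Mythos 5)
Your part (1) is essentially the paper's own argument: pullback forms are clearly invariant, and an invariant form descends by transporting its values through the fibrewise identifications coming from the local diffeomorphism $\phi$, with smoothness checked via local sections; this matches the paper. Your part (2), however, takes a genuinely different and cleaner route. The paper argues directly: it splits into the cases $L$ trivial and $L$ nontrivial, and in the nontrivial case constructs an explicit map $\Omega^\bullet(A,E\tensor L)\to\Omega^\bullet(\phi^!A,\phi^\sharp E)_-$ on pure tensors $\eta\tensor\ell$ using the trivialisation function $r$ of $\phi^\sharp L$, extends it $\Cinfty(M)$-linearly along $\phi^\ast$, builds an explicit inverse by choosing preimages, and then checks separately that the result is a chain map. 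You instead reduce (2) to (1) applied to the twisted representation $E\otimes L$: the canonical trivialisation $(q,[q,\lambda])\mapsto(q,\lambda)$ of $\phi^\sharp L$ converts the canonical $\field Z_2$-action into the deck swap composed with a fibrewise sign, and the hypothesis that locally flat sections of $L$ are $\nabla$-flat guarantees that the pulled-back representation on $\phi^\sharp L$ is the standard one, so $\phi^\sharp(E\otimes L)\simeq\phi^\sharp E$ as $\phi^!A$-representations with the $\field Z_2$-action twisted by an overall sign; hence the $+1$-eigenspace for $E\otimes L$ is the $-1$-eigenspace for $E$. This buys you three things: no case distinction on whether $L$ is trivial, no explicit local formulas or extension from pure tensors, and chain-map compatibility for free since it is inherited from $(\phi^!)^\ast$ in part (1). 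The only care-points — that the trivialisation intertwines the representations and that the sign twist is tracked correctly — are exactly the ones you flag, and your verification of them is correct.
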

	
Note that the condition on the representation of $A$ on $ L $ in the second part of Lemma~\ref{lemma:lapullback_cohomology} is nontrivial as it implies that the representation of $ A $ on $ L $ factors through the anchor, i.e.\ there is a (vector bundle) connection $ \nabla^{TM} $ on $ L $ such that 
	\begin{equation}
	\nabla_a=\nabla^{TM}_{\rho(a)}
	\end{equation}
	for all $ a\in \Gamma^\infty(A) $.
\begin{proof}[of Lemma \ref{lemma:lapullback_cohomology}]
			To see that $ \field Z_2 $ acts by chain maps is straightforward when evaluating a form on pullback sections. 
			To show the identifications of complexes, consider first the $ +1 $ eigenspace. Clearly, pullback forms are invariant. On the other hand, if $ \tilde{\omega}\in \Omega^k(\phi^! A,\phi^\sharp E)_+ $ we can define $ \omega\in \Omega^k(A,E) $ by
			\begin{equation*}
			\omega(a_1,\dots,a_k)\colon M\ni p\mapsto \tilde{\omega}_{\tilde{p}}(\phi^\sharp a_1 (\tilde{p}),\dots, \phi^\sharp a_k(\tilde{p} ))\in \phi^\sharp E_{\tilde{p}}=E_p,
			\end{equation*}
			where $ a_1,\dots,a_k\in \Gamma^\infty(A) $ and $ \tilde{p}\in \phi^{-1}(\{p\}) $. This is well-defined by $ \field Z_2 $-invariance of $ \tilde{\omega} $ and $ (\phi^!)^\ast\omega=\tilde{\omega} $, proving the first part.
			
			For the second part $ L $ be given as stated. The line bundle $ L $ is trivial if and only if $ \tilde{M}\to M $ is the trivial double cover and the representation of $ A $ on $ L $ is trivial, implying 
			\begin{equation*}
			\Omega^\bullet(\phi^! A,\phi^\sharp E)_-=\Omega^\bullet(\phi^! A,\phi^\sharp E)_+=\Omega^\bullet(A,E)=\Omega^\bullet(A,E\tensor L)
			\end{equation*}
			using the first part.
			If $ L $ is nontrivial, then the pullback bundle $\phi^\sharp L$ is trivial. In a trivialisation $ \phi^\sharp L=\tilde{M}\times \field R $ for every $ v_p\in L_p $ we have that
			\begin{equation*}\tag{$ \ast $}
			(\phi^\sharp)^{-1}(\{v_p\})=\{(p^+,r(v_p)),(p^-,-r(v_p))  \}\subseteq \tilde{M}\times \field R, 
			\end{equation*}
			where $ \{ p^+,p^- \}=\phi^{-1}(\{p\}) $, and the $ \field Z_2 $-action flips the two points. Let $ \omega=\eta\tensor \ell\in \Omega^k(A,E\tensor L) $ be given, where $ \eta\in \Omega^k(A,E) $ and $ \ell\in \Gamma^\infty(L) $. 
            Then we can define a form $ \tilde{\omega}\in \Omega^k(\phi^! A,\phi^\sharp E) $ by 
			\begin{equation*}
			\tilde{\omega}( \tilde{a}_1,\dots,\tilde{a}_k )\colon \tilde{M}\ni p^\pm\mapsto \pm r\big(\ell(\phi(p^\pm))\big) \eta\big( \phi^!(\tilde{a}_1(p^\pm)),\dots,\phi^!(\tilde{a}_k(p^\pm)) \big)\in E_{\phi(p^\pm)}=\phi^\sharp E_{p^\pm},
			\end{equation*}
            where $\tilde{a}_1,\dots,\tilde{a}_k\in \Gamma^\infty(\phi^! A)$.
			$ \Cinfty(M) $-linear extension as a module morphism along $ \phi^\ast $ gives a map $ \Omega^\bullet(A, E\tensor L)\to \Omega^\bullet(\phi^! A,\phi^\sharp E) $ which maps into the subcomplex of anti-invariant forms. 
            To show that it is a bijection let $ \tilde{\omega}\in \Omega^k(\phi^! A,\phi^\sharp E)=\Omega^k(\phi^! A,\phi^\sharp (E\tensor L)) $ be given. For $ a_1,\dots,a_k\in \Gamma^\infty(A) $ we set
			\begin{equation}
			\omega(a_1,\dots,a_k)\colon p\mapsto \phi^\sharp\big( \tilde{\omega}_{\tilde{p}}( \tilde{a}_1(\tilde{p}),\dots,\tilde{a}_k(\tilde{p})  )  \big), 
			\end{equation}
			where $ \tilde{a}_j(\tilde{p}) $ is chosen such that $ \phi^!(\tilde{a}_j(\tilde{p}))=a_j(p) $. This map is well-defined by $ \field Z_2 $-anti invariance of $ \tilde{\omega} $ and defines a smooth section since $ \phi $ is a local diffeomorphism. Clearly, the two constructions are inverses to each other. Finally, one needs to check that this construction gives a chain map, but this follows from the definition of the pullback representation.
		\end{proof}
 
	Finally, we note that the (anti-) invariant part of the cohomology is the cohomology of the (anti-) invariant subcomplex.
	
	\begin{lemma}
		Let $ A\laover M $, $ \phi\colon \tilde M\to M $, $ E\to M $ and $ L\to M $ be given as in Lemma~\ref{lemma:lapullback_cohomology}. Then
		\begin{equation}
		\Homology{\phi^!A}{\bullet}{,\phi^\sharp E}_\pm=\Homology{\Omega^\bullet(\phi^!A,\phi^\sharp E)_\pm}{\bullet}{}.
		\end{equation}
	\end{lemma}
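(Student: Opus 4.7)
The plan is to exploit that $|\mathbb{Z}_2|=2$ is invertible in $\field R$, so averaging produces chain-level projectors onto the two eigenspaces, and then to use that cohomology commutes with finite direct sums.

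First I would note that by Lemma~\ref{lemma:lapullback_cohomology} the involution $\hat 1$ acts by a chain map on $\Omega^\bullet(\phi^!A,\phi^\sharp E)$. Hence the $\field R$-linear operators
\[
P_\pm := \tfrac{1}{2}(\mathrm{id} \pm \hat 1)
\]
are chain maps satisfying $P_+^2=P_+$, $P_-^2=P_-$, $P_+P_-=P_-P_+=0$, and $P_++P_-=\mathrm{id}$. Their images are exactly the eigenspaces $\Omega^\bullet(\phi^!A,\phi^\sharp E)_\pm$ already described in Lemma~\ref{lemma:lapullback_cohomology}.

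Next I would observe that the $P_\pm$ provide a decomposition of cochain complexes
\[
\Omega^\bullet(\phi^!A,\phi^\sharp E) = \Omega^\bullet(\phi^!A,\phi^\sharp E)_+ \oplus \Omega^\bullet(\phi^!A,\phi^\sharp E)_-,
\]
because each summand is preserved by $\D^\nabla$ (this preservation follows from the chain-map property of $P_\pm$). Since cohomology commutes with finite direct sums of cochain complexes, one obtains
\[
\homology^\bullet(\phi^! A,\phi^\sharp E) \;=\; \homology^\bullet\bigl(\Omega^\bullet(\phi^!A,\phi^\sharp E)_+\bigr) \oplus \homology^\bullet\bigl(\Omega^\bullet(\phi^!A,\phi^\sharp E)_-\bigr).
\]

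Finally I would identify the two summands with the cohomological eigenspaces. The induced action of $\hat 1$ on $\homology^\bullet(\phi^! A,\phi^\sharp E)$ is the direct sum of the induced actions on each of the above summands, and on $\homology^\bullet(\Omega^\bullet(\phi^!A,\phi^\sharp E)_\pm)$ it is multiplication by $\pm 1$ because already at the chain level $\hat 1$ acts as $\pm \mathrm{id}$ there. Thus the $\pm 1$-eigenspace in cohomology coincides with $\homology^\bullet(\Omega^\bullet(\phi^!A,\phi^\sharp E)_\pm)$, which is the claim. There is no real obstacle in this argument; the only substantive point is that the averaging projectors $P_\pm$ commute with the differential, which is immediate from Lemma~\ref{lemma:lapullback_cohomology}.
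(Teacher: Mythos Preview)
Your proof is correct and uses essentially the same idea as the paper: averaging over the $\mathbb{Z}_2$-action. The paper phrases it more concretely---it checks injectivity of the natural map $\homology^\bullet(\Omega^\bullet(\phi^!A,\phi^\sharp E)_+)\to\homology^\bullet(\phi^!A,\phi^\sharp E)$ by taking a primitive $\theta$ of an invariant cocycle and replacing it with $\tfrac{1}{2}(\theta+\hat 1.\theta)$---whereas you package the same averaging into the chain-level projectors $P_\pm$ and invoke the direct-sum decomposition; the content is identical.
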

 \begin{proof}
			This is obtained by averaging the $ \field Z_2 $-action: For example, to see that the natural map for the invariant eigenspaces is injective let $ [\omega]_+\in \Homology{\Omega^\bullet(\phi^!A,\phi^\sharp E)_+}{\bullet}{}  $ be given. If $ [\omega]=0 $ there exists $ \theta\in \Omega^{\bullet-1}(\phi^! A,\phi^\sharp E) $ with $ \D \theta=\omega $. But then 
			\begin{equation*}
			\tfrac{1}{2}(\theta+ (-1).\theta)\in \Omega^{\bullet-1}(\phi^!A,\phi^\sharp E)_+
			\end{equation*}
			is an invariant primitive for $ \omega $.
		\end{proof}

	\section{Blowup of Lie algebroids}\label{section:blow_up_of_LA}
	The idea of the blowup of a Lie algebroid is the following. Suppose that $ (A,B)\laover (M,N) $ is a pair of Lie algebroids. Then blowing up will lead to a new Lie algebroid $ \blup(A,B)\laover \blup(M,N) $ together with a morphism of Lie algebroids $ \blowdown{p_A}{A}{B} $. This means we get an induced map in cohomology $ p_A^\ast\colon \homology^\bullet(A)\to \homology^\bullet(\blup(A,B)) $. Now two scenarios are possible: Either the blown-up Lie algebroid is of interest and we would like to understand its cohomology in relation to $ \homology^\bullet(A) $, or we are interested in computing $ \homology^\bullet(A) $. In the second case, one chooses the Lie subalgebroid $B$ in such a way that $ \homology^\bullet(\blup(A,B)) $ is easier to compute and then draws conclusions regarding $ \homology^\bullet(A) $ via the blow-down map.
	
\subsection{Real projective blowups of smooth manifolds}\label{sec:RPblub_of_manifolds}

Throughout this section let $ (M,N) $ be a pair of manifolds. The blowup $ \blup(M,N) $ of $ N $ in $ M $ is as a set given by
\begin{equation}\label{eq:RPblub_of_manifolds}
\blup(M,N)=(M\setminus N)\cup \field P,
\end{equation}
i.e.\ by replacing $ N $ with $ \field P=\field P(\nu) $, the projectivisation of the normal bundle $$\nu=\nu(M,N)=TM|_N/TN$$ of $ N $ in $ M $. However, \eqref{eq:RPblub_of_manifolds} does as written does not carry an obvious smooth structure. There are several equivalent ways to define the blowup as a manifold.
Since they are all useful to keep in mind we list them here, but define the blowup using a universal property~\cite[Proposition 5.30]{obster:2021a}.
	
	\begin{definition}\label{def:RPblup_univ_property}
		The \textup{blowup} of $ N $ in $ M $ is given by a pair of manifolds $ (B,P) $ together with a map of pairs $ p\colon (B, P)\to (M,N) $ such that
		\begin{enumerate}
			\item $ P\subseteq B $ is a submanifold of codimension $ 1 $,
			\item $ p^{-1}(N)= P $ and the normal derivative $ \D^N p\colon \nu(B, P)\to \nu(M,N) $ is fiberwisely injective,
			\item it satisfies the following universal property: If $ (X,Y) $ is another pair of manifolds and $ q\colon (X,Y)\to (M,N) $ satisfies the first two conditions, there exists a unique map of pairs $ \Phi\colon (X,Y)\to (B, P) $ such that $ q=p\circ\Phi $.
		\end{enumerate}
		The map $ p\colon B\to P $ is called the \textup{blow-down map}.
	\end{definition}

In accordance to the description in \eqref{eq:RPblub_of_manifolds} we write $\blup(M,N)=B$ and $\field P=P$ for the codimension $1$ submanifold.
From Definition~\ref{def:RPblup_univ_property} we see that if $ \codim N=1 $ then $ \blup(M,N)=M $ are isomorphic via the blow-down map, while it hides that the blow-down map is proper (see~\cite[Lemma 2.2]{arone.kankaanrinta:2010a} and \cite[Proposition 5.34]{obster:2021a} for a proof), and that the restriction
	\begin{equation}
	p\at{\blup(M,N)\setminus \field P}\colon \blup(M,N)\setminus \field P\to M\setminus N
	\end{equation}
	is a diffeomorphism.
	One way to construct the blowup is via the \textit{deformation to the normal cone} \cite[Section 4]{debord.Skandalis:2017a}, which also gives insights into the functorial properties of the blowup. As a set the deformation to the normal cone is given by
	\begin{equation}
	\DNC(M,N)=M\times (\field R\setminus \{0\})\cup (\nu\times \{0\})
	\end{equation}
	and is endowed with a smooth structure that magnifies normal directions for $ \field R\ni t\to 0 $, which can be understood in terms of smoothness of a $ \field R^\times $-action given by
	\begin{equation}
	\lambda. (v,t)=\begin{cases}
	(v,\lambda^{-1} t) &\text{ if }t\neq 0\\
	(\lambda v,0)&\text{ if }t=0.
	\end{cases}
	\end{equation}
	This action is proper and free on $ \DNC(M,N)\setminus (N\times \field R) $. Here we consider $ N\times \{0\} $ to sit inside the normal bundle $ \nu $ as the zero section. The blowup of $ N $ in $ M $ is then given by the quotient
	\begin{equation}
	\blup(M,N)=\frac{\DNC(M,N)\setminus (N\times \field R)}{\field R^\times}=M\setminus N\cup \field P(\nu)
	\end{equation}
	and the blow-down map on the projectivisation of the normal bundle is just the fibre projection.
	
	\begin{remark}\label{remark:maps_between_blups_are_complicated}
		Even though the deformation to the normal cone constitutes a functor from the categories of pairs of manifolds to the category of manifolds, the same is not quite true for blowups, see~\cite[Definition 4.8]{debord.Skandalis:2017a}. If $ f\colon (M,N)\to (X,Y) $ is a map of pairs of manifolds, we can only define a blowup of $ f $ on 
		\begin{equation}
		\blup(f)\colon \blup_f(M,N):=\frac{\DNC(M,N)\setminus \DNC(f)^{-1}(Y\times \field R) }{\field R^\times}\to \blup(X,Y).
		\end{equation}
		In particular, $ \blup(f) $ is defined on all of $ \blup(M,N) $ iff $ f^{-1}(Y)=N $ and the normal derivative of $ f $ is fiberwisely injective.
	\end{remark}
	
	For local computations charts describing the smooth structure of the blowup are most useful \cite[Remark 5.29]{obster:2021a}.
	\begin{remark}[Charts for $ \blup(M,N) $]\label{remark:charts_for_blup}
		Let $ (M,N) $ be a pair of manifolds and let $ (U,(x,y)) $ be a submanifold chart, i.e.\ $ U\cap N=\{ x=0 \} $, where for simplicity we assume $ (x,y)(U)=\field R^{n} $. Then the collection $ \{ U_i \}_{i=1,\dots,k} $ defined by
		\begin{equation}
		U_i= p^{-1}(\{ x_i\neq 0 \})\cup \{ [v]_{q}\in \field P_{U\cap N} \colon \D^Nx_i(v)\neq 0  \}
		\end{equation}
		yields an open cover of $ p^{-1}(U) $ and for $ i\in\{ 1,\dots,k\} $ the maps
		\begin{equation*}
		\begin{aligned}
		\Phi_i\colon U_i&\to \field R^n\\
		\xi&\mapsto \begin{cases}
		(y(p(\xi)), \tfrac{x_1(p(\xi))}{x_i(p(\xi))},\dots, \tfrac{x_{i-1}(p(\xi))}{x_i(p(\xi))},x_i(p(\xi)),\tfrac{x_{i+1}(p(\xi))}{x_i(p(\xi))},\dots \tfrac{x_k(p(\xi))}{x_i(p(\xi))} ) &\text{ for } y_i(p(\xi))\neq 0\\
		
		(y(p(\xi)), \tfrac{\D^Nx_1(v)}{\D^Nx_i(v)},\dots,\tfrac{\D^Nx_{i-1}(v)}{\D^Nx_i(v)},0,\tfrac{\D^Nx_{i+1}(v)}{\D^Nx_i(v)},\dots \tfrac{\D^Nx_k(v)}{\D^Nx_i(v)} ) &\text{ for } \xi=[v]_{p(\xi)}
		\end{cases}
		\end{aligned}
		\end{equation*}
		yield charts for $ \blup( U,U\cap N )\subseteq \blup(M,N) $.
	\end{remark}
	
	The final point of view we want to take is locally around the submanifold $N$ in a tubular neighbourhood. Using gluing, this is enough to define the blowup even globally, see e.g.~\cite[Section 2]{mikhalkin:1997a}. Moreover, it implies that every tubular neighbourhood of $ N $ in $ M $ induces a tubular neighbourhood of $ \field P $ in $ \blup(M,N) $, see Corollary~\ref{corollary:lift_of_Evf_is_Evf}.
	
	\begin{remark}\label{remark:blup_of_zero_section}
		Let $ E\to N $ be a vector bundle and view $ N $ as a subset of $ E $ via the image of the zero section. Then
		\begin{equation}
		\blup(E,N)\simeq\field L(E)=\field L,
		\end{equation}
		where $ \field L(E) $ is the tautological line bundle over the projectivisation of $ E $, i.e.\ it is the line bundle over $ \field P $ with fibres given by
		\begin{equation}
		\field L_{[v]}=\{ v'\in E\colon v'=\lambda v \text{ for some }\lambda\in \field R \}
		\end{equation}
		for $ v\in E\setminus N $. In this case, the blow-down map is given by $ v'_{[v]}\mapsto v' $.
	\end{remark}
	
	Thus every tubular neighbourhood of $ N $ in $ M $ induces a tubular neighbourhood of $ \field P $ in $ \blup(M,N) $. We show that the respective Euler vector fields are related by the blow-down map. For that, we first note that vector fields on $ M $ that are tangent to $ N $ lift to the blowup in a unique way.

	\begin{lemma}\label{lemma:lifting_vf_to_the_blowup}
		Let $ N\subseteq M $ be a submanifold, $ X\in \mathfrak{X}(M) $ and write $ p\colon \blup(M,N)\to M $ for the blow-down map. Then there is a vector field $ \tilde{X}\in \mathfrak{X}(\blup(M,N)) $ with $ \tilde{X}\sim_p X $ if and only if $ X\in \Gamma^\infty(TM,TN) $, i.e.\ $ X $ is tangent to $ N $. In that case, $ \tilde{X} $ is unique and tangent to $ \field P\subseteq \blup(M,N) $.
		\begin{proof}
			First suppose that $ \tilde{X} $ exists. Note that for $ [v]\in \field P $ we have $ \image(T_{[v]} p)=T_{p([v])}N\oplus \field R v $, making use of a tubular neighbourhood. Then 
			\begin{equation*}
			\bigcap_{ [v]\in \field P_p }\image(T_{[v]}\rho) =T_pN,
			\end{equation*}
			thus $ X\in \Gamma^\infty(TM,TN) $ follows immediately. Conversely let $ X\in\Gamma^\infty(TM,TN) $ be given. Take an adapted chart $ (U,(y,x)) $ of $ N $ in $ M $ with $ N\cap U=\{ x=0 \} $. Then $ X $ is locally given by
			\begin{equation*}
			X\at{U}= f^j\Partial{y^j}+g^k\Partial{x^k}
			\end{equation*}
			for $ f^j,g^k\in \Cinfty(M) $, where all $ g^k $ vanish on $ N $. Fix $ i\in \{ 1,\dots,\codim N \} $ and consider the chart $ (U_i,\Phi_i=(\tilde{y},\tilde{x})) $ of the blowup adapted to $ \field P $ (given by $ \tilde{x}_i=0 $). Then on $ U_i\setminus \field P $ we have
			\begin{equation*}\tag{$ \ast $}
			p^{\ast}X=p^\ast f^j \Partial{\tilde{y}^j}+p^\ast g^i\Partial{\tilde{x}^i}+\sum_{k\neq i} \frac{1}{\tilde{x}^i}(p^\ast g^k-p^\ast g^i \tilde{x}^k   )\Partial{\tilde{x}^k}.
			\end{equation*}
			Since for all $ k\in \{1,\dots,\codim N\} $ we have $ g^k\at{N} =0$, $ p^\ast g^k\at{\field P}=0 $ follows. Thus there are functions $ \tilde{g}^k\in \Cinfty(U_i) $ such that $ p^\ast g^k=\tilde{x}^i \tilde{g}^k $. Inserting this in $ (\ast) $ shows that $ p^\ast X\at{U\setminus N} $ extends smoothly to a vector field on $ U_i $, hence $ p^\ast X\at{M\setminus N} $ extends smoothly to $ \field P $.
		\end{proof}
	\end{lemma}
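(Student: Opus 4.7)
The plan is to prove the two directions of the equivalence separately, handle uniqueness first, and then verify the claim about tangency along $\field P$ as a byproduct of the sufficiency construction.

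For \emph{uniqueness}, I would observe that $p$ restricts to a diffeomorphism $\blup(M,N)\setminus \field P\to M\setminus N$, so any $p$-related lift $\tilde X$ is forced to equal $(p|_{\blup(M,N)\setminus \field P})^{-1}_\ast X$ on the open dense subset $\blup(M,N)\setminus \field P$. Since vector fields are continuous, this determines $\tilde X$ globally.

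For the \emph{necessity} of $X\in \Gamma^\infty(TM,TN)$, I would argue pointwise. Using a tubular neighbourhood of $N$ and Remark~\ref{remark:blup_of_zero_section}, one sees that at a point $[v]\in \field P_q$ the image of $T_{[v]}p\colon T_{[v]}\blup(M,N)\to T_qM$ equals $T_qN\oplus \field R\cdot v$ (the horizontal $T_qN$ contribution together with the tautological line). If $\tilde X$ is $p$-related to $X$, then $X_q$ must lie in $\image(T_{[v]}p)$ for every line $[v]\subset \nu_q$, and taking the intersection over all such lines yields $X_q\in T_qN$. Thus $X$ is tangent to $N$.

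For the \emph{sufficiency}, I would do the computation in the blowup charts of Remark~\ref{remark:charts_for_blup}. Take an adapted chart $(U,(y,x))$ with $N\cap U=\{x=0\}$, write
\begin{equation*}
X|_U=f^j\,\Partial{y^j}+g^k\,\Partial{x^k},
\end{equation*}
where $g^k|_N=0$ by the tangency assumption. In the blowup chart $(U_i,(\tilde y,\tilde x))$ one has $x_i=\tilde x^i$ and $x_k=\tilde x^i\tilde x^k$ for $k\neq i$, so a direct change-of-variables calculation expresses the unique lift of $X$ on $U_i\setminus \field P$ as a sum of terms, each of which involves either $p^\ast f^j$ (manifestly smooth) or a quotient $(p^\ast g^k-p^\ast g^i\tilde x^k)/\tilde x^i$. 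Since $g^k$ vanishes on $N=\{x=0\}$, Hadamard's lemma (or a direct Taylor argument) shows that $p^\ast g^k=\tilde x^i\tilde g^k$ for some smooth $\tilde g^k\in \Cinfty(U_i)$, so the apparent singularity along $\{\tilde x^i=0\}$ is removable. Hence the lift extends to a smooth vector field $\tilde X$ on $U_i$, and gluing over the cover $\{U_i\}$ yields the required global lift.

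Finally, the \emph{tangency to $\field P$} is read off the same coordinate formula: on $U_i$ the hypersurface $\field P$ is cut out by $\tilde x^i=0$, and the coefficient of $\Partial{\tilde x^i}$ in the extended lift is exactly $p^\ast g^i$, which vanishes on $\field P$ because $g^i|_N=0$. The only real subtlety is the bookkeeping in the chart formula and invoking smoothness of the $\tilde g^k$; once that is in place, every other piece is routine.
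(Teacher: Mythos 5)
Your proposal is correct and follows essentially the same route as the paper: necessity by computing $\image(T_{[v]}p)=T_{p([v])}N\oplus\field R v$ and intersecting over all lines, and sufficiency by the explicit computation in the blowup charts together with factoring $p^\ast g^k=\tilde x^i\tilde g^k$. Your explicit handling of uniqueness and of tangency to $\field P$ via the coordinate formula only makes precise what the paper leaves implicit.
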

	
	\begin{corollary}\label{corollary:lift_of_Evf_is_Evf}
		Let $ E\to N $ be a vector bundle with Euler vector field $ \xi\in \vfield(E) $. Then the lift of $ \xi $ is the Euler vector field of the line bundle $ \field L=\blup(E,N)\to \field P $.
		\begin{proof}
			Lemma~\ref{lemma:lifting_vf_to_the_blowup} implies that $ \xi $ lifts to the blowup. Since the blow-down map is a vector bundle morphism and a diffeomorphism on $ E\setminus N $, we can compare the flows of the vector fields there to see that the lift is indeed the Euler vector field.
		\end{proof}
	\end{corollary}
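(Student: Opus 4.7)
The plan is to invoke the uniqueness of lifts from Lemma~\ref{lemma:lifting_vf_to_the_blowup} and then produce an independent, geometrically obvious candidate for the lift, namely the Euler vector field of $\field L$ itself.

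First, I would observe that the Euler vector field $\xi\in\vfield(E)$ vanishes along the zero section $N\subseteq E$ (its flow $\phi_t(v)=e^tv$ fixes $N$), so in particular $\xi\in \Gamma^\infty(TE,TN)$. Applying Lemma~\ref{lemma:lifting_vf_to_the_blowup}, $\xi$ therefore admits a unique lift $\tilde\xi\in\vfield(\blup(E,N))$ with $\tilde\xi\sim_{p}\xi$, and this lift is tangent to $\field P$.

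Next, let $\xi_{\field L}\in\vfield(\field L)$ be the Euler vector field of the line bundle $\field L=\blup(E,N)\to \field P$ described in Remark~\ref{remark:blup_of_zero_section}. Its flow is fibrewise scalar multiplication $\psi_t(v')=e^tv'$ on $\field L$. By the description of the blow-down map in Remark~\ref{remark:blup_of_zero_section}, for $v'\in \field L_{[v]}$ one has $p(v')=v'$ viewed as an element of $E_{p([v])}$, so that $p$ is fibrewise $\field R$-linear on $\field L$. In particular $p\circ\psi_t=\phi_t\circ p$, where $\phi_t$ denotes the flow of $\xi$. Differentiating at $t=0$ yields $\xi_{\field L}\sim_{p}\xi$, so $\xi_{\field L}$ is also a lift of $\xi$ in the sense of Lemma~\ref{lemma:lifting_vf_to_the_blowup}.

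Finally, uniqueness of the lift gives $\tilde\xi=\xi_{\field L}$, which is the claim. There is essentially no difficult step: the only point requiring a small verification is that $p$ genuinely intertwines the Euler flows, and this is immediate from the set-theoretic description $\field L_{[v]}=\field R v\subseteq E$ together with the fact that the blow-down map is the tautological inclusion on each fibre. If one preferred to avoid flows altogether, one could instead check the identification in local charts using Remark~\ref{remark:charts_for_blup}: in the chart $U_i$ with coordinates $(\tilde y,\tilde x)$, where $\tilde x^i$ is the fibre coordinate of $\field L$ along the line $[\Partial{x^i}]$ and the remaining $\tilde x^k$ are projective coordinates on $\field P$, the formula $(\ast)$ in the proof of Lemma~\ref{lemma:lifting_vf_to_the_blowup} specialised to $\xi=x^k\Partial{x^k}$ reduces to $p^\ast\xi=\tilde x^i\Partial{\tilde x^i}$, which is manifestly the Euler vector field of the line bundle $\field L\to \field P$ in this chart.
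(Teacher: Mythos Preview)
Your proposal is correct and follows essentially the same approach as the paper: both invoke Lemma~\ref{lemma:lifting_vf_to_the_blowup} for existence and uniqueness of the lift, and then use that the blow-down map is a vector bundle morphism to see that it intertwines the two Euler flows, hence the Euler vector field of $\field L$ is $p$-related to $\xi$. Your write-up is simply more explicit (spelling out $p\circ\psi_t=\phi_t\circ p$ and adding the optional coordinate check via $(\ast)$), but the underlying argument is the same.
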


	\subsection{Blowups of Lie algebroids}

	Starting from a Lie algebroid $ A\laover M $ and a Lie subalgebroid $ B\laover N $, the blowup $ \blup(A,B) $ according to Definition~\ref{def:RPblup_univ_property} will in general not carry a natural structure of a Lie algebroid. In fact, if $ \rank(B)<\rank (A) $, by Remark~\ref{remark:maps_between_blups_are_complicated} already the vector bundle projection $ \pi\colon A\to M $ of $ A $ will not lift to the blowup. Instead, one has to consider $ \blup_\pi (A,B) $. 
	
	We start by blowing up vector subbundles, then anchored subbundles, and finally Lie subalgebroids. Even though each construction will bring its own universal property, we only state it for the blowup of Lie algebroids.

	\begin{lemma}\label{lemma:sections_of_blup}
		Let $\pi\colon (A,B)\to (M,N) $ be a pair of vector bundles. Then $ \blup(\pi)\colon \blup_\pi(A,B)\to \blup(M,N) $ is a vector bundle with sections given by the $ \Cinfty(\blup(M,N)) $-span of
		\begin{equation}
		\{ \blup(s)\colon \blup(M,N)\to \blup_\pi(A,B) \colon s\in \Gamma^\infty(A,B) \},
		\end{equation}
		considering $ s\in \Gamma^\infty(A,B) $ as a map of pairs $ (M,N)\to (A,B) $.
		\begin{proof}
			See~\cite[Section 5.4.2]{obster:2021a}. Note that $ \blup(s)\colon \blup_s(M,N)\to\blup(A,B) $ is actually defined on all of $ \blup(M,N) $ and maps into $ \blup_\pi(A,B) $ since $ s $ is a section.
		\end{proof}
	\end{lemma}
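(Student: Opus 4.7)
The plan is to reduce both claims to an explicit computation in an adapted local chart, then assemble via the universal property of the blowup and partitions of unity. I would start by picking, around each point of $N$, a submanifold chart $(U,(x,y))$ of $(M,N)$ with $U\cap N=\{x=0\}$ together with a trivialisation $A|_U\simeq U\times \field R^r$ adapted to $B$, meaning that $B|_{U\cap N}$ corresponds to $(U\cap N)\times \field R^s$ for a coordinate subspace $\field R^s\subseteq \field R^r$. Such adapted trivialisations exist for any pair of vector bundles. In this product model $\pi$ depends only on the base coordinates, so applying the charts of Remark~\ref{remark:charts_for_blup} to both $\blup_\pi(A|_U,B|_{U\cap N})$ and $\blup(U,U\cap N)$ shows that the $i$-th chart of $\blup_\pi$ is a product $U_i\times \field R^r$ on which $\blup(\pi)$ becomes the obvious projection. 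This establishes local triviality of rank $r$ and identifies the fibre over $[v]\in\field P$ with $A_{p([v])}$.

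For the section description, I would first check that for every $s\in \Gamma^\infty(A,B)$ the map $s\colon (M,N)\to (A,B)$ satisfies the hypotheses of the universal property in Definition~\ref{def:RPblup_univ_property}: it is a map of pairs because $s(N)\subseteq B$, and the normal derivative $\D^N s\colon \nu(M,N)\to \nu(A,B)$ is fibrewise injective because $\pi\circ s=\id_M$ implies $\D^N\pi\circ \D^N s=\id$ on $\nu(M,N)$. Hence $s$ lifts uniquely to a smooth section $\blup(s)$ of $\blup(\pi)$. In the local model above, a generating family for $\Gamma^\infty(A|_U,B|_{U\cap N})$ is given by $e_1,\dots,e_s$ together with $x_\ell\, e_i$ for $i\in\{s+1,\dots,r\}$ and $\ell\in\{1,\dots,\codim N\}$, the latter lying in the subspace because they vanish on $N$. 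In the chart $U_j$ of $\blup(U,U\cap N)$, where $\tilde{x}_j=x_j$ is the defining function of $\field P$ and the quotients $x_\ell/x_j$ are smooth coordinates, the lifts $\blup(e_1),\dots,\blup(e_s)$ together with $\tilde{x}_j^{-1}\blup(x_j\, e_{s+1}),\dots,\tilde{x}_j^{-1}\blup(x_j\, e_r)$ are well-defined smooth sections of $\blup_\pi(A|_U,B|_{U\cap N})$ and, via the product description of the previous paragraph, a local frame. The global $\Cinfty(\blup(M,N))$-span statement then follows by a partition of unity subordinate to the charts $\{U_j\}$ and the open subset $\blup(M,N)\setminus \field P=M\setminus N$ where $p$ is a diffeomorphism.

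The main technical obstacle is the last step, namely verifying that $\tilde{x}_j^{-1}\blup(x_j\, e_i)$ extends smoothly across $\field P$ to a nowhere-vanishing section. This reflects the fact that passing from $\blup$ to $\blup_\pi$ absorbs exactly one power of vanishing in the fibre directions transverse to $B$, precisely enough to turn the naively zero-on-$\field P$ lift $\blup(x_j\, e_i)$ into a frame member after dividing by $\tilde{x}_j$. The explicit chart computation of the first paragraph exhibits this absorption directly, so once that computation is in place the frame in the local model and hence the $\Cinfty(\blup(M,N))$-span description drop out.
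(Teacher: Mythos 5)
Your overall strategy (adapted charts for local triviality, the universal property to define $\blup(s)$, a local generating family, and a partition of unity) is reasonable and more self-contained than the paper's proof, which simply cites Obster's thesis and only remarks why $\blup(s)$ is defined on all of $\blup(M,N)$ and lands in $\blup_\pi(A,B)$. However, there is a genuine error precisely at the step you single out as the technical core, namely the local frame. In the chart $U_j$ the bundle $\blup_\pi(A,B)$ has fibre coordinates $(u,v)$ with $v=w/x_j$, where $(u,w)$ are fibre coordinates of $A$ adapted to $B$; the blow-down map sends a point with fibre coordinates $(u,v)$ lying over a base point with $\tilde{x}_j$-coordinate $\tilde{x}_j$ to the point of $A$ with fibre coordinates $(u,\tilde{x}_jv)$. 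Hence for $i>s$ the lift $\blup(x_j e_i)$ is the \emph{constant} section with coordinates $(0,\delta_i)$: it is already smooth and nowhere vanishing along $\field P$, because the division by $x_j$ is built into the bundle $\blup_\pi(A,B)$ itself (this is the same mechanism by which $x\partial_x$ is a frame member of the b-tangent bundle). Your sections $\tilde{x}_j^{-1}\blup(x_je_i)$ therefore do \emph{not} extend across $\field P$ (their $v$-coordinate is $\delta_i/\tilde{x}_j$), and the "absorption of one power of vanishing" you invoke would happen twice. The correct frame is $\{\blup(e_1),\dots,\blup(e_s),\blup(x_je_{s+1}),\dots,\blup(x_je_r)\}$, exactly as recorded in Remark~\ref{remark:local_frames_for_blups}. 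The error is not merely notational: if your frame were correct, the $\Cinfty(\blup(M,N))$-span of the lifts would consist only of sections whose transverse components vanish on $\field P$, i.e.\ the span statement you are trying to prove would be false, so your final partition-of-unity step cannot be run from your local description. Once the frame is corrected the argument does close, since every frame member is itself of the form $\blup(\sigma)$ with $\sigma\in\Gamma^\infty(A,B)$.

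Two smaller points. First, the identification of the fibre of $\blup_\pi(A,B)$ over $[v]\in\field P$ with $A_{p([v])}$ is only chart-dependent: the blow-down map restricted to such a fibre kills the transverse directions and maps onto $B_{p([v])}$, so no canonical identification with $A_{p([v])}$ exists. Second, your chart argument should state explicitly that $\blup_\pi(A,B)$ is covered by the charts indexed by the base-transverse coordinates $x_1,\dots,x_{\codim N}$ alone; the charts of the full manifold blowup $\blup(A,B)$ indexed by fibre directions cover exactly the locus removed when passing from $\blup$ to $\blup_\pi$, and this is what makes the product (hence rank $r$ vector bundle) description of $\blup(\pi)$ valid over all of $p^{-1}(U)$.
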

	
	From Lemma \ref{lemma:sections_of_blup} we can immediately write down local frames for the blowup.
	
	\begin{remark}[Local frames for $ \blup_\pi(A,B) $]\label{remark:local_frames_for_blups}
		Let $ B\to N $ be a vector subbundle of corank $ k $ of $ \pi\colon A\to M $. Let $ (U,(x,y)) $ be a submanifold chart of $ N $ in $ M $, i.e.\ $ U\cap N=\{ x=0 \} $. Moreover, let $ \{ e_1,\dots,e_k,f_1,\dots,f_{\rank B} \} $ be a local frame of $ A_U $ adapted to $ B $, meaning $ \{ f_1\at{N},\dots f_{\rank B}\at{N} \} $ is a local frame for $ B_{U\cap N} $. Then the collection 
		\begin{equation}
		\{ \blup(x_ie_1),\dots,\blup(x_ie_k),\blup(f_1),\dots,\blup(f_{\rank B}) \}
		\end{equation}
		yields a local frame for $ \blup_\pi(A,B)_{U_i} $.
	\end{remark}
	
	\begin{lemma}\label{lemma:vb_blup_in_two_stages}
		Let $ \pi\colon (A,B)\to (M,N) $ be a pair of vector bundles and let $ p\colon \blup(M,N)\to M $ denote the blow-down map of the base.
		\begin{enumerate}
			\item As vector bundles, $ \blup_\pi(A,A_N)=p^\sharp A $. Under this identification we have $ p_A=p^\sharp $.
			\item As vector bundles, $ \blup_{\pi_{p^\sharp A}}( p^\sharp A,p^\sharp B )=\blup(A,B) $.
		\end{enumerate}
		\begin{proof}
			Both statements follow from Remark~\ref{remark:local_frames_for_blups} using their respective local frames.
		\end{proof}
	\end{lemma}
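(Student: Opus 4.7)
The plan is to prove both identifications by exhibiting local frames on the two sides that coincide term by term, and then arguing that these local identifications glue into global vector bundle isomorphisms. Since local frames determine a vector bundle up to isomorphism, this reduces the lemma to two bookkeeping arguments based on Remark~\ref{remark:local_frames_for_blups} together with Remark~\ref{remark:charts_for_blup}.

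For the first statement, I would start from a submanifold chart $(U,(x,y))$ of $N$ in $M$ and an arbitrary local frame $\{f_1,\ldots,f_{\rank A}\}$ of $A_U$. Because $A_N$ is the full restriction of $A$ to $N$, its corank inside $A$ is zero, so Remark~\ref{remark:local_frames_for_blups} produces the local frame $\{\blup(f_1),\ldots,\blup(f_{\rank A})\}$ for $\blup_\pi(A,A_N)$ over $p^{-1}(U)$. Each section $\blup(f_j)$ is just the unique lift of $f_j\colon M\to A$ under the blowup functor, and hence is canonically identified with the pullback section $p^\sharp f_j\in\Gamma^\infty(p^\sharp A)$. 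These two local trivializations share the same transition functions under changes of frame on $A$, and the vector bundle map $p_A$ sends $\blup(f_j)(\xi)$ to $f_j(p(\xi))$, which is exactly $p^\sharp$. Thus the local identifications glue to $\blup_\pi(A,A_N)=p^\sharp A$ with $p_A=p^\sharp$.

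For the second statement, fix a submanifold chart $(U,(x,y))$ of $N$ in $M$ together with a local frame $\{e_1,\ldots,e_k,f_1,\ldots,f_{\rank B}\}$ of $A_U$ adapted to $B$. Cover $p^{-1}(U)$ by the charts $(U_i,\Phi_i=(\tilde y,\tilde x))$ of $\blup(M,N)$ from Remark~\ref{remark:charts_for_blup}, so that $\field P\cap U_i=\{\tilde x_i=0\}$ is a hypersurface and $\tilde x_i=x_i\circ p$ on $U_i$. By part~(1), the frame $\{p^\sharp e_\ell,p^\sharp f_m\}$ trivializes $p^\sharp A$ over $U_i$ and is adapted to the subbundle $p^\sharp B$ over $\field P\cap U_i$, which has corank $k$. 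Applying Remark~\ref{remark:local_frames_for_blups} to $(p^\sharp A,p^\sharp B)$ (using that $\codim\field P=1$, so the blow-down of $\blup(M,N)$ along $\field P$ is the identity) yields the local frame
\begin{equation*}
\{\blup(\tilde x_i\cdot p^\sharp e_1),\ldots,\blup(\tilde x_i\cdot p^\sharp e_k),\blup(p^\sharp f_1),\ldots,\blup(p^\sharp f_{\rank B})\}
\end{equation*}
for $\blup_{\pi_{p^\sharp A}}(p^\sharp A,p^\sharp B)$. Remark~\ref{remark:local_frames_for_blups} applied directly to the original pair $(A,B)$ produces the local frame
\begin{equation*}
\{\blup(x_ie_1),\ldots,\blup(x_ie_k),\blup(f_1),\ldots,\blup(f_{\rank B})\}
\end{equation*}
for $\blup(A,B)$ over the same open set. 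The identities $\tilde x_i=x_i\circ p$ and $p^\sharp e_\ell=\blup(e_\ell)$ (via part~(1)) make these two frames correspond term by term, providing the desired identification.

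The main obstacle, if any, is verifying that the term-by-term identification of local frames is independent of the chosen chart and adapted frame, so that the local isomorphisms of vector bundles patch into a global one. This reduces to observing that a change of frame on $A$ or a change of submanifold chart $(U,(x,y))$ induces the same transition functions on both sides — a direct consequence of the functoriality of $\blup$ and $p^\sharp$ on sections. I expect no substantive difficulty beyond this standard naturality check.
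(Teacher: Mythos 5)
Your proposal is correct and follows essentially the same route as the paper, whose proof simply invokes Remark~\ref{remark:local_frames_for_blups} and compares the respective local frames; you have merely spelled out the term-by-term frame correspondence and the naturality/gluing check that the paper leaves implicit.
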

	
	Given a map of pairs of vector bundles $ \Phi\colon (A,B)\to (E,F) $ over a base map $ \phi\colon (M,N)\to (X,Y) $ we obtain an induced map of vector bundles
	\begin{equation}
	\blup(\Phi)\colon \blup_{\pi_A}(A,B)_{\blup_\phi(M,N)}\to\blup_{\pi_E}(E,F).
	\end{equation}
	Next, we can include an anchor map into the construction, i.e.\ a vector bundle morphism $ \rho\colon A\to TM $ over the identity.
	
	\begin{lemma}\label{lemma:blup_of_anchored_vb}
		Let $ (\pi\colon A\to M,\rho) $ be an anchored vector bundle and $ B\to N $ a subbundle with $ \rho(B)\subseteq TN $. Then there is an induced anchor on $ \blup_\pi(A,B) $ which on blowup sections is given by
		\begin{equation}
		\rho_{\blup}(\blup(s))=\widetilde{\rho(s)},
		\end{equation}
		where $ s\in \Gamma^\infty(A,B) $ and $ \widetilde{\rho(s)} $ is the extension from Lemma~\ref{lemma:lifting_vf_to_the_blowup}. In particular, the blow-down map is a morphism of anchored vector bundles.
	\end{lemma}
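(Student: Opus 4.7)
The goal is to construct a vector bundle morphism $\rho_\blup\colon \blup_\pi(A,B)\to T\blup(M,N)$ satisfying the stated formula on generating sections, and to verify the compatibility $Tp\circ \rho_\blup=\rho\circ p_A$. The strategy is to first define $\rho_\blup$ on a generating set of sections using the lift from Lemma~\ref{lemma:lifting_vf_to_the_blowup}, then show that the resulting $C^\infty(\blup(M,N))$-linear extension is a well-defined vector bundle morphism by comparing it with the natural pullback on the dense open subset $\blup(M,N)\setminus \field P$, where $p_A$ is a diffeomorphism.

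The key observation is the following: for any $s\in \Gamma^\infty(A,B)$, the vector field $\rho(s)\in \vfield(M)$ is tangent to $N$. Indeed, at a point $q\in N$ we have $s(q)\in B_q$, hence $\rho(s)(q)=\rho(s(q))\in \rho(B_q)\subseteq T_qN$ by the hypothesis $\rho(B)\subseteq TN$. Consequently Lemma~\ref{lemma:lifting_vf_to_the_blowup} applies and yields a unique lift $\widetilde{\rho(s)}\in \vfield(\blup(M,N))$. I therefore set $\rho_\blup(\blup(s)):=\widetilde{\rho(s)}$ on the generators of $\Gamma^\infty(\blup_\pi(A,B))$ provided by Lemma~\ref{lemma:sections_of_blup}, and extend $C^\infty(\blup(M,N))$-linearly. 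On the open subset $\blup(M,N)\setminus \field P$, where $p_A$ identifies $\blup_\pi(A,B)$ with the pullback bundle $p^\sharp A$, this definition simply reproduces $Tp^{-1}\circ \rho\circ p_A$, since by construction $Tp\circ \widetilde{\rho(s)}=\rho(s)\circ p$.

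The main subtle point is that $\Gamma^\infty(\blup_\pi(A,B))$ is not freely generated by the $\blup(s)$, so one has to justify that the $C^\infty(\blup(M,N))$-linear extension is actually well-defined as a vector bundle morphism. For this I use the local frames of Remark~\ref{remark:local_frames_for_blups}. Starting from a submanifold chart $(U,(x,y))$ and a local frame of $A_U$ adapted to $B$, both types of generators $\blup(x_ie_l)$ and $\blup(f_j)$ come from sections in $\Gamma^\infty(A,B)$: the sections $x_ie_l$ vanish on $N$ and hence trivially restrict to $B$, while the $f_j$ are chosen so that $f_j|_N\in \Gamma^\infty(B)$. Defining $\rho_\blup$ on these frame elements as the corresponding smooth lifts produces a smooth anchor locally; on overlaps the two definitions agree on the open dense subset $\blup(M,N)\setminus \field P$, hence everywhere by continuity. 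The intertwining $Tp\circ \rho_\blup=\rho\circ p_A$ holds on generators by construction of the lift, and extends by $C^\infty$-linearity, showing that $p_A$ is a morphism of anchored vector bundles. The hardest step is precisely this well-definedness check, but it reduces to the two unambiguous pieces of data: the lifting property of Lemma~\ref{lemma:lifting_vf_to_the_blowup}, and the uniqueness of smooth extensions from a dense open subset.
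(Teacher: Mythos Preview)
Your proof is correct and follows precisely the approach implicit in the lemma statement itself: define $\rho_\blup$ on the generating sections $\blup(s)$ via the lift of Lemma~\ref{lemma:lifting_vf_to_the_blowup}, and check well-definedness using local frames together with the density of $\blup(M,N)\setminus\field P$. The paper does not spell out these details; instead, immediately after the lemma it records an alternative construction, namely blowing up the anchor as a map of pairs $\rho\colon (A,B)\to (TM,TN)$ and composing with the canonical map $\blup_{\pi_{TM}}(TM,TN)\to T\blup(M,N)$, which gives a more functorial description but requires the identification of the latter map from \cite[Proposition 5.58]{obster:2021a}.
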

	
	Alternatively, one can blowup the map of pairs $ \rho\colon (A,B)\to (TM,TN) $ and consider the concatenation \begin{equation}\label{eq:definition_of_anchor_of_blups}
	\rho_{\blup}\colon \blup_\pi(A,B)\to \blup_{\pi_{TM}}(TM,TN)\to T\blup(M,N),
	\end{equation}
	where the second map is given by the identity in fibres over $ M\setminus N $ and by the differential of the projection $ \nu(M,N)\setminus 0\to \field P(M,N) $ over the projective bundle ~\cite[Proposition 5.58]{obster:2021a}. Lastly, the universal property of the blowup of Lie algebroids is the following.
	
	\begin{definition}
		Let $ (A,B)\laover (M,N) $ be a pair of Lie algebroids. Then the \textup{blowup of $ B $ in $ A $} is a pair of Lie algebroids $ (\tilde{A},\tilde{B}) $ together with a morphism $ p_A\colon (\tilde{A},\tilde{B})\to (A,B) $ of pairs of Lie algebroids such that
		\begin{enumerate}
			\item $ \tilde{B} $ is a full rank Lie subalgebroid over a base of codimension $ 1 $,
			\item $ p_A^{-1}(B)=\tilde{B} $ and the normal derivative $ \D^Np_A\colon \nu(\tilde{A},\tilde{B})\to  \nu(A,B) $ is fiberwisely injective,
			\item it satisfies the following universal property: If $ (E,F) $ is another pair of Lie algebroids and $ \phi\colon (E,F)\to (A,B) $ a morphism of pairs of Lie algebroids such that the first two conditions are satisfied, there exists a unique morphism of pairs of Lie algebroids $ \Phi\colon (E,F)\to (\tilde{A},\tilde{B}) $ such that $ \phi =p_A\circ \Phi $.
		\end{enumerate}
	\end{definition}
	
	One can obtain an explicit model for the blowup of Lie algebroids by equipping $ \blup_\pi(A,B) $ with the anchor from Lemma~\ref{lemma:blup_of_anchored_vb} and bracket on blowup sections given by
	\begin{equation}\label{eq:def_bracket_on_blowup}
	[\blup(a),\blup(b)]=\blup([a,b])
	\end{equation}
	for $ a,b\in \Gamma^\infty(A,B) $. Together with the Leibniz rule, \eqref{eq:def_bracket_on_blowup} defines the bracket uniquely as these sections generate $ \Gamma^\infty(\blup_\pi(A,B)) $. Then the blow-down map $ p_A\colon \blup_\pi(A,B)\to A $ becomes a Lie algebroid morphism over $ \blowdown{p}{M}{N} $. Proving that this construction satisfies the universal property is analog to the case of blowups of manifolds, for which it is given in \cite[Proposition 5.30]{obster:2021a}.
	
	\begin{example}[Elementary modifications]
		Elementary modifications \cite{klaasse:2017a, gualtieri.li:2014a, lanius:2021a} can be expressed in terms of blowups. If $ N\subseteq M $ has $ \codim(N)=1 $, the blow-down map is a diffeomorphism. In particular, when blowing up the base manifold does not change, and in this case the sections of the Lie algebroid blowup are precisely given by 
		\begin{equation}
		\Gamma^\infty(\blup_\pi(A,B))=\Gamma^\infty(A,B)
		\end{equation}
		by Lemma~\ref{lemma:sections_of_blup}. Examples of these modifications are
		\begin{enumerate}
			\item the log-tangent bundle $ T^b_NM=\blup_{\pi_{TM}}(TM,TN) $,
			\item the scattering tangent bundle $ \blup_{\pi_{T^b_NM}}(T^b_NM,0_N) $.
		\end{enumerate}
	\end{example}

	\section{The blow-down map in cohomology}\label{section:blow_down_map_in_cohomology}

Since for a pair of Lie algebroids $(A,B)\laover (M,N)$ the only interesting blowup to consider is $ \blup_\pi(A,B) $, we drop the additional subscript $ \pi $ from now on. 

The blow-down map $ p_A\colon \blup_\pi(A,B)\to \blup(M,N) $ is a morphism of Lie algebroids, hence induces a map in Lie algebroid cohomology
	\begin{equation}
	p_A^\ast\colon \Homology{A}{\bullet}{}\to \Homology{\blup_\pi(A,B)}{\bullet}{}.
	\end{equation}

 In this section we study the blow-down map in cohomology by first considering the pullback by the blow-down map 
 \begin{equation}\label{eq:pullback_by_pA}
     p_A^\ast\colon \Omega^\bullet(A)\to\Omega^\bullet(\blup(A,B))
 \end{equation}
 on the level of forms. For a pair of Lie algebroids $ (A,B)\laover (M,N) $ the pullback \eqref{eq:pullback_by_pA} will not be surjective unless both $ N\subseteq M $ is of codimension $ 1 $ and $ B=A_N $. However, if we restrict to forms that are flat along $ N $ and $ \field P=p^{-1}(N) $, respectively, $ p_A^\ast $ becomes an isomorphism. Here we mean flatness in the following sense:
	
	\begin{definition}
		Let $ E\to M $ be a vector bundle and $ N\subseteq M $ a submanifold. Let  $ \vanishing_N $ denote the vanishing ideal of $ N $.
		\begin{enumerate}
			\item The space of \textup{flat sections along $ N $} is 
			\begin{equation}
			\Gamma^\infty_N(E)=\bigcap_{k\in \field N} \vanishing_N^k\Gamma^\infty(E).
			\end{equation}
			\item The \textup{$ \infty $-jets of sections along $ N $} are
			\begin{equation}
			\mathscr{J}_N^\infty\Gamma^\infty(E)=\frac{\Gamma^\infty(E)}{\Gamma^\infty_N(E)}.
			\end{equation}
		\end{enumerate}
	\end{definition}
	
	With the notation fixed we can formulate the important result of this section, which shows that the blow-down map in cohomology only depends on local data around the blown-up manifold.
	
	\begin{theorem}\label{theorem:flatiso_and_consequences}
		Let $ (A,B)\laover (M,N) $ be a pair of Lie algebroids.
		\begin{enumerate}
			\item\label{theorem:flatiso_and_consequences:item:flatiso} The induced map on flat forms
			\begin{equation}
			p_A^\ast\colon \Omega_N^\bullet(A)\to \Omega_{\field P}^\bullet(\blup(A,B))
			\end{equation}
			is an isomorphism of cochain complexes.
			\item\label{theorem:flatiso_and_consequences:item:consequences} By part \ref{theorem:flatiso_and_consequences:item:flatiso} we obtain an isomorphism of cochain complexes 
			\begin{equation}
			\frac{\Omega^\bullet(\blup(A,B))}{p_A^\ast\Omega^\bullet(A)}\cong \frac{\mathscr{J}^\infty_{\field P}\Omega^\bullet(\blup(A,B))}{p_A^\ast \mathscr{J}^\infty_N\Omega^\bullet(A)}.
			\end{equation}
			In particular, in the long exact sequence
			\begin{equation}\label{eq:les_for_blup}
			\begin{tikzpicture}[baseline=(current bounding box.center)]
			\node (1) at (0,0) {$ \dots $};
			\node (2) at (1.8,0) {$ \Homology{A}{\bullet}{} $};
			\node (3) at (4.6,0) {$ \Homology{\blup(A,B)}{\bullet}{} $};
			\node (4) at (8,0) {$ \homology^\bullet\left(\frac{\Omega^\bullet(\blup(A,B))}{p_A^\ast\Omega^\bullet(A)}\right)$};
			\node (5) at (10.9,0) {$ \Homology{A}{\bullet+1}{} $};
			\node (6) at (12.8,0) {$ \dots $};
			
			\path[->]
			(1) edge node[]{$  $} (2)
			(2) edge node[above]{$ p_A $} (3)
			(3) edge node[above]{$ f $} (4)
			(4) edge node[above]{$  $} (5)
			(5) edge node[]{$  $} (6);
			
			\end{tikzpicture}
			\end{equation}
			the map $ f $ only depends on local data around $ N $ and $ \field P $, e.g.\ if $ \iota\colon U\hookrightarrow \blup(M,N) $ is an open neighbourhood of $ \field P $ then $ f = f_U\circ\homology(\iota^\ast) $. Here, $ \homology(\iota^\ast)\colon \homology^\bullet(\blup(A,B))\to \homology^\bullet(\blup(A,B)_U) $ denotes the map induced in cohomology by the inclusion and $ f_U $ is the map corresponding to $ f $ in the long exact sequence~\eqref{eq:les_for_blup} for $ A_{p(U)} $. 
		\end{enumerate}
	\end{theorem}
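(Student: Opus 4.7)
The plan is to prove part \ref{theorem:flatiso_and_consequences:item:flatiso} by a direct local coordinate calculation, and then deduce part \ref{theorem:flatiso_and_consequences:item:consequences} from it by combining the third isomorphism theorem with the elementary locality of $\infty$-jets along a submanifold.

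For part \ref{theorem:flatiso_and_consequences:item:flatiso}, the easy checks come first. Since $p_A$ is a morphism of Lie algebroids, $p_A^\ast$ is a chain map, and since $p^{-1}(N)=\field P$ one has $p^\ast\vanishing_N\subseteq\vanishing_{\field P}$, so $p_A^\ast$ really sends $\Omega_N^\bullet(A)$ into $\Omega_{\field P}^\bullet(\blup(A,B))$. Injectivity is immediate: the blow-down map is an isomorphism of Lie algebroids over $M\setminus N$, hence $p_A^\ast\omega=0$ forces $\omega$ to vanish on the dense open set $M\setminus N$, so $\omega=0$.

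The substance of part \ref{theorem:flatiso_and_consequences:item:flatiso} is surjectivity. I would work in a submanifold chart $(U,(y,x))$ of $N$ in $M$ with $U\cap N=\{x=0\}$ and a local frame $\{e_1,\dots,e_k,f_1,\dots,f_r\}$ of $A\at U$ adapted to $B$, with dual coframe $\{\alpha^j,\beta^l\}$. In a chart $(U_i,(\tilde y,\tilde x))$ from Remark \ref{remark:charts_for_blup} (where $\field P\cap U_i=\{\tilde x_i=0\}$, $p^\ast x_i=\tilde x_i$, and $p^\ast x_j=\tilde x_j\tilde x_i$ for $j\neq i$), Remark \ref{remark:local_frames_for_blups} supplies a frame of $\blup(A,B)$ whose dual coframe $\{\tilde\alpha^j,\tilde\beta^l\}$ satisfies $p_A^\ast\alpha^j=\tilde x_i\tilde\alpha^j$ and $p_A^\ast\beta^l=\tilde\beta^l$. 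Expanding a form $\tilde\omega\in\Omega_{\field P}^\bullet(\blup(A,B))$ as $\sum\tilde g_{IL}\,\tilde\alpha^I\wedge\tilde\beta^L$ with each $\tilde g_{IL}$ flat along $\{\tilde x_i=0\}$, flatness allows me to divide by $\tilde x_i^{|I|}$ and obtain smooth functions $\tilde h_{IL}$ that are still flat along $\{\tilde x_i=0\}$. The real work is then to show that each $\tilde h_{IL}$ descends to a smooth function $g_{IL}$ on $U$ flat along $N$. Away from $N$ one sets $g_{IL}:=\tilde h_{IL}\circ p^{-1}$ using the diffeomorphism $p\at{\blup(M,N)\setminus\field P}$, and the task reduces to extending it smoothly and flat across $N$. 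I would handle this by exploiting the flatness estimate $|\tilde h_{IL}|\leq C_{K,N}|\tilde x_i|^N$ on compacta, applied at a point $x$ near $0$ using the chart $U_i$ for which $|x_i|$ is maximal, so that the ratios $x_j/x_i$ remain bounded by $1$ and all derivatives of $g_{IL}$ decay faster than any polynomial as $x\to 0$. This descent is the principal obstacle of the whole argument; compatibility on overlaps follows because on the dense set $M\setminus N$ the construction is nothing but $p^{-1}$, which is chart-independent.

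For part \ref{theorem:flatiso_and_consequences:item:consequences}, once the equality $p_A^\ast\Omega_N^\bullet(A)=\Omega_{\field P}^\bullet(\blup(A,B))$ from part \ref{theorem:flatiso_and_consequences:item:flatiso} is in hand, the third isomorphism theorem applied to the chain of subcomplexes $\Omega_{\field P}^\bullet(\blup(A,B))\subseteq p_A^\ast\Omega^\bullet(A)\subseteq \Omega^\bullet(\blup(A,B))$ immediately yields
\[
\frac{\Omega^\bullet(\blup(A,B))}{p_A^\ast\Omega^\bullet(A)}\cong \frac{\mathscr{J}^\infty_{\field P}\Omega^\bullet(\blup(A,B))}{p_A^\ast\mathscr{J}^\infty_N\Omega^\bullet(A)}.
\]
Locality of $f$ then falls out: if $\iota\colon U\hookrightarrow \blup(M,N)$ is an open neighbourhood of $\field P$, restriction to $U$ (and correspondingly to $p(U)$) induces isomorphisms on $\infty$-jets along $\field P$ and $N$ respectively, because the flat ideals $\Omega^\bullet_{\field P}$ and $\Omega^\bullet_N$ see only germs at these submanifolds. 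Since $f$ is by construction the cohomological image of the quotient projection onto $\mathscr{J}^\infty_{\field P}\Omega^\bullet(\blup(A,B))/p_A^\ast\mathscr{J}^\infty_N\Omega^\bullet(A)$, this projection factors through restriction to $U$, which gives exactly the factorisation $f=f_U\circ\homology(\iota^\ast)$.
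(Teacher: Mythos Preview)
Your proposal is correct, and the overall architecture matches the paper's: part~\ref{theorem:flatiso_and_consequences:item:flatiso} is a local computation in blowup charts, and part~\ref{theorem:flatiso_and_consequences:item:consequences} is a diagram-chase consequence. There is, however, a genuine difference in how you handle part~\ref{theorem:flatiso_and_consequences:item:flatiso}.

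The paper does not attack forms of arbitrary degree directly. It first invokes a factorisation result for flat functions (Nagel's theorem) to prove that any flat form can be written as a locally finite sum of wedge products of flat functions and flat $1$-forms (Lemma~\ref{Cohomology_Blup.lemma:flat_generation}). This reduces surjectivity to the cases of degree $0$ and degree $1$, which are then handled separately in Lemma~\ref{lemma:past_is_iso_on_flats}: for functions, the paper writes the pulled-back coordinate vector fields $p^\ast\partial_x$, $p^\ast\partial_y$ as singular fields with $1/\tilde x_i$ poles and shows inductively that applying them to a $\field P$-flat function again yields a $\field P$-flat function, so all derivatives of the descended function extend continuously; for $1$-forms, the local frames of Remark~\ref{remark:local_frames_for_blups} reduce matters back to the function case. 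Your route bypasses Nagel's theorem entirely by expanding a form of arbitrary degree in the blowup coframe and dividing each coefficient by the appropriate power of $\tilde x_i$; your descent of the resulting flat coefficients via the estimate $|\tilde h_{IL}|\leq C_{K,N}|\tilde x_i|^N$ on the bounded region $\{|\tilde x_j|\leq 1\}$ is essentially the analytic counterpart of the paper's inductive chain-rule argument, and both lead to the same conclusion. What the paper's approach buys is a clean separation into a purely algebraic step (Nagel) and a purely scalar step, at the cost of invoking an external result; your approach is more self-contained but requires keeping track of the frame combinatorics and the change of chart index $i$ as $x$ varies.

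For part~\ref{theorem:flatiso_and_consequences:item:consequences}, your third-isomorphism argument and the paper's $3\times 3$-lemma are the same content in different packaging; the locality statement is then deduced identically from the locality of $\infty$-jets along a closed submanifold.
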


	To show the first part of Theorem~\ref{theorem:flatiso_and_consequences} we use that a flat form on $ A $ along $ N $ can be written as a locally finite sum of $ \wedge $-products of flat functions and flat $ 1 $-forms. To show this, we need the following \cite[Theorem 1]{nagel:1973a}.
	
	\begin{lemma}\label{lemma:factorization_of_flat_functions}
		Let $ \Omega\subseteq \field R^n $ be open and $ F\subseteq \Omega $ be closed. Let $ \{f_i\}_{i\in \mathbb{N}}\subseteq \Cinfty_F(\Omega) $ be any countable collection of 
		functions which are flat along $ F $. Then there exist $ \{g_i\}_{i\in \field N}\subseteq \Cinfty_F(\Omega) $ and $ h\in \Cinfty_F(\Omega) $ such that
		\begin{enumerate}
			\item $ h(x)>0  $ if $ x\in \Omega\setminus F $,
			\item $ f_i=hg_i $ for all $ i\in \field N $.
		\end{enumerate}
	\end{lemma}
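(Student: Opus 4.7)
The plan is to construct $h$ of the form $\psi\circ\rho$, where $\rho\colon\Omega\to[0,\infty)$ is a smooth function measuring distance to $F$ and $\psi\colon[0,\infty)\to[0,\infty)$ is a smooth one-variable weight, positive on $(0,\infty)$ and flat at $0$, chosen by diagonalisation to handle all $f_i$ simultaneously. Concretely, I would first fix $\rho$ to be non-negative, smooth, with $\rho^{-1}(0)=F$ and comparable to $d(\cdot,F)$ (e.g.\ a mollified distance, with derivative bounds $|D^\alpha\rho(x)|\leq C_\alpha d(x,F)^{\max(1-|\alpha|,0)}$). Fix also a compact exhaustion $K_1\subset K_2\subset\cdots$ of $\Omega$. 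By flatness, each $f_i$ admits, on each $K_j$, finite constants $C_{i,j,N}$ with
\begin{equation*}
|D^\alpha f_i(x)| \leq C_{i,j,N}\,\rho(x)^N \quad \text{for all } x \in K_j,\ |\alpha|\leq N.
\end{equation*}

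For any smooth $\psi$ as specified, $h:=\psi\circ\rho$ is smooth on $\Omega$, positive on $\Omega\setminus F$, and flat along $F$. What remains is to ensure that each $g_i:=f_i/h$ extends smoothly and flatly across $F$. By the Fa\`a di Bruno formula, on $\Omega\setminus F$ the derivatives of $g_i$ are polynomial combinations of derivatives of $f_i$, derivatives of $\rho$, and the terms $\psi^{(k)}(\rho)/\psi(\rho)^{k+1}$; combining with the bounds on the $f_i$, smoothness and flatness of $g_i$ across $F$ reduce to a countable family of quantitative inequalities expressing that $\psi$ does not vanish at $0$ faster than $f_i$ does in the $\rho$-direction.

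To produce a single $\psi$ meeting all these requirements I would use a diagonal construction, writing $\psi=\sum_n a_n\psi_n$ for flat-at-$0$ functions $\psi_n$ vanishing at progressively slower rates (for instance $\psi_n(t)=\exp(-1/t^{1-1/n})$) and choosing $a_n>0$ small enough that the series converges in every $C^m$-seminorm on $[0,\infty)$. The main obstacle is precisely this calibration of $\psi$: it must vanish to infinite order at $0$ (so that $h$ is flat) yet slowly enough relative to every $f_i$ that the quotients $f_i/h$ remain flat, and the derivatives of $\psi$ must be controlled so that the Fa\`a di Bruno expressions stay bounded on each $K_j$. Balancing these countably many opposing conditions is the delicate core of Nagel's construction; positivity of $h$ off $F$ is automatic since every $\psi_n>0$ on $(0,\infty)$.
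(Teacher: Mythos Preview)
The paper does not prove this lemma; it is simply quoted from \cite[Theorem~1]{nagel:1973a} and used as a black box in the proof of Lemma~\ref{Cohomology_Blup.lemma:flat_generation}. So there is no in-paper argument to compare against.

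Your outline does capture the correct high-level strategy---write $h=\psi\circ\rho$ for a regularised distance $\rho$ and a one-variable weight $\psi$ flat at $0$, then control the quotients via Fa\`a di Bruno---and this is indeed the skeleton of the classical argument. However, what you have written is a sketch rather than a proof, and the one concrete detail you supply is flawed. Your family $\psi_n(t)=\exp(-1/t^{1-1/n})$ is \emph{fixed}, independent of the data; any convergent combination $\psi=\sum a_n\psi_n$ is dominated near $0$ by its slowest-vanishing summand and hence decays at a rate determined in advance. But the $f_i$ are arbitrary flat functions: one of them could vanish like $\exp(-1/\rho(x)^{1/100})$, which is still flat yet slower than every $\psi_n$, and then $f_i/h$ would blow up near $F$ rather than extend flatly. (Incidentally, as written the $\psi_n$ vanish progressively \emph{faster} as $n$ grows, not slower, since the exponent $1-1/n$ increases.) The genuine construction must let the constants $C_{i,j,N}$ coming from the $f_i$ dictate how slowly $\psi$ decays---for instance by choosing a data-dependent sequence $\varepsilon_k\downarrow 0$ and arranging $\psi(t)\geq t^k$ on $[\varepsilon_{k+1},\varepsilon_k]$. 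You correctly flag this calibration as the crux but do not carry it out, so the proposal remains an outline.
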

	
	\begin{lemma}\label{Cohomology_Blup.lemma:flat_generation}
		Let $ E\to M $ be a vector bundle and $ N\subseteq M $ a submanifold. Then any $ \omega\in \Gamma_N^\infty(\Lambda^\bullet E) $ can be written as a locally finite sum of $ \wedge $-products of elements in $ \Cinfty_N(M)\oplus\Gamma^\infty_N(E) $.
		\begin{proof}
			Let $ (U_\alpha,x_\alpha)_\alpha $ be an atlas of submanifold charts with local frames $ \{e_\alpha^1,\dots,e_\alpha^\ell  \} $ of $ E_{U_\alpha} $ and $ \omega\in \Gamma^\infty_N(\Lambda^k E) $ for some $ k>1 $. Let $ \{\chi_\alpha \}_\alpha $ be a partition of unity of order $ k+1 $ subordinate to the open cover $ \{ U_\alpha\}_\alpha $, i.e.\ $ \supp(\chi_\alpha)\subseteq U_\alpha $ for all $ \alpha $ and $ \sum_\alpha \chi_\alpha^{k+1}=1 $. Locally,
			\begin{equation*}
			\omega\at{U_\alpha}=f_{\alpha,j_1,\dots,j_k}e_\alpha^{j_1}\wedge \cdots\wedge e_\alpha^{j_k}
			\end{equation*}
			in the chart $ (U_\alpha,x_\alpha) $. Flatness of $ \omega $ on $ N $ means flatness of all coefficient functions $ f_{\alpha,j_1,\dots,j_k} $ on $ U_\alpha\cap N $. In particular, $ \chi_\alpha f_{\alpha,j_1,\dots,j_k} $ is flat on $ U_\alpha\cap N $ and by applying Lemma~\ref{lemma:factorization_of_flat_functions} $ k $ times we find $ g_{1,\alpha,j_1,\dots,j_k},\dots, g_{k,\alpha,j_1,\dots,j_k}\in \Cinfty_{U_\alpha\cap N}(U_\alpha) $ such that
			\begin{equation*}
			g_{1,\alpha,j_1,\dots,j_k}\cdots g_{k,\alpha,j_1,\dots,j_k}=\chi_\alpha f_{\alpha,j_1,\dots,j_k}.
			\end{equation*}
			Thus we can decompose
			\begin{equation*}
			\begin{aligned}
			\omega
            &=\sum_\alpha\chi_\alpha^{k+1} \omega=\sum_\alpha\chi_\alpha^k \chi_\alpha f_{\alpha,j_1,\dots,j_k}e_\alpha^{j_1}\wedge \cdots\wedge e_\alpha^{j_k}\\
			&=\sum_{\alpha,j_1,\dots,j_k}\chi_\alpha^k g_{1,\alpha,j_1,\dots,j_k}\cdots g_{k,\alpha,j_1,\dots,j_k}e_\alpha^{j_1}\wedge \cdots\wedge e_\alpha^{j_k}\\
			&=\sum_{\alpha,j_1,\dots,j_k} \big( \chi_\alpha g_{1,\alpha,j_1,\dots,j_k} e_\alpha^{j_1} \big)\wedge \cdots\wedge \big( \chi_\alpha g_{k,\alpha,j_1,\dots,j_k} e_\alpha^{j_k} \big),
			\end{aligned}
			\end{equation*}
			which proves the claim as $ \chi_\alpha g_{\ell,\alpha,j_1,\dots,j_k} e_\alpha^{j_\ell} $ is a well-defined $ 1 $-form on all of $ M $ which is flat on $ N $.
		\end{proof}
	\end{lemma}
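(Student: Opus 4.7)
My plan is to reduce the statement to a purely local computation in submanifold charts carrying frames of $E$, and then use the factorization lemma (Lemma~\ref{lemma:factorization_of_flat_functions}) as the engine that turns a single flat coefficient into enough flat factors to distribute across the wedge slots. Concretely, after splitting into homogeneous degrees I fix $k \ge 1$ and a form $\omega \in \Gamma^\infty_N(\Lambda^k E)$. Choose an atlas $(U_\alpha, x_\alpha)$ of submanifold charts equipped with local frames $\{e_\alpha^1, \ldots, e_\alpha^\ell\}$ of $E|_{U_\alpha}$, so that $\omega|_{U_\alpha} = f_{\alpha, j_1, \ldots, j_k}\, e_\alpha^{j_1} \wedge \cdots \wedge e_\alpha^{j_k}$ with all coefficients $f_{\alpha, j_1, \ldots, j_k}$ flat along $U_\alpha \cap N$.

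The key trick will be to use a partition of unity $\{\chi_\alpha\}$ of order $k+1$ subordinate to the cover, meaning $\mathrm{supp}\,\chi_\alpha \subseteq U_\alpha$ and $\sum_\alpha \chi_\alpha^{k+1} = 1$. Then $\omega = \sum_\alpha \chi_\alpha^{k+1}\omega$, and I intend to push exactly one factor of $\chi_\alpha$ into each of the $k$ local frame sections $e_\alpha^{j_\ell}$ (so that $\chi_\alpha e_\alpha^{j_\ell}$ extends by zero to a globally defined section of $E$), while the remaining $\chi_\alpha$ absorbs into the coefficient. The resulting flat function $\chi_\alpha f_{\alpha, j_1, \ldots, j_k}$ must then be split into $k$ flat pieces, one for each wedge slot; this is precisely where I invoke Lemma~\ref{lemma:factorization_of_flat_functions} iteratively: one application yields $\chi_\alpha f = h\, g$ with both factors flat, and applying the lemma $k$ times in succession produces flat functions $g_{1,\alpha,\ldots}, \ldots, g_{k,\alpha,\ldots}$ whose product is $\chi_\alpha f_{\alpha, j_1, \ldots, j_k}$.

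Putting everything together, the $\ell$th wedge factor becomes $\chi_\alpha g_{\ell, \alpha, j_1, \ldots, j_k}\, e_\alpha^{j_\ell}$, which is globally defined (thanks to the $\chi_\alpha$ cutoff), lies in $\Gamma^\infty_N(E)$ (thanks to flatness of $g_{\ell,\alpha,\ldots}$), and gives an expression of $\omega$ as $\sum (\chi_\alpha g_1 e_\alpha^{j_1}) \wedge \cdots \wedge (\chi_\alpha g_k e_\alpha^{j_k})$. Local finiteness of the sum is inherited from the partition of unity.

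The main obstacle is the incompatibility between the local nature of the frames $e_\alpha^{j}$ and the requirement that each wedge factor be a \emph{global} flat section; this is exactly what is resolved by the calibrated choice of partition-of-unity order. One needs precisely $k$ copies of $\chi_\alpha$ to cut off the $k$ local frame sections, plus one additional $\chi_\alpha$ to stay with the coefficient so that the factorization lemma can be applied to a function still known to be flat on $U_\alpha \cap N$ (as opposed to only on a neighbourhood of $N$ intersected with the support of $\chi_\alpha$). The degree-zero summand is handled trivially, since $C^\infty_N(M)$ already lies in the claimed generating set.
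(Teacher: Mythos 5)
Your proposal is correct and follows essentially the same route as the paper's proof: the same order-$(k{+}1)$ partition of unity, the same iterated application of Lemma~\ref{lemma:factorization_of_flat_functions} to split $\chi_\alpha f_{\alpha,j_1,\dots,j_k}$ into $k$ flat factors, and the same distribution of one cutoff $\chi_\alpha$ into each wedge slot so that $\chi_\alpha g_{\ell,\alpha,j_1,\dots,j_k}e_\alpha^{j_\ell}$ is a global flat section of $E$. No gaps.
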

	
	Using Lemma \ref{Cohomology_Blup.lemma:flat_generation} we can show Theorem~\ref{theorem:flatiso_and_consequences}. Since the first part is a statement about sections of vector bundles and does not involve the additional structures provided by a Lie algebroid, we formulate and prove it in a separate lemma.
	
	\begin{lemma}\label{lemma:past_is_iso_on_flats}
		Let $ (E,F)\to (M,N) $ be a pair of vector bundles. Then the pullback of the blow-down map $ p^\ast_E\colon \Gamma^\infty(E^\ast)\to \Gamma^\infty(\blup(E,F)^\ast) $ restricts to an isomorphism
		\begin{equation}
		p^\ast_E\colon \Gamma^\infty_N(E^\ast)\xrightarrow{\simeq} \Gamma^\infty_{\field P} (\blup(E,F)^\ast).
		\end{equation}
  \end{lemma}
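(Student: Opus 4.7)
The strategy is to prove injectivity by restriction to the generic part, and surjectivity by a local construction in blowup charts together with a descent/extension argument using flatness.

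Injectivity is immediate: since $ p\colon \blup(M,N)\setminus \field P\to M\setminus N $ is a diffeomorphism and $ p_E $ is fibrewise injective, $ p_E $ restricts to a vector bundle isomorphism $ \blup(E,F)\at{\blup(M,N)\setminus \field P}\to E\at{M\setminus N} $. Hence $ p_E^\ast\alpha=0 $ forces $ \alpha\at{M\setminus N}=0 $, and flatness of $ \alpha $ along $ N $ (in particular continuous vanishing at $ N $) yields $ \alpha\equiv 0 $.

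For surjectivity, given $ \tilde\alpha\in \Gamma^\infty_{\field P}(\blup(E,F)^\ast) $, I define $ \alpha $ on $ M\setminus N $ as the unique preimage of $ \tilde\alpha\at{\blup(M,N)\setminus \field P} $ under the restricted $ p_E^\ast $ and extend it by zero to $ N $. The content of the lemma is then that this extension is smooth on $ M $ and flat along $ N $. This is a local problem around $ N $, so I work in a submanifold chart $ (U,(x,y)) $ with $ U\cap N=\{x=0\} $ and a frame $ \{e_1,\ldots,e_k,f_1,\ldots,f_r\} $ of $ E\at{U} $ adapted to $ F $, and in the blowup chart $ U_i $ of Remark~\ref{remark:charts_for_blup} where $ \field P\cap U_i=\{\tilde x^i=0\} $ with $ \tilde x^i:=p^\ast x_i $ and $ \blup(E,F)\at{U_i} $ is trivialised by the frame $ \{\blup(x_ie_j),\blup(f_s)\} $ of Remark~\ref{remark:local_frames_for_blups}. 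Using the identity $ p_E\circ\blup(s)=s\circ p $, a direct computation shows that for $ \alpha=a^je^j+b^sf^s $
\begin{equation*}
p_E^\ast\alpha=p^\ast(x_ia^j)\,\blup(x_ie_j)^\ast+p^\ast b^s\,\blup(f_s)^\ast.
\end{equation*}
Writing $ \tilde\alpha=g^j\,\blup(x_ie_j)^\ast+h^s\,\blup(f_s)^\ast $ with coefficients flat on $ \{\tilde x^i=0\} $, surjectivity amounts to solving $ p^\ast(x_ia^j)=g^j $ and $ p^\ast b^s=h^s $ for smooth $ a^j,b^s $ on $ U $ that are flat on $ U\cap N $.

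The last step is to produce such $ a^j,b^s $. Flatness of $ g^j $ on the smooth hypersurface $ \{\tilde x^i=0\}\subseteq U_i $ yields, by a Hadamard-type division, smooth and still flat functions $ \tilde a^j:=g^j/\tilde x^i $ on $ U_i $; setting $ \tilde b^s:=h^s $, the task reduces to showing that the smooth flat functions $ \tilde a^j,\tilde b^s $ descend through $ p $ to smooth flat functions on $ U $. On $ U\setminus N $ this is automatic because $ p $ is a diffeomorphism there. I expect the main obstacle to be the extension across $ N $: the plan is to exploit the explicit coordinate form of $ p $ in the chart $ U_i $ --- where the non-scale coordinates are the ratios $ x_j/x_i $ --- together with infinite vanishing of $ \tilde a^j,\tilde b^s $ in $ \tilde x^i $, allowing one to write these functions as $ (\tilde x^i)^m\cdot(\text{smooth}) $ for arbitrary $ m $. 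This translates into estimates of the form $ |\partial^\beta a^j(y,x)|\leq C_{\beta,m}|x|^{m-|\beta|} $ on the region of $ U\setminus N $ parametrised by $ U_i $ (namely where $ x_i $ dominates among $ x_1,\ldots,x_k $). Covering a neighbourhood of $ N $ in $ U $ by such regions for $ i=1,\ldots,k $ forces $ a^j,b^s $ to extend smoothly with all partial derivatives vanishing on $ N $, i.e.\ flatly. Well-definedness of the extension across different charts $ U_i $ follows from uniqueness on $ U\setminus N $, and a partition of unity globalises the construction.
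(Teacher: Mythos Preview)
Your approach is correct and structurally matches the paper's: reduce to local frames, compute $p_E^\ast$ on the adapted dual frame (the identities $p_E^\ast e^j=\tilde x^i\,\blup(x_ie_j)^\ast$ and $p_E^\ast f^s=\blup(f_s)^\ast$ are exactly what the paper records), divide the flat coefficients by $\tilde x^i$, and then show that a function flat along $\field P$ descends through $p$ to a function flat along $N$.

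The only genuine difference is in how this last descent step is carried out. The paper argues inductively on the order of differentiation: in the chart $U_i$ one computes $p^\ast\partial_{x_j}$ as a vector field with a simple pole along $\{\tilde x^i=0\}$, observes that applying such a vector field to a flat function returns a flat function, and concludes that every partial derivative of the candidate function on $M\setminus N$ pulls back to something flat and hence extends continuously by zero. Your route via the decomposition $(\tilde x^i)^m\cdot(\text{smooth})$ and the resulting polynomial decay estimates $|\partial^\beta a(y,x)|\le C_{\beta,m}|x|^{m-|\beta|}$ on the cone $\{|x_i|\gtrsim |x|\}$ is the standard analytic alternative; it is equally valid and perhaps more transparent about why flatness is exactly the hypothesis needed to absorb the blowup's rescaling. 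One small point worth making explicit when you write it out: the constants $C_{\beta,m}$ are only uniform on compacta (since they involve bounds on the smooth factor and its derivatives), so the conclusion ``all derivatives extend by zero'' should be phrased pointwise at each $q\in N$, which is of course sufficient.
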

\begin{proof}
The proof consists of two steps. First, we show that $ p^\ast\colon \Cinfty_N(M)\to \Cinfty_{\field P}(\blup(M,N)) $ is an isomorphism (which corresponds to $ E=M\times \field R $ and $ F=N\times\field R $). Then we can prove the statement for general vector bundles.
It is clear that $ p^\ast $ factors to flat functions (as $ p^\ast \vanishing_N\subseteq \vanishing_{\field P} $) and is injective. Thus, let $ f\in \Cinfty_{\field P}(\blup(M,N)) $ be given. 
Since $ f\at{\field P}=0 $ we find a function $ \tilde{f}\in \Cinfty(M) $ with $ p^\ast \tilde{f}=f $, which is automatically continuous. 
To show that it is smooth and flat along $ N $ we use the charts from Remark~\ref{remark:charts_for_blup}. 
First we note that differentiation along $ N $ is just differentiating along the base coordinates of $ \field P $, hence there is nothing to check. 
Thus we can assume that $ N $ is a point and, since differentiation is local, $ M=\field R^n $. 
As the computations in arbitrary dimensions are the same as in two with more bookkeeping, suppose $ n=2 $. 
			
			In the chart $ \Phi_1 $ the blow-down map maps
			\begin{equation*}
			p(\Phi_1^{-1}(x_1,x_2))=(x_1,x_1x_2).
			\end{equation*}
			We use $ (x_1,x_2) $ for the coordinates on $ U_1\subseteq \blup(\field R^2,\{0\}) $ and $ (x,y) $ on $ \field R^2 $. We show inductively that for any $ \alpha\in \field N $ and $ \beta\leq k $ the derivative $ \frac{\partial^\beta}{\partial x^\beta}\frac{\partial^{\alpha-\beta}}{\partial y^{\alpha-\beta}}\tilde{f} $ pulls back to a function in $ \Cinfty_{\field P}(\blup(M,N)) $ and thus extends continuously to zero. Regarding the differential of the blow-down map we find
			\begin{equation*}
			\begin{aligned}
			T_{(x_1,x_2)}p (\partial_{x_1})&=\partial_x+x_2\partial_y=\partial_x+\frac{y}{x}\partial_y\\
			T_{(x_1,x_2)}p(\partial_{x_2})&= x_1\partial_y=x\partial_y.
			\end{aligned}
			\end{equation*}
			In other words, away from the origin in $ p(U_1) $ we have for the pullback with $ p $ that
			\begin{equation}
			\begin{aligned}
			p^\ast \partial_x &= \partial_{x_1}-\frac{x_2}{x_1}\partial_{x_2} \\
			p^\ast \partial_y &= \frac{1}{x_1}\partial_{x_2}.
			\end{aligned}
			\end{equation}
			These are singular vector fields, but since $ f $ is flat along $ \field P $ we find $ g\in \Cinfty_{\field P\cap U_1}(U_1) $ such that $ f=x_1g $. Hence we can apply them to $ f $, yielding a function that is again flat along $ \field P $. The same computation for $ (U_2,\Phi_2) $ shows that the first derivatives of $ \tilde{f} $ indeed pull back to flat functions as claimed. But then the same argument also shows the step $ k\to k+1 $. 
			
			For the general vector bundle blowup, let $ (U,(x,y)) $ be a submanifold chart of $ N $ such that $ U\cap N=\{x=0\} $, and $ \{ e_1,\dots,e_{\mathrm{corank} F},f_1,\dots,f_{\rank F} \} $ be a local frame for $ E_U $ such that the collection $ \{f_\alpha\}_\alpha $ restricts to a frame of $ F $ over $ U\cap N $. Recall that for $ U_i\subseteq \blup(M,N) $, $ i=1,\dots,\codim N $, a local frame for $ \blup(E,F)_{U_i} $ is given by 
			\begin{equation*}\tag{$ \ast $}
			\{ \blup(x_ie_1),\dots,\blup(x_ie_{\mathrm{corank}F}),\blup(f_1),\dots,\blup(f_{\rank F}) \},
			\end{equation*} 
			see Remark~\ref{remark:local_frames_for_blups}. For the pullbacks of the dual frames of $ E_U $ to $ U_i $ one immediately finds
			\begin{equation*}
			\begin{aligned}
			p_E^\ast f^\alpha( \blup(f_\alpha) )&= 1\\
			p_E^\ast e^k(\blup(x_ie_k))&=x_i,
			\end{aligned}
			\end{equation*}
			while all other pairings are zero. Writing the dual frame of $ (\ast) $ as
			\begin{equation*}
			\{ \blup(x_ie_1)^\ast,\dots,\blup(x_ie_{\mathrm{corank}B})^\ast,\blup(f_1)^\ast,\dots,\blup(f_{\rank B})^\ast \},
			\end{equation*}
			any $ \hat{\omega}\in \Gamma^\infty_{\field P}(\blup(E,F)) $ can locally be written as  $ \hat{\omega}\at{U_i}= \hat{\omega}^k \blup(x_i e_k)^\ast+\hat{\omega}^\alpha \blup(f_\alpha)^\ast $. But since $ \hat{\omega} $ is flat along $ \field{P}\cap U_i $, the functions $ \tfrac{\hat{\omega}^k}{x_i} $ are still well-defined on $ U_i $ and flat along $ \field{P}\cap U_i $. On $p(U_i)\setminus N$, we can define the $1$-form
			\begin{equation*}
			\omega_i= \sum_\alpha (p_E)_\ast(\hat{\omega}^\alpha) f^\alpha+\sum_k (p_E)_\ast\big(\tfrac{\hat{\omega}^k}{x_i}\big)e^k.
			\end{equation*}
			Since $ p_E^\ast \omega_i=\hat{\omega}\at{U_i\setminus \field P} $, we have $\omega_i=\omega_j$ for $i,j\in \{1,\dots,\codim N\}$ whenever their domain of definition overlap. Thus, gluing gives $\omega\in \Omega^1(U\setminus N)$, which extends flatly to $ N\cap U $ and satisfies $p_E^\ast \omega=\hat{\omega}\at{p^{-1}(U)}$.
		\end{proof}
	
	\begin{proof}[of Theorem~\ref{theorem:flatiso_and_consequences}]
		The first part follows from Lemma~\ref{Cohomology_Blup.lemma:flat_generation} and Lemma~\ref{lemma:past_is_iso_on_flats}. For the second part consider the following commutative diagram.
		
		\begin{equation*}\label{Cohomology_Blup.eq:exact_diagram_flatntaylor}
		\begin{tikzpicture}[baseline=(current bounding box.center)]
		\node (21) at (2,0) {$ 0 $};
		\node (31) at (5.5,0) {$ 0 $};
		\node (41) at (9.5,0) {$ 0 $};
		\node (25) at (2,7) {$ 0 $};
		\node (35) at (5.5,7) {$ 0 $};
		\node (45) at (9.5,7) {$ 0 $};
		\node (12) at (0,1.5) {$ 0 $};
		\node (13) at (0,3.5) {$ 0 $};
		\node (14) at (0,5.5) {$ 0 $};
		\node (52) at (12,1.5) {$ 0 $};
		\node (53) at (12,3.5) {$ 0 $};
		\node (54) at (12,5.5) {$ 0 $};
		\node (44) at (9.5,5.5) {$ 0 $};
		\node (34) at (5.5,5.5) {$ \Omega_{\field P}^\bullet(\blup(A,B)) $};
		\node (24) at (2,5.5) {$ \Omega^\bullet_N(A) $};
		\node (43) at (9.5,3.5) {$ \frac{\Omega^\bullet(\blup(A,B))}{p_A^\ast\Omega^\bullet(A)} $};
		\node (33) at (5.5,3.5) {$ \Omega^\bullet(\blup(A,B)) $};
		\node (23) at (2,3.5) {$ \Omega^\bullet(A) $};
		\node (42) at (9.5,1.5) {$\frac{\mathscr{J}^\infty_{\field P}\Omega^\bullet(\blup(A,B))}{p_A^\ast \mathscr{J}^\infty_N\Omega^\bullet(A)} $};
		\node (32) at (5.5,1.5) {$ \mathscr{J}^\infty_{\field P}\Omega^\bullet(\blup(A,B)) $};
		\node (22) at (2,1.5) {$ \mathscr{J}^\infty_{N}\Omega^\bullet(A) $};
		\path[->] 
		(14) edge node[]{} (24)
		(24) edge node[above]{$ p_A^\ast $} (34)
		(34) edge node[]{} (44)
		(44) edge node[]{} (54)
		(13) edge node[]{} (23)
		(23) edge node[above]{$ p_A^\ast $} (33)
		(33) edge node[]{} (43)
		(43) edge node[]{} (53)
		(12) edge node[]{} (22)
		(22) edge node[above]{$ p_A^\ast $} (32)
		(32) edge node[]{} (42)
		(42) edge node[]{} (52)
		(25) edge node[]{} (24)
		(24) edge node[left]{$$} (23)
		(23) edge node[]{} (22)
		(22) edge node[]{} (21)
		(35) edge node[]{} (34)
		(34) edge node[left]{$  $} (33)
		(33) edge node[]{} (32)
		(32) edge node[]{} (31)
		(45) edge node[]{} (44)
		(44) edge node[left]{$  $} (43)
		(43) edge node[]{} (42)
		(42) edge node[]{} (41);
		\end{tikzpicture}
		\end{equation*}
		Exactness of the first and second column is clear. Regarding exactness of the second and third row note that $ p_A $ is a diffeomorphism on $ M\setminus N $, implying injectivity of $ p_A^\ast $. From the first part we get exactness of the first row, and thus
		\begin{equation*}
		\frac{\Omega^\bullet(\blup(A,B))}{p_A^\ast\Omega^\bullet(A)}\cong \frac{\mathscr{J}^\infty_{\field P}\Omega^\bullet(\blup(A,B))}{p_A^\ast \mathscr{J}^\infty_N\Omega^\bullet(A)}
		\end{equation*}
		by the $ 3\times3 $-Lemma \cite[Chapter 2, Lemma 5.1]{maclane:1963b}. The remaining claim follows from the local nature of jet spaces.		
	\end{proof}

	

	\section{Blowups of transversals}\label{section:blow_up_of_transversals}
	
	One class of Lie subalgebroids of $ A\laover M $ is given by transverse submanifolds $ \iota\colon N\hookrightarrow M $, i.e.\ submanifolds such that the inclusion is transverse to the anchor. Then $ \rho^{-1}(TN)=\iota^! A\subseteq A $ is a Lie subalgebroid, see Section~\ref{sec:pullbacks_of_Lie_algebroids}. An example of such is e.g.\ $ TN\subseteq TM $ for any closed submanifold $ N\subseteq M $ as a Lie subalgebroid of the tangent Lie algebroid. 
We compute the cohomology of $\blup(A,\iota^! A)$ in Corollary \ref{corollary:blup_of_transversals} after characterising the blow-down map in cohomology in this setting in Theorem \ref{theorem:blup_transversals:past_in_cohomology}.
 
A property of transversals we will use is that they admit simple normal forms \cite[Section 4]{BursztynLimaMeinrenken2019}.
	
	\begin{theorem}\label{theorem:normal_form_transversals}
		Let $ A\laover M $ be a Lie algebroid and $ \iota\colon N\hookrightarrow M $ a transversal. Then there exists a tubular neighbourhood $ \pr\colon E\to N $ of $ N $ in $ M $ such that 
		\begin{equation}
		A_E\cong \pr^! \iota^! A
		\end{equation}
		are isomorphic as Lie algebroids.
	\end{theorem}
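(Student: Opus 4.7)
The plan is to follow the strategy of Bursztyn-Lima-Meinrenken by producing an \emph{Euler-like section} of $A$ along $N$, that is, a section $\sigma \in \Gamma^\infty(A)$ defined on a neighborhood of $N$, vanishing on $N$, whose anchor $\rho(\sigma) \in \vfield(M)$ is Euler-like for some tubular embedding of $N$ (equivalently, the transverse linearization of $\rho(\sigma)$ along $N$ equals the Euler vector field of the normal bundle $\nu(M,N)$). The data of $\sigma$ will simultaneously produce the tubular neighborhood $\pr\colon E\to N$ and linearize $A$ to $\pr^!\iota^! A$.

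First, I would construct $\sigma$ locally. By transversality, $\rho\colon A\to TM$ restricts to a surjection $A|_N \to TM|_N/TN = \nu$ with kernel $\iota^!A|_N$. Pick a local splitting $A|_N = \iota^!A|_N \oplus K$ with $\rho|_K\colon K\to \nu$ an isomorphism, and a local vector bundle coordinate chart in which $K$ extends to sections of $A$. A model Euler-like section over a local tube is obtained by pushing the standard Euler vector field of $\nu$ through $(\rho|_K)^{-1}$ and multiplying by a cutoff. To glue, one uses that the affine space of Euler-like sections over a fixed open set is convex: any convex combination of Euler-like sections is again Euler-like, since the defining conditions (vanishing on $N$, linearization on $\nu$ equal to the Euler vector field) are affine. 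A partition of unity argument then yields a global $\sigma$ defined on some open neighborhood of $N$ in $M$.

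Second, given $\sigma$, the anchor $\rho(\sigma)$ is an Euler-like vector field, hence by the classical linearization result for Euler-like vector fields (see \cite[Section~2]{BursztynLimaMeinrenken2019}) its flow $\varphi_t$, reparametrized by $s = -\log|t|$, defines a tubular embedding $\pr\colon E \to N$ in which $\rho(\sigma)$ becomes the Euler vector field on $E$. Now lift $\sigma$ itself to a flow on $A$: since $\sigma$ is a section of the Lie algebroid, the operator $[\sigma,\cdot\,]$ together with $\rho(\sigma)$ defines a flow of Lie algebroid automorphisms of $A$ covering $\varphi_t$. Composing this lifted flow with the reparametrization produces a vector bundle isomorphism $\Psi\colon A_E \to \pr^!\iota^!A$ that is the identity on the fiber $\iota^!A$ over $N$. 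Equivalently, one can interpret this via the deformation to the normal cone: $\DNC(A,\iota^!A)$ is a Lie algebroid over $\DNC(M,N)$, and the section $\sigma$ induces an $\field R^\times$-equivariant trivialization of it, yielding the same identification on the $t=1$ slice.

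The map $\Psi$ is automatically a Lie algebroid morphism because both its covering diffeomorphism $\pr$ and its fiberwise linear part arise from the flow of the Lie algebroid section $\sigma$, which acts by infinitesimal Lie algebroid automorphisms; the bracket and anchor compatibility are preserved along the flow. The main obstacle is therefore the first step: producing a \emph{global} Euler-like section and ensuring its flow exists on a full tubular neighborhood. Existence is handled by the convexity argument above (which is formally identical to the one used in \cite{BursztynLimaMeinrenken2019} for Euler-like vector fields), and the domain-of-definition issue is resolved by shrinking the tube, which is admissible since the statement only asserts the existence of \emph{some} tubular neighborhood $E\to N$.
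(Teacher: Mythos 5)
Your proposal is correct and follows exactly the argument the paper relies on: the paper does not prove this theorem itself but quotes it from \cite[Section 4]{BursztynLimaMeinrenken2019}, whose proof is precisely the Euler-like-section strategy you outline (local construction plus convexity/partition of unity, then linearisation of $A$ along the flow of the section, equivalently via the deformation to the normal cone). No substantive gap beyond the level of detail one would in any case defer to that reference.
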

	
	Moreover, any such isomorphism $ A_E\cong \pr^! \iota^! A $ induces an isomorphism 
	\begin{equation}\label{eq:transversals_local_cohomology}
	\homology^\bullet(A_E)=\homology^\bullet(\iota^!A)
	\end{equation}
	by means of the restriction map, see~\cite[Theorem 2]{frejlich:2019a}. The proof in \cite{frejlich:2019a} utilises \cite[Theorem 2]{crainic:2000a}, a spectral sequence argument, see also \cite[Section 5.2]{marcut.schuessler:2024a}.
	
	According to Theorem~\ref{theorem:flatiso_and_consequences} it is enough to understand the blow-down map in cohomology in a neighbourhood of $ N $, thus we can utilise Theorem~\ref{theorem:normal_form_transversals} to calculate $ \Homology{\blup(A,\iota^! A)}{\bullet}{} $. To formulate the result we need two definitions.
	
	\begin{definition}
		Let $ A\laover M $ be a Lie algebroid and $ B\laover N $ a Lie subalgebroid of $ A $.
		\begin{enumerate}
			\item A vector field $ X\in \mathfrak{X}(M) $ is called \textup{Euler-like along $ N $} if it is complete and there is a tubular neighbourhood embedding $ \phi\colon E\to U $ of a vector bundle $ E\to N $ onto an open neighbourhood $ U $ of $ N $ in $ M $ such that $ \phi^\ast X $ is the Euler vector field of $ E $ \cite[Definition 2.6]{BursztynLimaMeinrenken2019}.
			\item A section $ a\in \Gamma^\infty(A,B) $ is called \textup{Euler-like along $ B $} if $ \rho(A) $ is Euler-like along $ N $ and it induces the trivial inner derivation
			\begin{equation}
			[a\at{N},\argument]_B=0\colon \Gamma^\infty(B)\to \Gamma^\infty(B).
			\end{equation}
		\end{enumerate}
	\end{definition}

    From~\cite[Lemma 3.9]{BursztynLimaMeinrenken2019} we know that, given a transversal $ N $, we can always find Euler-like sections $ a\in \Gamma^\infty(A) $ along $ B $ with $ a\at{N}=0 $.
	Note that this definition of an Euler-like section of a Lie algebroid differs from the one given in \cite{bischoff.bursztyn.lima.meinrenken:2020a} in the way that we do not require $ a\at{N} $ itself to vanish. The reason behind this is that, for an isomorphism like \eqref{eq:transversals_local_cohomology} to hold, the existence of an Euler-like section in our sense is enough, one does not necessarily need a trivialisation $A_E\cong \pr^! \iota^! A$ of Lie algebroids, see \cite[Theorem 3.24]{marcut.schuessler:2024a}.
 
    Moreover, to formulate Theorem \ref{theorem:blup_transversals:past_in_cohomology} we need the notion of compact vertical cohomology.
	
	\begin{definition}\label{def:compact_vertical_cohomology}
		Let $ \pi\colon E\to M $ be a fibre bundle and $ A\laover E $ a Lie algebroid. The subcomplex of forms of $ A $ compactly supported in vertical directions is defined as
		\begin{equation}\label{eq:compact_vertical_complex}
		\Omega^\bullet_{\mathrm{cv}}(A)=\{ \omega\in \Omega^\bullet(A)\colon\supp(\omega)\cap \pi^{-1}(K) \text{ is compact for all compact }K\subseteq M \}.
		\end{equation}
	\end{definition}
	
	Since the differential is a local operator, \eqref{eq:compact_vertical_complex} is indeed a subcomplex, with cohomology denoted by $ \homology^\bullet_{\mathrm{cv}}(A) $, called the \textit{compact vertical cohomology} (see e.g.\ \cite{stokesfordensities}). Of course, the same notion can be defined also for forms with coefficients in some representation. Regarding the cohomology of the blowup of transversals, we obtain the following results.
	
	\begin{theorem}\label{theorem:blup_transversals:past_in_cohomology}
		Let $ \iota\colon N\hookrightarrow M $ be a closed transversal of a Lie algebroid $ A\laover M $. Denote the blow-down map by $ \blowdown{p_A}{A}{\iota^! A} $, the blow-down map of the base by $ \blowdown{p}{M}{N} $ and the projection of the projective bundle by $ \pi_{\field P}\colon \field P\to N $.
		\begin{enumerate} \item\label{theorem_item:past_in_cohomology:iso_and_codim1} We have (canonical) isomorphisms
			\begin{align}
			\Homology{\blup(A,\iota^! A)}{\bullet}{}&\cong\Homology{\blup(p^! A,\pi_{\field P}^! A)}{\bullet}{}\\
			&\cong\Homology{p^! A}{\bullet}{}\oplus \Homology{\pi_{\field P}^! A}{\bullet-1}{}.\label{theorem_eq:general_identif}
			\end{align}
			Under the identification \eqref{theorem_eq:general_identif}, the blow-down map $ p_A^\ast $ in cohomology becomes $ (p^!)^\ast\colon \Homology{A}{\bullet}{}\to\Homology{p^! A}{\bullet}{} $.
			\item \label{theorem_item:past_in_cohomology:odd} If $ \codim N $ is odd then 
			\begin{equation}\label{eq:odd_codim_p^!_cohomology}
			(p^!)^\ast\colon\Homology{A}{\bullet}{}\xrightarrow{\simeq}\Homology{p^!A}{\bullet}{}.
			\end{equation}
			\item \label{theorem_item:past_in_cohomology:even} If $ \codim N $ is even then any section $a\in \Gamma^\infty(A)$ Euler-like along $ \iota^! A $ with $a\at{N}=0$ and corresponding tubular neighbourhood $ E\to N $ in $ M $ gives rise to a long exact sequence
			\begin{equation}\label{eq:even_codim_p^!_cohomology}
			\begin{tikzpicture}[baseline=(current bounding box.center)]
			\node (1) at (0,0) {$ \dots $};
			\node (2) at (2,0) {$ \homology^\bullet(A) $};
			\node (3) at (4.5,0) {$ \homology^\bullet(p^!A) $};
			\node (4) at (7,0) {$ \homology_\mathrm{cv}^{\bullet+1} (A_E) $};
			\node (5) at (9.5,0) {$ \homology^{\bullet+1}(A) $};
			\node (6) at (11.5,0) {$ \dots $};
			
			\path[->]
			(1) edge node[]{$  $} (2)
			(2) edge node[above]{$ (p^!)^\ast $} (3)
			(3) edge node[above]{$ (p^!)_\ast $} (4)
			(4) edge node[above]{$ i $} (5)
			(5) edge node[]{$  $} (6)
			
			;
			\end{tikzpicture}
			\end{equation}
			where $ (p^!)_\ast $ first restricts a form to $ \field P $, fibre integrates and applies the Thom isomorphism (Lemma \ref{lemma:Thom_for_Liealgebroids}), and $ i([\omega])=[\omega] $ for any $ [\omega]\in \homology_\mathrm{cv}^\bullet(A_E) $.
		\end{enumerate}
	\end{theorem}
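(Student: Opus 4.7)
The plan is to combine three ingredients: the two-stage blowup of Lemma~\ref{lemma:vb_blup_in_two_stages}, the normal form for transversals (Theorem~\ref{theorem:normal_form_transversals}), and the Gysin sequence (Theorem~\ref{intro:theorem:gysin}) on the double cover $\phi\colon S\nu\to \field P$, decomposed into $\field Z_2$-eigenspaces via Lemma~\ref{lemma:lapullback_cohomology}. The locality statement of Theorem~\ref{theorem:flatiso_and_consequences} lets me work in a tubular neighbourhood of $N$, where the normal form provides a concrete model $A_E\cong \mathrm{pr}^!\iota^!A$ on which all the computations are carried out.

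For part~\ref{theorem_item:past_in_cohomology:iso_and_codim1}, I first identify $\blup(A,\iota^!A)\cong \blup(p^!A,\pi_{\field P}^!A)$ by iterating Lemma~\ref{lemma:vb_blup_in_two_stages}. Blowing up the base first gives $\blup(A,A_N)=p^\sharp A$; transversality of $\iota$ forces $p$ to be transverse to $\rho_A$, so $p^\sharp A=p^!A$ canonically as Lie algebroids. A second application of the lemma to the codimension-$1$ pair $(p^!A,\pi_{\field P}^!A)$ recovers $\blup(A,\iota^!A)$ at the vector bundle level, and the universal property of the Lie algebroid blowup pins down the bracket. The direct sum splitting~\eqref{theorem_eq:general_identif} is the Mazzeo--Melrose-style decomposition for codimension-$1$ elementary modifications, arising from the residue sequence
\[
0\to \Omega^\bullet(p^!A)\xrightarrow{p_A^\ast}\Omega^\bullet(\blup(p^!A,\pi_{\field P}^!A))\xrightarrow{\mathrm{res}}\Omega^{\bullet-1}(\pi_{\field P}^!A)\to 0,
\]
in which a defining section of $\field P$ provides a logarithmic splitting. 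Tracing $p_A^\ast$ through the two stages shows it corresponds to $(p^!)^\ast$ composed with the inclusion into the first summand.

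For parts~\ref{theorem_item:past_in_cohomology:odd} and~\ref{theorem_item:past_in_cohomology:even}, I use the normal form to assume $A_E\cong \mathrm{pr}^!\iota^!A$, so that $p^!A$ over $\blup(E,N)$ becomes $(\mathrm{pr}\circ p)^!\iota^!A$. By Remark~\ref{remark:blup_of_zero_section} the base $\blup(E,N)=\field L$ is the tautological line bundle over $\field P$, and homotopy invariance of Lie algebroid cohomology along vector bundle projections (via the Euler-field contraction of Corollary~\ref{corollary:lift_of_Evf_is_Evf}) gives $\homology^\bullet(p^!A)\simeq \homology^\bullet(\pi_{\field P}^!\iota^!A)$, identifying $(p^!)^\ast$ with $(\pi_{\field P}^!)^\ast\colon \homology^\bullet(\iota^!A)\to \homology^\bullet(\pi_{\field P}^!\iota^!A)$. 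I then pass to the double cover $\phi\colon S\nu\to \field P$: by Lemma~\ref{lemma:lapullback_cohomology} the space $\homology^\bullet(\pi_{\field P}^!\iota^!A)$ is the $+1$-eigenspace of $\homology^\bullet(\pi_{S\nu}^!\iota^!A)$, and taking $\field Z_2$-invariants of the Gysin sequence (Theorem~\ref{intro:theorem:gysin}) for the rank-$(k-1)$ sphere bundle $S\nu\to N$, with $k=\codim N$, yields a long exact sequence for $\homology^\bullet(\pi_{\field P}^!\iota^!A)$. For odd $k$ the antipodal map on $S^{k-1}$ preserves fibre orientation, so the $+1$-invariant part of the twisted Gysin term vanishes and $(\pi_{\field P}^!)^\ast$ is an isomorphism. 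For even $k$ the antipodal map reverses orientation, the invariant Gysin term survives, and I identify the connecting map with fibre integration along $\field L\to \field P$ followed by the Thom isomorphism (Lemma~\ref{lemma:Thom_for_Liealgebroids}) into $\homology^{\bullet+1}_\mathrm{cv}(A_E)$, producing exactly~\eqref{eq:even_codim_p^!_cohomology} with $i$ the natural inclusion.

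The main obstacle is the orientation and twist bookkeeping in the $\field Z_2$-equivariant Gysin sequence: one has to track the orientation line $o(S\nu)$ of the sphere bundle and the sign line $L$ from the double cover $\phi$ simultaneously, match them according to the parity of $k$, and verify that the $+1$-invariant connecting map agrees with the composition of fibre integration, the Thom isomorphism, and the inclusion $i$ stated in~\eqref{eq:even_codim_p^!_cohomology}. Once this alignment is correctly carried out, parts~\ref{theorem_item:past_in_cohomology:odd} and~\ref{theorem_item:past_in_cohomology:even} follow formally from the Gysin sequence and Lemma~\ref{lemma:lapullback_cohomology}.
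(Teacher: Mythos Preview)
Your approach to part~\ref{theorem_item:past_in_cohomology:iso_and_codim1} contains a genuine error: the claim that $p^\sharp A = p^!A$ canonically as Lie algebroids is false whenever $\codim N > 1$. Over a point $[v]\in\field P$ lying above $q\in N$, the fibre $(p^\sharp A)_{[v]}=A_q$, whereas $(p^!A)_{[v]}=\{(a,w)\in A_q\times T_{[v]}\blup(M,N):\rho(a)=T_{[v]}p(w)\}$; the projection $(a,w)\mapsto a$ has kernel $\ker T_{[v]}p$ of dimension $\codim N-1$, so it is not an isomorphism. Moreover $\blup(A,A_N)$ is not even a Lie algebroid blowup unless $N$ is invariant, so Lemma~\ref{lemma:vb_blup_in_two_stages} only gives vector bundle identifications and cannot by itself yield the Lie algebroid isomorphism $\blup(A,\iota^!A)\cong \blup(p^!A,\iota_{\field P}^!p^!A)$. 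The paper instead constructs this isomorphism directly (Proposition~\ref{prop:transv_blup_is_blup_of_pullback}) via the map $b\mapsto (p_A(b),\rho_{\blup}(b))$, checked on local frames using the normal form, after which the codimension-$1$ Mazzeo--Melrose result (Theorem~\ref{theorem:blup_of_codim1_transversal}) supplies the splitting~\eqref{theorem_eq:general_identif}.

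For parts~\ref{theorem_item:past_in_cohomology:odd} and~\ref{theorem_item:past_in_cohomology:even} your strategy via the double cover $S\nu\to\field P$ and $\field Z_2$-invariants of the Gysin sequence is sound and parallels the paper's Theorem~\ref{theorem:blup_transversals:projective_bundles}. However, the local-to-global step is not justified: Theorem~\ref{theorem:flatiso_and_consequences} concerns $p_A^\ast$ on $\blup(A,\iota^!A)$, not $(p^!)^\ast$ on $p^!A$. The paper establishes the needed analogue separately (Lemma~\ref{lemma:p^!_is_iso_on_flat}, deduced from Proposition~\ref{prop:transv_blup_is_blup_of_pullback} together with Theorem~\ref{theorem:flatiso_and_consequences}), and in the even case identifies the quotient cohomology with $\homology^{\bullet+1}_\mathrm{cv}(A_E)$ via a $5$-lemma comparison (Lemma~\ref{lemma:iso_between_Q_and_Hcf}) rather than directly through the equivariant Gysin connecting map. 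Since your route to Lemma~\ref{lemma:p^!_is_iso_on_flat} would also go through the isomorphism of part~\ref{theorem_item:past_in_cohomology:iso_and_codim1}, the gap there propagates to the later parts.
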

	Note that the choice of Euler-like section in the third part only affects the tubular neighbourhood, i.e.\ two Euler-like sections inducing the same tubular neighbourhood lead to the same long exact sequence \eqref{eq:even_codim_p^!_cohomology}.
 
	Theorem \ref{theorem:blup_transversals:past_in_cohomology} characterises the blow-down map in cohomology completely. Note that the reason for the distinction between even and odd codimension lies in the de Rham cohomology of the projective spaces that constitute the fibres of $ \field P\to N $, see also Theorem~\ref{theorem:blup_transversals:projective_bundles}. In the case of odd codimension the cohomology is trivial in all but $ 0 $-th degree, leading to the simplified form. We prove Theorem~\ref{theorem:blup_transversals:past_in_cohomology} in two steps: First we prove the case of $ \codim N=1 $ in Section~\ref{sec:transversals_codim1} and then complete the proof in Section~\ref{sec:proof_of_transversals}.
	
	From Equation~\eqref{theorem_eq:general_identif} we see that the only missing ingredient to compute the cohomology of the blowup of a transversal is computing $ \Homology{\pi_{\field P}^! A}{\bullet}{} $.
	
	\begin{theorem}\label{theorem:blup_transversals:projective_bundles}
		Let $ B\laover N $ be a Lie algebroid. Let $ E\to N $ be a vector bundle of rank $ k $, $ o(E)\to N $ its orientation bundle, and $ e\in \Homology{N}{k}{,o(E)} $ its Euler class. Let $ \pi_{\field P}\colon \field P\to N $ denote the projectivisation of $ E $.
		\begin{enumerate}
			\item If $ k $ is odd then we have an isomorphism
			\begin{equation}
			(\pi_{\field P}^!)^\ast\colon \Homology{B}{\bullet}{}\xrightarrow{\simeq}\Homology{\pi_{\field P}^!B}{\bullet}{}.
			\end{equation}
			\item \label{theorem_item:projective_bundles:gysin}If $ k $ is even then there is a Gysin-like long exact sequence
			\begin{equation}
			\begin{tikzpicture}[baseline=(current bounding box.center)]
			\node (1) at (-0.3,0) {$ \dots $};
			\node (2) at (1.5,0) {$ \homology^\bullet(B) $};
			\node (3) at (3.8,0) {$ \homology^\bullet( \pi_{\field P}^! B ) $};
			\node (4) at (7,0) {$ \homology^{\bullet-(k-1)}(B,o(E)) $};
			\node (5) at (10.2,0) {$ \homology^{\bullet+1}(B) $};
			\node (6) at (12,0) {$ \dots $};
			
			\path[->]
			(1) edge node[above]{$  $} (2)
			(2) edge node[above]{$ (\pi_{\field P}^!)^\ast $} (3)
			(3) edge node[above]{$ (\pi_{\field P}^!)_\ast $} (4)
			(4) edge node[above]{$ \wedge \rho^\ast e $} (5)
			(5) edge node[above]{$  $} (6)

			;
			\end{tikzpicture}
			\end{equation} 
			Here, $ (\pi_{\field P}^!)_\ast $ denotes fibre integration.
		\end{enumerate}
		\begin{proof}
			Let $ N $ be of odd codimension. In this case, the fibres of the projective bundle $\pi_{\field P}\colon \field P\to N $ are projective spaces of even dimension, which have nontrivial de Rham cohomology only in degree $ 0 $. By the Serre-Leray spectral sequence for Lie algebroids \cite[Corollary 5.8]{marcut.schuessler:2024a}, we immediately obtain
			\begin{equation*}
			(\pi_{\field P}^!)^\ast\colon \Homology{B}{\bullet}{}\xrightarrow{\simeq}\Homology{\pi_{\field P}^!B}{\bullet}{}.
			\end{equation*}
			The second part follows from Theorem~\ref{theorem:gysin_sequence_for_LA} noting that all maps are compatible with the antipodal action.
		\end{proof}
	\end{theorem}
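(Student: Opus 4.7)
My plan is to handle the two parities of $k$ separately, using only the tools available in the excerpt.

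For part~(1), when $k$ is odd, the fibres of $\pi_{\field P}\colon \field P\to N$ are real projective spaces of even dimension $k-1$, whose de Rham cohomology vanishes in positive degrees. The Serre--Leray spectral sequence for Lie algebroids (\cite[Corollary~5.8]{marcut.schuessler:2024a}), applied to $\pi_{\field P}^! B\laover \field P$ sitting over $B\laover N$, then has only the column $q=0$ nonzero on its $E_2$-page. Hence it collapses, and its edge map is exactly the pullback $(\pi_{\field P}^!)^\ast$, which is therefore an isomorphism.

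For part~(2), my idea is to reduce to Theorem~\ref{intro:theorem:gysin} by passing through the unit sphere bundle. Fix a Euclidean metric on $E$ and let $\pi\colon \field S\to N$ be the associated sphere bundle, whose fibres are $S^{k-1}$, odd-dimensional since $k$ is even. The fibrewise antipodal involution has degree $(-1)^k=+1$ on $S^{k-1}$, so it preserves fibre orientation, and the quotient map $\phi\colon \field S\to \field P$ is a double cover. Theorem~\ref{intro:theorem:gysin} applied to $\pi^! B$ (with $o(\field S)=o(E)$) yields the long exact sequence of the statement but with $\homology^\bullet(\pi^! B)$ instead of $\homology^\bullet(\pi_{\field P}^! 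B)$, and it remains to take $\field Z_2$-invariants.

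On the outer terms of this Gysin sequence the action is trivial, while by Lemma~\ref{lemma:lapullback_cohomology} the invariant part of $\homology^\bullet(\pi^! B)$ is precisely $\homology^\bullet(\pi_{\field P}^! B)$. Equivariance of the pullback $(\pi^!)^\ast$ and of cup product with $\rho^\ast e$ is immediate, and since taking $\field Z_2$-invariants over $\field R$ is exact via the averaging projector, the invariant subsequence remains exact and coincides with the desired Gysin sequence for $\pi_{\field P}$. The main obstacle is the equivariance of fibre integration $(\pi^!)_\ast$: this uses precisely that the antipodal map preserves fibre orientation, which forces $(\pi^!)_\ast$ to vanish on anti-invariants, and which is also exactly why the argument is confined to the even-codimension case.
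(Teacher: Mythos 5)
Your proposal is correct and follows essentially the same route as the paper: part (1) via the Serre--Leray spectral sequence using that even-dimensional real projective spaces have trivial positive-degree de Rham cohomology, and part (2) by applying the sphere-bundle Gysin sequence (Theorem~\ref{theorem:gysin_sequence_for_LA}) and passing to $\field Z_2$-invariants under the antipodal action, identifying $\homology^\bullet(\pi_{\field P}^! B)$ with the invariant part via Lemma~\ref{lemma:lapullback_cohomology}. The paper compresses this into the remark that all maps are compatible with the antipodal action; your verification that fibre integration is equivariant precisely because the antipodal map on $S^{k-1}$ has degree $(-1)^k=+1$ for $k$ even is the correct justification of that remark.
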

	
	Putting together Theorem~\ref{theorem:blup_transversals:past_in_cohomology} and Theorem~\ref{theorem:blup_transversals:projective_bundles} we can compute $ \Homology{\blup(A,\iota^! A)}{\bullet}{} $.
	
	\begin{corollary}\label{corollary:blup_of_transversals}
		Let $ \iota\colon N\hookrightarrow M $ be a closed transversal of a Lie algebroid $ A\laover M $. Denote the blow-down map by $ \blowdown{p_A}{A}{\iota^! A} $, the blow-down map of the base by $ \blowdown{p}{M}{N} $, and the projection of the projective bundle by $ \pi_{\field P}\colon \field P\to N $.
		\begin{enumerate}
			\item If $ \codim N $ is odd then \begin{equation}\label{theorem_eq:odd_identif}
			\Homology{\blup(A,\iota^! A)}{\bullet}{}=\Homology{A}{\bullet}{}\oplus \Homology{\iota^!A}{\bullet-1}{}
			\end{equation}
			and, under \eqref{theorem_eq:odd_identif}, $ p_A^\ast\colon \Homology{A}{\bullet}{}\xrightarrow{\simeq}\Homology{A}{\bullet}{}\oplus 0  $.
			\item If $ \codim N $ is even, $ \homology^\bullet(\blup(A,\iota^! A)) $ fits into a long exact sequence
   \begin{equation*}
    \dots\to \homology^{\bullet}(A)\xrightarrow{p_A^\ast} \homology^\bullet(\blup(A,\iota^! A)) \xrightarrow{f} \homology^{\bullet+1}_\mathrm{cv}(A_E)\oplus \homology^{\bullet-1}(\pi^!_{\field P} A) \xrightarrow{g} \homology^{\bullet+1}(A)\to \dots
\end{equation*}
where $ \image (f)= X\oplus\homology^{\bullet-1}(\pi_{\field P}^! A) $ for a subspace $X\subseteq \homology^{\bullet+1}_\mathrm{cv}(A_E)$, and $ g=i\circ \mathrm{pr}_{\homology^{\bullet+1}_\mathrm{cv}(A_E)}$. 
			
		\end{enumerate}
	\end{corollary}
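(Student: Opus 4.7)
The proof is essentially an assembly of the two preceding theorems, so the plan is to combine the general decomposition from Theorem~\ref{theorem:blup_transversals:past_in_cohomology}~\eqref{theorem_item:past_in_cohomology:iso_and_codim1}, which identifies
\begin{equation*}
\homology^\bullet(\blup(A,\iota^! A)) \cong \homology^\bullet(p^! A) \oplus \homology^{\bullet-1}(\pi_{\field P}^! A)
\end{equation*}
with $p_A^\ast$ acting as $(p^!)^\ast$ into the first summand, with the cohomology computations of the two summands provided by Theorem~\ref{theorem:blup_transversals:past_in_cohomology}~\eqref{theorem_item:past_in_cohomology:odd}/\eqref{theorem_item:past_in_cohomology:even} and Theorem~\ref{theorem:blup_transversals:projective_bundles}. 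Throughout, recall that $\pi_{\field P}^! A = \pi_{\field P}^! \iota^! A$, so Theorem~\ref{theorem:blup_transversals:projective_bundles} applies with $B = \iota^! A$ and $E$ the normal bundle (of rank $k = \codim N$).

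For the odd codimension case, both factors collapse to familiar objects. Theorem~\ref{theorem:blup_transversals:past_in_cohomology}~\eqref{theorem_item:past_in_cohomology:odd} gives $(p^!)^\ast\colon \homology^\bullet(A) \xrightarrow{\simeq} \homology^\bullet(p^!A)$, while Theorem~\ref{theorem:blup_transversals:projective_bundles} (odd rank case) gives $(\pi_{\field P}^!)^\ast\colon \homology^\bullet(\iota^! A) \xrightarrow{\simeq} \homology^\bullet(\pi_{\field P}^! A)$. Substituting these into the general decomposition yields both the claimed isomorphism~\eqref{theorem_eq:odd_identif} and the identification of $p_A^\ast$ as inclusion of the first summand.

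For the even codimension case, I plan to build the long exact sequence by fitting the direct-sum decomposition into the long exact sequence from Theorem~\ref{theorem:blup_transversals:past_in_cohomology}~\eqref{theorem_item:past_in_cohomology:even},
\begin{equation*}
\dots\to \homology^\bullet(A)\xrightarrow{(p^!)^\ast}\homology^\bullet(p^!A)\xrightarrow{(p^!)_\ast}\homology^{\bullet+1}_{\mathrm{cv}}(A_E)\xrightarrow{i}\homology^{\bullet+1}(A)\to\dots
\end{equation*}
by formally adjoining $\homology^{\bullet-1}(\pi_{\field P}^! A)$ as a direct summand to both the middle term and (as $\homology^{\bullet}(\pi_{\field P}^! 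A)$-shifted appropriately) the third term, connected by the identity. Concretely, define $f((x,y)) = ((p^!)_\ast(x),y)$ and $g((z,y)) = i(z)$; then $p_A^\ast$ lands in $\ker f$ by exactness of the original LES in the first slot, $\ker f = \image((p^!)^\ast)\oplus 0 = \image(p_A^\ast)$ because $(p^!)_\ast(x)=0$ forces $x \in \image((p^!)^\ast)$, $\ker g = \ker(i) \oplus \homology^{\bullet-1}(\pi_{\field P}^! A) = \image f$ because $\image((p^!)_\ast) = \ker(i)$ in the original LES, and $\image g = \image(i) = \ker((p^!)^\ast)$ as required at $\homology^{\bullet+1}(A)$. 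Each of these is a one-line diagram chase using the exactness of the ambient LES, so no genuine obstacle appears; the description $\image(f) = X \oplus \homology^{\bullet-1}(\pi_{\field P}^! A)$ with $X = \ker(i)$ drops out of the computation of $\ker g$.
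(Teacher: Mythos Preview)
Your proposal is correct and matches the paper's intended approach: the corollary is stated immediately after Theorems~\ref{theorem:blup_transversals:past_in_cohomology} and~\ref{theorem:blup_transversals:projective_bundles} as the result of ``putting together'' these two, and your assembly---substituting the odd-codimension isomorphisms into the decomposition~\eqref{theorem_eq:general_identif} for part~(1), and adjoining the $\homology^{\bullet-1}(\pi_{\field P}^! A)$ summand via the identity to the long exact sequence~\eqref{eq:even_codim_p^!_cohomology} for part~(2)---is exactly that. Your identification $X=\ker(i)=\image((p^!)_\ast)$ is the right one, and the exactness checks you sketch are the (trivial) content of the argument.
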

	
	As a corollary, Theorem~\ref{theorem:blup_transversals:past_in_cohomology} also computes the de Rham cohomology of real projective blowups of manifolds.

	\begin{corollary}
		Let $ (M,N) $ be a pair of manifolds.
		\begin{enumerate}
			\item If $ \codim N $ is odd, then
			\begin{equation}
			p^\ast\colon\Homology{M}{\bullet}{}\xrightarrow{\simeq}\Homology{\blup(M,N)}{\bullet}{}.
			\end{equation}
			\item If $ \codim N $ is even, let $ E\to N $ be a tubular neighbourhood of $ N $ in $ M $. Then $ \Homology{\blup(M,N)}{\bullet}{} $ fits into a long exact sequence
			\begin{equation}
			\begin{tikzpicture}[baseline=(current bounding box.center)]
			\node (1) at (0,0) {$ \dots $};
			\node (2) at (2,0) {$ \homology^\bullet(M) $};
			\node (3) at (5.1,0) {$ \homology^\bullet(\blup(M,N)) $};
			\node (4) at (8.3,0) {$ \homology_\mathrm{cv}^{\bullet+1} (E) $};
			\node (5) at (11,0) {$ \homology^{\bullet+1}(M) $};
			\node (6) at (13.2,0) {$ \dots $};
			
			\path[->]
			(1) edge node[]{$  $} (2)
			(2) edge node[above]{$ p^\ast $} (3)
			(3) edge node[above]{$ p_\ast $} (4)
			(4) edge node[above]{$ i $} (5)
			(5) edge node[]{$  $} (6)
			
			;
			\end{tikzpicture}
			\end{equation}
			Here, $ p_\ast $ first restricts a form to $ \field P $, then fibre-integrates and applies the Thom isomorphism.
		\end{enumerate}
		\begin{proof}
			The statement follows from Equation~\eqref{eq:odd_codim_p^!_cohomology} for odd codimension and Equation~\eqref{eq:even_codim_p^!_cohomology} for even codimension of $ N $, applied to the Lie algebroid $ A=TM $.
		\end{proof}
	\end{corollary}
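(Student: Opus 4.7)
The strategy is to specialise Theorem~\ref{theorem:blup_transversals:past_in_cohomology} to the tangent Lie algebroid $A=TM$. First I would verify that the set-up transfers: the anchor of $TM$ is the identity, so every closed submanifold $N\subseteq M$ is automatically a transversal, with $\iota^! TM = TN$. Next I need to identify $p^!TM$ with $T\blup(M,N)$; this follows because, for any smooth map $p$, the pullback fibre $p^!TM_x = \{(v,w)\in T_{p(x)}M \times T_x\blup(M,N) : v = T_xp(w)\}$ is canonically isomorphic to $T_x\blup(M,N)$ via projection onto the second factor, and the bracket and anchor transport accordingly. In particular $\homology^\bullet(p^!TM)=\homology^\bullet_{\mathrm{dR}}(\blup(M,N))$ and $(p^!)^\ast$ becomes ordinary pullback $p^\ast$ of de Rham forms.

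For the odd codimension statement, the claim is then an immediate consequence of part~\ref{theorem_item:past_in_cohomology:odd} of Theorem~\ref{theorem:blup_transversals:past_in_cohomology}, together with the canonical isomorphism $\homology^\bullet(\blup(A,\iota^!A)) \cong \homology^\bullet(p^!A)$ of Equation~\eqref{theorem_eq:general_identif}, noting that these two identifications compose to $p^\ast$ on $\homology^\bullet(M)$.

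For the even codimension statement I would invoke part~\ref{theorem_item:past_in_cohomology:even}. Applied to $A=TM$, this yields a long exact sequence
\begin{equation*}
\dots \to \homology^\bullet(M) \xrightarrow{p^\ast} \homology^\bullet(\blup(M,N)) \xrightarrow{(p^!)_\ast} \homology^{\bullet+1}_{\mathrm{cv}}((TM)_E) \xrightarrow{i} \homology^{\bullet+1}(M)\to \dots
\end{equation*}
provided a section $X\in\mathfrak{X}(M)$ Euler-like along $TN$ with $X|_N=0$ is chosen; such an $X$ exists by \cite[Lemma 3.9]{BursztynLimaMeinrenken2019} and gives rise to the tubular neighbourhood $E\to N$. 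Since $E$ is open in $M$, the Lie algebroid $(TM)_E$ coincides with $TE$, so $\homology^{\bullet+1}_{\mathrm{cv}}((TM)_E)=\homology^{\bullet+1}_{\mathrm{cv}}(E)$ is the usual compact-vertical de Rham cohomology of the vector bundle $E\to N$. Under these identifications, $(p^!)_\ast$ is the map described in the theorem: restrict to $\field P$, fibre-integrate along $\field P\to N$, and apply the Thom isomorphism to land in $\homology^{\bullet+1}_{\mathrm{cv}}(E)$, and $i$ is the natural map induced by inclusion of compactly supported forms.

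There is no real obstacle; the work has all been done in Theorem~\ref{theorem:blup_transversals:past_in_cohomology} and the only genuine check is the identification of the Lie algebroid data, which is routine. The one small point to be careful about is to confirm that the descriptions of the connecting maps given in Theorem~\ref{theorem:blup_transversals:past_in_cohomology} reduce to the de Rham ones upon setting $A=TM$, which follows because both fibre integration and the Thom isomorphism for Lie algebroids in the appendix specialise to their classical analogues when the Lie algebroid is the tangent bundle.
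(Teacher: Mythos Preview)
Your proposal is correct and follows the paper's approach exactly: specialise Theorem~\ref{theorem:blup_transversals:past_in_cohomology} to $A=TM$, after making explicit the identification $p^!TM\cong T\blup(M,N)$ so that $\homology^\bullet(p^!TM)=\homology^\bullet_{\mathrm{dR}}(\blup(M,N))$ and $(p^!)^\ast=p^\ast$. One minor point: in the odd case your appeal to Equation~\eqref{theorem_eq:general_identif} is both unnecessary and slightly misstated (that equation gives a direct-sum decomposition $\homology^\bullet(\blup(A,\iota^!A))\cong\homology^\bullet(p^!A)\oplus\homology^{\bullet-1}(\pi_{\field P}^!A)$, not an isomorphism onto the first summand); since you have already identified $\homology^\bullet(p^!TM)$ with $\homology^\bullet(\blup(M,N))$, part~\ref{theorem_item:past_in_cohomology:odd} alone yields the claim.
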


\subsection{The case of codimension 1}\label{sec:transversals_codim1}
	
In this section we prove Theorem~\ref{theorem:blup_transversals:past_in_cohomology}, \ref{theorem_item:past_in_cohomology:iso_and_codim1} for $ \codim (N)=1 $. In this case, $p\colon \blup(M,N) \xrightarrow{\simeq} M$ and $\field P\simeq N$, hence the statement reduces to 
 \begin{equation}\label{eq:cohomology_codim1_blup}
		\homology^\bullet(\blup(A,\iota^! A))=\homology^\bullet(A)\oplus \homology^{\bullet-1}(\iota^! A).
\end{equation}

We prove a slightly stronger statement, which can be seen as the Mazzeo-Melrose analogue for the blowup of a codimension $1$ transversal, by adapting the proof in \cite[Section 2.1]{marcut.osornotorres:2014a} to our situation.

\begin{theorem}[Mazzeo-Melrose for Lie algebroid transversals]\label{theorem:blup_of_codim1_transversal}
		Let $ \iota\colon N\hookrightarrow M $ be a codimension $ 1 $ transversal of a Lie algebroid $ A\laover M $. Then the sequence
		\begin{equation}\label{eq:mazzeo_melrose_ses}
		\begin{tikzpicture}[baseline=(current bounding box.center)]
		\node (1) at (0,0) {$ 0 $};
		\node (2) at (1.9,0) {$ \Omega^\bullet(A) $};
		\node (3) at (5,0) {$ \Omega^\bullet(\blup(A,\iota^! A)) $};
		\node (4) at (9,0) {$ \Omega^{\bullet-1}(\iota^! A) $};
		\node (5) at (11,0) {$ 0 $};
		
		\path[->]
		(1) edge node[]{$  $} (2)
		(2) edge node[above]{$ p^\ast_A $} (3)
		(3) edge node[above]{$  j_{\blup(a)}\at{N} $} (4)
		(4) edge node[]{$  $} (5)
		
		;
		\end{tikzpicture}
		\end{equation}
		where $ j $ denotes the right insertion of sections and $ a\in \Gamma^\infty(A) $ is Euler-like along $ N $ with $ a\at{N}=0 $, is a split exact sequence of cochain complexes and does not depend on the choice of $a$. In particular, the Lie algebroid cohomology of the blowup is given by
		\begin{equation}
		\homology^\bullet(\blup(A,\iota^! A))=\homology^\bullet(A)\oplus \homology^{\bullet-1}(\iota^! A).
		\end{equation}
	\end{theorem}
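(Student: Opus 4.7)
The strategy is to adapt the Mazzeo--Melrose argument from \cite{marcut.osornotorres:2014a} to the Lie algebroid setting by reducing, via the normal form Theorem~\ref{theorem:normal_form_transversals}, to an explicit local decomposition of forms. Injectivity of $p_A^\ast$ is immediate because $p_A$ is a Lie algebroid isomorphism on the dense open $M\setminus N$, while $(j_{\blup(a)}\circ p_A^\ast)\omega\at{N}=p_A^\ast(j_a \omega)\at{N}=0$ follows from $p_A(\blup(a))=a$ together with $a\at{N}=0$. This dispatches exactness at $\Omega^\bullet(A)$ and the vanishing of the composition.

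For the local picture, Theorem~\ref{theorem:normal_form_transversals} supplies a tubular neighborhood $\pr\colon E\to N$ with an isomorphism $A_E\cong\pr^!\iota^!A$, where $E\to N$ is a line bundle. Let $x$ be a fiberwise linear defining function for $N$; an Euler-like section vanishing on $N$ can be taken to satisfy $\rho(a)=x\partial_x$ in this model. The module $\Gamma^\infty(\blup(A_E,\iota^!A))$ is then generated over $\Cinfty(E)$ by $\blup(a)$ together with the pullback sections $\pr^!b$ for $b\in\Gamma^\infty(\iota^!A)$, and the dual frame contains a distinguished one-form $\tau\in\Omega^1(\blup(A_E,\iota^!A))$ characterized by $j_{\blup(a)}\tau=1$ and $j_{\pr^!b}\tau=0$. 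The Cartan formula, combined with $[a,\pr^!b]=0$ (which follows from $\rho(a)=x\partial_x$ and the $x$-independence of pullback sections), gives $\D\tau=0$. Consequently every form on the blowup admits a unique decomposition
\begin{equation*}
\omega = p_A^\ast\alpha + \tau\wedge p_A^\ast\pr^!\beta,\qquad \alpha\in\Omega^\bullet(A_E),\ \beta\in\Omega^{\bullet-1}(\iota^!A),
\end{equation*}
and in this decomposition $j_{\blup(a)}\omega\at{N}=\beta$. Since $\tau$ is closed and the decomposition commutes with $\D$ up to sign, surjectivity of $j_{\blup(a)}\at{N}$, middle exactness, and the chain-level splitting follow at once; the splitting map is $\eta\mapsto\tau\wedge p_A^\ast\pr^!\eta$.

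To globalize I would pick an Euler-like section $a$ with $a\at{N}=0$ (existence by \cite[Lemma~3.9]{BursztynLimaMeinrenken2019}), take the tubular neighborhood $E\subseteq M$ it induces via the normal form theorem, and patch the local splitting with the trivial one on $M\setminus N$, where $p_A$ is already an isomorphism, by means of a bump function supported in $E$. The main obstacle is to verify that the resulting sequence is independent of the chosen $a$. For this, observe that if $a'$ is another Euler-like section with $a'\at{N}=0$, then $a-a'=x\cdot c$ for some $c\in\Gamma^\infty(A,\iota^!A)$, so $\blup(a)-\blup(a')=x\blup(c)$ vanishes on $\field P=N$ and the insertion maps $j_{\blup(a)}\at{N}$ and $j_{\blup(a')}\at{N}$ coincide. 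The cohomological decomposition $\homology^\bullet(\blup(A,\iota^!A))\cong\homology^\bullet(A)\oplus\homology^{\bullet-1}(\iota^!A)$ is then an immediate consequence of the split short exact sequence of cochain complexes.
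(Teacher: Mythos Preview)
Your overall strategy matches the paper's, and the local analysis is sound: in the normal form $A_E\cong\pr^!\iota^!A$ with a linear connection, the dual element $\tau$ to $\blup(a)$ is indeed closed, and the resulting decomposition gives exactness and a chain-level splitting over $E$. The argument for independence of $a$ is essentially right but incomplete as written: from $a|_N=a'|_N=0$ you only get $a-a'=xc$ with $c\in\Gamma^\infty(A)$; to conclude $c\in\Gamma^\infty(A,\iota^!A)$ you must use that $\rho(a)$ and $\rho(a')$ have the \emph{same} normal linearisation (both Euler-like), so that the transverse component of $a-a'$ vanishes to second order. The paper makes this explicit in Lemma~\ref{lemma:existence_of_eulerlikes}.

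The genuine gap is the globalisation of the splitting. Cutting off $\tau$ by a bump function $\chi$ does not in general produce a chain map: one computes $\D(\chi\tau)=\rho_{\blup}^\ast d\chi\wedge\tau$, and unless $\rho_{\blup}^\ast d\chi$ is itself a multiple of $\tau$ (which requires $\chi$ to be a function of a fibre metric and the connection used to define $\tau$ to be metric-compatible, neither of which you arrange), the error term $\rho_{\blup}^\ast d\chi\wedge\tau\wedge p_A^\ast\pr^!\eta$ need not vanish. The paper circumvents this by replacing your locally defined $\tau$ with $\alpha=\rho_{\blup}^\ast\D\log\lambda$ for an adapted distance function $\lambda$: this form is globally defined on $\blup(A,\iota^!A)$, has compact vertical support because $\lambda\equiv 1$ outside a neighbourhood of $N$, is closed because it is the $\rho_{\blup}$-pullback of a closed form on the dense open $M\setminus N$, and satisfies $\alpha(\blup(a))|_N=1$. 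The splitting $\eta\mapsto p_A^\ast(\pr^!)^\ast\eta\wedge\alpha$ is then automatically a chain map, with no patching required.
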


We prove Theorem \ref{theorem:blup_of_codim1_transversal} in the remainder of this section. First, we collect properties of $\blup(a)\in \Gamma^\infty(\blup(A,\iota^! A))$ for $a\in \Gamma^\infty(A)$ as in the Theorem.
	
	\begin{lemma}\label{lemma:existence_of_eulerlikes}
		Let $ \iota\colon N\hookrightarrow M $ be a Lie algebroid transversal of any codimension $ \codim N\geq 1 $. For an Euler-like section $ a\in \Gamma^\infty(A) $ with $ a\at{N}=0 $ (which exists by \cite[Lemma 3.9]{BursztynLimaMeinrenken2019}), the blowup section $ \blup(a)\in \Gamma^\infty(\blup(A,\iota^! A)) $ satisfies the following:
		\begin{enumerate}
			\item $ \rho_\blup(\blup(a)) $ is Euler-like along $ \field P $.
			\item $ \blup(a)\in \Gamma^\infty(\blup(A,\iota^!A)) $ is nowhere vanishing on $ N $ and satisfies $ p_A(\blup(a)\at{\field P})=0 $, where $ p_A\colon \blup(A,\iota^! A)\to A $ denotes the blow-down map.
			\item $ \blup(a) $ is Euler-like along the Lie subalgebroid $ \blup(A,\iota^!A)_{\field P} $.
            \item If $a'\in \Gamma^\infty(A)$ is another section Euler-like along $\iota^! A$ with $a'\at{N}=0$, we have
            \begin{equation}
                (\blup(a)-\blup(a'))\at{N}=0.
            \end{equation}
            In particular, \eqref{eq:mazzeo_melrose_ses} does not depend on the choice of $a$.
		\end{enumerate}
		\begin{proof}
			The first part follows from Corollary~\ref{corollary:lift_of_Evf_is_Evf}.For the second part, since $ p_A(\blup(a)\at{\field P})=0 $ is clear we, only have to show that $ \blup(a)\at{\field P} $ is nowhere vanishing. For this, consider $ \rho\colon (A,\iota^! A)\to (TM,TN) $ and $ \rho(a) \colon (M,N)\to (TM,TN) $ as map of pairs. Then
			\begin{equation}
			\blup(\rho)(\blup(a))=\blup(\rho(a))\in \Gamma^\infty(\blup(TM,TN)).
			\end{equation}
			But for the tubular neighbourhood for which $ \rho(a) $ is the Euler vector field it is easy to see that $ \blup(\rho(a)) $ is nowhere vanishing on $ \field P $ using Remark~\ref{remark:local_frames_for_blups}. Thus $ \blup(a) $, too, must be nowhere vanishing on $ \field P $.
   
			For the third part let $ f\in \Cinfty(\blup(M,N)) $ and $ \mu\in \Gamma^\infty(\blup(A,\iota^! A)) $ be given. Then
			\begin{equation*}
			[\blup(a),f\mu]_\blup\at{\field P}=\underbrace{\rho_\blup (\blup(a))}_{=0 \text{ on }\field P}(f)\at{\field P}\mu+f[ \blup(a),\mu ]_\blup\at{\field P},
			\end{equation*}
			so it is enough to check the statement on a set of generators, namely $ \blup( \Gamma^\infty(A,\iota^! A) ) $. Next, we use $ a\in \Gamma^\infty(A) $ to obtain an isomorphism of Lie algebroids
			\begin{equation*}
			A\at{E}\cong \pr^! \iota^! A
			\end{equation*}
			for the tubular neighbourhood $ \pr\colon E\to N $ corresponding to $ \rho(a) $. Since the statement is a local one around $ \field P $ it is enough to work in this neighbourhood. By \cite[Remark 3.19]{BursztynLimaMeinrenken2019} this isomorphism maps $ a\in \Gamma^\infty(A) $ to $ (0,\xi_E) $, where $ \xi_E $ is the Euler vector field of $ E $. Pick a linear connection on $ E $ with corresponding horizontal lift $ \argument^\hor $. Let $ \{ U_\alpha \}_\alpha$ be an open cover of $ N $ such that there are local frames $ \{ f_1,\dots,f_{\rank \iota^!A} \} $ of $ (\iota^!A)_U $ and $ \{ e_1,\dots,e_{\codim N} \} $ of $ E_U $. Then the collection $ \{ \tilde{f}_1,\dots,\tilde{f}_{\rank \iota^!A} , \tilde{e}_1,\dots,\tilde{e}_{\codim N} \} $ defined by
			\begin{equation*}
			\begin{aligned}
			\tilde{f}_i&=( f_i\circ \pr, (\rho_{\iota^! A}(f_i))^\hor )\\
			\tilde{e}_j&=( 0, e_j^\ver )
			\end{aligned}
			\end{equation*}
			yields a local frame of $ \pr^! \iota^! A_{E_U} $. The module $ \Gamma^\infty (\pr^! \iota^! A_{E_U},\iota^! A_{U}) $ is generated by $ \tilde{f}_i $'s and $ g\tilde{e}_j $'s, where $ g\in \mathcal{J}_N $ is in the vanishing ideal of $ N $. 
            By definition we have
			\begin{equation*}
			[\blup(a),\blup(b)]_\blup
			=\blup([a,b])
			\end{equation*}
            and want to show that
            \begin{equation*}
                \blup([a,b])\at{\field P}=0
            \end{equation*}
            for any such section $ b $.
			For this, note that it is enough to show that $ [a,b] $ vanishes to second order along $ N $. Clearly, we have
			\begin{equation*}
			[a,\tilde{f}_i]=[(0,\xi_E),(f_i\circ \pr,\rho_{\iota^! A}(f_i)^\hor)]=(0,[\xi_E,\rho_{\iota^! A}(f_i)^\hor])=0.
			\end{equation*}
			Thus let $ g\in \mathcal{J}_N $ be given. Then
			\begin{equation*}
			\begin{aligned}
			[a,g\tilde{e}_j]&=(0,\xi_E(g)e_j^\ver)+g( 0,[\xi_E, e_j^\ver] )\\
			&=(0,\xi_E(g)e_j^\ver- ge_j^\ver)\\
			&=0 \mod \mathcal{I}_N^2
			\end{aligned}
			\end{equation*}
			as $ \xi_E(g)=g \mod\mathcal{J}_N^2 $.

   For the last part, let $a'\in \Gamma^\infty(A)$ be another choice of Euler-like section. In the trivialisation induced by $a$ we can write
   \begin{equation*}
       a'=g^i \tilde{f}_i+h^j\tilde{e}_j
   \end{equation*}
   for $g^i,h^j\in \Cinfty(M)$. Since by assumption $a'\at{N}=0$, $g^i\in \vanishing_N$ follows. Since $\rho_A(a')$ is Euler-like along $N$,
   \begin{equation*}
       \xi_E-h^j\tilde{e}_j\in \vanishing_N^2 \vfield(M)
   \end{equation*}
   vanishes to second order. In conclusion, 
   $\blup(a-a')\at{\field P}=0$.
		\end{proof}
	\end{lemma}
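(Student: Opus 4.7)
The plan is to verify the four items in turn, leveraging the explicit local-frame description of blowups (Remark~\ref{remark:local_frames_for_blups}), the normal form for transversals (Theorem~\ref{theorem:normal_form_transversals}), and the compatibility of blowup with Euler vector fields (Corollary~\ref{corollary:lift_of_Evf_is_Evf}).

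First I would dispatch items (i) and (ii). For (i), Lemma~\ref{lemma:blup_of_anchored_vb} identifies $\rho_\blup(\blup(a))$ with the lift of $\rho(a)$, and since $\rho(a)$ is the Euler vector field of some tubular neighbourhood $E\to N$, Corollary~\ref{corollary:lift_of_Evf_is_Evf} identifies this lift with the Euler vector field of $\blup(E,N)=\field L(E)\to \field P$, a tubular neighbourhood of $\field P$ in $\blup(M,N)$. For (ii), the relation $p_A\circ \blup(a)=a\circ p$ together with $a\at{N}=0$ immediately gives $p_A(\blup(a)\at{\field P})=0$. For non-vanishing on $\field P$, I would apply the functor $\blup$ to $\rho\colon (A,\iota^!A)\to (TM,TN)$ to obtain $\blup(\rho)(\blup(a))=\blup(\rho(a))$ and then check in the blowup chart of Remark~\ref{remark:local_frames_for_blups} that $\blup(\xi_E)$ expands as a nowhere-vanishing combination of the local frame $\{\blup(x_i\partial_{x^j})\}$ on $\field P$. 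By linearity of $\blup(\rho)$, the contraposition forces $\blup(a)\at{\field P}$ to be nonvanishing as well.

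Item (iii) is the technical heart. Given (i), the only remaining task is to show that the inner derivation $[\blup(a)\at{\field P},\argument]$ is trivial on $\Gamma^\infty(\blup(A,\iota^!A)_{\field P})$. Using $\Cinfty$-linearity and the Leibniz rule together with $\rho_\blup(\blup(a))\at{\field P}=0$, it suffices to prove $\blup([a,b])\at{\field P}=0$ for $b$ in a local generating set of $\Gamma^\infty(A,\iota^!A)$. For this I would invoke the normal form of Theorem~\ref{theorem:normal_form_transversals} to identify $A_E\cong \pr^!\iota^!A$ with $a\leftrightarrow (0,\xi_E)$, pick a linear connection on $E\to N$ providing a local frame $\{\tilde{f}_i=(f_i\circ \pr,\rho_{\iota^!A}(f_i)^\hor),\;\tilde{e}_j=(0,e_j^\ver)\}$ of $\pr^!\iota^!A$, and observe that $\Gamma^\infty(A,\iota^!A)$ is locally generated by the $\tilde{f}_i$ and by $g\tilde{e}_j$ with $g\in \vanishing_N$. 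The two brackets then reduce to $[\xi_E,\rho_{\iota^!A}(f_i)^\hor]=0$, which is valid since horizontal lifts are scale-invariant, and $[\xi_E,ge_j^\ver]=(\xi_E(g)-g)e_j^\ver$, which lies in $\vanishing_N^2\vfield(E)$ because $\xi_E(g)\equiv g\pmod{\vanishing_N^2}$ for $g\in \vanishing_N$. Since $\blup$ sends $\vanishing_N^2\Gamma^\infty(A,\iota^!A)$ into sections vanishing on $\field P$, the claim follows.

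For (iv), I would expand $a'=g^i\tilde{f}_i+h^j\tilde{e}_j$ in the trivialisation coming from $a$. The assumption $a'\at{N}=0$ forces $g^i\in \vanishing_N$, and the observation that any two Euler-like vector fields have the same linearisation along $N$ (both being the identity on $\nu(M,N)$) forces $\xi_E-h^j\tilde{e}_j\in \vanishing_N^2\vfield(M)$. Both contributions lie in $\vanishing_N\cdot \Gamma^\infty(A,\iota^!A)$, and because $\blup(fs)=p^\ast f\cdot \blup(s)$ with $p^\ast f\at{\field P}=0$ for $f\in \vanishing_N$, one obtains $\blup(a-a')\at{\field P}=0$, giving $j_{\blup(a)}\omega\at{\field P}=j_{\blup(a')}\omega\at{\field P}$ and hence independence of~\eqref{eq:mazzeo_melrose_ses} on $a$. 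The step I expect to be the main obstacle is item (iii): it requires the normal form theorem in an essential way and careful bookkeeping of vanishing orders, since the naive estimate $[a,b]\at{N}=0$ is too weak to yield $\blup([a,b])\at{\field P}=0$.
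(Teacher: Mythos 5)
Your proposal is correct and follows essentially the same route as the paper: items (i)–(ii) via Corollary~\ref{corollary:lift_of_Evf_is_Evf} and the identity $\blup(\rho)(\blup(a))=\blup(\rho(a))$ checked in the frames of Remark~\ref{remark:local_frames_for_blups}, item (iii) by reducing to the generators $\tilde f_i$ and $g\tilde e_j$ in the normal form $A_E\cong\pr^!\iota^!A$ and showing the brackets vanish to second order along $N$, and item (iv) by expanding $a'$ in the same trivialisation. The only differences are cosmetic (you make explicit the linearisation argument behind $\xi_E-h^j\tilde e_j\in\vanishing_N^2\vfield(M)$, which the paper states more tersely).
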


Next, we show that the insertion of $ \blup(a)\at{N} $ is a cochain map. Clearly the restriction to $ N $ itself is a cochain map, so the crucial part happens inside the Lie subalgebroid $ \blup(A,\iota^! A)_{N} $. We first note that, in the case of $ \codim N=1 $, the Lie algebroid $ \blup(A,\iota^! A)_{N} $ is an abelian extension according to the definition in \cite{mackenzie:2005a}.
	
	\begin{lemma}\label{lemma:codim1_transversal_abeilan_extension}
		Let $ \iota \colon N\hookrightarrow M $ be a transversal of a Lie algebroid $ A\laover M $ of any codimension. Then $ \blup(A,\iota^! A)_{\field P} $ fits into a short exact sequence of Lie algebroids
		\begin{equation}\label{eq:diagram_bluptrans_ses_of_LA}
		\begin{tikzpicture}[baseline=(current bounding box.center)]
		\usetikzlibrary{arrows}
		\node (1) at (0,1.2) {$ 0 $};
		\node (2) at (1.5,1.2) {$ L $};
		\node (3) at (4,1.2) {$ \blup(A,\iota^! A)_{\field P} $};
		\node (4) at (6.5,1.2) {$ \iota^! A $};
		\node (5) at (8,1.2) {$ 0 $};
		
		\node (w) at (1.5,0) {$ \field P $};
		\node (e) at (4,0) {$ \field P $};
		\node (r) at (6.5,0) {$ N $};
		\draw[-Implies,double equal sign distance]
		(2) -- (w);
		\draw[-Implies,double equal sign distance]
		(3) -- (e);
		\draw[-Implies,double equal sign distance]
		(4) -- (r);

		\path[->]
		(1) edge node[]{$  $} (2)
		(2) edge node[above]{$ i $} (3)
		(3) edge node[above]{$ p_A $} (4)
		(4) edge node[]{$  $} (5)
		
		(w) edge node[above]{$ \id_{\field P} $} (e)
		(e) edge node[above]{$ p $} (r);
		
		\end{tikzpicture}
		\end{equation}
		where $ L:= \ker p_A\at{\field P}  $ and $ i\colon L\hookrightarrow \blup(A,\iota^! A)_{\field P} $ denotes the inclusion. 
  
  Moreover, if $ \codim N=1 $ the sequence \eqref{eq:diagram_bluptrans_ses_of_LA} is an abelian extension of Lie algebroids.
		\begin{proof}
			Since $ p_A\at{\field P}\colon \blup(A,\iota^! A)_{\field P}\to A_N $ is a vector bundle morphism with image given by $ \iota^! A $, it follows that $ L=\ker p_A\at{\field P} $ is a vector bundle of rank $ \codim N $ and \eqref{eq:diagram_bluptrans_ses_of_LA} is a short exact sequence of Lie algebroids. 
            To show that in case of $ \codim N=1 $ the Lie algebroid $L$ is abelian, note that the anchor of the blowup makes the diagram
			\begin{equation*}
			\begin{tikzpicture}[baseline=(current bounding box.center)]
			\node (blup) at (0,1.5) {$ \blup(A,\iota^!A)_N $};
			\node (A) at (2.5,1.5) {$ A_N $};
			\node (TNl) at (0,0) {$ TM_N $};
			\node (TNr) at (2.5,0) {$ TM_N $};
			
			\path[->]
			(blup) edge node[above]{$ p_A $} (A)
			(blup) edge node[left]{$ \rho_\blup $} (TNl)
			(A) edge node[right]{$ \rho $} (TNr)
			(TNl) edge node[above]{$ = $} (TNr)
			
			;
			\end{tikzpicture}
			\end{equation*}
			commute, from which $ L\subseteq \ker \rho_\blup $ follows. Thus the inherited anchor is zero, turning $L$ into an abelian Lie algebroid of rank $ 1 $.
		\end{proof}
	\end{lemma}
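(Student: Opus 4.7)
The plan is to first establish the sequence at the level of vector bundles by a surjectivity-and-rank argument, then upgrade this to an exact sequence of Lie algebroids, and finally treat the codimension-one case separately using the compatibility of the anchors.

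First, I would show that $p_A\at{\field P}$ takes values in $\iota^! A \subseteq A_N$ and that the induced map $\blup(A,\iota^! A)_{\field P} \to \iota^! A$ is a surjective vector bundle morphism over $p\at{\field P}\colon \field P \to N$. By Lemma~\ref{lemma:sections_of_blup}, sections of $\blup(A,\iota^! A)$ are locally generated over $\Cinfty(\blup(M,N))$ by blowups $\blup(s)$ with $s \in \Gamma^\infty(A,\iota^! A)$; by the universal property of the blowup one has $p_A \circ \blup(s) = s \circ p$, so restricting to $\field P$ yields pointwise values in $\iota^! A$. Surjectivity follows by extending an arbitrary element of $\iota^! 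A$ to a section in $\Gamma^\infty(A,\iota^! A)$ and evaluating its blowup. A rank count using $\rank \blup(A,\iota^! A) = \rank A$ and $\rank \iota^! A = \rank A - \codim N$ (from transversality) then shows that $L = \ker p_A\at{\field P}$ is a vector bundle of rank $\codim N$ over $\field P$.

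To promote this to a short exact sequence of Lie algebroids, I would note that $p_A$ is a Lie algebroid morphism by construction, and that $\iota^! A \hookrightarrow A\at{N}$ is a Lie subalgebroid. The kernel $L$ inherits a bracket since the bracket of two sections in the kernel of the Lie algebroid morphism $p_A\at{\field P}$ remains in the kernel, and the inclusion $i\colon L \hookrightarrow \blup(A,\iota^! A)_{\field P}$ together with $p_A\at{\field P}$ then forms a short exact sequence over the base maps $\id_{\field P}$ and $p\at{\field P}$ as in~\eqref{eq:diagram_bluptrans_ses_of_LA}.

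For the abelianness in codimension one, I would exploit that $p\colon \blup(M,N) \to M$ is a diffeomorphism, so that in the compatibility diagram $\rho_A \circ p_A = p_\ast \circ \rho_{\blup}$ the map $p_\ast$ is a fibrewise isomorphism. Hence for any $\xi \in L$, the identity $p_\ast(\rho_\blup(\xi)) = \rho_A(p_A(\xi)) = 0$ forces $\rho_\blup(\xi) = 0$, so the inherited anchor of $L$ vanishes. Since $\rank L = \codim N = 1$, every fibre of $L$ is a one-dimensional Lie algebra and hence abelian, so the bracket on $\Gamma^\infty(L)$ is zero by the Leibniz rule, confirming that \eqref{eq:diagram_bluptrans_ses_of_LA} is an abelian extension of Lie algebroids.

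The step I expect to require the most care is verifying that the image of $p_A\at{\field P}$ actually lies in $\iota^! A$ rather than merely in $A\at{N}$. This is precisely the feature distinguishing the blowup of the Lie subalgebroid $\iota^! A$ from a blowup of the larger $A\at{N}$, and it relies on using only sections in $\Gamma^\infty(A,\iota^! A)$ to generate $\Gamma^\infty(\blup(A,\iota^! A))$ via Lemma~\ref{lemma:sections_of_blup}.
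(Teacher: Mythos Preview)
Your proposal is correct and follows essentially the same route as the paper's proof: identify the image of $p_A\at{\field P}$ as $\iota^! A$ to obtain the short exact sequence, then in codimension one use the anchor compatibility $\rho_A\circ p_A = Tp\circ\rho_{\blup}$ together with invertibility of $Tp$ to conclude that $L$ has vanishing anchor and hence, being of rank one, is abelian. You supply more detail than the paper does (in particular the justification via Lemma~\ref{lemma:sections_of_blup} that the image lands in $\iota^! A$ and is all of it), but the underlying argument is the same.
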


 \begin{remark}
     For an arbitrary codimension of $N$, the extension \eqref{eq:diagram_bluptrans_ses_of_LA} is not abelian. Consider $A=T\field R^2\laover \field R^2$ and $\iota\colon N=\{0\}\hookrightarrow \field R^2$. Then $L=\blup(A,\iota^! A)_{\field P}$, and
     \begin{equation*}
         [\blup(x\partial_x),\blup(x\partial_y)]_\blup=\blup(x\partial_x)
     \end{equation*}
     does not vanish on $\field P$.
 \end{remark}
	
	By Lemma \ref{lemma:codim1_transversal_abeilan_extension}, for $ \iota \colon N\hookrightarrow M $ a transversal of a Lie algebroid $ A\laover M $ of codimension $ 1 $ there is a representation of $ \iota^! A $ on $ L $ given by
	\begin{equation}
	\nabla_b \eta =[\tilde{b},\eta ]=  [\hat{b},\hat{\eta}]\at{N}
	\end{equation}
	for $ \tilde{b}\in \Gamma^\infty(\blup(A,\iota^! A )\at{N}) $ with $ p(\tilde{b})=b $ or, alternatively, $ \hat{b}, \hat{\eta} $ extensions of $ \tilde{b},\eta $ to a section of $ \blup(A,\iota^! A) $ \cite[Proposition 3.3.20]{mackenzie:2005a}. 
 
Key to showing that $ j_{\blup(a)}\at{N} $ in~\eqref{eq:mazzeo_melrose_ses} is a chain map is to see that it is enough for $ \blup(a)\at{N} $ to be constant with respect to the representation of $ \iota^! A $.
	
	\begin{lemma}\label{lemma:req_for_eulervf_to_be_chain}
		Let $ A\laover M $ be a Lie algebroid, $ \iota\colon N\hookrightarrow M $ a codimension $ 1 $ transversal and $ L=\ker p_A\at{N} $. Suppose $ \eta\in \Gamma^\infty(L) $ is constant with respect to the action of $ \iota^! A $, i.e.\ 
		\begin{equation}
		\nabla_b \eta =0
		\end{equation}
		for all $ b\in \Gamma^\infty(\iota^! A) $. Then the map 
		\begin{equation}
		\Omega^\bullet(\blup(A,\iota^! A))\ni \omega\mapsto j_\eta \omega\at{N}\in \Omega^{\bullet-1}(\iota^! A)
		\end{equation}
		is a chain map, where for $ \omega\in \Omega^k(\blup(A,\iota^! A)) $ and $ b_1,\dots,b_{k-1}\in \Gamma^\infty(\iota^! A) $ we define 
		\begin{equation}
		j_\eta \omega\at{N}(b_1,\dots,b_{k-1})=\omega\at{N}(\tilde{b}_1,\dots,\tilde{b}_{k-1},\eta).
		\end{equation}
		Here $ \tilde{b_i} $ is any section of $ \Gamma^\infty(\blup(A,\iota^! A)_N) $ such that $ p_A(\tilde{b}_i)=b_i $.
		\begin{proof}
			Let $ \omega\in \Omega^k(\blup(A,\iota^! A)) $ and $ b_0,\dots,b_{k-1}\in \Gamma^\infty(\iota^! A) $ be given with corresponding $ \tilde{b}_i $. Then we get
			\begin{equation*}
			\begin{aligned}
			j_\eta\D\omega\at{N}&(b_0,\dots,b_{k-1})= (\D\omega)(b_0,\dots,b_{k-1},\eta)\\
			=&\sum_{i=0}^{k-1}(-1)^i\rho_\blup(\tilde{b}_i)\omega\at{N}(\tilde{b}_0,\dots,\stackrel{i}{\wedge},\dots,\tilde{b}_{k-1},\eta)+(-1)^{k}\rho_\blup(\eta)\omega\at{N}(\tilde{b}_0,\dots,\tilde{b}_{k-1})\\
			&+\sum_{\mathclap{0\leq i<j\leq k-1}}(-1)^{i+j}\omega\at{N}([\tilde{b}_i,\tilde{b}_j],\tilde{b}_0,\dots,\stackrel{i}{\wedge},\dots,\stackrel{j}{\wedge},\dots,\tilde{b}_{k-1},\eta)\\
			&+ (-1)^{k}\sum_{i=0}^{k-1}(-1)^i\omega\at{N}([\tilde{b}_i,\eta],\tilde{b}_0,\dots,\stackrel{i}{\wedge},\dots,\tilde{b}_{k-1})\\
			=&\D (j_\eta\omega\at{N})(b_0,\dots,b_{k-1})+(-1)^{k}\rho_\blup(\eta)\omega\at{N}(\tilde{b}_0,\dots,\tilde{b}_{k-1})\\&+(-1)^{k}\sum_{i=0}^{k-1}(-1)^i\omega\at{N}([\tilde{b}_i,\eta],\tilde{b}_0,\dots,\stackrel{i}{\wedge},\dots,\tilde{b}_{k-1}).
			\end{aligned}
			\end{equation*}
			Thus it defines a chain map if and only if the expression
			\begin{equation*}
			\rho_\blup(\eta)\omega\at{N}(\tilde{b}_0,\dots,\tilde{b}_{k-1})+\sum_{i=0}^{k-1}(-1)^i\omega\at{N}([\tilde{b}_i,\eta],\tilde{b}_0,\dots,\stackrel{i}{\wedge},\dots,\tilde{b}_{k-1})
			\end{equation*}
			is zero. But under the assumptions on $ \eta $ every single summand vanishes.
		\end{proof}
	\end{lemma}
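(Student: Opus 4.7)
The plan is to verify the chain map property by a direct expansion of the Koszul formula for the differential of $\blup(A,\iota^! A)$, together with an identification of which terms vanish under the hypotheses. Before starting that calculation I would first check well-definedness of $j_\eta\omega|_N$: two different lifts $\tilde{b}_i,\tilde{b}_i'\in \Gamma^\infty(\blup(A,\iota^! A)_{\field P})$ with $p_A(\tilde{b}_i)=p_A(\tilde{b}_i')=b_i$ differ by a section of $L=\ker p_A|_{\field P}$. Since $\codim N=1$, Lemma~\ref{lemma:codim1_transversal_abeilan_extension} gives $\rank L=1$, so any such difference is a $\Cinfty(\field P)$-multiple of $\eta$, and then antisymmetry of $\omega|_{\field P}$ forces the extra contributions into $j_\eta\omega|_N$ to cancel.

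Next I would expand $j_\eta(\D\omega)|_N(b_0,\dots,b_{k-1}) = (\D\omega)|_{\field P}(\tilde{b}_0,\dots,\tilde{b}_{k-1},\eta)$ using the Koszul formula on $\blup(A,\iota^! A)_{\field P}$. The expansion naturally separates into three groups: (i) anchor terms $\rho_\blup(\tilde{b}_i)$ and inner brackets $[\tilde{b}_i,\tilde{b}_j]$ in which $\eta$ appears only passively in the last slot; (ii) the single anchor term $(-1)^k \rho_\blup(\eta)\,\omega|_{\field P}(\tilde{b}_0,\dots,\tilde{b}_{k-1})$; (iii) the cross-bracket terms $(-1)^{k+i}\omega|_{\field P}([\tilde{b}_i,\eta],\tilde{b}_0,\dots,\widehat{\tilde{b}_i},\dots,\tilde{b}_{k-1})$. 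Because $p_A$ is a Lie algebroid morphism, along $N$ it intertwines $\rho_\blup$ with $\rho$ and $[\tilde{b}_i,\tilde{b}_j]$ with $[b_i,b_j]$, so the group (i) reassembles exactly to $\D(j_\eta\omega|_N)(b_0,\dots,b_{k-1})$.

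It then remains to show that the groups (ii) and (iii) vanish. For (iii) the hypothesis $\nabla_b\eta=0$ is by definition the statement that for any extension, $[\tilde{b}_i,\eta]|_{\field P}=0$, so each summand evaluates $\omega$ on a vanishing section and is zero. For (ii), the codimension $1$ assumption enters crucially: by Lemma~\ref{lemma:codim1_transversal_abeilan_extension} the extension $0\to L\to \blup(A,\iota^!A)_{\field P}\to \iota^!A\to 0$ is abelian with $L\subseteq \ker \rho_\blup$, hence $\rho_\blup(\eta)=0$ and (ii) vanishes identically.

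The main obstacle I expect is purely organisational — keeping the signs and index patterns straight in the Koszul expansion and matching them cleanly against the formula for $\D$ on $\iota^!A$, including verifying that the $C^\infty(N)$-linearity in the $\tilde{b}_i$ slots is preserved when one passes from $\blup(A,\iota^!A)_{\field P}$ down to $\iota^!A$ via $p_A$. Once the calculation is laid out, the key inputs — vanishing of $\rho_\blup$ on $L$ from the abelian extension of Lemma~\ref{lemma:codim1_transversal_abeilan_extension}, and vanishing of the cross brackets from the $\nabla_b\eta=0$ hypothesis — slot in directly and finish the argument.
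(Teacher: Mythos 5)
Your proposal is correct and follows essentially the same route as the paper: expand $j_\eta\D\omega\at{N}$ via the Koszul formula, reassemble the terms not involving $\eta$ in a bracket or anchor into $\D(j_\eta\omega\at{N})$, kill the cross-bracket terms using $\nabla_b\eta=[\tilde{b},\eta]\at{N}=0$, and kill the remaining term since $\eta\in L\subseteq\ker\rho_\blup$ by Lemma~\ref{lemma:codim1_transversal_abeilan_extension}. Your extra well-definedness check is a sensible addition (the paper leaves it implicit); just note that the difference of two lifts is only \emph{pointwise} proportional to $\eta$ in the rank-one bundle $L$ (not necessarily a smooth multiple if $\eta$ vanishes somewhere), which is all that antisymmetry needs.
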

	
	\begin{corollary}\label{corollary:blup_of_euler_trivialises_action}
		Let $ \iota \colon N\hookrightarrow M $ be a transversal of a Lie algebroid $ A\laover M $ with $ \codim N=1 $ and $ a\in \Gamma^\infty(A) $ the Euler-like section from Lemma~\ref{lemma:existence_of_eulerlikes}. Then the map
		\begin{equation}
		j_{\blup(a)}\at{N}\colon \Omega^\bullet(\blup(A,\iota^! A))\to \Omega^{\bullet-1}(\iota^! A)
		\end{equation} 
		is a chain map.
	\end{corollary}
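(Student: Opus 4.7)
The plan is to deduce the corollary directly from Lemma~\ref{lemma:req_for_eulervf_to_be_chain} applied to the section $\eta = \blup(a)\at{N}$, with the two required hypotheses supplied by Lemma~\ref{lemma:existence_of_eulerlikes}. First, I would verify that $\blup(a)\at{N}$ is a section of $L = \ker p_A\at{N}$: this is exactly part~2 of Lemma~\ref{lemma:existence_of_eulerlikes}, which gives $p_A(\blup(a)\at{\field P}) = 0$. Since $\codim N = 1$, $\field P \simeq N$ and there is nothing more to say.

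The second, and main, point is to check that $\blup(a)\at{N}$ is constant with respect to the representation $\nabla$ of $\iota^! A$ on $L$. Recall that this representation is defined by $\nabla_b \eta = [\tilde{b}, \eta]_{\blup(A,\iota^! A)_{\field P}}$ for any lift $\tilde{b} \in \Gamma^\infty(\blup(A,\iota^! A)\at{N})$ of $b \in \Gamma^\infty(\iota^! A)$. By part~3 of Lemma~\ref{lemma:existence_of_eulerlikes}, $\blup(a)$ is Euler-like along the Lie subalgebroid $\blup(A,\iota^! A)_{\field P}$, which by definition means that the inner derivation $[\blup(a)\at{\field P},\argument]_{\blup(A,\iota^! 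A)_{\field P}}$ vanishes on $\Gamma^\infty(\blup(A,\iota^! A)_{\field P})$. In particular
\begin{equation*}
    \nabla_b \big( \blup(a)\at{N} \big) \;=\; [\tilde{b}, \blup(a)\at{N}]_{\blup(A,\iota^! A)_{\field P}} \;=\; -[\blup(a)\at{N}, \tilde{b}]_{\blup(A,\iota^! A)_{\field P}} \;=\; 0
\end{equation*}
for every $b \in \Gamma^\infty(\iota^! A)$, as required.

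With both hypotheses verified, Lemma~\ref{lemma:req_for_eulervf_to_be_chain} immediately yields that $j_{\blup(a)}\at{N}$ is a chain map. I do not anticipate any serious obstacle here, since all the genuine work is already embedded in Lemma~\ref{lemma:existence_of_eulerlikes} (the Euler-likeness statement, whose proof relied on the local model $A\at{E} \cong \pr^! \iota^! A$ and a computation modulo $\vanishing_N^2$) and in the cochain computation of Lemma~\ref{lemma:req_for_eulervf_to_be_chain}. The only mild care needed is in translating between $\blup(a)\at{\field P}$ and $\blup(a)\at{N}$ under the identification $\field P \simeq N$ coming from $\codim N = 1$, and in matching the sign convention of the representation $\nabla$ with the bracket on $\blup(A,\iota^! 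A)_{\field P}$.
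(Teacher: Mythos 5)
Your proposal is correct and follows exactly the route the paper intends: the corollary is a direct combination of Lemma~\ref{lemma:existence_of_eulerlikes} (parts 2 and 3, giving that $\blup(a)\at{N}$ is a nowhere-vanishing section of $L=\ker p_A\at{N}$ whose inner derivation on $\Gamma^\infty(\blup(A,\iota^! A)_{\field P})$ vanishes, hence by antisymmetry $\nabla_b(\blup(a)\at{N})=0$) and Lemma~\ref{lemma:req_for_eulervf_to_be_chain}. No gaps; this matches the paper's argument.
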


	As a last ingredient we need the concept of an adapted distance function, see~\cite[Section 2.1]{marcut.osornotorres:2014a}.
	\begin{definition}
		Let $ E\to N $ be a vector bundle equipped with a metric. A function $ \lambda\colon E\setminus N\to \field R_0^+ $ is called an \textup{adapted distance function} if the following hold:
		\begin{enumerate}
			\item $ \lambda\in \Cinfty(E\setminus N) $.
			\item For all $ x\in E\setminus N $ with $ |x|< \tfrac{1}{2} $, $ \lambda(x)=|x| $.
			\item $ \lambda(x)=1 $ for all $ x\in E $ with $ |x|>1 $.
		\end{enumerate}
	\end{definition}
	
	Note that such a function always exists, see e.g.~\cite[Section 8.5]{geudens:2017a}. With the notion of an adapted distance function at hand, we can proof Theorem~\ref{theorem:blup_of_codim1_transversal}.
	
	\begin{proof}[of Theorem~\ref{theorem:blup_of_codim1_transversal}]
		First note that $ j_{\blup(a)}\at{N}\circ p_A^\ast=0 $ is clear as $ a\at{N}=0 $. Suppose that $ \omega\in \Omega^k(\blup(A,\iota^! A)) $ is mapped to $ 0 $ under $ j_{\blup(a)}\at{N} $. Since we have the short exact sequence
		\begin{equation*}
		\begin{tikzpicture}[baseline=(current bounding box.center)]
		\node (0r) at (0,0) {$ 0 $};
		\node (L) at (1.5,0) {$ L $};
		\node (Blup) at (4,0) {$ \blup(A,\iota^! A)_N $};
		\node (B) at (6.5,0) {$ \iota^! A $};
		\node (0l) at (8,0) {$ 0,$};
		
		\path[->]
		(0r) edge node[]{$  $} (L)
		(L) edge node[]{$  $} (Blup)
		(Blup) edge node[above]{$ p_A $} (B)
		(B) edge node[]{$  $} (0l)
		
		;
		\end{tikzpicture}
		\end{equation*}
		this just means that $ \omega\at{N} $ actually is the pullback of a form on $ \iota^! A $. By computing in local coordinates one can then easily show that $ \omega\in p^\ast\Omega^k(A) $: If $ \{ b_1, \dots,b_k,e \} $ is a local frame an adapted chart $ U\subseteq M $, $ N\cap U=\{x=0\} $ such that the collection of $ b's $ yield a local frame for $ \iota^! A $ when restricted to $ N $, a form of the blowup is generated by forms that locally looks like
		\begin{equation*}
		\omega= f \frac{e^\ast}{x}\wedge (b^\ast)^{I}+ (b^\ast)^J,
		\end{equation*}
		for some $ f\in \Cinfty(N) $, denoting the dual frames with a $ \argument^\ast $ and $ I,J $ multi-indices, meaning that e.g.\ $ (b^\ast)^{\{ i_1,i_2 \}}=b_{i_1}^\ast\wedge b_{i_2}^\ast $. Then the condition $ j_{\blup(a)}\omega\at{N}=0 $ implies that $ f=xg $ since $\blup(a)\at{N}$ is nowhere vanishing. 
        Finally, by Corollary~\ref{corollary:blup_of_euler_trivialises_action} the sequence \eqref{eq:mazzeo_melrose_ses} is indeed a sequence of chain complexes.
		
		To define a splitting let $ \lambda(r) $ be an adapted distance function on $ E $. Then $ \D \log \lambda $ is a form on $ E\setminus N $ dual to the vertical Euler vector field is a neighbourhood of $N$, thus there exists $ \rho^\ast\D\log\lambda\in \Omega^1(A\at{E\setminus N}) $. The corresponding form $ \alpha=\rho_\blup^\ast\D\log\lambda $ extends smoothly to $ N $ (as can again be seen in local coordinates) and satisfies $ \alpha(\blup(a))=1 $ on $ N $ by continuity. Moreover, it is compactly supported in fiber direction of $ E $ (by definition of $ \lambda $) and closed as $ \rho_\blup^\ast $ is a chain map and $ \D \log\lambda $ is closed on the dense subset $ E\setminus N $. Thus we can define the map
		\begin{equation*}
		\Omega^{\bullet-1}(\iota^! A)\ni \tilde{\omega}\mapsto p_A^\ast(\pr^!)^\ast\tilde{\omega}\wedge \alpha \in \Omega^\bullet(\blup(A,\iota^! A)).
		\end{equation*}
        By closedness of $ \alpha $ this is a chain map, thus defining the desired splitting. In particular, $ j_{\blup(a)}\at{N} $ is surjective, hence the sequence is exact.
		But now it is clear that in the decomposition
		\begin{equation*}
		\homology^\bullet(\blup(A,\iota^! A))=\homology^\bullet(A)\oplus \homology^{\bullet-1}(\iota^! A),
		\end{equation*}
		the pullback by the blow-down map maps into the first factor.
	\end{proof}
	
\begin{remark}
    One way to interpret the closed $ 1 $-form $ \alpha=\rho_{\blup}^\ast \D \log \lambda $ from the proof of Theorem~\ref{theorem:blup_of_codim1_transversal} is as a Lie algebroid splitting for~\eqref{eq:diagram_bluptrans_ses_of_LA}: Restricted to $ N $ it is still closed, hence we can identify
	\begin{equation}
	\iota^! A\simeq \ker\alpha\at{N}\subseteq \blup(A,\iota^! A)_N
	\end{equation}
	as Lie algebroids. Thus, by Corollary~\ref{corollary:cohom_for_trivial_extension_class} we find
	\begin{equation}
	\Homology{\blup(A,\iota^! A)_N}{\bullet}{}=\Homology{\iota^! A}{\bullet}{}\oplus \Homology{\iota^! A}{\bullet-1}{}
	\end{equation}
	using that $ L $ is trivialisable such that the representation of $ \iota^! A $ becomes the trivial one. This observation alone is enough to prove Theorem~\ref{theorem:blup_of_codim1_transversal}. Indeed, let $ E\to N $ denote the tubular neighbourhood induced by the chosen Euler-like section $ a\in \Gamma^\infty(A) $ and $ \field L $ its tautological line bundle (see Remark \ref{remark:blup_of_zero_section}). Then there is an isomorphism
	\begin{equation}
	\Homology{\blup(A,\iota^! A)_{\field L}}{\bullet}{}\cong \Homology{\blup(A,\iota^! A)_{N}}{\bullet}{}
	\end{equation}
	induced by the restriction, see \cite[Theorem 3.24]{marcut.schuessler:2024a} in combination with Lemma \ref{lemma:existence_of_eulerlikes}. Together with $ \Homology{A_E}{\bullet}{}\cong \Homology{\iota^! A}{\bullet}{} $ we find that in cohomology we obtain a long exact sequence
	\begin{equation}
	\begin{tikzpicture}[baseline=(current bounding box.center)]
	\node (1) at (0,0) {$ \dots $};
	\node (2) at (2,0) {$ \Homology{A_E}{k}{} $};
	\node (3) at (5.3,0) {$ \Homology{\blup(A_E,\iota^! A)}{k}{} $};
	\node (4) at (8.6,0) {$ \Homology{\iota^! A}{k-1}{} $};
	\node (5) at (11.3,0) {$ \Homology{A_E}{k+1}{} $};
	\node (6) at (13.3,0) {$ \dots $};
	
	\path[->]
	(1) edge node[]{$  $} (2)
	(2) edge node[above]{$ p_A^\ast $} (3)
	(3) edge node[above]{$ f $} (4)
	(4) edge node[above]{$ g $} (5)
	(5) edge node[]{$  $} (6);
	
	\end{tikzpicture}
	\end{equation}
	in which $ f $ is surjective and $ g $ is zero. Moreover, this identifies 
	\begin{equation}
	\homology^\bullet\left(\frac{\Omega^\bullet(\blup(A_E,\iota^! A))}{p_A^\ast\Omega^\bullet(A_E)}\right)=\Homology{\iota^! A}{\bullet-1}{}.
	\end{equation}
	Then Theorem~\ref{theorem:blup_of_codim1_transversal} can be proven with the same local-to-global technique used to prove  Theorem~\ref{theorem:blup_transversals:past_in_cohomology}, \ref{theorem_item:past_in_cohomology:even} in Section~\ref{sec:proof_of_transversals}.
\end{remark}

	\subsection{From arbitrary codimension to codimension 1}
	
	The next step to proving Theorem \ref{theorem:blup_transversals:past_in_cohomology} is to show that for a transversal $ \iota\colon N \hookrightarrow M $ of any codimension the blowup is isomorphic to the blowup of a codimension $ 1 $ transversal in a different Lie algebroid, namely in $ p^! A $. Since $ N $ is a transversal and, for every $x\in  \field P $, $ T_{p(x)}N\subseteq T_x pT_x\blup(M,N) $, the blow-down map is transverse to the anchor, hence $p^! A$ is a well-defined Lie algebroid. Moreover, at the end of the subsection we show that $(p^!)^\ast$, like $p_A^\ast$, is an isomorphism when restricted to flat forms.
	
	\begin{proposition}\label{prop:transv_blup_is_blup_of_pullback}
		Let $ A\laover M $ be a Lie algebroid and $ \iota\colon N\hookrightarrow M $ a transversal. Denoting the blow-down map of the base by $ p\colon \blup(M,N)\to M $ and the inclusion of the projective bundle by $ \iota_\field{P}\colon \field P\hookrightarrow \blup(M,N) $, there is an isomorphism of Lie algebroids
		\begin{equation}
		\blup(A,\iota^! A)\cong \blup(p^! A, \iota_\field{P}^! p^! A)
		\end{equation}
		over the identity.
		The corresponding map on sections is given by
		\begin{equation}\label{prop:transv_blup_is_blup_of_pullback:eq:section}
		\Gamma^\infty(\blup(A,\iota^! A))\ni b \mapsto ( p_A(b), \rho_{\blup}(b))\in \Gamma^\infty(p^! A,\iota_\field{P}^!p^!A),
		\end{equation}
		identifying $ \Gamma^\infty(\blup(p^!A,\iota_\field{P}^! p^!A ))=\Gamma^\infty(p^!A,\iota_\field{P}^! p^!A ) $. Here, $ p_A\colon \blup(A,\iota^!A)\to A $ denotes the blow-down map. In particular, under this isomorphism $ p_A $ and $ p^!\circ p_{p^! A} $ coincide.
		\begin{proof}
			First note that $ \iota_\field{P}\colon \field P\hookrightarrow \blup(M,N) $ is a transversal in $ p^! A $ because $ \iota\colon N\hookrightarrow M $ is. For $ X\in \Gamma^\infty(TM,TN) $ we denote by $ \tilde{X}\in \mathfrak{X}(\blup(M,N)) $ the vector field with $ \tilde{X}\sim_p X $, which exists by Lemma~\ref{lemma:lifting_vf_to_the_blowup}. We first check that~\eqref{prop:transv_blup_is_blup_of_pullback:eq:section} is well-defined. Since both spaces are $ \Cinfty(\blup(M,N)) $-modules and \eqref{prop:transv_blup_is_blup_of_pullback:eq:section} is compatible with the module structure, it is enough to check well-definedness on a set of generators, namely $ \blup(\Gamma^\infty(A,\iota^! A)) $. Thus, let $ a\in \Gamma^\infty(A,\iota^! A) $ be given. Then
			\begin{equation*}
			\blup(a)\mapsto (p_A(\blup(a)), \rho_{\blup}(\blup(a))=(a\circ p, \widetilde{\rho(a)}).
			\end{equation*}
			Since $ \widetilde{\rho(a)}\sim_p \rho(a) $, $ (a\circ p, \widetilde{\rho(a)})\in \Gamma^\infty(p^! A) $ follows. Moreover, by Lemma~\ref{lemma:lifting_vf_to_the_blowup}, $ \widetilde{\rho(a)} $ is tangent to $ T\field P $, so $ \blup(a) $ maps to a section in $ \Gamma^\infty(p^! A, \iota_\field{P}^! p^! A) $ as claimed. 
			
			Now it is immediate to see that outside of $ \field P $, Equation~\eqref{prop:transv_blup_is_blup_of_pullback:eq:section} gives an isomorphism (of vector bundles). Thus it is enough to consider the neighbourhood around $ N $ provided by the normal form theorem~\ref{theorem:normal_form_transversals}. Suppose $ \pr\colon E=M\to N $ is a vector bundle and $ A=\pr^! \iota^! A $. As in the proof of Lemma \ref{lemma:existence_of_eulerlikes}, let $ \{ a_1,\dots,a_k \} $ be a local frame of $ \iota^!A $ and $ \{ s_1,\dots,s_{\codim N} \} $ a local frame of $ E $ over some common open $ U\subseteq N $ (we will use the dual frame on $ E $ as fibre coordinates on $ E_U $). According to Remark~\ref{remark:local_frames_for_blups}, for $ \beta\in\{1,\dots,\codim N\} $ we get a frame for $ \blup(A,\iota^! A)_{U_\beta} $ by
			\begin{equation*}
			\{ \blup(\tilde{a}_1),\dots,\blup(\tilde{a}_k),\blup(s^\beta \tilde{s}_1),\dots,\blup(s^\beta\tilde{s}_{\codim N}) \}, 
			\end{equation*}
			where
			\begin{equation*}
			\begin{aligned}
			\tilde{a}_i&=( a_i\circ \pr, (\rho_{\iota^! A}(a_i))^\hor )\\
			\tilde{s}_j&=( 0, s_j^\ver )
			\end{aligned}
			\end{equation*}
			for the horizontal lift of some linear connection on $ E\to N $.
			Then~\eqref{prop:transv_blup_is_blup_of_pullback:eq:section} maps
			\begin{equation*}
			\begin{aligned}
			\blup(\tilde{a}_j)&\mapsto (\tilde{a}_j\circ p, \widetilde{\rho(a_j)}),\\
			\blup(s^\beta\tilde{s}_\alpha)&\mapsto (0,\widetilde{s^\beta \Partial{s^\alpha}} ).
			\end{aligned}
			\end{equation*}
			The above collection of sections still span $ \Gamma^\infty(p^! A, \iota_\field{P}^! p^! A) $ locally over $ U_\beta $ as	
            \begin{equation*}
			\widetilde{s^\beta \Partial{s^\alpha}}=\begin{cases}
			\Partial{\tilde{s}^\alpha} &\text{ if }\alpha\neq \beta\\
			s^\beta\Partial{\tilde{s}^\beta}-\sum_{\gamma\neq \alpha }\tilde{s}^\gamma\Partial{\tilde{s}^\gamma} &\text{ if }\alpha=\beta.
			\end{cases}
			\end{equation*}
			Finally, we check that it is a morphism of Lie algebroids. Clearly, \eqref{prop:transv_blup_is_blup_of_pullback:eq:section} preserves anchors by its very definition. Thus it is enough to check compatibility with the Lie bracket using a set of generators. Let $ a,a'\in \Gamma^\infty(A,\iota^! A) $ be given, then
			\begin{equation*}
			\begin{aligned}
			[ \blup(a),\blup(a') ]\mapsto &(p_A(\blup([a,a'])), \widetilde{\rho([a,a'])} )\\
			=& ([a,a']\circ p, \widetilde{[\rho(a),\rho(a')]} )\\
			=& ([a,a']\circ p, [\widetilde{\rho(a)},\widetilde{\rho(a')}] )\\
			=& [ (a\circ p, \widetilde{\rho(a)} ), (a'\circ p,\widetilde{\rho(a')}) ],
			\end{aligned}
			\end{equation*}
			hence~\eqref{prop:transv_blup_is_blup_of_pullback:eq:section} constitutes an isomorphism of Lie algebroids.
		\end{proof}
	\end{proposition}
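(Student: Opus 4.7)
The plan is to construct the candidate isomorphism $\varphi$ explicitly at the level of sections (by the formula given in the statement), verify it is well defined and underlies a vector bundle isomorphism, and then check it respects the Lie algebroid structure. As a preliminary, note that $\iota_{\field P}\colon \field P\hookrightarrow \blup(M,N)$ is transverse to the anchor of $p^! A$: for $x\in \field P$ one has $T_xp(T_x\blup(M,N))\supseteq T_{p(x)}N$, so transversality of $\iota$ with $\rho_A$ propagates. Thus $\iota_{\field P}^! p^! A$ is a well-defined Lie subalgebroid, and the target of the claimed isomorphism makes sense. I would then define $\varphi$ on the $\Cinfty(\blup(M,N))$-generating set $\{\blup(a)\colon a\in \Gamma^\infty(A,\iota^! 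A)\}$ from Lemma~\ref{lemma:sections_of_blup} by $\blup(a)\mapsto (a\circ p,\widetilde{\rho_A(a)})$, where $\widetilde{\rho_A(a)}$ is the lift produced by Lemma~\ref{lemma:lifting_vf_to_the_blowup}. Since this lift is tangent to $\field P$, the image lies in $\Gamma^\infty(p^! A,\iota_{\field P}^! p^! A)$, and extending by $\Cinfty(\blup(M,N))$-linearity yields a vector bundle morphism over the identity.

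To show $\varphi$ is a fibrewise bijection it suffices to work locally. On the dense open $\blup(M,N)\setminus \field P$ both sides reduce to $p^\sharp A$ and $\varphi$ becomes the obvious identification. Near $\field P$ I would invoke the normal form Theorem~\ref{theorem:normal_form_transversals} to assume $A=\mathrm{pr}^! \iota^! A$ on a tubular neighborhood $\mathrm{pr}\colon E\to N$. Fix a linear connection on $E$ and pick local frames $\{a_j\}$ of $\iota^! A$ and $\{s_\alpha\}$ of $E$ over some $U\subseteq N$. Remark~\ref{remark:local_frames_for_blups} supplies an explicit frame of $\blup(A,\iota^! A)$ over each chart $U_\beta$ in terms of blowup sections of horizontal lifts and of the radial-type sections $s^\beta\tilde s_\alpha$. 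Applying $\varphi$ yields pairs whose vector field components are the lifts $\widetilde{\rho_{\iota^! A}(a_j)}$ and $\widetilde{s^\beta\,\partial/\partial s^\alpha}$, and I would show that together they form a local frame of $p^! A$ on $U_\beta$, taking care of the distinguished coordinate (the case $\alpha=\beta$ corresponds to the radial direction preserved by the blowup chart).

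For the Lie algebroid structure, anchor intertwining $\rho_{p^! A}\circ \varphi=\rho_\blup$ is immediate from the very definition of $\varphi$ and of the pullback anchor. Bracket compatibility reduces, by $\Cinfty(\blup(M,N))$-linearity and the Leibniz rule, to a check on the generators $\blup(a),\blup(a')$; this in turn follows from $[\blup(a),\blup(a')]_\blup=\blup([a,a']_A)$ together with the naturality $[\widetilde{X},\widetilde{Y}]=\widetilde{[X,Y]}$ of lifts of vector fields tangent to $N$. The final claim $p_A=p^!\circ p_{p^! A}$ under $\varphi$ comes from tracking the first component. The main obstacle will be the local calculation near $\field P$ inside the chart $U_\beta$: one must simultaneously juggle the distinguished blowup coordinate $s^\beta$ (kept as-is while the others are rescaled) and the horizontal/vertical split from the connection, and confirm that the image of the blowup frame is a basis at every point of $\field P$ rather than merely a spanning set.
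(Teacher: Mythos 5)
Your overall strategy coincides with the paper's (define the map on the generators $\blup(\Gamma^\infty(A,\iota^!A))$, use Lemma~\ref{lemma:lifting_vf_to_the_blowup} for well-definedness, reduce the local verification to the normal form of Theorem~\ref{theorem:normal_form_transversals}, and check anchors and brackets on generators), but the key local step is set up incorrectly. You propose to show that the images of the blowup frame "form a local frame of $p^!A$ on $U_\beta$" and are "a basis at every point of $\field P$". That is false, and no computation will rescue it: the bundle map $b\mapsto (p_A(b),\rho_\blup(b))$ into $p^!A$ is \emph{not} fibrewise injective along $\field P$. For instance, take $A=T\field R^2$ and $N=\{0\}$; then $\blup(x_1\partial_{x_1})$ is part of a frame of $\blup(A,\iota^!A)$ over $U_1$, yet its image $\bigl((x_1\partial_{x_1})\circ p,\; x_1\partial_{x_1}-\tilde x_2\partial_{\tilde x_2}\bigr)$ vanishes at the point $[1:0]\in\field P$. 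This is exactly why the target of the isomorphism is $\blup(p^!A,\iota_\field{P}^!p^!A)$ and not $p^!A$ itself.

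The correct formulation of the local step, which is what the paper proves, is that the images of the frame $\{\blup(\tilde a_i),\blup(s^\beta\tilde s_\alpha)\}$ generate, over $\Cinfty(U_\beta)$, the submodule $\Gamma^\infty(p^!A,\iota_\field{P}^!p^!A)$ of sections of $p^!A$ tangent to $\iota_\field{P}^!p^!A$ along $\field P$; since $\field P$ has codimension $1$, this submodule is precisely $\Gamma^\infty(\blup(p^!A,\iota_\field{P}^!p^!A))$ by Lemma~\ref{lemma:sections_of_blup} (the elementary-modification picture), and a generating set of the right cardinality of this locally free module is a frame of the blown-up bundle. Concretely one computes, as the paper does, that $\widetilde{s^\beta\Partial{s^\alpha}}$ equals $\Partial{\tilde s^\alpha}$ for $\alpha\neq\beta$ and $s^\beta\Partial{\tilde s^\beta}-\sum_{\gamma\neq\beta}\tilde s^\gamma\Partial{\tilde s^\gamma}$ for $\alpha=\beta$, and checks that these, together with the $(\tilde a_j\circ p,\widetilde{\rho(a_j)})$, span the sections tangent to $\iota_\field{P}^!p^!A$ — not the fibres of $p^!A$ over points of $\field P$. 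With that correction the remainder of your argument (density off $\field P$, anchor preservation by construction, bracket compatibility via $[\blup(a),\blup(a')]=\blup([a,a'])$ and naturality of lifts) goes through as in the paper.
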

	
Using Theorem \ref{theorem:blup_of_codim1_transversal}, the identification $\blup(A,\iota^! A)\cong \blup(p^! A, \iota_\field{P}^! p^! A)$ from Proposition \ref{prop:transv_blup_is_blup_of_pullback} implies the following.
	\begin{corollary}\label{corollary:homology_of_trans_intermsof_pullback}
		Let $ \iota\colon N\hookrightarrow M $ be a transversal of a Lie algebroid $ A\laover M $. Then
		\begin{equation}
		\homology^\bullet(\blup(A,\iota^! A))=\homology^\bullet(p^! A)\oplus \homology^{\bullet-1}(\iota_{\field P}^! p^! A),
		\end{equation}
		where $ \blowdown{p}{M}{N} $ denotes the blow-down map of the base manifolds and $ \iota_{\field P}\colon \field P\hookrightarrow \blup(M,N) $ the inclusion of the projective bundle. Under this identification, $ p_A^\ast\colon \homology^\bullet(A)\to \homology^\bullet(\blup(A,\iota^! A)) $ maps into $ \homology^\bullet(p^! A) $ and is given by $ (p^!)^\ast $.
	\end{corollary}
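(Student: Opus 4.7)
The plan is to combine Proposition \ref{prop:transv_blup_is_blup_of_pullback} with the codimension-$1$ Mazzeo–Melrose result Theorem \ref{theorem:blup_of_codim1_transversal}. The key observation is that although $\iota\colon N\hookrightarrow M$ is a transversal of arbitrary codimension in $A$, after passing to the pullback $p^! A\laover \blup(M,N)$ the projective bundle $\field P\subseteq \blup(M,N)$ sits inside the base as a codimension-$1$ submanifold, and it is still a transversal of $p^! A$ (as noted just above Proposition \ref{prop:transv_blup_is_blup_of_pullback}, for every $x\in \field P$ we have $T_{p(x)}N\subseteq T_x p(T_x\blup(M,N))$).

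First I would apply Proposition \ref{prop:transv_blup_is_blup_of_pullback} to identify
\begin{equation*}
	\blup(A,\iota^! A)\cong \blup(p^! A, \iota_{\field P}^! p^! A)
\end{equation*}
as Lie algebroids over $\blup(M,N)$. This reduces the computation of $\homology^\bullet(\blup(A,\iota^! A))$ to the cohomology of the blowup of a codimension-$1$ transversal, namely $\iota_{\field P}\colon \field P\hookrightarrow \blup(M,N)$, in the Lie algebroid $p^! A$.

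Second I would apply Theorem \ref{theorem:blup_of_codim1_transversal} to the pair $(p^! A, \iota_{\field P}^! p^! A)$, which yields directly
\begin{equation*}
	\homology^\bullet(\blup(p^! A,\iota_{\field P}^! p^! A))=\homology^\bullet(p^! A)\oplus \homology^{\bullet-1}(\iota_{\field P}^! p^! A),
\end{equation*}
via the split short exact sequence of complexes from \eqref{eq:mazzeo_melrose_ses} applied to an Euler-like section of $p^! A$ along $\iota_{\field P}^! p^! A$ (which exists by \cite[Lemma 3.9]{BursztynLimaMeinrenken2019}, since $\field P$ is a closed transversal of $p^! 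A$).

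Finally, for the statement on the blow-down map I would use the factorization recorded in Proposition \ref{prop:transv_blup_is_blup_of_pullback}: under the above identification, $p_A$ corresponds to $p^!\circ p_{p^! A}$, so $p_A^\ast=p_{p^! A}^\ast\circ (p^!)^\ast$. By Theorem \ref{theorem:blup_of_codim1_transversal} the map $p_{p^! A}^\ast$ lands in the first summand of the Mazzeo–Melrose decomposition; composing with $(p^!)^\ast\colon \homology^\bullet(A)\to \homology^\bullet(p^!A)$ gives the claim. There is no substantial obstacle here—the corollary is essentially assembled from the two preceding results, with the only bookkeeping being the verification that $\iota_{\field P}$ is a codimension-$1$ transversal of $p^! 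A$.
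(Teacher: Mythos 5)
Your proposal is correct and follows exactly the paper's route: the corollary is obtained by combining the identification $\blup(A,\iota^! A)\cong \blup(p^! A,\iota_{\field P}^! p^! A)$ from Proposition~\ref{prop:transv_blup_is_blup_of_pullback} (including the factorisation $p_A = p^!\circ p_{p^!A}$) with the codimension-$1$ Mazzeo--Melrose decomposition of Theorem~\ref{theorem:blup_of_codim1_transversal} applied to the transversal $\field P\subseteq \blup(M,N)$ in $p^! A$. No gaps; the bookkeeping you mention (that $\field P$ is a codimension-$1$ transversal of $p^! A$) is precisely what the paper verifies at the start of the proof of Proposition~\ref{prop:transv_blup_is_blup_of_pullback}.
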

	
Thus, to compute the cohomology of the blowup one needs to compute the cohomology of $ p^! A $. 
By the normal form theorem \ref{theorem:normal_form_transversals} and \eqref{eq:transversals_local_cohomology}, locally this comes down to comparing the cohomology of $ \iota^!A $ to that of $ \pi_{\field P}^!A $, the pullback to a projective bundle. And since in this case, $ (p^!)^\ast $ too constitutes an isomorphism when restricted to flat forms, the local picture will be enough.
	
	\begin{lemma}\label{lemma:p^!_is_iso_on_flat}
		Let $ N\subseteq M $ be a transversal of a Lie algebroid $ A\laover M $. Denoting the blow-down map of the base by $ \blowdown{p}{M}{N} $, the map
		\begin{equation}
		(p^!)^\ast\colon \Omega_N^\bullet(A)\to \Omega_{\field P}^\bullet(p^! A)
		\end{equation}
		is an isomorphism.
		\begin{proof}
			We know that the diagram
			\begin{equation*}
			\begin{tikzpicture}[baseline=(current bounding box.center)]
			\node (1) at (0.5,0) {$ \Omega^\bullet(A) $};
			\node (2) at (0,2) {$ \Omega^\bullet(\blup(A,\iota^! A)) $};
			\node (4) at (3.5,0) {$ \Omega^\bullet(p^! A) $};
			\node (3) at (4,2) {$ \Omega^\bullet(\blup(p^! A,p^! \iota^! A)) $};
			
			\path[->]
			(1) edge node[left]{$ p_A^\ast $} (2)
			(1) edge node[above]{$ (p^!)^\ast $}  (4)
			(2) edge node[above]{$ \Phi^\ast $} node[below]{$ \simeq $} (3)
			(4) edge node[right]{$ p_{p^! A}^\ast $} (3)
			;
			\end{tikzpicture}
			\end{equation*}
			commutes, see Proposition~\ref{prop:transv_blup_is_blup_of_pullback} for the upper isomorphism. We see that the maps factor to flat forms, and there $ p_A^\ast $ and $ p_{p^!A}^\ast $ are isomorphisms, thus so is $ (p^!)^\ast $.
		\end{proof}
	\end{lemma}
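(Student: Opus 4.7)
The plan is to deduce this lemma from Theorem~\ref{theorem:flatiso_and_consequences} together with Proposition~\ref{prop:transv_blup_is_blup_of_pullback} by a simple two-out-of-three argument in a commutative square.

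First I would write down the square
\begin{equation*}
\begin{tikzcd}
\Omega^\bullet(A) \ar[r,"(p^!)^\ast"] \ar[d,"p_A^\ast"'] & \Omega^\bullet(p^! A) \ar[d,"p_{p^! A}^\ast"] \\
\Omega^\bullet(\blup(A,\iota^! A)) \ar[r,"\Phi^\ast","\simeq"'] & \Omega^\bullet(\blup(p^! A,\iota_{\field P}^! p^! A))
\end{tikzcd}
\end{equation*}
where $\Phi$ is the Lie algebroid isomorphism from Proposition~\ref{prop:transv_blup_is_blup_of_pullback}. Commutativity follows from the last sentence of that proposition, which says that under $\Phi$ the blow-down map $p_A$ corresponds to $p^!\circ p_{p^! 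A}$; hence on forms $p_A^\ast=\Phi^\ast\circ p_{p^!A}^\ast\circ (p^!)^\ast$. Since $\Phi$ is an isomorphism over the identity, $\Phi^\ast$ is an isomorphism on all forms, not just on flat ones.

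Next I would observe that each arrow respects the ``flat along the relevant submanifold'' subcomplexes: the blow-down maps send the vanishing ideal $\vanishing_N$ into $\vanishing_{\field P}$ because $p^{-1}(N)=\field P$, and similarly for $p^!$; hence $p_A^\ast$, $p_{p^! A}^\ast$, $(p^!)^\ast$, and $\Phi^\ast$ all restrict to maps between the corresponding flat subcomplexes. By part~\ref{theorem:flatiso_and_consequences:item:flatiso} of Theorem~\ref{theorem:flatiso_and_consequences}, $p_A^\ast$ and $p_{p^!A}^\ast$ become isomorphisms when restricted to flat forms. The restricted square then has three out of four arrows known to be isomorphisms, forcing the fourth, $(p^!)^\ast\colon\Omega_N^\bullet(A)\to\Omega_{\field P}^\bullet(p^!A)$, to be an isomorphism as well.

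The only step that is not entirely formal is checking commutativity of the square, which requires tracing through the identification $\Phi$ on sections given in~\eqref{prop:transv_blup_is_blup_of_pullback:eq:section}; but Proposition~\ref{prop:transv_blup_is_blup_of_pullback} already records that $p_A = p^!\circ p_{p^!A}\circ \Phi^{-1}$, so this is essentially built in. The main conceptual point to keep clear is simply that both blow-down maps $p_A$ and $p_{p^!A}$ land on flat-form isomorphisms for the \emph{same} reason (Theorem~\ref{theorem:flatiso_and_consequences}), so nothing genuinely new has to be proven for $(p^!)^\ast$ beyond the diagram chase.
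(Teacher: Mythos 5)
Your proposal is correct and follows essentially the same route as the paper: the same commutative square from Proposition~\ref{prop:transv_blup_is_blup_of_pullback}, the observation that all four maps restrict to the flat subcomplexes, and the two-out-of-three conclusion using that $p_A^\ast$ and $p_{p^!A}^\ast$ are isomorphisms on flat forms by Theorem~\ref{theorem:flatiso_and_consequences}. The only difference is that you spell out the commutativity and flatness checks slightly more explicitly than the paper does.
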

	
	\begin{lemma}\label{lemma:cohomology_p^!A_locally_is_nice}
		Let $ E\to N $ be the tubular neighbourhood corresponding to an Euler-like section $ a\in \Gamma^\infty(A) $. Then the inclusions $ \iota^! A\hookrightarrow A_E $ and $ \pi_{\field P}^!\iota^! A\hookrightarrow p^! A_E $ induce isomorphisms such that
		\begin{equation}\label{eq:cohomology_p^!A_locally_is_nice}
		\begin{tikzpicture}[baseline=(current bounding box.center)]
		\node (12) at (0,2) {$ \homology^\bullet (A_E) $};
		\node (22) at (2.2,2) {$ \homology^\bullet(\iota^! A) $};
		\node (11) at (0,0) {$ \homology^\bullet(p^! A_E) $};
		\node (21) at (2.2,0) {$ \homology^\bullet (\pi_{\field P}^! \iota^! A) $};
		
		\path[]
		(12) edge[white] node[black]{$ \cong $} (22)
		(11) edge[white] node[black]{$ \cong $} (21);
		
		\path[->]
		(12) edge node[left] {$ (p^!)^\ast $} (11)
		(22) edge node[right] {$ (\pi_{\field P}^!)^\ast $} (21)
		;
		\end{tikzpicture}
		\end{equation}
		commutes. Here, $ \pi_{\field P}\colon \field P\to N $ denotes the projection of the projective bundle.

        If $\codim N$ is odd, then all maps in \eqref{eq:cohomology_p^!A_locally_is_nice} are isomorphisms.
		\begin{proof}
			The inclusions yield a commutative diagram
			\begin{equation*}
			\begin{tikzpicture}[baseline=(current bounding box.center)]
			\node (12) at (0,1.5) {$  A_E $};
			\node (22) at (2,1.5) {$\iota^! A $};
			\node (11) at (0,0) {$ p^! A_E $};
			\node (21) at (2,0) {$ \pi_{\field P}^! \iota^! A $};
			
			\path[->]
			(11) edge node[left] {$ p^! $} (12)
			(21) edge node[right] {$ \pi_{\field P}^! $} (22)
			(21) edge node[]{$  $} (11)
			(22) edge node[]{$  $} (12)
			;
			\end{tikzpicture}
			\end{equation*}
			of Lie algebroids, leading to~\eqref{eq:cohomology_p^!A_locally_is_nice} in cohomology, where the inclusions become isomorphisms by~\eqref{eq:transversals_local_cohomology}. If $\codim N$ is odd, Theorem \ref{theorem:blup_transversals:projective_bundles} shows the remaining statement.
		\end{proof}
	\end{lemma}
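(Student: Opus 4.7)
The plan is to build a commutative square of Lie algebroid morphisms underlying \eqref{eq:cohomology_p^!A_locally_is_nice}, pass to cohomology, check that the horizontal inclusions become isomorphisms via the cohomological normal form \eqref{eq:transversals_local_cohomology}, and then invoke Theorem \ref{theorem:blup_transversals:projective_bundles} in the odd-codimension case.

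First I would assemble the square of Lie algebroid morphisms: the horizontal inclusions $\iota^! A \hookrightarrow A_E$ and $\pi_{\field P}^! \iota^! A \hookrightarrow p^! A_E$ cover the zero-section embeddings $N \hookrightarrow E$ and $\field P \hookrightarrow \blup(E,N) = \field L$ (viewed as the zero section of the tautological line bundle via Remark \ref{remark:blup_of_zero_section}), while the verticals are the canonical morphisms of pullback Lie algebroids. Commutativity is immediate from the definitions: tracing either direction sends a point $(b,v) \in \pi_{\field P}^!\iota^! A$ to $b \in \iota^! A \subseteq A_E$. Passing to cohomology yields the square in the statement.

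Next I would argue both horizontal arrows become isomorphisms. For the top row this is \eqref{eq:transversals_local_cohomology} applied to the Euler-like section $a$, which provides a normal form $A_E \cong \pr^! \iota^! A$. For the bottom row I would compose pullbacks to obtain
\begin{equation*}
p^! A_E \cong p^! \pr^! \iota^! A \cong (\pr \circ p)^! \iota^! A \cong \pi^! \pi_{\field P}^! \iota^! A,
\end{equation*}
where $\pi \colon \field L \to \field P$ is the tautological line bundle projection, using the identity $\pr \circ p = \pi_{\field P} \circ \pi$. This is precisely the normal form for the transversal $\pi_{\field P}^!\iota^! A \hookrightarrow p^! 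A_E$ along the tubular neighborhood $\pi \colon \field L \to \field P$, so a second application of \eqref{eq:transversals_local_cohomology} delivers the bottom isomorphism.

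Finally, for the odd codimension statement $E \to N$ has odd rank, so Theorem \ref{theorem:blup_transversals:projective_bundles}(1) yields that $(\pi_{\field P}^!)^\ast$ is an isomorphism; combined with the two horizontal isomorphisms from the previous step and commutativity of \eqref{eq:cohomology_p^!A_locally_is_nice}, $(p^!)^\ast$ is forced to be an isomorphism as well. The only mildly subtle point is the iterated-pullback identity in the middle step, which reduces the bottom-row statement to another instance of the same normal-form argument used on the top row; everything else is a direct diagram chase.
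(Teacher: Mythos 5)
Your proposal is correct and follows essentially the same route as the paper: form the commutative square of Lie algebroid morphisms, note that both horizontal inclusions induce isomorphisms in cohomology via the normal-form statement \eqref{eq:transversals_local_cohomology}, and conclude the odd-codimension case from Theorem \ref{theorem:blup_transversals:projective_bundles} together with commutativity. Your explicit justification of the bottom-row isomorphism through $\pr\circ p=\pi_{\field P}\circ\pi$ and the trivialisation $p^!A_E\cong \pi^!\pi_{\field P}^!\iota^!A$ simply spells out what the paper leaves implicit in its appeal to \eqref{eq:transversals_local_cohomology}.
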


	\subsection{Proof of Theorem~\ref{theorem:blup_transversals:past_in_cohomology}}\label{sec:proof_of_transversals}
	The third part of Theorem~\ref{theorem:blup_transversals:past_in_cohomology} is more technical, thus we prove the remaining statements of Theorem \ref{theorem:blup_transversals:past_in_cohomology} first.
	
	\begin{proof}[of Theorem~\ref{theorem:blup_transversals:past_in_cohomology}, \ref{theorem_item:past_in_cohomology:iso_and_codim1} and \ref{theorem_item:past_in_cohomology:odd}]
		For the first part, the isomorphism 
		\begin{equation*}
		\Homology{\blup(A,\iota^! A)}{\bullet}{}\cong\Homology{\blup(p^! A,\pi_{\field P}^! A)}{\bullet}{}
		\end{equation*} 
		is Proposition \ref{prop:transv_blup_is_blup_of_pullback}, while $ \Homology{\blup(A,\iota^! A)}{\bullet}{}\cong\Homology{p^! A}{\bullet}{}\oplus \Homology{\pi_{\field P}^! A}{\bullet-1}{} $ is Theorem~\ref{theorem:blup_of_codim1_transversal} since $ \field P$ is a transversal of codimension $ 1 $ inside $p^! A$. For the second part note that we find a neighbourhood $E$ of $ \field P $ in $ \blup(M,N) $ such that $ \Homology{p^! A_{E}}{\bullet}{}\cong \Homology{\pi_{\field P}^!A}{\bullet}{} $ by means of an Euler-like section. Thus we have
		\begin{equation*}
		\homology^\bullet\left(\frac{\Omega^\bullet(p^! A)}{(p^!)^\ast \Omega^\bullet(A)}\right)\cong\homology^\bullet\left( \frac{\Omega^\bullet(p^! A_E)}{(p^!)^\ast\Omega^\bullet(A_E)} \right)=0,
		\end{equation*}
		where the first isomorphism is a direct consequence of Lemma~\ref{lemma:p^!_is_iso_on_flat} (see also~Theorem~\ref{theorem:flatiso_and_consequences}), and the second identification follows from Lemma \ref{lemma:cohomology_p^!A_locally_is_nice}. From this, 
		\begin{equation*}
		(p^!)^\ast\colon \Homology{A}{\bullet}{}\xrightarrow{\simeq}\Homology{p^!A}{\bullet}{}
		\end{equation*}
		follows at once, implying $ \Homology{\blup(A,\iota^! A)}{\bullet}{}=\Homology{A}{\bullet}{}\oplus \Homology{\iota^!A}{\bullet-1}{} $ by the first part.
	\end{proof}
	
	To show Theorem~\ref{theorem:blup_transversals:past_in_cohomology}, \ref{theorem_item:past_in_cohomology:even} we first work in a tubular neighbourhood $ \pr\colon E\to N $ induced by an Euler-like section of $ A $ along $ N $. 
	\begin{lemma}
		Let $ \pr\colon E\to N $ be a vector bundle with zero section $ \iota\colon N\hookrightarrow E $ and $ B\laover N $ a Lie algebroid. Then there is a long exact sequence
		\begin{equation}
		\begin{tikzpicture}[baseline=(current bounding box.center)]
		
		\node (1) at (0,0) {$ \dots $};
		\node (2) at (2,0) {$ \homology^\bullet (\pr^! B) $};
		\node (3) at (4.5,0) {$ \homology^\bullet ( p^! \pr^! B ) $};
		\node (4) at (7.5,0) {$ \homology^{\bullet+1}_\mathrm{cv}(\pr^! B) $};
		\node (5) at (10,0) {$ \Homology{\pr^! B}{\bullet+1}{} $};
		\node (6) at (12,0) {$ \dots $};
		
		\path[->]
		(1) edge node[]{$  $} (2)
		(2) edge node[above]{$ (p^!)^\ast $} (3)
		(3) edge node[above]{$ (p^!)_\ast $} (4)
		(4) edge node[above]{$ i $} (5)
		(5) edge node[]{$  $} (6)
		
		;
		\end{tikzpicture}
		\end{equation}
		Here, $ \blowdown{p}{E}{N} $ is the blow-down map of the base, $ (p^!)_\ast $ denotes the composition of the Thom isomorphism after fibre integrating the restriction of a form to $ \field P $, and $ i\colon \homology^\bullet_\mathrm{cv}(\pr^! B)\to \homology^\bullet(\pr^! B) $ denotes the map that regards a form compactly supported in fibre direction as just a form on $ \pr^! B $.
		\begin{proof}
			This follows from Theorem~\ref{theorem:blup_transversals:projective_bundles}, \ref{theorem_item:projective_bundles:gysin} using the same reasoning as in Lemma~\ref{lemma:thom_euler_diagram}, as we again have both
			\begin{equation*}
			\begin{aligned}
			\Homology{\pr^! B}{\bullet}{}&\cong \Homology{ B}{\bullet}{} \text{ and } \\
			\Homology{p^! B}{\bullet}{} &\cong \Homology{\pi_\field{P}^! B}{\bullet}{}
			\end{aligned}
			\end{equation*}
			by means of the respective restrictions. 
		\end{proof}
	\end{lemma}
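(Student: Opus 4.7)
The plan is to reduce the claim to the Gysin-type sequence of Theorem \ref{theorem:blup_transversals:projective_bundles}, part \ref{theorem_item:projective_bundles:gysin}, by translating each of its terms and maps into the language of $ \pr^! B $ via suitable cohomological identifications. I would first set up three isomorphisms. By Remark \ref{remark:blup_of_zero_section}, $ \blup(E,N) $ coincides with the tautological line bundle $ \field L\to \field P $ over $ \field P=\field P(E) $, so the two vector bundle projections $ \pr\colon E\to N $ and $ \pi_{\field L}\colon \field L\to \field P $ have contractible fibres. Applying the Serre-Leray spectral sequence for Lie algebroids (exactly as in \eqref{eq:transversals_local_cohomology}) to both fibrations gives
\begin{equation*}
    \homology^\bullet(\pr^! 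B)\cong \homology^\bullet(B) \qquad\text{and}\qquad \homology^\bullet(p^!\pr^! B)\cong \homology^\bullet(\pi_{\field L}^! \pi_{\field P}^! B)\cong \homology^\bullet(\pi_{\field P}^! B).
\end{equation*}
Thirdly, the Thom isomorphism for Lie algebroids (Lemma \ref{lemma:Thom_for_Liealgebroids}) identifies $ \homology^{\bullet+1}_{\mathrm{cv}}(\pr^! B)\cong \homology^{\bullet+1-k}(B,o(E)) $, where $ k=\rank E $; note $ \bullet+1-k=\bullet-(k-1) $, so the degree on the right matches the third term of the Gysin sequence.

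Next, I would substitute these three identifications into the Gysin sequence
\begin{equation*}
    \dots\to \homology^\bullet(B)\xrightarrow{(\pi_{\field P}^!)^\ast} \homology^\bullet(\pi_{\field P}^! B)\xrightarrow{(\pi_{\field P}^!)_\ast} \homology^{\bullet-(k-1)}(B,o(E))\xrightarrow{\wedge\rho^\ast e}\homology^{\bullet+1}(B)\to\dots
\end{equation*}
to obtain a long exact sequence of the desired shape. What remains is to check that, under the identifications, the three maps become $ (p^!)^\ast $, $ (p^!)_\ast $, and $ i $. For the first this is essentially tautological from functoriality of pullback, since $ \pi_{\field P}\circ (\iota_{\field P}\circ p\at{\field L}) $ factors through $ \pr $. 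For the fibre-integration map $ (p^!)_\ast $ one unpacks the composition: restriction of a form on $ p^!\pr^! B $ to $ \field P $, fibre integration along $ \pi_{\field P} $, and the Thom isomorphism combine to exactly the chain of operations hidden in the identification $ \homology^\bullet(p^!\pr^!B)\cong \homology^\bullet(\pi_{\field P}^! B) $ followed by $ (\pi_{\field P}^!)_\ast $.

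The main technical point, and the only step I expect to require real care, is the compatibility of the connecting map: I need that wedging with the Euler class $ \rho^\ast e $ in the Gysin sequence corresponds, under the Thom isomorphism, to the natural inclusion $ i\colon \homology^{\bullet+1}_{\mathrm{cv}}(\pr^! B)\to \homology^{\bullet+1}(\pr^! B) $. This is the classical fact that the Thom class restricts to the Euler class along the zero section, transcribed to the Lie algebroid setting. The commutativity of the relevant square is precisely the content of the Thom-Euler diagram used in Lemma \ref{lemma:thom_euler_diagram}, so the argument would reuse that diagram verbatim with $ \pr^! B $ in place of the trivial Lie algebroid. Once this compatibility is in place, exactness of the resulting sequence is inherited from exactness of the Gysin sequence, completing the proof.
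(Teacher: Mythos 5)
Your proposal is correct and follows essentially the same route as the paper: identify $\homology^\bullet(\pr^! B)\cong\homology^\bullet(B)$ and $\homology^\bullet(p^!\pr^! B)\cong\homology^\bullet(\pi_{\field P}^! B)$ via restriction to the zero sections (using $\blup(E,N)\cong\field L$), substitute into the Gysin sequence of Theorem~\ref{theorem:blup_transversals:projective_bundles}, \ref{theorem_item:projective_bundles:gysin}, replace $\homology^{\bullet-(k-1)}(B,o(E))$ by $\homology^{\bullet+1}_{\mathrm{cv}}(\pr^! B)$ via the Thom isomorphism, and use the Thom--Euler compatibility of Lemma~\ref{lemma:thom_euler_diagram} to identify the connecting map with $i$. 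Your write-up simply makes explicit the steps the paper compresses into its one-line proof.
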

	
	This already gives Theorem~\ref{theorem:blup_transversals:past_in_cohomology}, \ref{theorem_item:past_in_cohomology:even} in the local setting. To prove the global statement, we use that by Lemma~\ref{lemma:p^!_is_iso_on_flat} for any tubular neighbourhood $ E\to N $,
	\begin{equation}
	Q^\bullet :=\homology^\bullet\left(\frac{\Omega^\bullet(p^! A)}{(p^!)^\ast \Omega^\bullet(A)}\right)=\homology^\bullet\left( \frac{\Omega^\bullet(p^! A_E)}{(p^!)^\ast\Omega^\bullet(A_E)} \right) =:Q^\bullet_E,
	\end{equation}
	where equality already holds on the level of chain complexes. The plan is to identify $ Q^\bullet $ and $ \homology^{\bullet+1}_\mathrm{cv}(A_E) $ to complete the proof of Theorem~\ref{theorem:blup_transversals:past_in_cohomology}. We can define a map $ Q^\bullet_E\to \homology_\mathrm{cv}^{\bullet+1}(A_E) $ in the following way: First, pick any splitting $ \sigma $ of the short exact sequence
	\begin{equation*}
	\begin{tikzpicture}[baseline=(current bounding box.center)]
	\node (1) at (0,0) {$ 0 $};
	\node (2) at (1.7,0) {$ \Omega^\bullet(A_E) $};
	\node (3) at (4.2,0) {$ \Omega^\bullet(p^! A_E) $};
	\node (4) at (7,0) {$ \frac{\Omega^\bullet(p^! A_E)}{(p^!)^\ast\Omega^\bullet(A_E)} $};
	\node (5) at (9,0) {$ 0 $};
	
	\path[->]
	(1) edge node[]{$  $} (2)
	(2) edge node[above]{$ (p^!)^\ast $} (3)
	(4) edge node[]{$  $} (5);
	
	\begin{scope}[transform canvas={yshift=.3em}]
	\draw [<-] (4) edge node[above]{$ \tau $} (3);
	\end{scope}
	\begin{scope}[transform canvas={yshift=-.3em}]
	\draw [<-, dashed] (3) edge node[below]{$ \sigma $} (4);
	\end{scope}

	;
	\end{tikzpicture}
	\end{equation*}
	and let $ \chi\in \Cinfty(E) $ be a smooth function with compact vertical support such that $ \chi=1 $ in a neighbourhood of $ N $. Then also $ \tilde{\sigma}= p^\ast\chi \sigma $ defines a splitting for $ \tau $, since by Lemma~\ref{lemma:p^!_is_iso_on_flat} $\tau$ depends only on local data around $ \field P $, and there the map did not change. We proceed now to define a map the same way one would construct the edge endomorphism in the corresponding long exact sequence in cohomology: For $ [\lambda]\in Q^\bullet_E $ we know that $ \tau \D \tilde{\sigma}\lambda=\D\lambda=0 $, so we can find a unique $ \eta\in \Omega^\bullet(A_E) $ such that $ (p^!)^\ast \eta= \D \tilde{\sigma}\lambda $. Consequently, $ \eta $ has compact vertical support, and since $ (p^!)^\ast $ is injective, $ \D\eta=0 $. We map $ [\lambda] $ to $ [\eta]\in \homology^{\bullet+1}_\mathrm{cv}(A_E) $. This is well-defined: If $ \lambda'\in [\lambda] $, then $ \tau( \tilde{\sigma}\lambda-\tilde{\sigma}\lambda' )=0 $, thus there is $ \xi\in \Omega^\bullet_\mathrm{cv}(A_E) $ with $ (p^!)^\ast \xi=\tilde{\sigma}\lambda-\tilde{\sigma}\lambda' $. But then $ (p^!)^\ast \D\xi=\D(\tilde{\sigma}\lambda-\tilde{\sigma}\lambda') $, showing that the cohomology class $ [\eta]\in\homology^{\bullet+1}_\mathrm{cv}(A_E)  $ does not depend on the chosen representative.
	
	\begin{lemma}\label{lemma:iso_between_Q_and_Hcf}
		The above constructed map $ \Psi\colon Q^\bullet_E\to \homology_\mathrm{cv}^{\bullet+1}(A_E) $ is an isomorphism.
		\begin{proof}
			We will show this by using the $ 5 $-lemma \cite[Chapter 1, Lemma 3.3]{maclane:1963b}. Note that $ Q^\bullet_E $ and $ \homology_\mathrm{cv}^{\bullet+1}(A_E) $ fit into a long exact sequence
			\begin{equation*}
			\begin{tikzpicture}[baseline=(current bounding box.center)]
			\node (1) at (0.3,2) {$ \dots $};
			\node (2) at (2,2) {$ \homology^\bullet(A_E) $};
			\node (3) at (4.4,2) {$ \homology^\bullet(p^!A_E) $};
			\node (4) at (7.1,2) {$ Q^\bullet_E $};
			\node (5) at (9.5,2) {$ \homology^{\bullet+1}(A_E)  $};
			\node (6) at (12.4,2) {$ \homology^{\bullet+1}(p^!A_E) $};
			\node (7) at (14.4,2) {$ \dots $};
			\node (q) at (0.3,0) {$ \dots $};
			\node (w) at (2,0) {$ \homology^\bullet(A_E) $};
			\node (e) at (4.4,0) {$ \homology^\bullet(p^!A_E) $};
			\node (r) at (7.1,0) {$ \homology_\mathrm{cv}^{\bullet+1}(A_E) $};
			\node (t) at (9.5,0) {$ \homology^{\bullet+1}(A_E) $};
			\node (z) at (12.4,0) {$ \homology^{\bullet+1}(p^!A_E) $};
			\node (u) at (14.4,0) {$ \dots $};
			
			\path[->]
			(1) edge node[]{$  $} (2)
			(2) edge node[above]{$ (p^!)^\ast $} (3)
			(3) edge node[above]{$ \tau $} (4)
			(4) edge node[above]{$ \delta $} (5)
			(5) edge node[above]{$ (p^!)^\ast $} (6)
			(6) edge node[]{$  $} (7)
			
			(q) edge node[]{$  $} (w)
			(w) edge node[above]{$ (p^!)^\ast $} (e)
			(e) edge node[above]{$ -2(p^!)_\ast $} (r)
			(r) edge node[above]{$ i $} (t)
			(t) edge node[above]{$ (p^!)^\ast $} (z)
			(z) edge node[]{$  $} (u)
			
			(2) edge node[]{$  $} (w)
			(3) edge node[]{$  $} (e)
			(4) edge node[left]{$ \Psi $} (r)
			(5) edge node[]{$  $} (t)
			(6) edge node[]{$  $} (z)
			
			;
			\end{tikzpicture}
			\end{equation*}
			where $ \delta $ denotes the edge homomorphism and unlabeled vertical arrows are the identity. By the 5-Lemma, if the diagram commutes, $ \Psi $ is an isomorphism. Thus we have to check if the squares left and right to $ \Psi $ are commuting. For the square on the right this is obvious, since $ \Psi $ is defined in the same way one constructs the edge homomorphism. 
   
   Consider the square on the left and let $ [\omega]\in \homology^\bullet(p^! A_E) $ be given. We first compute $ -2(p^!)_\ast[\omega] $ explicitly and then check that it coincides with $ \Psi\tau[\omega] $. By the choice of $ E $, we find $ \eta\in \Omega^\bullet(\pi_{\field P}^! \iota^! A) $ such that $ [\omega]=[(\pi_{\field L}^!)^\ast\eta] $ by \eqref{eq:transversals_local_cohomology}. Here, $ \pi_{\field L}\colon \field L\to\field P $ and $ \pi_{\field P}\colon \field P\to N $ denote the bundle projections of the tautological line bundle and the projective bundle, respectively. We can also consider the double cover of $ \field P $ given by the sphere bundle $ \mathfrak{p}\colon \field S\to \field P $ and the surjective submersion $ \mathfrak{q}\colon E\setminus 0\to \field S $ to pull $ \eta $ back to a form $ (\mathfrak{q}^!)^\ast (\mathfrak{p}^!)^\ast \eta $ on $ A_{E\setminus 0} $. Pick a fibre metric on $ E $ and let $ \chi\in \Cinfty(E) $ be a function such that $ \chi=1 $ in a neighbourhood of $ N $, $ \chi $ only depends on the radius and $ \chi(x)=0 $ for all $ x\in E $ with $ |x|>1 $. Then
			\begin{equation*}
			\hat{\eta}=\D\chi\wedge (\mathfrak{q}^!)^\ast (\mathfrak{p}^!)^\ast \eta
			\end{equation*}
			defines a closed form in $ \Omega^\bullet_\mathrm{cv}(A_E) $ that is send to $ -2(p^!)_\ast [\omega] $ by fibre integration, hence
			\begin{equation*}
			-2(p^!)_\ast [\omega] = [\hat{\eta}].
			\end{equation*}
			Next, we have
			\begin{equation*}
			\begin{aligned}
			(p^!)^\ast \hat{\eta}&= (p^!)^\ast (\D\chi\wedge (\mathfrak{q}^!)^\ast (\mathfrak{p}^!)^\ast \eta)\\
			&= \D (p^\ast \chi)\wedge ((\mathfrak{p}\circ \mathfrak{q}\circ p)^!)^\ast \eta\\
			&= \D (p^\ast \chi (\pi_{\field L}^!)^\ast \eta ).
			\end{aligned}
			\end{equation*}
			So choosing the cut-off function in the definition of $ \Psi $ to be $ p^\ast \chi $ and the splitting $ \sigma $ such that $ \sigma(\tau(\pi^!_\field L)^\ast\eta)=(\pi^!_\field L)^\ast\eta $, it follows that the left square commutes and thus $ \Psi $ constitutes an isomorphism.
		\end{proof}
	\end{lemma}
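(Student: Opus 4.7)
The plan is to apply the five-lemma to a commutative ladder of long exact sequences in which $\Psi$ appears as the middle vertical arrow and the remaining four vertical arrows are identities. For the top row I would use the long exact sequence in cohomology induced by the defining short exact sequence
\begin{equation*}
0 \to \Omega^\bullet(A_E) \xrightarrow{(p^!)^\ast} \Omega^\bullet(p^! A_E) \xrightarrow{\tau} \frac{\Omega^\bullet(p^! A_E)}{(p^!)^\ast \Omega^\bullet(A_E)} \to 0,
\end{equation*}
placing $Q^\bullet_E$ between $\homology^\bullet(p^! A_E)$ and $\homology^{\bullet+1}(A_E)$ via the connecting homomorphism $\delta$. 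For the bottom row I would invoke the Gysin-like long exact sequence just established for $\pr\colon E \to N$, applied with $B = \iota^! A$ and combined with the Lie algebroid normal form $A_E \cong \pr^! \iota^! A$; this places $\homology_\mathrm{cv}^{\bullet+1}(A_E)$ between the same two cohomology groups, with $(p^!)_\ast$ feeding in and $i$ going out. Once both rows are in place, the five-lemma reduces the task to checking commutativity of the two squares adjacent to $\Psi$.

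The right square is essentially automatic: the construction of $\Psi$ is modelled on the snake-lemma recipe for $\delta$, the only difference being that the primitive produced lies in $\Omega_\mathrm{cv}^{\bullet+1}(A_E)$ thanks to the cutoff factor $p^\ast \chi$ inserted into the splitting. Hence $i \circ \Psi$ just forgets this compact support and recovers $\delta$ by inspection.

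The main obstacle is commutativity of the left square, which reads $\Psi \circ \tau = -2 (p^!)_\ast$. My plan for this is to pick a convenient local representative. Given $[\omega] \in \homology^\bullet(p^! A_E)$, the restriction isomorphism from Lemma \ref{lemma:cohomology_p^!A_locally_is_nice} lets me write $[\omega] = [(\pi_{\field L}^!)^\ast \eta]$ for some $\eta \in \Omega^\bullet(\pi_{\field P}^! \iota^! A)$, where $\pi_{\field L}\colon \field L \to \field P$ is the tautological line bundle. For $-2(p^!)_\ast[\omega]$ I would pass through the sphere bundle double cover of $\field P$, extend off $\field P$ by a radial cutoff $\chi$, and use that Thom fibre integration over the projective bundle differs from fibre integration over the sphere by the factor $-\tfrac{1}{2}$. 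For $\Psi(\tau[\omega])$ I would pick the splitting so that $\sigma(\tau[\omega]) = p^\ast\chi \cdot (\pi_{\field L}^!)^\ast \eta$; closedness of $(\pi_{\field L}^!)^\ast \eta$ then collapses $\D\sigma(\tau[\omega])$ to $\D(p^\ast\chi) \wedge (\pi_{\field L}^!)^\ast \eta$, which coincides with $(p^!)^\ast$ of the representative chosen on the right-hand side. The most delicate bookkeeping will be tracking the normalization and sign arising from the sphere-to-projective passage; once these agree, the five-lemma concludes the argument.
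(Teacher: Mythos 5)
Your proposal is correct and follows essentially the same route as the paper: the five-lemma ladder with the connecting-homomorphism sequence on top and the Gysin-type sequence (via the normal form $A_E\cong \pr^!\iota^!A$) on the bottom, the right square commuting by construction of $\Psi$, and the left square checked on a representative $[(\pi_{\field L}^!)^\ast\eta]$ using the sphere double cover of $\field P$, a radial cutoff $\chi$, and the splitting chosen so that $\sigma(\tau[\omega])=p^\ast\chi\,(\pi_{\field L}^!)^\ast\eta$. The normalization you flag is exactly where the paper's factor $-2$ comes from ($\D\chi$ contributes $-1$ radially, the double cover contributes $2$), so your outline needs only that bookkeeping made explicit.
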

	
	We can now complete the proof of Theorem~\ref{theorem:blup_transversals:past_in_cohomology}. 
	
	\begin{proof}[of Theorem~\ref{theorem:blup_transversals:past_in_cohomology}, \ref{theorem_item:past_in_cohomology:even}]
		We want to show that for a transversal $ \iota\colon N\hookrightarrow M $ of a Lie algebroid $ A\laover M $ of even codimension any Euler-like section of $ A $ along $ N $ gives rise to a long exact sequence
		\begin{equation*}
		\begin{tikzpicture}[baseline=(current bounding box.center)]
		\node (1) at (0,0) {$ \dots $};
		\node (2) at (2,0) {$ \homology^\bullet(A) $};
		\node (3) at (4.5,0) {$ \homology^\bullet(p^!A) $};
		\node (4) at (7,0) {$ \homology_\mathrm{cv}^{\bullet+1} (A_E) $};
		\node (5) at (9.5,0) {$ \homology^{\bullet+1}(A) $};
		\node (6) at (11.5,0) {$ \dots $};
		
		\path[->]
		(1) edge node[]{$  $} (2)
		(2) edge node[above]{$ (p^!)^\ast $} (3)
		(3) edge node[above]{$ (p^!)_\ast $} (4)
		(4) edge node[above]{$ i $} (5)
		(5) edge node[]{$  $} (6)
		
		;
		\end{tikzpicture}
		\end{equation*}
		where $ \blowdown{p}{M}{N} $ denotes the blow-down map of the base. Since $ (p^!)^\ast\colon \Omega^\bullet(A)\to \Omega^\bullet(p^! A) $ is a chain map, we obtain a long exact sequence
		\begin{equation*}
		\begin{tikzpicture}[baseline=(current bounding box.center)]
		\node (1) at (0,0) {$ \dots $};
		\node (2) at (2,0) {$ \homology^\bullet(A) $};
		\node (3) at (4.5,0) {$ \homology^\bullet(p^!A) $};
		\node (4) at (6.5,0) {$ Q^\bullet $};
		\node (5) at (8.5,0) {$ \homology^{\bullet+1}(A) $};
		\node (6) at (10.5,0) {$ \dots $};
		
		\path[->]
		(1) edge node[]{$  $} (2)
		(2) edge node[above]{$ (p^!)^\ast $} (3)
		(3) edge node[above]{$ \tau $} (4)
		(4) edge node[above]{$ \delta $} (5)
		(5) edge node[]{$  $} (6)
		
		;
		\end{tikzpicture}
		\end{equation*}
		In this long exact sequence, by Lemma~\ref{lemma:p^!_is_iso_on_flat} the map $ \tau $ only depends on local data, i.e.\ writing $ \iota_E\colon A_E\to A $ we have $  \tau_E\circ\homology(\iota_E^\ast)=\tau $. Moreover, the connecting homomorphism $ \delta $ is defined using any kind of splitting, so in particular we can choose one with support localised inside $ E $. Then the result follows from (the proof of) Lemma~\ref{lemma:iso_between_Q_and_Hcf}. But then the rest of the statement follows from the identification
		\begin{equation*}
		\homology^\bullet(\blup(A,\iota^! A))=\homology^\bullet(p^! A)\oplus \homology^{\bullet-1}(\pi_{\field P}^! A)
		\end{equation*}
  from Corollary \ref{corollary:homology_of_trans_intermsof_pullback}.
	\end{proof}
	
	\section{Invariant submanifolds}\label{section:invariant_submanifolds}
Suppose the orbit foliation of $A\laover M$ is singular with $N\subset M$ a closed and embedded singular leaf. Then one can hope to increase the dimensions of the orbit by blowing up $ A_N $, possibly turning $\blup(A, A_N)$ into a regular Lie algebroid with more easily computable cohomology. In this section we discuss two examples: The action Lie algebroids $\liealg{so}(3)\ltimes \field R^3$ and $\liealg{sl}_2(\field R)\ltimes \field R^3$ with singular leaves given by the origin, where in both cases we consider the adjoint action. For $\liealg{so}(3)\ltimes \field R^3$ we find that blowing up the origin resolves the singularity, allowing to compute the cohomology of both the blowup and $\liealg{so}(3)\ltimes \field R^3$, while for $\liealg{sl}_2(\field R)\ltimes \field R^3$ we find that even iterated blowups will not result in a regular Lie algebroid. 

To compute the cohomology of $ \blup(A,A_N) $ there exists no auxiliary statement analog to Theorem \ref{theorem:normal_form_transversals}. Instead, one can work directly on formal cohomology, for which we can utilise the Serre spectral sequence developed in \cite{marcut.schuessler:2024a}. If $i\colon L \hookrightarrow A$ is any Lie subalgebroid, the differential ideal
 \begin{equation*}
     \mathcal{I}(L):=\ker (i^\ast\colon \Omega^\bullet(A)\to \Omega^\bullet(L))
 \end{equation*}
 induces a filtration on $\Omega^\bullet(A)$ by
 \begin{equation*}
    \mathcal{F}^p\Omega^\bullet(A):= (\mathcal{I}(L)^{\wedge p}\Omega(A))^\bullet.
 \end{equation*}
 This filtration gives rise to a spectral sequence, called the \textit{Serre spectral sequence associated to $L$} in \cite{marcut.schuessler:2024a}, which we will denote by $\{(E_L)^{\bullet,\bullet}_r\}_{r\geq 0}$.
 Recall the following results on formal cohomology along (invariant) submanifolds \cite[Theorem 3.17, Theorem 3.18]{marcut.schuessler:2024a}.

    \begin{theorem}\label{theorem:formal_cohomology_ss}
        Let $N\subset M$ be a closed and embedded  submanifold of a Lie algebroid $A\laover M$. 
        \begin{enumerate}
        \item The Serre spectral sequence associated to any Lie subalgebroid $L\laover N$ converges to formal Lie algebroid cohomology along $N$.
            \item \label{theorem:formal_cohomology_ss:item:E1} If $N$ is an invariant submanifold the first page of the Serre spectral sequence associated to $A_N$ is given by 
            \begin{equation}
                (E_{A_N})_1^{p,q}\simeq\mathrm{H}^{p+q}(A_N, S^p\nu^\ast)\implies \mathrm{H}^{p+q}(\mathscr{J}_N^\infty\Omega^\bullet(A)).
            \end{equation}
            \item If $A$ is linearisable around $N$ then 
            \begin{equation}
                \mathrm{H}^\bullet(\mathscr{J}_N^\infty\Omega^\bullet(A))=\prod_{p=0}^\infty \mathrm{H}^{\bullet}(A_N, S^p\nu^\ast).
            \end{equation}
        \end{enumerate}
    \end{theorem}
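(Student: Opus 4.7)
The plan is to analyse the filtration $\{\mathcal{F}^p\Omega^\bullet(A)\}_{p\geq 0}$, proving first convergence to formal cohomology, then identifying the $E_1$-page under the invariance hypothesis, and finally establishing degeneration under linearisability. Since $\mathcal{I}(L)$ is a differential ideal in $\Omega^\bullet(A)$, so is each wedge-power $\mathcal{I}(L)^{\wedge p}$; compatibility with the differential is then automatic, and boundedness below plus exhaustion on $\infty$-jets is clear. The crux of part~(1) is showing $\bigcap_{p}\mathcal{F}^p\Omega^\bullet(A)=\Omega^\bullet_N(A)$. The inclusion $\supseteq$ follows from $\vanishing_N\subseteq \mathcal{I}(L)$ (a function vanishing on $N$ restricts to zero on $L$), giving $\vanishing_N^p\Omega^\bullet(A)\subseteq \mathcal{F}^p\Omega^\bullet(A)$. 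For $\subseteq$, I would work locally near $N$ in a submanifold chart with an $A$-frame adapted to $L$, writing elements of $\mathcal{I}(L)$ explicitly in terms of functions in $\vanishing_N$ together with the dual forms of a complement to $L$ inside $A|_N$. A local combinatorial computation then shows each $\wedge$-product of $p$ such generators contributes a factor in $\vanishing_N^p$ after normalising the non-$L$ directions, yielding flatness along $N$.

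For part~(2), take $L=A_N$ and use that invariance of $N$ provides a well-defined anchor $\rho_{A_N}\colon A_N\to TN$, which induces a flat $A_N$-representation (the linear holonomy) on the conormal bundle $\nu^\ast$ and hence on each $S^p\nu^\ast$. The first step is to identify $\mathcal{I}(A_N)/\mathcal{I}(A_N)^{\wedge 2}$ with $\Omega^1(A_N,\nu^\ast)$ via the normal derivative along $N$; extending $\Cinfty(M)$-linearly and taking wedge powers then yields
\begin{equation*}
\mathrm{gr}^p\Omega^\bullet(A)\simeq \Omega^{\bullet-p}(A_N, S^p\nu^\ast).
\end{equation*}
A direct unpacking of the induced differential shows it coincides with the Chevalley--Eilenberg differential for the pullback representation on $S^p\nu^\ast$, delivering the claimed $E_1$-page.

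For part~(3), if $A$ is linearisable around $N$, a linearisation identifies a neighbourhood of $N$ in $M$ with a neighbourhood of the zero section in $\nu$, intertwining $A$ with a fibrewise-linear Lie algebroid. The $\field R^\times$-action by fibre rescaling on $\nu$ then lifts to an action on $\mathscr{J}_N^\infty\Omega^\bullet(A)$ by chain isomorphisms, producing a weight decomposition preserved by the differential. The weight-$p$ piece is canonically isomorphic to $\Omega^\bullet(A_N,S^p\nu^\ast)$, splitting the filtration and giving the product decomposition.

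The main obstacle will be the identification $\bigcap_p\mathcal{F}^p\Omega^\bullet(A)=\Omega^\bullet_N(A)$ in part~(1): the $\supseteq$ direction is routine, but the reverse requires careful bookkeeping to convert wedge powers of $\mathcal{I}(L)$ into genuine vanishing-order-$p$ conditions along $N$, especially when $\mathrm{rank}\, L<\mathrm{rank}\,A|_N$ so that generators of $\mathcal{I}(L)$ mix functions in $\vanishing_N$ with forms along the non-$L$ directions. Once this is in place, parts~(2) and~(3) reduce to bookkeeping with the associated graded and a standard weight-grading argument.
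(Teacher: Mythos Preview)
The paper does not give its own proof of this theorem: it is explicitly recalled from \cite[Theorem~3.17, Theorem~3.18]{marcut.schuessler:2024a} and used as a black box. So there is no in-paper argument to compare your proposal against.

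That said, your sketch contains a genuine grading error in part~(2). For $L=A_N$ the restriction map $\Omega^\bullet(A)\to\Omega^\bullet(A_N)$ is just restriction of coefficient functions, so $\mathcal{I}(A_N)=\vanishing_N\,\Omega^\bullet(A)$ and hence $\mathcal{I}(A_N)^{\wedge p}=\vanishing_N^p\,\Omega^\bullet(A)$. The associated graded is therefore
\[
\mathrm{gr}^p\Omega^{p+q}(A)\;\simeq\;\big(\vanishing_N^p/\vanishing_N^{p+1}\big)\otimes_{\Cinfty(N)}\Omega^{p+q}(A_N)\;\simeq\;\Omega^{p+q}(A_N,S^p\nu^\ast),
\]
with \emph{no} degree shift: the filtration degree $p$ counts order of vanishing along $N$, not form degree. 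Your claimed identification $\mathrm{gr}^p\Omega^\bullet(A)\simeq \Omega^{\bullet-p}(A_N,S^p\nu^\ast)$ and the statement that $\mathcal{I}(A_N)/\mathcal{I}(A_N)^{\wedge 2}\simeq\Omega^1(A_N,\nu^\ast)$ are both wrong for this reason---you seem to have imported the grading behaviour from the case of a strictly smaller $L$, where $\mathcal{I}(L)$ does acquire positive-degree generators. This also explains the form of the $E_1$-page in the theorem: $(E_{A_N})_1^{p,q}\simeq \homology^{p+q}(A_N,S^p\nu^\ast)$, with total degree $p+q$ rather than $q$.

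Your outlines for parts~(1) and~(3) are reasonable in spirit; the identification $\bigcap_p\mathcal{F}^p\Omega^\bullet(A)=\Omega^\bullet_N(A)$ is indeed the substantive point for convergence, and the weight-grading argument for linearisable $A$ is the expected mechanism.
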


The representation of $A_N$ on the normal bundle $\nu$ of $N$ in $M$ is given by (see e.g.\ \cite[Section 6.1]{fernandes.marcut:2022a})
\begin{equation*}
     \nabla_a (X|_N \text{ mod }TN)=[\rho( \tilde{a}),X]|_N \text{ mod }TN,
\end{equation*}
where $X\in \vfield(M) $ and $\tilde{a}\in \Gamma^\infty(A)$ is some extension of $a\in \Gamma^\infty(A_N)$. The Lie algebroid $A\laover M$ is called linearisable around $N$ if, around $N$, it is diffeomorphic to the action Lie algebroid $A\ltimes \nu$, see \cite[Theorem 2.4]{higgins.mackenzie:1990a} for the general construction of action Lie algebroids.

When blowing up the restriction of a Lie algebroid to an invariant submanifold, the blow-down map naturally induces a map between spectral sequences.
\begin{lemma}\label{lemma:p_A_in_spectral_sequence}
    Let $A\laover M$ be a Lie algebroid and $N\subseteq M$ a closed and embedded invariant submanifold. 
    \begin{enumerate}
        \item The blow-down map $p_A\colon \blup(A, A_N)\to A$ induces a map between Serre spectral sequences
        \begin{equation}
            p_A^\ast\colon \{(E_{A_N})^{\bullet,\bullet}_r\}_{r\geq 0}\to \{(\tilde{E}_{\blup(A,A_N)_{\field P}})^{\bullet,\bullet}_r\}_{r\geq 0}.
        \end{equation}
        \item If the induced map
        \begin{equation}
            p_A^\ast\colon \mathrm{H}^{\bullet}(A_N, S^p\nu(M,N)^\ast)\to \mathrm{H}^{\bullet}(\blup(A, A_N)_{\field P}, S^p\nu(\blup(M,N),\field P)^\ast)
        \end{equation}
        is an isomorphism for all $p\in \field N$ then 
        \begin{equation}
            p_A^\ast\colon \mathrm{H}^{\bullet}(\mathscr{J}_N^\infty\Omega^\bullet(A))\xrightarrow{\simeq}\mathrm{H}^{\bullet}(\mathscr{J}_{\field P}^\infty\Omega^\bullet(\blup(A,A_N)))
        \end{equation}
        is an isomorphism.
    \end{enumerate}
    \begin{proof}
        It is easy to check that $p_A^\ast$ respects the corresponding filtrations and thus induces a map between spectral sequences. Then the second part follows from Theorem \ref{theorem:formal_cohomology_ss}, \ref{theorem:formal_cohomology_ss:item:E1}.\ and the Mapping Lemma for spectral sequences \cite[Lemma 5.2.4]{weibel:1994a}.
    \end{proof}
\end{lemma}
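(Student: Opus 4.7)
The plan follows the hint given at the end of the lemma statement: verify that $p_A^\ast$ respects the filtrations defining the Serre spectral sequences (yielding part (1)), then identify the induced map on the $E_1$-pages with the hypothesised maps on cohomology with coefficients in symmetric powers of the conormal bundle, and conclude part (2) via the Mapping Lemma for spectral sequences together with the convergence statement of Theorem~\ref{theorem:formal_cohomology_ss}.

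For part (1), the key observation is that $p_A^{-1}(N) = \field P$ and $p_A(\blup(A,A_N)_{\field P})\subseteq A_N$, which translates to the inclusion of differential ideals
\begin{equation*}
p_A^\ast\, \mathcal{I}(A_N)\subseteq \mathcal{I}(\blup(A,A_N)_{\field P}).
\end{equation*}
Since $p_A^\ast$ is an algebra morphism and a cochain map, raising to the $p$-th wedge power and multiplying by $\Omega^\bullet$ preserves this, so
\begin{equation*}
p_A^\ast\,\mathcal{F}^p\Omega^\bullet(A)\subseteq \tilde{\mathcal{F}}^p\Omega^\bullet(\blup(A,A_N)).
\end{equation*}
Thus $p_A^\ast$ is a morphism of filtered cochain complexes, inducing the desired morphism of Serre spectral sequences.

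For part (2), I would apply the Mapping Lemma for spectral sequences \cite[Lemma~5.2.4]{weibel:1994a}: if a morphism of spectral sequences is an isomorphism on some page, it is an isomorphism on every subsequent page. By Theorem~\ref{theorem:formal_cohomology_ss}, \ref{theorem:formal_cohomology_ss:item:E1}, the $E_1$-pages are canonically identified with $\mathrm{H}^{p+q}(A_N, S^p\nu(M,N)^\ast)$ and $\mathrm{H}^{p+q}(\blup(A,A_N)_{\field P}, S^p\nu(\blup(M,N),\field P)^\ast)$ respectively, and I would check that under these identifications the map on $E_1$ induced by $p_A^\ast$ agrees with the hypothesised map. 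By hypothesis this is an isomorphism for every $p$; the Mapping Lemma then gives isomorphisms on all later pages, and the convergence of both spectral sequences to the respective formal cohomologies (first part of Theorem~\ref{theorem:formal_cohomology_ss}) transports the conclusion to $\mathrm{H}^\bullet(\mathscr{J}_N^\infty\Omega^\bullet(A))$ and $\mathrm{H}^\bullet(\mathscr{J}_{\field P}^\infty\Omega^\bullet(\blup(A,A_N)))$.

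The main bookkeeping obstacle is verifying compatibility of $p_A^\ast$ with the canonical identification $\mathcal{I}(A_N)^p/\mathcal{I}(A_N)^{p+1}\cong \Gamma^\infty(S^p\nu(M,N)^\ast)$ used in the proof of Theorem~\ref{theorem:formal_cohomology_ss}, \ref{theorem:formal_cohomology_ss:item:E1}. This reduces to the functoriality of the normal derivative: the map $\D^N p\colon \nu(\blup(M,N),\field P)\to \nu(M,N)$ induces, on $p$-th symmetric powers of duals, precisely the map appearing on the associated graded of $p_A^\ast$, so the identification is natural in the Lie algebroid pair and commutes with the morphism of filtered complexes induced by $p_A$. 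Once this diagram of natural isomorphisms is seen to commute, the identification of the $E_1$-level map with the one in the hypothesis is automatic, and the rest of the argument is the formal invocation of the Mapping Lemma.
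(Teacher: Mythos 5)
Your proposal is correct and follows essentially the same route as the paper: check that $p_A^\ast$ preserves the filtrations induced by the differential ideals $\mathcal{I}(A_N)$ and $\mathcal{I}(\blup(A,A_N)_{\field P})$, identify the induced map on the first page with the hypothesised maps via Theorem~\ref{theorem:formal_cohomology_ss}, \ref{theorem:formal_cohomology_ss:item:E1}, and conclude with the Mapping Lemma together with convergence to the formal cohomologies. The extra details you supply (the ideal inclusion and the naturality of the normal-derivative identification of the associated graded) are exactly the "easy to check" steps the paper leaves implicit.
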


To compute cohomologies in this section we also need the spectral sequence of an abelian extension introduced in \cite{mackenzie:2005a}.

\begin{definition}
A Lie algebroid $A\laover M$ is called \textup{abelian extension} if it fits into a short exact sequence of Lie algebroids
    \begin{equation}\label{eq:ses_abelian_extension}
	\begin{tikzpicture}[baseline=(current bounding box.center)]
	\node (1) at (0,0) {$ 0 $};
	\node (2) at (1.5,0) {$ L $};
	\node (3) at (3,0) {$ A $};
	\node (4) at (4.5,0) {$ B $};
	\node (5) at (6,0) {$ 0 $};
	
	\path[->]
	(1) edge node[]{$  $} (2)
	(2) edge node[above]{$ i $} (3)
	(3) edge node[above]{$ \Phi $} (4)
	(4) edge node[]{$  $} (5);
	\end{tikzpicture}
	\end{equation}
 over $M$ such that $L\laover M$ is abelian. 
\end{definition}

There is a representation of $ B $ on $ L $ given by $ \nabla^L_b \ell=[ a,i(\ell) ] $, where $ a\in \Gamma^\infty(A) $ satisfies $ \Phi(a)=b\in \Gamma^\infty(B) $ and $ \ell\in \Gamma^\infty(L) $, see \cite[Proposition 3.3.20]{mackenzie:2005a}. The \textit{extension class} $[\gamma]\in \mathrm{H}^2(B,L)$ of an abelian extension is defined via a splitting $\sigma\colon B\to A$ of \eqref{eq:ses_abelian_extension} by the class of the induced curvature tensor $\gamma\in \Omega^2(B,L)$, where
\begin{equation}\label{eq:extension_class}
    \gamma(b_1,b_2)=[ \sigma (b_1),\sigma (b_2) ]_A-\sigma([b_1,b_2]_B) \in \Gamma^\infty(L)
\end{equation}
for $b_1,b_2\in \Gamma^\infty(B)$. Note that $[\gamma]$ is independent of the chosen splitting. Then one obtains the following (see \cite[Corollary 4.2]{marcut.zeiser:2022a} for $\Phi$ the anchor of regular Lie algebroids, and \cite[Theorem 7.1]{marcut.schuessler:2024a} for a Lie algebroid $B\laover Q$ over an arbitrary manifold).

\begin{theorem}\label{theorem:abelian_ss_second_page}
    Given an abelian extension \eqref{eq:ses_abelian_extension}, there is a spectral sequence converging to the cohomology of $A$ with second page given by
    \begin{equation}
        \big(d_2: E_2^{p,q}\to E_2^{p+2,q-1}\big) \simeq \big( (-1)^p\mathrm{i}_{[\gamma]}\colon  \mathrm{H}^{p}(B,\Lambda^qL^\ast)\to \mathrm{H}^{p+2}(B,\Lambda^{q-1}L^\ast)\big).
    \end{equation}
\end{theorem}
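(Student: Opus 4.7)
My plan is to use the standard filtration on $\Omega^\bullet(A)$ by ``number of $L$-slots.'' Explicitly, set
\begin{equation*}
\mathcal{F}^p\Omega^n(A)=\{\omega\in \Omega^n(A)\colon \mathrm{i}_{\ell_1}\cdots\mathrm{i}_{\ell_{n-p+1}}\omega=0\ \text{for all}\ \ell_1,\dots,\ell_{n-p+1}\in \Gamma^\infty(L)\},
\end{equation*}
so that $\omega\in \mathcal{F}^p$ precisely when $\omega$ requires at least $p$ arguments outside $\Gamma^\infty(L)$ to be nonzero. Since $[\Gamma^\infty(L),\Gamma^\infty(L)]\subseteq \Gamma^\infty(L)$ (in fact vanishes), the Chevalley--Eilenberg formula for $\D$ immediately shows this filtration is preserved by $\D$. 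Choosing a vector bundle splitting $\sigma\colon B\to A$ of $\Phi$ yields $A=i(L)\oplus\sigma(B)$ and a canonical identification
\begin{equation*}
E_0^{p,q}=\mathcal{F}^p\Omega^{p+q}(A)/\mathcal{F}^{p+1}\Omega^{p+q}(A)\simeq \Omega^p(B,\Lambda^q L^\ast).
\end{equation*}
Because $\mathcal{F}^0\Omega^n(A)=\Omega^n(A)$ and $\mathcal{F}^{n+1}\Omega^n(A)=0$, the filtration is bounded in each total degree, so the associated spectral sequence converges to $\mathrm{H}^\bullet(A)$.

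Under the splitting I would decompose the differential as $\D=\sum_{k\geq 0}\D_{k,1-k}$, where $\D_{k,1-k}$ shifts bidegree by $(k,1-k)$. The component $\D_{0,1}$ is the Chevalley--Eilenberg differential of $L$ with values in $\Lambda^q L^\ast$. Since $L$ is abelian, $[\ell,\ell']=0$, and the Leibniz rule $[\ell,f\ell']=f[\ell,\ell']+\rho_L(\ell)(f)\ell'$ then forces $\rho_L=0$; hence $\D_{0,1}=0$ and $\D_0=0$ in the spectral sequence, so $E_1^{p,q}\simeq \Omega^p(B,\Lambda^q L^\ast)$. The next component $\D_1=\D_{1,0}$ equals the Chevalley--Eilenberg differential of $B$ acting on $\Lambda^\bullet L^\ast$ through the representation induced by $\nabla^L$ (since on this bidegree only $B$-brackets and the $B$-anchor applied to coefficients survive). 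Therefore
\begin{equation*}
E_2^{p,q}\simeq \mathrm{H}^p(B,\Lambda^q L^\ast).
\end{equation*}

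To identify $\D_2$, I would analyze the component $\D_{2,-1}$. Evaluating $\D\omega$ for $\omega\in \Omega^p(B,\Lambda^q L^\ast)$ on $p+2$ sections $\sigma(b_0),\dots,\sigma(b_{p+1})$ together with $q-1$ sections $\ell_1,\dots,\ell_{q-1}$ of $L$, only the brackets $[\sigma(b_i),\sigma(b_j)]$ contribute, because they are the only source of an $L$-component to be fed into the remaining $L$-slot of $\omega$. By the definition \eqref{eq:extension_class} of the extension class, the $L$-part of $[\sigma(b_i),\sigma(b_j)]$ is $\gamma(b_i,b_j)$, and a direct bookkeeping calculation yields
\begin{equation*}
\D_{2,-1}\omega=(-1)^p\mathrm{i}_\gamma\omega,
\end{equation*}
where $\mathrm{i}_\gamma\colon \Omega^p(B,\Lambda^q L^\ast)\to \Omega^{p+2}(B,\Lambda^{q-1} L^\ast)$ wedges the $B$-argument with $\gamma\in \Omega^2(B,L)$ while contracting its $L$-value into one of the $L^\ast$-factors. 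Expanding $\D^2=0$ in bidegree gives $\D_{1,0}\D_{2,-1}+\D_{2,-1}\D_{1,0}=0$, so $\D_{2,-1}$ descends to a well-defined differential on $E_2$, namely $\D_2$. Changing the splitting modifies $\gamma$ by a $\D_{1,0}$-coboundary in $\Omega^\bullet(B,L)$, so the induced map on $E_2$ depends only on $[\gamma]$ and equals $(-1)^p\mathrm{i}_{[\gamma]}$.

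The main obstacle will be the sign bookkeeping in the explicit formula for $\D_{2,-1}$: one must unravel the Chevalley--Eilenberg formula on a shuffled collection of $\sigma(B)$- and $L$-arguments, move the pair $\sigma(b_i),\sigma(b_j)$ into adjacent slots so that the bracket $[\sigma(b_i),\sigma(b_j)]$ produces the $L$-factor contracted into $\omega$, and verify that the remaining shuffle signs assemble into $(-1)^p$. Everything else is formal: vanishing $\rho_L=0$ comes for free from abelianness, the identification of $\D_1$ with the $B$-Chevalley--Eilenberg differential is a direct check on $\sigma(B)^{p+1}\times L^q$, convergence is immediate from boundedness, and splitting-independence reduces to noting that a second splitting $\sigma'$ satisfies $\sigma'-\sigma\in \Omega^1(B,L)$, under which $\gamma$ transforms by $\D_{1,0}(\sigma'-\sigma)$.
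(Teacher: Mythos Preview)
The paper does not supply its own proof of this theorem; it is quoted with references to \cite{marcut.zeiser:2022a} and \cite{marcut.schuessler:2024a}. Your argument is the standard Hochschild--Serre construction for Lie algebroid extensions and is correct. One minor simplification worth recording: because $[\sigma(b),\sigma(b')]=\sigma([b,b']_B)+\gamma(b,b')$ with $\gamma(b,b')\in\Gamma^\infty(L)$, and all remaining brackets among the generators land in $L$, the decomposition of $\D$ under the splitting actually terminates at $\D_{2,-1}$ (there are no components $\D_{k,1-k}$ with $k\geq 3$). Hence the chain-map identity $\D_{1,0}\D_{2,-1}+\D_{2,-1}\D_{1,0}=0$ is exactly the bidegree-$(3,-1)$ part of $\D^2=0$, with no extra terms to dispose of, and your descent argument goes through without further work.
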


\begin{corollary}\label{corollary:cohom_for_trivial_extension_class}
    If the abelian extension \eqref{eq:ses_abelian_extension} splits as Lie algebroids, i.e.\ the extension class is zero, and $L=M\times \field R$ is trivial with trivial representation of $B$, the cohomology of $A$ is given by
    \begin{equation}
        \homology^\bullet(A)=\homology^\bullet(B)\oplus\homology^{\bullet-1}(B).
    \end{equation}
\end{corollary}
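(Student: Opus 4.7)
The plan is to apply Theorem~\ref{theorem:abelian_ss_second_page} and observe that the associated spectral sequence collapses at $E_2$. Since $L = M \times \field R$ is a trivial line bundle with trivial $B$-representation, the induced representations on $L^\ast$ and on $\Lambda^q L^\ast$ are also trivial, and $\Lambda^q L^\ast = 0$ for $q \geq 2$. Hence the second page
\[
E_2^{p,q} \simeq \mathrm{H}^p(B,\Lambda^q L^\ast)
\]
is concentrated in the two rows $q \in \{0,1\}$, with $E_2^{p,0} \simeq E_2^{p,1} \simeq \mathrm{H}^p(B)$.

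By assumption the extension class $[\gamma] \in \mathrm{H}^2(B,L)$ vanishes, so the differential $d_2 = (-1)^p \mathrm{i}_{[\gamma]}$ is identically zero. For $r \geq 3$ the differential $d_r\colon E_r^{p,q} \to E_r^{p+r,\, q-r+1}$ shifts the $q$-degree down by $r-1 \geq 2$; since all nontrivial groups sit in $q \in \{0,1\}$, these higher differentials vanish automatically. Consequently $E_\infty^{p,q} = E_2^{p,q}$.

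Convergence then equips $\homology^n(A)$ with a filtration whose associated graded is nontrivial only in bidegrees $(n,0)$ and $(n-1,1)$, giving a short exact sequence of $\field R$-vector spaces
\[
0 \to \mathrm{H}^n(B) \to \homology^n(A) \to \mathrm{H}^{n-1}(B) \to 0,
\]
which splits automatically, yielding the claimed decomposition $\homology^n(A) \simeq \mathrm{H}^n(B) \oplus \mathrm{H}^{n-1}(B)$.

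There is no serious obstacle here; the only ingredients are the triviality of the induced representation on $L^\ast$, the collapse of $d_r$ for $r \geq 3$ by a dimension count on $q$, and the (automatic) splitting of a short exact sequence of vector spaces. As a sanity check, one can also argue directly: a Lie algebroid splitting $\sigma\colon B \hookrightarrow A$ of \eqref{eq:ses_abelian_extension} combined with a trivialisation of $L$ provides a nowhere-vanishing $\theta \in \Omega^1(A)$ which is closed (using both abelianness of $L$ and triviality of the $B$-action on $L$), so every form decomposes uniquely as $\sigma^\ast \omega_0 + \theta \wedge \sigma^\ast \omega_1$ with $\omega_i \in \Omega^\bullet(B)$ and the differential respects this decomposition, recovering the conclusion at the cochain level.
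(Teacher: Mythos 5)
Your spectral-sequence argument is exactly the intended proof: the paper states this corollary as an immediate consequence of Theorem~\ref{theorem:abelian_ss_second_page}, and your observations (only the rows $q=0,1$ survive since $\rank L=1$, $d_2=\mathrm{i}_{[\gamma]}=0$ by the splitting hypothesis, higher differentials vanish for degree reasons, and the resulting two-step filtration of $\homology^n(A)$ splits as vector spaces) are precisely what is needed. Only a notational slip in your supplementary cochain-level remark: the forms $\omega_i\in\Omega^\bullet(B)$ should be pulled back along the projection $\Phi\colon A\to B$ (writing $\omega=\Phi^\ast\omega_0+\theta\wedge\Phi^\ast\omega_1$), not along the splitting $\sigma$, whose pullback goes the other way.
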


	\subsection{The action Lie algebroid $ \liealg{so}(3)\ltimes \field R^3 $}\label{sec:so3}
	
	In this section we consider $ A=\liealg{so}(3)\ltimes \field R^3 $, the action Lie algebroid corresponding to infinitesimal rotations in $ \field R^3 $, which is linear around the origin. More abstractly, $A\laover \field R^3$ is a trivial vector bundle with global frame $\{e_1,e_2,e_3\}$, and bracket and anchor given by
     \begin{align}
         [e_i,e_j]&=\sum_{k=1}^3\varepsilon_{ijk} e_k, \label{eq:so3_bracket}\\
         \rho(e_i)&=\sum_{j,k=1}^3\varepsilon_{ijk} x_j \partial_k.
     \end{align}

 Since $ \liealg{so}(3) $ is compact, averaging~\cite{ginzburg.weinstein:1992a} shows that
	\begin{equation}\label{so3:eq:cohomology_averaging}
	\Homology{A}{\bullet}{}\simeq\Homology{\liealg{so}(3)}{\bullet}{}\tensor \Cinfty(\field R^3)^{\liealg{so}(3)}=\algebra A\oplus 0\oplus 0\oplus\algebra A,
	\end{equation}
	identifying $ \Cinfty(\field R^3)^{\liealg{so}(3)}=\algebra A=\{ f\in \Cinfty(\field R)\colon f(-x)=f(x)\text{ for all }x\in \field R \} $ with the even functions on $ \field R $, as $ \Cinfty(\field R^3)^{\liealg{so}(3)}=\{ f\in \Cinfty(\field R^3)\colon f\text{ only depends on the radius} \} $. Blowing up the Lie subalgebroid $ A_{\{0\}} $ results in yet another action Lie algebroid,
	\begin{equation}
	\blup(A,A_{\{0\}})=\liealg{so}(3)\ltimes \blup(\field R^3,\{0\}),
	\end{equation}
	see~\cite[Corollary 5.93]{obster:2021a}, and we write 
    \begin{equation}
        \tilde{e}_i:=\blup(e_i)=p^\sharp e_i \in \Gamma^\infty(\blup(A,A_{\{0\}})).
    \end{equation}
We compute the cohomology of the blowup and can reproduce \eqref{so3:eq:cohomology_averaging}.
	\begin{proposition}
		Let $ A=\liealg{so}(3)\ltimes \field R^3 $. Then
		\begin{equation}
		\Homology{A}{\bullet}{}=\Homology{\liealg{so}(3)\ltimes \blup(\field R^3,\{0\})}{\bullet}{}=\algebra A\oplus 0\oplus 0\oplus\algebra A.
		\end{equation}
	\end{proposition}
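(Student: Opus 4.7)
The plan is to apply the averaging argument underlying \eqref{so3:eq:cohomology_averaging} directly to the blowup, which first requires that $\blup(A,A_{\{0\}})=\liealg{so}(3)\ltimes \blup(\field R^3,\{0\})$ is again an action Lie algebroid for a compact Lie group. Since the linear $SO(3)$-action on $\field R^3$ fixes the origin, each $g\in SO(3)$ defines a diffeomorphism of the pair $(\field R^3,\{0\})$ and hence lifts to a diffeomorphism of $\blup(\field R^3,\{0\})$ by the universal property of Definition~\ref{def:RPblup_univ_property}, yielding a smooth $SO(3)$-action integrating the sections $\tilde e_i=\blup(e_i)$.

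Averaging over the compact connected group $SO(3)$ on the complex $\Omega^\bullet(\blup(A,A_{\{0\}}))\simeq\Cinfty(\blup(\field R^3,\{0\}))\tensor \Lambda^\bullet\liealg{so}(3)^\ast$ then gives, exactly as in \cite{ginzburg.weinstein:1992a},
\begin{equation*}
\Homology{\blup(A,A_{\{0\}})}{\bullet}{}\simeq \Homology{\liealg{so}(3)}{\bullet}{}\tensor \Cinfty(\blup(\field R^3,\{0\}))^{SO(3)}.
\end{equation*}
The first tensor factor agrees with the de Rham cohomology of the compact simply-connected form $SU(2)\simeq S^3$, hence equals $\field R$ in degrees $0$ and $3$ and is zero elsewhere.

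For the second factor, I would use the squared radius $r^2=p^\ast(x_1^2+x_2^2+x_3^2)\in \Cinfty(\blup(\field R^3,\{0\}))$: its nonzero level sets are the $SO(3)$-orbit $2$-spheres, and its zero locus is the exceptional divisor $\field P=\field{RP}^2$, itself an $SO(3)$-orbit. On the dense open complement $\blup(\field R^3,\{0\})\setminus \field P\simeq \field R^3\setminus \{0\}$ every $SO(3)$-invariant smooth function is of the form $g(|x|^2)$, and smoothness across $\field P$ forces $g$ to extend smoothly to $0\in \field R_{\geq 0}$; conversely, for every $g\in \Cinfty(\field R_{\geq 0})$ the function $g(r^2)$ is smooth on the blowup. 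Whitney's theorem on even functions identifies $\Cinfty(\field R_{\geq 0})\simeq \algebra A$ via $g\leftrightarrow (t\mapsto g(t^2))$, so $\Cinfty(\blup(\field R^3,\{0\}))^{SO(3)}\simeq \algebra A$ and we obtain $\Homology{\blup(A,A_{\{0\}})}{\bullet}{}=\algebra A\oplus 0\oplus 0\oplus \algebra A$.

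The main subtlety is the smoothness verification across the exceptional divisor when identifying the invariant functions; in the charts of Remark~\ref{remark:charts_for_blup} the radial and angular directions become mixed, so one must argue that a smooth $SO(3)$-invariant extension of $g(|x|^2)$ across $\field P$ genuinely yields a smooth function of $r^2$. Once this is in place, $p_A^\ast$ restricts on invariants to $p^\ast\colon \Cinfty(\field R^3)^{SO(3)}\to \Cinfty(\blup(\field R^3,\{0\}))^{SO(3)}$, which sends $|x|^2$ to $r^2$ and is therefore an isomorphism of the two copies of $\algebra A$, so both equalities in the proposition hold simultaneously and are realised via the natural map $p_A^\ast$.
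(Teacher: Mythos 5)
Your argument is correct, but it takes a genuinely different route from the paper. You lift the $SO(3)$-action to $\blup(\field R^3,\{0\})$ (legitimate: the action map is a map of pairs with injective normal derivative, so it blows up, and the paper itself records that $\blup(A,A_{\{0\}})=\liealg{so}(3)\ltimes\blup(\field R^3,\{0\})$) and then re-apply the Ginzburg--Weinstein averaging to the blown-up action algebroid, reducing everything to the identification $\Cinfty(\blup(\field R^3,\{0\}))^{SO(3)}\simeq\algebra A$; naturality of the averaging isomorphism then exhibits $p_A^\ast$ as $\id\otimes p^\ast$ on invariant functions, giving both equalities at once. The paper deliberately avoids averaging on the blowup: it exploits the regular structure of $\blup(A,A_{\{0\}})$ as an abelian extension over its orbit foliation, passes to the double cover $\field S^2\times\field R$ to trivialise the kernel line bundle and the representation, computes the extension class (a multiple of the volume form of $\field S^2$) so that $\D_2$ in the spectral sequence of Theorem~\ref{theorem:abelian_ss_second_page} is an isomorphism, and descends by $\field Z_2$-(anti)invariance; the comparison with $\Homology{A}{\bullet}{}$ is then made through the formal cohomologies along $\{0\}$ and $\field P$ via Lemma~\ref{lemma:p_A_in_spectral_sequence} and Theorem~\ref{theorem:flatiso_and_consequences}. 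Your approach is shorter and more elementary, but it relies on compactness of the symmetry group and on the previously quoted averaging result for $A$, so it neither illustrates the blow-down machinery the section is meant to showcase nor produces the coefficient computations $\Homology{\blup(A,A_{\{0\}})_{\field P}}{\bullet}{,S^\ell\field L^\ast}$ that the paper uses to prove independently that $p_A^\ast$ is an isomorphism; the paper's route, conversely, reproves the averaging formula rather than consuming it. The smoothness point you flag is real but routine, and does not constitute a gap: if $F$ is a smooth invariant function on the blowup, restrict it in the chart $(U_1,\tilde x)$ of Remark~\ref{remark:charts_for_blup} to the line $\{\tilde x_2=\tilde x_3=0\}$, which $p$ maps to the $x_1$-axis; there $F(\tilde x_1,0,0)=u(|\tilde x_1|)$ with $u$ the induced function of the radius, so $u$ has a smooth even extension, hence $u(r)=g(r^2)$ with $g$ smooth by Whitney, and $F=g(p^\ast|x|^2)$ on the dense complement of $\field P$ and therefore everywhere by continuity, since $p^\ast|x|^2$ is smooth on the blowup.
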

	
	We first show the second equality by computing the cohomology of the blowup using the spectral sequence for abelian extensions and then show that the cohomology of the blowup is isomorphic to the cohomology of $ A $ by proving that the formal cohomologies are isomorphic. First we see that the blowup indeed is an abelian extension.
	
	\begin{lemma}
		The blowup $ \blup(A,A_{\{0\}}) $ is an abelian extension, i.e.\ it fits into an exact sequence of Lie algebroids
		\begin{equation}
		\begin{tikzpicture}[baseline=(current bounding box.center)]
		\node (1) at (0,0) {$ 0 $};
		\node (2) at (2,0) {$ L=\ker \rho_{\blup} $};
		\node (3) at (5,0) {$ \blup(A,A_{\{0\}}) $};
		\node (4) at (7.5,0) {$ T\foliation{F} $};
		\node (5) at (8.8,0) {$ 0 $};
		
		\draw[->]
		(1) edge node[]{$  $} (2)
		(2) edge node[above]{$  $} (3)
		(3) edge node[above]{$ \rho_{\blup} $} (4)
		(4) edge node[]{$  $} (5);
		
		\end{tikzpicture}
		\end{equation}
		where $ \foliation{F} $ denotes the orbit foliation of $ \blup(A,A_{\{0\}}) $ given by
		\begin{equation}
		\foliation{F}=\{ p^{-1}( \{x_1^2+x_2^2+x_3^3=r^2 \} )\}_{r\geq 0}.
		\end{equation}
		In particular, $ \blup(A,A_{\{0\}}) $ is a regular Lie algebroid.
		\begin{proof}
Recall that the blow-down map $ \blowdown{p_A}{A}{A_{\{0\}}} $ restricted to $ \blup(\field R^3,\{0\})\setminus \field P $ is a diffeomorphism, thus on this set the orbit foliation is the same as on $ \field R^3\setminus \{0\} $ under $ p $. 
To see that $ \field P $ is a single orbit, since the situation is highly symmetric it is enough to consider one of the charts of $ \blup(\field R^3,\{0\}) $ from Remark~\ref{remark:charts_for_blup}, say $ U_1 $.
Using $ (\tilde{x}_1,\tilde{x}_2,\tilde{x}_3) $ to denote the coordinates on $ U_1 $ we find 
			\begin{equation}\label{eq:so3_rho_in_U1_coords}
			\begin{aligned}
			\rho_{\blup}(\tilde{e}_1)\at{U_1}&= \tilde{x}_3\tilde{\partial}_2-\tilde{x}_2\tilde{\partial}_3\\
			\rho_{\blup}(\tilde{e}_2)\at{U_1}&=-\tilde{x}_1\tilde{x}_3\tilde{\partial}_1+\tilde{x}_2\tilde{x}_3\tilde{\partial}_2+(1+\tilde{x}_3^2)\tilde{\partial}_3\\
			\rho_{\blup}(\tilde{e}_3)\at{U_1}&= \tilde{x}_1\tilde{x}_2\tilde{\partial}_1-(1+\tilde{x}_2^2)\tilde{\partial}_2-\tilde{x}_2\tilde{x}_3\tilde{\partial}_3.
			\end{aligned}
			\end{equation}
			Thus we see that on $ U_1\cap\field P=\{ \tilde{x}_1=0 \} $, the image of $ \rho_{\blup} $ still spans a two-dimensional distribution. Since $ \field P $ is connected, this implies that $ \field P $ is a single orbit.
		\end{proof}
	\end{lemma}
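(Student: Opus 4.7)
The plan is to reduce the lemma to a constant-rank statement for the anchor $\rho_\blup$: once we know that $\rho_\blup$ has rank $2$ everywhere, both regularity and identification of the orbit foliation drop out, and the rank-$1$ kernel $L$ is automatically abelian. Indeed, any line bundle with vanishing anchor is abelian as a Lie algebroid, since the Leibniz rule then forces its bracket to be $\Cinfty$-bilinear and therefore pointwise equal to the bracket of a $1$-dimensional Lie algebra, which is necessarily trivial.

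To establish the constant rank, off the exceptional divisor $\field P$ the blow-down $p_A$ is an isomorphism of Lie algebroids, and the orbits of the $\liealg{so}(3)$-action on $\field R^3\setminus\{0\}$ are concentric $2$-spheres, so $\rho_\blup$ has rank $2$ there. On $\field P$ I would compute the lifted anchor in the chart $U_1$ from Remark~\ref{remark:charts_for_blup}: each $\rho(e_i)=\sum_{j,k}\varepsilon_{ijk}x_j\partial_k$ vanishes at the origin and hence lifts uniquely to $\blup(\field R^3,\{0\})$ by Lemma~\ref{lemma:lifting_vf_to_the_blowup}; by Lemma~\ref{lemma:blup_of_anchored_vb} this lift is exactly $\rho_\blup(\tilde e_i)$. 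A direct substitution with $x_1=\tilde x_1$, $x_2=\tilde x_1\tilde x_2$, $x_3=\tilde x_1\tilde x_3$ produces the three chart formulas stated in the lemma, and restricting to $\tilde x_1=0$ one sees that $\rho_\blup(\tilde e_2)$ and $\rho_\blup(\tilde e_3)$ are linearly independent throughout $\field P\cap U_1$ (a short $2\times 2$ determinant in the $\tilde\partial_2,\tilde\partial_3$ components suffices). The analogous rank computation in the charts $U_2,U_3$ follows by the same argument.

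With $\rho_\blup$ of constant rank $2$, the Lie algebroid $\blup(A,A_{\{0\}})$ is regular and the image of $\rho_\blup$ integrates to a foliation $\foliation F$. The leaf through any point of $\field P$ is a connected $2$-dimensional immersed submanifold contained in $\field P$, and since $\field P$ is itself a connected $2$-manifold, that leaf must coincide with $\field P$. Away from $\field P$ each leaf pushes forward under $p$ to a sphere $\{x_1^2+x_2^2+x_3^2=r^2\}$, so the orbit foliation is exactly $\foliation F=\{p^{-1}(\{x_1^2+x_2^2+x_3^2=r^2\})\}_{r\geq 0}$, and the desired short exact sequence $0\to L\to \blup(A,A_{\{0\}})\xrightarrow{\rho_\blup} T\foliation F\to 0$ follows directly from the constant-rank splitting.

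I expect the main obstacle to be bookkeeping rather than ideas: correctly identifying $\rho_\blup(\tilde e_i)$ with the lifted vector field from Lemma~\ref{lemma:lifting_vf_to_the_blowup} and handling the coordinate conventions of Remark~\ref{remark:charts_for_blup}. Once the explicit chart formulas for $\rho_\blup(\tilde e_i)|_{U_1}$ are written down, the rank count, the identification of $\foliation F$, and the abelianness of $L$ are all immediate.
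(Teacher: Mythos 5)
Your proposal is correct and follows essentially the same route as the paper: lift the anchor of each $e_i$ to the blowup chart $U_1$ (via Lemmas~\ref{lemma:lifting_vf_to_the_blowup} and~\ref{lemma:blup_of_anchored_vb}), observe that on $\tilde x_1=0$ the lifted anchors span a two-dimensional distribution tangent to $\field P$, use connectedness of $\field P$ to conclude it is a single orbit, and match the leaves off $\field P$ with the spheres via the blow-down diffeomorphism, which yields constant rank $2$ and hence the stated exact sequence. Your explicit verification that the rank-one kernel is abelian (via the Leibniz rule forcing a pointwise, hence trivial, bracket) is left implicit in the paper but is a welcome, correct addition rather than a different method.
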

	
Thus we are in the framework of Theorem~\ref{theorem:abelian_ss_second_page}. Note that in this particular case the spectral sequence will stabilise after the second page as $ \rank L=1 $. 
As a vector bundle, $ L $ is given by the pullback of the tautological line bundle back to itself.
	
\begin{lemma}
Denoting the projection of the tautological line bundle $ \field L $ of $ \field P $ by $ \pi_{\field L}\colon \field L\to\field P $, the kernel of $ \rho_{\blup} $ is isomorphic to 
\begin{equation}
L=\ker \rho_{\blup}=\pi_{\field L}^\sharp \field L,
\end{equation}
by mapping 
\begin{equation}\label{eq:so3_identify_L_with_tautological_bundle}
\pi_{\field L}^\sharp \field L\ni((v^1,v^2,v^3)_{[v']})_{w_{[v']}}\mapsto (v^1\tilde{e}_1+v^2\tilde{e}_2+v^3\tilde{e}_3)\at{w_{[v']}}\in L.
\end{equation}
\begin{proof}
    The defined map identifies $ \pi_{\field L}^\sharp \field L $ with a rank $ 1 $ subbundle of $ \blup(A,A_{\{0\}}) $. Thus, we only need to show that its image lies in the kernel of $ \rho_{\blup} $. In the view of the chart for $ U_1 $ of the blowup, recall that the canonical chart for $ U_1 $ already is a vector bundle chart for $ \field L_{U_1\cap \field P} $ with $ \tilde{x}_1 $ as the fibre coordinate. Here, the map~\eqref{eq:so3_identify_L_with_tautological_bundle} becomes
    \begin{equation*}
    ((\tilde{x}_1)_{[1:\tilde{x}_2:\tilde{x}_3]})_{(\tilde{x}_1',\tilde{x}_2,\tilde{x}_3)}\mapsto \tilde{x}_1(\tilde{e}_1+\tilde{x}_2\tilde{e}_2+\tilde{x}_3\tilde{e}_3).
    \end{equation*}
    But given~\eqref{eq:so3_rho_in_U1_coords} it is easy to check that points of the form $ \tilde{e}_1+\tilde{x}_2\tilde{e}_2+\tilde{x}_3\tilde{e}_3 $ lie in the kernel of $ \rho_{\blup} $.
\end{proof}
\end{lemma}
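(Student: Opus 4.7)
The plan is to verify three claims about the indicated map $\Phi\colon \pi_{\field L}^\sharp \field L \to \blup(A,A_{\{0\}})$: that it is a smooth vector bundle morphism over the identity, that its image lies in $L=\ker\rho_\blup$, and that it is fibrewise an isomorphism. Since the orbit foliation $\foliation{F}$ has rank $2$ and $\blup(A,A_{\{0\}})$ has rank $3$, the bundle $L$ is of rank $1$; hence it suffices to exhibit a fibrewise linear $\Phi$ into $L$ which is nowhere vanishing.

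Smoothness and fibrewise linearity of $\Phi$ are immediate: $\{\tilde e_1,\tilde e_2,\tilde e_3\}$ is a global frame of $\blup(A,A_{\{0\}})$, so $(v,w)\mapsto(v^1\tilde e_1+v^2\tilde e_2+v^3\tilde e_3)|_w$ is smooth in both arguments and $\field R$-linear in $v$. For the kernel condition, I would first treat the dense open set $\blup(\field R^3,\{0\})\setminus \field P$, where $p$ is a diffeomorphism and a point $w_{[v']}$ corresponds to $x=w\in \field R^3\setminus\{0\}$. The constraint $(v^1,v^2,v^3)\in\field L_{[v']}=\field R\cdot w$ gives $v=\lambda w$ for some $\lambda\in\field R$, so at the point $x=w$ we have
\begin{equation*}
\rho(v^i e_i)|_x=\sum_{i,j,k}\varepsilon_{ijk} v^i x_j\partial_k = v\times x=\lambda(w\times w)=0,
\end{equation*}
proving that $\Phi$ lands in $\ker\rho_\blup$ off of $\field P$.

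For the behaviour over $\field P$, the rotational symmetry of the entire setup (the $\liealg{so}(3)$-action, the blowup, and the map $\Phi$) allows me to work in just one chart; I take $U_1$ from Remark~\ref{remark:charts_for_blup}. There the canonical fibre generator of $\pi_{\field L}^\sharp \field L_{U_1}$ is sent by $\Phi$ to $\tilde x_1(\tilde e_1+\tilde x_2\tilde e_2+\tilde x_3\tilde e_3)$, and using the anchor formulas~\eqref{eq:so3_rho_in_U1_coords} a direct term-by-term expansion yields
\begin{equation*}
\rho_\blup(\tilde e_1+\tilde x_2\tilde e_2+\tilde x_3\tilde e_3)=0
\end{equation*}
as an identity on all of $U_1$ (not merely on $\field P\cap U_1$). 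Thus $\Phi$ takes values in $L$ throughout $U_1$, and the charts $U_2,U_3$ are handled by permuting indices.

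Finally, to conclude that $\Phi$ is an isomorphism, since both $\pi_{\field L}^\sharp \field L$ and $L$ are line bundles it suffices to observe that $\Phi$ is nowhere zero as a map into $L$. In $U_1$ the expression $\tilde x_1(\tilde e_1+\tilde x_2\tilde e_2+\tilde x_3\tilde e_3)$ vanishes exactly when the fibre coordinate $\tilde x_1$ vanishes, which is precisely the zero section of $\pi_{\field L}^\sharp \field L$; since $\tilde e_1+\tilde x_2\tilde e_2+\tilde x_3\tilde e_3$ is a non-vanishing section of $\blup(A,A_{\{0\}})$ over $U_1$, fibrewise injectivity follows. The only real obstacle in the proof is notational bookkeeping: the parallelism condition $v\in\field L_{[v']}$ rather than the weaker $v\in \field R^3$ is exactly what enforces $v\times x=0$, and one must carefully disentangle the overlapping roles of $v'$, $w$, and $v$ in the parametrisation of a point of $\pi_{\field L}^\sharp \field L$.
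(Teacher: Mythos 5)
Your proof is correct and takes essentially the same route as the paper: both reduce the statement to the observation that $L$ has rank $1$ and then verify, via the chart formula \eqref{eq:so3_rho_in_U1_coords} on $U_1$, that $\tilde e_1+\tilde x_2\tilde e_2+\tilde x_3\tilde e_3$ lies in $\ker\rho_{\blup}$, concluding by fibrewise injectivity of a map between line bundles. Your preliminary cross-product argument over $\blup(\field R^3,\{0\})\setminus\field P$ is a valid sanity check but is redundant once the identity is established on all of $U_1$ (and, by symmetry, on $U_2,U_3$).
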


In particular, $ L $ is not a trivial vector bundle. Pulled back to a double cover, however, we can even trivialise the action of the orbit foliation.

\begin{lemma}\label{lemma:so3_properties_of_globalpullback}
Consider the double cover $ \pr\colon \field S^2\times \field R\to\blup(\field R^3,\{0\}) $ defined by
\begin{equation}
\pr\colon (x,t)\mapsto \begin{cases}
p^{-1}(tx) &\text{ if }t\neq 0\\
[x] &\text{ if }t=0
\end{cases}
\end{equation}
where we view $ \field S^2 $ as the unit sphere in $ \field R^3 $. Then the induced abelian extension
\begin{equation}\label{eq:so3_abelian_extension_of_pullback}
\begin{tikzpicture}[baseline=(current bounding box.center)]
\node (1) at (0,0) {$ 0 $};
\node (2) at (1.5,0) {$\pr^! L $};
\node (3) at (4.2,0) {$ \pr^!\blup(A,A_{\{0\}}) $};
\node (4) at (7.2,0) {$ \pr^!T\foliation{F} $};
\node (5) at (8.8,0) {$ 0, $};

\draw[->]
(1) edge node[]{$  $} (2)
(2) edge node[above]{$  $} (3)
(3) edge node[above]{$ \rho_{\pr^!} $} (4)
(4) edge node[]{$  $} (5);

\end{tikzpicture}
\end{equation}
where $ \rho_{\pr^!} $ denotes the anchor of $ \pr^!\blup(A,A_{\{0\}}) $, has the following properties:
\begin{enumerate}
\item The Lie algebroid $\pr^! \blup(A,A_{\{0\}})$  is a product Lie algebroid
    \begin{equation}
        \pr^! \blup(A,A_{\{0\}})\simeq \pr^! \blup(A,A_{\{0\}})_{\field S^2\times \{1\}}\times (0\laover \field R).
    \end{equation}
    \item The Lie algebroid $ \pr^! T\foliation{F} $ is the foliation Lie algebroid of the foliation
    \begin{equation}
    \{ \field S^2\times\{ t \} \}_{t\in\field R}.
    \end{equation}
    \item There exists a non-vanishing section $ g\in \Gamma^\infty(\pr^! L) $, anti-invariant under the action of $ \field Z_2 $, which is constant under the action of $ \pr^! \foliation{F} $.
\end{enumerate}
\begin{proof}
    For the first part, recall that 
    \begin{equation*}
        \blup(A, A_{\{0\}})=\liealg{so}(3)\ltimes \blup(\field R^3,\{0\})
    \end{equation*}
    is an action Lie algebroid. Since $\pr\colon \field S^2\times \field R\to \field L $ is a double cover, we obtain 
    \begin{equation*}
        \pr^!\blup(A, A_{\{0\}})=\liealg{so}(3)\ltimes (\field S^2\times \field R).
    \end{equation*}
    We show that the action of $\liealg{so}(3)$ is independent of the $\field R$ variable, from which the statement follows.
    
    Note that out of charts for the blowup we get charts on $ U_i^\pm=\pr^{-1}(U_i) $ in the natural way. Consider $(U_1^\pm,\tilde{x}\circ\pr)$. The coordinate transformation
    \begin{equation*}
        \begin{aligned}
            y_1&=\tilde{x}_1\sqrt{1+\tilde{x}_2^2+\tilde{x}_3^2},\\
            y_2&=\tilde{x}_2\\
            y_3&=\tilde{x}_3
        \end{aligned}
    \end{equation*}
    gives a product chart for $U_1^\pm\times \field R$, where $y_1$ is the coordinate on $\field R$. In this chart, the anchor is given by
    \begin{equation*}
        \begin{aligned}
            \rho(\pr^!\tilde{e}_1)&=y_3\Partial{y_2}-y_2\Partial{y_3}\\
            \rho(\pr^!\tilde{e}_2)&=-y_2y_3\Partial{y_2}-(1+y_3^2)\Partial{y_3}\\
            \rho(\pr^!\tilde{e}_3)&=(1+y_2^2)\Partial{y_2}+y_2y_3\Partial{y_3}.
        \end{aligned}
    \end{equation*}
    Thus the action of $\liealg{so}(3)$ does not depend on the $\field R$ variable $y_1$.
    
    Using the first part, the second part is clear. For the last part, define a section over $ U_i^\pm $ of $ \pr^! L $ by $ (g_i^\pm,0) $, where
    \begin{equation}\label{eq:so3_section_g}
    g_i^\pm=\frac{\pm 1}{\sqrt{1+\sum_{j\neq i} y_j^2}} ( \pr^!\tilde{e}_i+\sum_{j\neq i}y_j\pr^!\tilde{e}_j ).
    \end{equation}
    These definitions agree on overlaps and thus define a global trivialising section, which is constant under the action of $ \pr^! T\foliation{F} $ as a straightforward computation shows.
\end{proof}
\end{lemma}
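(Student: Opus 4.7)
The plan is to verify the three properties by working in the explicit charts for $\blup(\field R^3,\{0\})$ described in Remark~\ref{remark:charts_for_blup}. Since $\blup(A,A_{\{0\}})=\liealg{so}(3)\ltimes \blup(\field R^3,\{0\})$ is an action Lie algebroid and $\pr$ is a local diffeomorphism, the pullback $\pr^!\blup(A,A_{\{0\}})$ is again an action Lie algebroid, namely $\liealg{so}(3)\ltimes (\field S^2\times \field R)$. So Part~1 reduces to showing that the infinitesimal $\liealg{so}(3)$-action on $\field S^2\times \field R$ is independent of the $\field R$-variable; once this is true, the anchor splits as a vector field on $\field S^2$ plus zero on $\field R$, giving the asserted product decomposition.

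To establish this, I would pull back the coordinates $\tilde{x}=(\tilde{x}_1,\tilde{x}_2,\tilde{x}_3)$ on the chart $U_1$ to the two sheets $U_1^{\pm}=\pr^{-1}(U_1)$ and change variables to $(y_1,y_2,y_3)$ with $y_1=\tilde{x}_1\sqrt{1+\tilde{x}_2^2+\tilde{x}_3^2}$, $y_2=\tilde{x}_2$, $y_3=\tilde{x}_3$. The new coordinate $y_1$ equals the signed radius $\pm\lvert\pr(x,t)\rvert$ and thus provides a product chart for $\field S^2\times \field R$ on each sheet. A direct calculation starting from \eqref{eq:so3_rho_in_U1_coords}, together with the analogous computations on the other coordinate patches $U_2, U_3$, should show that the pullback anchor has no $\partial/\partial y_1$ component and no $y_1$-dependence in its coefficients. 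Part~2 then follows immediately: the orbit foliation of $\blup(A,A_{\{0\}})$ consists of $\field P$ together with preimages of round spheres in $\field R^3$, and these lift under $\pr$ to exactly the slices $\field S^2\times\{t\}$.

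For Part~3 I would exploit the identification $L\simeq \pi_{\field L}^\sharp\field L$ from the preceding lemma: locally on $U_1$ the kernel of $\rho_\blup$ is spanned by $\tilde{e}_1+\tilde{x}_2\tilde{e}_2+\tilde{x}_3\tilde{e}_3$. Rescaling this by $\sqrt{1+y_2^2+y_3^2}$ suggests the candidate
\[
g_1^{\pm}=\frac{\pm 1}{\sqrt{1+y_2^2+y_3^2}}\bigl(\pr^!\tilde{e}_1+y_2\,\pr^!\tilde{e}_2+y_3\,\pr^!\tilde{e}_3\bigr)
\]
on $U_1^{\pm}$, with analogous formulas on the other charts. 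I would then check three things: that these local sections agree on overlaps (so as to define a global section $g$), that the deck transformation of $\pr$ sends $g_1^+$ to $-g_1^-$ (giving anti-invariance), and that $\nabla_{\rho_{\pr^!}(e)} g = 0$ for each generator $e$ of $\pr^!T\foliation{F}$, which reduces to a short bracket computation against the trivialised anchor coming from Part~1.

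The main technical obstacle is the coordinate change in Part~1: one must verify that $y_1=\tilde{x}_1\sqrt{1+\tilde{x}_2^2+\tilde{x}_3^2}$ genuinely kills the $y_1$-dependence in the pullbacks of the three vector fields in \eqref{eq:so3_rho_in_U1_coords}. Geometrically this is unsurprising, since the map $(x,t)\mapsto tx$ decouples rotational and radial directions and the $\liealg{so}(3)$-action preserves spheres, but turning this into a clean coordinate calculation is the delicate part. Once Part~1 is established, Parts~2 and~3 are largely bookkeeping: the foliation statement is automatic, the local formula for $g$ is dictated by the identification of $L$ with the tautological bundle, and flatness of $g$ along the leaves comes from the rotational invariance of the normalised radial generator.
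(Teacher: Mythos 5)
Your proposal is correct and follows essentially the same route as the paper: viewing the pullback as the action Lie algebroid $\liealg{so}(3)\ltimes(\field S^2\times\field R)$, performing the very same coordinate change $y_1=\tilde{x}_1\sqrt{1+\tilde{x}_2^2+\tilde{x}_3^2}$ to exhibit the product structure, and defining the same normalised radial sections $g_i^\pm$ for the anti-invariant trivialisation of $\pr^!L$. The only difference is that the paper carries out the anchor computation in the $y$-coordinates explicitly, which is exactly the "delicate part" you flag and defer.
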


This is sufficient to compute the spaces on the second page of the spectral sequence associated to the abelian extension \eqref{eq:so3_abelian_extension_of_pullback}  (Theorem \ref{theorem:abelian_ss_second_page}). For the differential on the second page we show that the extension class does not vanish in a way that $ \D_2\colon E_2^{0,2}\to E_2^{1,0} $ is an isomorphism.

\begin{proposition}\label{so3_prop:cohomology_of_blowup}
The cohomology of $ \blup(A,A_{\{0\}}) $ is given by
\begin{equation}
\Homology{\blup(A,A_{\{0\}})}{\bullet}{}=\algebra A\oplus 0\oplus 0\oplus \algebra A,
\end{equation}
where $ \algebra A=\{ f\in \Cinfty(\field R)\colon f(-x)=f(x)\text{ for all }x\in \field R \} $.
\begin{proof}
    By Theorem~\ref{theorem:abelian_ss_second_page} and Lemma~\ref{lemma:so3_properties_of_globalpullback} the nontrivial entries on the second page of the spectral sequence associated to \eqref{eq:so3_abelian_extension_of_pullback} are given by
    \begin{equation*}
    \begin{tikzpicture}[baseline=(current bounding box.center)]
    \node (1) at (0,1) {$ \Cinfty(\field R) $};
    \node (2) at (2,1) {$ 0 $};
    \node (3) at (4,1) {$ \Cinfty(\field R) $};
    \node (q) at (0,0) {$ \Cinfty(\field R) $};
    \node (w) at (2,0) {$ 0 $};
    \node (e) at (4,0) {$ \Cinfty(\field R) $};
    
    \path[->, dashed]
    (1) edge node[fill=white]{$ \D_2 $} (e)
    
    ;
    \end{tikzpicture}
    \end{equation*}
    as $ \Homology{\pr^!T\foliation{F}}{\bullet}{}=\Cinfty(\field R)\oplus0\oplus\Cinfty(\field R) $ by \cite[Lemma 5.4]{marcut.schuessler:2024a}, see also \cite[Lemma 3]{crainic.marcut:2013a}. Note that in the third column we integrated along the spheres of the foliation to identify the cohomology with functions on $ \field R $, and that the $ \field Z_2 $-action reverses the orientation. 

    For the differential $ \D_2 $ we compute the extension class~\eqref{eq:extension_class}. To do so, note that by Lemma \ref{lemma:so3_properties_of_globalpullback} it is enough to consider the short exact sequence \eqref{eq:so3_abelian_extension_of_pullback} restricted to a single fibre $\field S^2\times \{1\}$. We show that the extension class is given by the volume form on $\field S^2$. Consider $\field S^2 \subseteq \field R^3$, identify the trivial vector bundles $\pr^! \blup(A,A_{\{0\}})_{\field S^2\times \{1\}}\simeq T\field R^3\at{\field S^2}$, and identify for $x\in \field S^2$ the tangent space
    \begin{equation*}
        T_x\field S^2\simeq \{v\in \field T_x\field R^3\colon \langle x,v\rangle=0\} \subseteq T_x\field R^3.
    \end{equation*}
    Note that this gives a splitting $\sigma$ of the short exact sequence. To determine the curvature, note that under these identifications, the section $g\in \Gamma^\infty(\pr^! L)$ defined in \eqref{eq:so3_section_g} is given by the outward pointing unit normal vector field of $\field S^2$. The Lie bracket on the constant sections of $T\field R^3\at{\field S^2}$ is the cross product. 
    
    For $V\in \vfield(\field S^2)$, we write $\sigma(V)=v^ie_i$ for some $v^i\in \Cinfty(\field S^2)$. Let $V,W\in \vfield (\field S^2)$ be given. Pairing the curvature with the trivialising normal vector field yields
    \begin{equation*}
        \begin{aligned}
            \langle x,\gamma(V,W) \rangle&=\langle x, [\sigma(V),\sigma(W)]\rangle\\
            &= \langle x, \rho(\sigma(V))w^\ell e_\ell-\rho(\sigma(W))v^\ell e_\ell\rangle+\langle x,\sigma(V)\times\sigma(W)\rangle\\
            &=3\langle x,\sigma(V)\times\sigma(W)\rangle \\
            &= 3\mathrm{i}_{\sigma(W)}\mathrm{i}_{\sigma(V)}\mathrm{i}_x\mathrm{vol}_{\field R^3},
        \end{aligned}
    \end{equation*}
    since
    \begin{equation*}
        \begin{aligned}
            \langle x, \rho(\sigma(V))w^\ell e_\ell&-\rho(\sigma(W))v^\ell e_\ell \rangle =\sum_{i,j,k,\ell=1}^3 \big(\varepsilon_{ijk} x_j x_\ell (v^i (\partial_k w^\ell)-w^i (\partial_k v^\ell))\big)\\
            &= \sum_{i,j,k,\ell=1}^3 \big(\varepsilon_{ijk} x_j (v^i (\partial_k \underbrace{ x_\ell w^\ell}_{=0})-v^i\delta_{k\ell} w^\ell -w^i (\partial_k \underbrace{x_\ell v^\ell}_{=0} )+w^i\delta_{kl}v^\ell)    \big)\\
            &=\sum_{i,j,k=1}^3\big(\varepsilon_{ijk} x_j (-v^iw^k+w^iv^k)    \big)\\
            &=\sum_{i,j,k=1}^3 2\varepsilon_{ijk} x_jw^i v^k\\
            &= 2 \langle x, \sigma(V)\times \sigma(W)\rangle.
        \end{aligned}
    \end{equation*}
    Thus $\gamma$ is a multiple of the volume form on $\field S^2$ and, in conclusion, wedging with the extension class of \eqref{eq:so3_abelian_extension_of_pullback} is an isomorphism. Thus, the cohomology of $ \pr^! \blup(A,A_{\{0\}}) $ is given by
    \begin{equation*}
    \Homology{\pr^!\blup(A,A_{\{0\}})}{\bullet}{}=\Cinfty(\field R)\oplus 0\oplus 0\oplus\Cinfty(\field R).
    \end{equation*}
    Using Lemma~\ref{lemma:lapullback_cohomology} the statement follows: In degree $ 0 $ the invariant functions give the cohomology of the blowup, whereas in degree $ 3 $, since we trivialised the representation, we need anti-invariant classes of forms. But after integrating anti-invariant forms correspond to invariant functions as the $ \field Z_2 $-action reverses the orientation.
\end{proof}
\end{proposition}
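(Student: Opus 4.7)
The plan is to apply the abelian-extension spectral sequence of Theorem~\ref{theorem:abelian_ss_second_page} to the sequence $0 \to L \to \blup(A, A_{\{0\}}) \to T\foliation{F} \to 0$ established just above. Since $L$ is the pullback of the (nontrivial) tautological line bundle, direct computation is awkward, so first pass to the double cover $\pr \colon \field S^2 \times \field R \to \blup(\field R^3, \{0\})$ from Lemma~\ref{lemma:so3_properties_of_globalpullback}, on which $\pr^! L$ is trivialized by a global flat section $g$, and then recover $\Homology{\blup(A, A_{\{0\}})}{\bullet}{}$ as the $\field Z_2$-invariant part via Lemma~\ref{lemma:lapullback_cohomology}.

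After the pullback, $\pr^! T\foliation{F}$ is the foliation Lie algebroid of $\{\field S^2 \times \{t\}\}_{t\in\field R}$, whose cohomology is $\Cinfty(\field R)$ in degrees $0$ and $2$ and zero elsewhere (via fiber-integration along the spheres). Flatness of $g$ trivializes the representation of $\pr^! T\foliation{F}$ on $\pr^! L$, so the $E_2$-page has only four nonzero entries $E_2^{p,q} = \Cinfty(\field R)$ at $(p,q) \in \{(0,0), (0,1), (2,0), (2,1)\}$.

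The main obstacle is to show that the differential $d_2 \colon E_2^{0,1} \to E_2^{2,0}$, which by Theorem~\ref{theorem:abelian_ss_second_page} is wedge product with the extension class, is an isomorphism. I would compute this class explicitly on a single leaf $\field S^2 \times \{1\}$, identifying $\pr^! \blup(A, A_{\{0\}})|_{\field S^2 \times \{1\}}$ with $T\field R^3|_{\field S^2}$ and using the splitting $\sigma \colon T\field S^2 \hookrightarrow T\field R^3|_{\field S^2}$ arising from the standard embedding $\field S^2 \subseteq \field R^3$. Under this identification $g$ corresponds to the outward unit normal, so extracting the normal component of the curvature $\gamma(V, W) = [\sigma(V), \sigma(W)] - \sigma([V, W])$ by pairing with $x$, a direct calculation using $\rho(e_i) = \varepsilon_{ijk} x_j \partial_k$ and the normality condition $\langle x, \sigma(V)\rangle = 0$ reduces $\langle x, \gamma(V, W)\rangle$ to a nonzero constant multiple of the volume form on $\field S^2$. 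Hence $d_2$ is an isomorphism of $\Cinfty(\field R)$-modules.

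Consequently the spectral sequence collapses at $E_3$ with only $E_\infty^{0,0}$ and $E_\infty^{2,1}$ surviving, giving $\Homology{\pr^! \blup(A, A_{\{0\}})}{\bullet}{} = \Cinfty(\field R) \oplus 0 \oplus 0 \oplus \Cinfty(\field R)$. To descend, Lemma~\ref{lemma:lapullback_cohomology} identifies $\Homology{\blup(A, A_{\{0\}})}{\bullet}{}$ with the $+1$-eigenspace under the $\field Z_2$ deck transformation $(x,t) \mapsto (-x, -t)$. In degree $0$ this is precisely the even functions $\algebra A \subseteq \Cinfty(\field R)$; in degree $3$ the anti-invariance of $g$ combined with the orientation-reversing antipodal action on the $\field S^2$-fibers turns $\field Z_2$-invariance into evenness of the fiber-integrated function, again yielding $\algebra A$.
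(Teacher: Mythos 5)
Your proposal is correct and follows essentially the same route as the paper: pass to the double cover $\field S^2\times\field R$, trivialise $\pr^! L$ by the flat anti-invariant section $g$, run the abelian-extension spectral sequence, show $d_2$ is an isomorphism by computing the extension class on $\field S^2\times\{1\}$ (pairing the curvature with the outward normal to get a multiple of the volume form), and descend to the $\field Z_2$-invariants via Lemma~\ref{lemma:lapullback_cohomology}, with the same sign bookkeeping ($g$ anti-invariant, antipodal map orientation-reversing) producing the even functions $\algebra A$ in degrees $0$ and $3$.
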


Next, we compute the cohomology associated to the representation of $ \blup(A,A_{\{0\}})_{\field P} $ on the normal bundle $ \nu(\blup(\field R^3,\{0\}),\field P)=\field L $. For the original Lie algebroid we know by \cite[Theorem 3.5]{ginzburg.weinstein:1992a} that
\begin{equation}\label{so3_eq:formal_cohomology_so3}
\Homology{A_{\{0\}}}{k}{,S^\ell (\field R^3)^\ast}=\begin{cases}
\field R &\text{ if }k=0,3\text{ and }\ell\text{ even}\\
0 &\text{ otherwise}.
\end{cases}
\end{equation}
We show that \eqref{so3_eq:formal_cohomology_so3} is isomorphic to $ \Homology{\blup(A,A_{\{0\}})_{\field P}}{\bullet}{,S^\ell \field L^\ast} $, which by Lemma \ref{lemma:p_A_in_spectral_sequence}
implies that 
\begin{equation*}
\mathrm{H}^\bullet(\mathscr{J}^\infty_{\field P}\Omega^\bullet(\blup(A,A_{\{0\}}))) \simeq \mathrm{H}^\bullet(\mathscr{J}^\infty_{\{0\}}\Omega^\bullet(A)).
\end{equation*}
Then the cohomology of the quotient complex
\begin{equation}
\frac{\mathscr{J}^\infty_{\field P}\Omega^\bullet(\blup(A,A_{\{0\}}))}{p_A^\ast\mathscr{J}^\infty_{\{0\}}\Omega^\bullet(A)}
\end{equation}
vanishes.

\begin{proposition}
The cohomology of $ \Omega^\bullet(\blup(A,A_{\{0\}})_{\field P},S^\ell \field L^\ast) $ is given by
\begin{equation}
\Homology{\blup(A,A_{\{0\}})_{\field P}}{k}{,S^\ell \field L^\ast}=\begin{cases}
\field R &\text{ if }k=0,3\text{ and }\ell\text{ even}\\
0 &\text{ otherwise}.
\end{cases}
\end{equation}
In particular, 
\begin{equation}
p_A^\ast\colon\Homology{A}{\bullet}{}\to\Homology{\blup(A,A_{\{0\}})}{\bullet}{}
\end{equation}
is an isomorphism.
\begin{proof}

    We again make use of the double cover of $ \field P $ given by
    \begin{equation*}
    \pr\colon \field S^2\ni x\mapsto [x]\in \field P.
    \end{equation*}
    First, consider $\ell=1$. The constant section of the trivial bundle $ \pr^\sharp \field L=\field S^2\times \field R $ trivialises the representation. Note that these sections are anti-invariant under the $\field Z_2$-action. Now we can proceed as in the proof of Proposition~\ref{so3_prop:cohomology_of_blowup} using a spectral sequence argument to compute the cohomology of $ \Omega^\bullet(\pr^!\blup(A,A_{\{0\}})_{\field P},\pr^\sharp S^\ell \field L^\ast) $. 
    By Theorem~\ref{theorem:abelian_ss_second_page} the second page of the spectral sequence is given by
    \begin{equation*}
    \begin{tikzpicture}[baseline=(current bounding box.center)]
    \node (1) at (0,1) {$ \field R $};
    \node (2) at (1.5,1) {$ 0 $};
    \node (3) at (3,1) {$ \field R $};
    \node (q) at (0,0) {$ \field R $};
    \node (w) at (1.5,0) {$ 0 $};
    \node (e) at (3,0) {$ \field R $};
    
    \path[->, dashed]
    (1) edge node[fill=white]{$ \D_2 $} (e)
    
    ;
    \end{tikzpicture}
    \end{equation*}
    as $ \Homology{\field S^2}{\bullet}{}=\field R\oplus 0\oplus\field R $, using integration in degree $ 2 $, and the differential is again an isomorphism following a similar reasoning as in Proposition~\ref{so3_prop:cohomology_of_blowup}. Thus
    \begin{equation*}
    \Homology{\pr^!\blup(A,A_{\{0\}})_{\field P}}{\bullet}{,\pr^\sharp S^\ell \field L^\ast}=\field R\oplus 0\oplus 0\oplus \field R.
    \end{equation*}
    Since all cohomology groups are one-dimensional, $ \Homology{\blup(A,A_{\{0\}})_{\field P}}{k}{, S^\ell \field L^\ast} $ will either be trivial or $ \field R $. We argue that it is nontrivial if and only if $ \ell $ is even and $ k=0,3 $. Indeed, only in this case the trivialising section of $ \pr^\sharp S^\ell\field L^\ast  $ is invariant, thus the cohomological degree $ 0 $ part is invariant. In degree $ 3 $ the coefficients are tensored with one more copy of $ \field L^\ast $, but since we used integration which is anti-invariant under the $ \field Z_2 $-action, again $ \field R=\Homology{\blup(A,A_{\{0\}})_{\field P}}{3}{, S^\ell \field L^\ast}  $. If $ \ell $ is odd, similar reasoning shows that $ \Homology{\blup(A,A_{\{0\}})_{\field P}}{k}{, S^\ell \field L^\ast}=0 $. Hence, we have
    \begin{equation*}
    \Homology{\blup(A,A_{\{0\}})_{\field P}}{k}{,S^\ell \field L^\ast}\simeq \Homology{A_{\{0\}}}{k}{,S^\ell (\field R^3)^\ast}.
    \end{equation*}
The rest follows from Lemma \ref{lemma:p_A_in_spectral_sequence}.
\end{proof}
\end{proposition}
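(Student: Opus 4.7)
The plan is to reduce the statement in two steps. First, I would compute the cohomology $\homology^k(\blup(A,A_{\{0\}})_{\field P}, S^\ell \field L^\ast)$ directly, then combine Lemma~\ref{lemma:p_A_in_spectral_sequence} with Theorem~\ref{theorem:flatiso_and_consequences} to deduce that $p_A^\ast$ is an isomorphism on full cohomology. Since $A = \liealg{so}(3)\ltimes \field R^3$ is a linear action Lie algebroid, it is linearisable around the origin, so Theorem~\ref{theorem:formal_cohomology_ss} together with formula~\eqref{so3_eq:formal_cohomology_so3} identifies the first page of its formal cohomology spectral sequence along $\{0\}$. Matching this against the first page of the Serre spectral sequence for $\blup(A,A_{\{0\}})_{\field P}$ along $\field P$ is exactly the cohomology identification claimed in the proposition.

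For the cohomology computation I would pass to the antipodal double cover $\pr\colon \field S^2 \to \field P$, mirroring the strategy of Proposition~\ref{so3_prop:cohomology_of_blowup}. On $\field S^2$ the pulled-back tautological line bundle $\pr^\sharp \field L$ is trivialisable, and so are all its symmetric powers. The pullback Lie algebroid fits into an abelian extension
\begin{equation*}
0 \to \pr^\sharp L \to \pr^! \blup(A,A_{\{0\}})_{\field P} \to T\field S^2 \to 0,
\end{equation*}
whose associated spectral sequence (Theorem~\ref{theorem:abelian_ss_second_page}) has second page concentrated in bidegrees $(p,q)$ with $p\in\{0,2\}$ and $q\in\{0,1\}$, each a copy of $\field R$ coming from $\homology^\bullet(\field S^2)$. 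The $d_2$ differential is wedging with the extension class of this restricted sequence, which inherits a non-zero multiple of the volume form on $\field S^2$ from the computation in Proposition~\ref{so3_prop:cohomology_of_blowup} and is therefore an isomorphism. This yields $\homology^\bullet(\pr^! \blup(A,A_{\{0\}})_{\field P}, \pr^\sharp S^\ell \field L^\ast) \simeq \field R \oplus 0 \oplus 0 \oplus \field R$.

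To descend from $\field S^2$ back to $\field P$ I would invoke Lemma~\ref{lemma:lapullback_cohomology} and carefully track two $\field Z_2$-signs. The trivialising section of $\pr^\sharp \field L^\ast$ transforms by $-1$ under the antipodal action, so the trivialising section of $\pr^\sharp S^\ell \field L^\ast$ carries a sign $(-1)^\ell$. In degree $0$ this lies in the invariant part precisely when $\ell$ is even. In the top degree $3$ the identification with $\field R$ uses fibre integration on $\field S^2$, which is anti-invariant because the antipodal map reverses orientation; combined with $(-1)^\ell$ from the coefficient side, the class survives again exactly when $\ell$ is even, reproducing the formula in the statement.

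The main obstacle I anticipate is verifying that $p_A^\ast$ on the first page of the formal spectral sequences genuinely implements the isomorphism of cohomologies, and not merely that both sides are abstractly isomorphic to $\field R$ in degrees $0$ and $3$ for even $\ell$. Once this map-level comparison is in place, Lemma~\ref{lemma:p_A_in_spectral_sequence} yields an isomorphism of formal cohomologies along $\{0\}$ and $\field P$; the quotient complex appearing in Theorem~\ref{theorem:flatiso_and_consequences} is then acyclic, and its long exact sequence forces $p_A^\ast$ on full Lie algebroid cohomology to be an isomorphism.
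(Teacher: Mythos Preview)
Your proposal is correct and follows essentially the same route as the paper: pass to the antipodal double cover $\field S^2\to\field P$, compute via the abelian-extension spectral sequence with $d_2$ given by the (nonzero) extension class, and then descend by tracking the $\field Z_2$-signs on the trivialising section of $\pr^\sharp S^\ell\field L^\ast$ and on fibre integration. You are in fact slightly more explicit than the paper on two points it leaves implicit: that Lemma~\ref{lemma:p_A_in_spectral_sequence} requires $p_A^\ast$ itself (not merely an abstract isomorphism) to identify the first pages, and that one must combine the resulting formal-cohomology isomorphism with Theorem~\ref{theorem:flatiso_and_consequences} to conclude that $p_A^\ast$ is an isomorphism on full cohomology.
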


	\subsection{A non-example: $ \liealg{sl}_2(\field R)\ltimes \field R^3 $}\label{sec:sl2r}
	
	Consider the action Lie algebroid $ A=\liealg{sl}_2(\field R)\ltimes \field R^3 $ coming from the coadjoint action of $\liealg{sl_2(\field R)}$, which is a trivial vector bundle with global frame $\{e_1,e_2,e_3\}$ and anchor
 \begin{equation}
     \begin{aligned}
         \rho(e_1)&=-x_3\partial_2-x_2\partial_3  ,\\
         \rho(e_2)&= x_3\partial_1+x_1\partial_3 ,\\
         \rho(e_3)&= x_2\partial_1-x_1\partial_2.
     \end{aligned}
 \end{equation}
 The bracket can be expressed in terms of the global frame, but will not be used in this section.

 The orbits of this Lie algebroid are given by the connected components of the level sets of the function $ f=x_1^2+x_2^2-x_3^2 $, where $ f^{-1}(\{0\}) $ splits into three orbits $ x_3>0 $, $ x_3<0 $, and the origin.
 Hence, the origin is the only leaf of this foliation that is not two dimensional. In this section we show that by means of repeatedly blowing up restrictions of $ A $ (or its blowups) to orbits, one cannot construct a regular Lie algebroid.
	
	\begin{proposition}
		The restriction of the Lie algebroid $ \tilde{A}=\blup(A,A_{\{0\}}) $ to $ \field P $ consists of three orbits:
		\begin{itemize}
			\item A one-dimensional orbit
			\begin{equation}
			Z=\{ [x_1:x_2:x_3]\in \field P\colon x_1^2+x_2^2=x_3^2 \};
			\end{equation}
			\item Two two-dimensional orbits given by the connected components of $ \field P\setminus Z $, explicitly given by
			\begin{equation}
			\{ [x_1:x_2:x_3]\in \field P\colon x_1^2+x_2^2<x_3^2 \} \quad\text{ and }\quad \{ [x_1:x_2:x_3]\in \field P\colon x_1^2+x_2^2>x_3^2 \}.
			\end{equation}.
		\end{itemize}
		\begin{proof}
			In the charts of $ \blup(\field R^3,\{0\}) $ from Remark~\ref{remark:charts_for_blup} we can e.g.\ compute over $(U_1, \tilde{x})$
			\begin{equation*}
			\begin{aligned}
			\rho_{\blup}(p^\sharp e_1)\at{U_1}&=-(\tilde{x}_3\tilde{\partial}_2+\tilde{x}_2\tilde{\partial}_3),\\
			\rho_{\blup}(p^\sharp e_2)\at{U_1}&=\tilde{x}_1\tilde{x}_3\tilde{\partial}_1-\tilde{x}_2\tilde{x}_3\tilde{\partial}_2+(1-\tilde{x}_3^2)\tilde{\partial}_3,\\
			\rho_{\blup}(p^\sharp e_3)\at{U_1}&=\tilde{x}_1\tilde{x}_2\tilde{\partial}_1-(1+\tilde{x}_2^2)\tilde{\partial}_2-\tilde{x}_2\tilde{x}_3\tilde{\partial}_3.
			\end{aligned}
			\end{equation*}
			Thus, on the invariant submanifold $ \field P\cap U_1 $, the orbit foliation is spanned by
			\begin{equation*}
			\{ \underbrace{\tilde{x}_3\tilde{\partial}_2+\tilde{x}_2\tilde{\partial}_3}_{(I)}, \underbrace{(1+\tilde{x}_2^2)\tilde{\partial}_2+\tilde{x}_2\tilde{x}_3\tilde{\partial}_3}_{(II)}, \underbrace{\tilde{x}_2\tilde{x}_3\tilde{\partial}_2-(1-\tilde{x}_3^2)\tilde{\partial}_3}_{(III)} \}.
			\end{equation*}
			If  $ \tilde{x}_3=0 $ then this equals the span of $ \{\tilde{\partial}_2,\tilde{\partial}_3\} $, which is two-dimensional. If $ \tilde{x}_3\neq 0 $ then $ (I) $ and $ (II) $ span a two-dimensional subspace as long as $ 1+\tilde{x}_2^2-\tilde{x}_3^2\neq 0 $ as
			\begin{equation*}
			\det \begin{pmatrix}
			\tilde{x}_2 & \tilde{x}_3\\
			(1-\tilde{x}_3^2) & -\tilde{x}_2 \tilde{x}_3
			\end{pmatrix}=-\tilde{x}_3(1+\tilde{x}_2^2-\tilde{x}_3^2).
			\end{equation*}
			If $ \tilde{x}_2=0 $ and $ \tilde{x}_3= 1 $, i.e.\ it is a point in $ Z $, the span clearly is one-dimensional. Thus let $ \tilde{x}_2\neq0$ and $ 1+\tilde{x}_2^2=\tilde{x}_3^2 $. Then 
			\begin{equation}
			\begin{aligned}
			\tilde{x}_3(II)+\tilde{x}_2(I)&=\tilde{x}_3\tilde{\partial}_2+\tilde{x}_2\tilde{\partial}_3,\\
			\tilde{x}_3(II)-\tilde{x}_2(I)&=\tilde{x}_3(\tilde{x}_2^2+\tilde{x}_3^2)\tilde{\partial}_2+\tilde{x}_2(\tilde{x}_2^2+\tilde{x}_3^2)\tilde{\partial}_3,
			\end{aligned}
			\end{equation}
			both of which are multiples of $ \tilde{x}_3\tilde{\partial}_2+\tilde{x}_2\tilde{\partial}_3 $, which shows that the image of $ \rho $ over points in $ Z\cap U_1 $ is one-dimensional. Similar computations for the other charts show that the orbit foliation is indeed as stated.
		\end{proof}
	\end{proposition}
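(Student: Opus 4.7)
The plan is to identify $\tilde A|_{\field P}$ as the action Lie algebroid of the projective $\liealg{sl}_2(\field R)$-action on $\field P(\field R^3)$, and then read off the orbits from the quadratic invariant $f=x_1^2+x_2^2-x_3^2$ coming from the identification $\liealg{sl}_2(\field R)\simeq \liealg{so}(2,1)$.

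First, since $A=\liealg{sl}_2(\field R)\ltimes \field R^3$ is a linear action Lie algebroid whose anchor vanishes at the origin, the same argument used for $\liealg{so}(3)$ in Section~\ref{sec:so3} via \cite[Corollary 5.93]{obster:2021a} gives
\begin{equation*}
    \tilde A=\blup(A,A_{\{0\}})=\liealg{sl}_2(\field R)\ltimes \blup(\field R^3,\{0\}),
\end{equation*}
with anchor the unique lift of the action from Lemma~\ref{lemma:lifting_vf_to_the_blowup}. Restricting to $\field P$ yields the action Lie algebroid $\liealg{sl}_2(\field R)\ltimes \field P$ for the induced projective action, so I only need to analyse the orbits of that action on $\field P(\field R^3)$.

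Second, $f$ is an invariant of the action, and being homogeneous of degree two its sign descends to a well-defined locally constant function on $\field P\setminus Z$, where $Z=\{[x]\in\field P:f(x)=0\}$ is the projectivised null cone. Hence $\field P$ decomposes into three invariant pieces: the smoothly embedded circle $Z$ (a non-degenerate real projective conic), and the two open connected components of its complement. A dimension count identifies each with a single orbit. At $x\in\field R^3\setminus\{0\}$ with $f(x)\neq 0$ the affine orbit through $x$ has dimension $2$ and the Euler direction $x$ is transverse to it because $\nabla f(x)\cdot x=2f(x)\neq 0$; hence the projectivised orbit through $[x]$ retains dimension $2$, matching $\dim\field P$. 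For $x\in\{f=0\}\setminus\{0\}$ the Euler direction instead lies tangent to the null cone, so the projectivised orbit through $[x]$ has dimension at most $1$; but the affine orbit still has dimension $2$, so the single radial direction cannot exhaust it, forcing the projectivised orbit to have dimension exactly $1=\dim Z$. Together with connectedness of $Z$, this forces $Z$ to be a single orbit.

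The main obstacle is turning the projection-of-tangent-spaces argument into a rigorous rank computation, which is cleanest by a direct chart-wise calculation in $(U_1,\tilde x)$ of Remark~\ref{remark:charts_for_blup}, where $\field P\cap U_1=\{\tilde x_1=0\}$ and $Z\cap U_1=\{\tilde x_1=0,\;\tilde x_3^2-\tilde x_2^2=1\}$: every $2\times 2$ minor of the coefficient matrix of the three restricted vector fields $\rho_{\blup}(p^\sharp e_i)|_{U_1\cap \field P}$ in the frame $\{\tilde{\partial}_2,\tilde{\partial}_3\}$ of $T\field P|_{U_1\cap \field P}$ turns out to be proportional to $1+\tilde x_2^2-\tilde x_3^2$, which vanishes exactly on $Z\cap U_1$, while $\rho_\blup(p^\sharp e_1)$ is nowhere zero on $Z\cap U_1$ (since there $\tilde x_3^2\geq 1$). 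Analogous computations in $U_2$ and $U_3$ cover the remainder of $\field P$ and complete the argument.
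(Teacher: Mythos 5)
Your argument is correct, and its main route differs from the paper's. The paper proceeds purely computationally: it restricts the three anchor images $\rho_\blup(p^\sharp e_i)$ to $\field P\cap U_1$ in the blowup chart and determines the rank by a case analysis ($\tilde x_3=0$, $\tilde x_3\neq 0$ with a $2\times 2$ determinant, then the locus $1+\tilde x_2^2=\tilde x_3^2$), repeating in the other charts. You instead identify $\tilde A|_{\field P}$ as the action algebroid of the projectivised coadjoint action and push forward the known affine orbit picture: since $f=x_1^2+x_2^2-x_3^2$ is invariant and homogeneous, its sign cuts $\field P$ into the projective conic $Z$ and two invariant open discs, and the rank at $[x]$ is governed by whether the Euler direction $\field R x$ meets the anchor image at $x$ — transverse to the level set when $f(x)\neq 0$ (Euler's relation $\D f_x(x)=2f(x)$), contained in it when $x$ lies on the punctured cone, because the affine orbit there is an open, scale-invariant half-cone; this gives rank $2$ off $Z$ and rank $1$ on $Z$, and connectedness plus constant rank on each invariant piece yields the three orbits, exactly the (implicit) final step of the paper as well. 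The one place where your wording is looser than the underlying argument is the cone case: "tangent to the null cone" must be upgraded to "lies in the anchor image", which holds precisely because the orbit is an open subset of the smooth cone and is scale-invariant — the facts you do cite, so no real gap. Your supplementary chart computation is essentially the paper's argument but streamlined: all three $2\times 2$ minors of the coefficient matrix in $\{\tilde\partial_2,\tilde\partial_3\}$ are $-\tilde x_3(1+\tilde x_2^2-\tilde x_3^2)$, $-\tilde x_2(1+\tilde x_2^2-\tilde x_3^2)$ and $1+\tilde x_2^2-\tilde x_3^2$, which replaces the paper's case distinctions by a single factorisation and immediately locates the rank drop on $Z\cap U_1$, with $\rho_\blup(p^\sharp e_1)\neq 0$ there since $\tilde x_3^2\geq 1$. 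What each approach buys: the paper's is self-contained and needs nothing beyond the chart formulas; yours explains conceptually why $Z$ is the projectivised null cone of the invariant quadratic form and adapts directly to other linear actions with a homogeneous invariant, at the cost of invoking the identification $\blup(A,A_{\{0\}})=\liealg{sl}_2(\field R)\ltimes\blup(\field R^3,\{0\})$ and the affine orbit description (both of which the paper states or cites anyway).
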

	
	Thus by blowing up the foliation has become less singular. Since $ Z $ is completely contained in $ U=U_3\setminus\{ [0:0:1] \} $, in showing that the remaining singularity cannot be resolved in the sense that always at least one orbit will be one-dimensional, we can restrict the discussion to this open subset. Firstly, in this chart let us introduce polar coordinates for $ (\tilde{x}_1,\tilde{x}_2) $ by 
	\begin{equation}
	(\tilde{x}_1,\tilde{x}_2,\tilde{x}_3)=((r+1)\cos\phi,(r+1)\sin\phi,\tilde{x}_3),
	\end{equation}
	where $ r\in (-1,\infty)  $.
	
	\begin{lemma}\label{sl2R:lemma:foliation_in_polar}
		Over $ U $ the image of the anchor map is generated by
		\begin{equation}\label{sl2R:eq:foliation_in_polar}
		\{ \partial_\phi, r(r+2)\partial_r-(r-1)\tilde{x}_3\tilde{\partial}_3 \}.
		\end{equation}
		\begin{proof}
			Over $ U_3 $, the anchor of the blowup maps
			\begin{equation*}
			\begin{aligned}
			\rho_{\blup}(p^\sharp e_1)\at{U_3}&=\tilde{x}_1\tilde{x}_2 \tilde{\partial}_1 + (\tilde{x}_2^2-1)\tilde{\partial}_2-\tilde{x}_2\tilde{x}_3\tilde{\partial}_3,\\
			\rho_{\blup}(p^\sharp e_2)\at{U_3}&=(1-\tilde{x}_1^2)\tilde{\partial}_1 -\tilde{x}_1\tilde{x}_2\tilde{\partial}_2 +\tilde{x}_1\tilde{x}_3 \tilde{\partial}_3,\\
			\rho_{\blup}(p^\sharp e_3)\at{U_3}&=\tilde{x}_2\tilde{\partial}_1-\tilde{x}_1\tilde{\partial}_2.
			\end{aligned}
			\end{equation*}
			The change of coordinates leads to
			\begin{equation*}
			\begin{aligned}
			\rho_{\blup}(p^\sharp e_1)\at{U}&=r(r+2)\sin\phi \partial_r -\tfrac{1}{r+1}\cos\phi \partial_\phi-(r+1)\tilde{x}_3\sin \phi \tilde{\partial}_3,\\
			\rho_{\blup}(p^\sharp e_2)\at{U} &=-r(r+2)\cos\phi\partial_r-\tfrac{1}{r+1}\sin\phi \partial_\phi +(r+1)\tilde{x}_3\cos\phi \tilde{\partial}_3,\\
			\rho_{\blup}(p^\sharp e_3)\at{U}&=-\partial_\phi,
			\end{aligned}
			\end{equation*}
			which implies the statement. Indeed, the collection $ \{ f_1,f_2,f_3 \}  $ with $ f_1=\sin \phi p^\sharp e_1-\cos\phi p^\sharp e_2 $, $ f_2=\cos\phi p^\sharp e_1+\sin\phi p^\sharp e_2 $ and $ f_3=p^\sharp e_3 $ constitutes a frame over $ U $, where $ f_3 $ and $ f_1 $ are mapped to~\eqref{sl2R:eq:foliation_in_polar}, while $ \rho_{\blup}(f_2) $ is a multiple of $ \partial_\phi $.
		\end{proof}
	\end{lemma}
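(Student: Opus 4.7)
The plan is a direct computation in the chart $(U_3,\tilde{x})$ of $\blup(\field R^3,\{0\})$ followed by passage to polar coordinates. First I would write down the three generators $\rho_\blup(p^\sharp e_i)|_{U_3}$ by pulling back the anchor of $A=\liealg{sl}_2(\field R)\ltimes \field R^3$ through the blow-down map, which in these coordinates reads $p(\tilde{x})=(\tilde{x}_1\tilde{x}_3,\tilde{x}_2\tilde{x}_3,\tilde{x}_3)$. Since each $\rho(e_i)$ from the start of Section \ref{sec:sl2r} vanishes at the origin, Lemma \ref{lemma:lifting_vf_to_the_blowup} guarantees that the lifts extend smoothly across $\field P\cap U_3=\{\tilde{x}_3=0\}$; away from $\field P$, where $p$ is a local diffeomorphism, one simply inverts the Jacobian of $p$ to obtain explicit formulas in $\tilde{\partial}_1,\tilde{\partial}_2,\tilde{\partial}_3$.

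Next I would change to polar coordinates $\tilde{x}_1=(r+1)\cos\phi$, $\tilde{x}_2=(r+1)\sin\phi$, which is a diffeomorphism on $U$ because this is precisely the locus where $r+1>0$. The chain rule gives
\begin{equation*}
\tilde{\partial}_1=\cos\phi\,\partial_r-\tfrac{\sin\phi}{r+1}\partial_\phi,\qquad \tilde{\partial}_2=\sin\phi\,\partial_r+\tfrac{\cos\phi}{r+1}\partial_\phi,
\end{equation*}
allowing me to rewrite each $\rho_\blup(p^\sharp e_i)$ in terms of $\partial_r,\partial_\phi,\tilde{\partial}_3$.

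The final step is to exhibit a rotated frame of $\blup(A,A_{\{0\}})|_U$ on which the anchor decouples. The natural candidates are
\begin{equation*}
f_1=\sin\phi\,p^\sharp e_1-\cos\phi\,p^\sharp e_2,\qquad f_2=\cos\phi\,p^\sharp e_1+\sin\phi\,p^\sharp e_2,\qquad f_3=p^\sharp e_3,
\end{equation*}
which are smooth on $U$ because $\cos\phi=\tilde{x}_1/\sqrt{\tilde{x}_1^2+\tilde{x}_2^2}$ and $\sin\phi=\tilde{x}_2/\sqrt{\tilde{x}_1^2+\tilde{x}_2^2}$ are smooth there, and the corresponding change-of-basis matrix is a rotation and thus invertible. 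A direct calculation shows that $\rho_\blup(f_2)$ and $\rho_\blup(f_3)$ are both nonzero multiples of $\partial_\phi$, while $\rho_\blup(f_1)$ equals the stated generator $r(r+2)\partial_r-(r-1)\tilde{x}_3\tilde{\partial}_3$. Since $\{f_1,f_2,f_3\}$ spans $\blup(A,A_{\{0\}})|_U$, the image of the anchor is the $\Cinfty(U)$-module generated by these two vector fields.

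The main obstacle is computational rather than conceptual: the Cartesian expressions are lengthy, and the correct rotation is not immediately obvious. The guiding observation is that $p^\sharp e_3$ generates infinitesimal rotations in the $(\tilde{x}_1,\tilde{x}_2)$-plane and so its anchor is automatically a multiple of $\partial_\phi$; this suggests decomposing the remaining anchors into components along and orthogonal to this rotation via the frame $\{f_1,f_2,f_3\}$, which causes the $\partial_\phi$-contributions in $f_1$ to cancel by trigonometric identity.
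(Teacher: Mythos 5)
Your proposal is correct and takes essentially the same route as the paper: the identical chart formulas for $\rho_{\blup}(p^\sharp e_i)$ over $U_3$, the same passage to polar coordinates via the chain rule, and the very same rotated frame $f_1,f_2,f_3$, with $\rho_{\blup}(f_2)$ and $\rho_{\blup}(f_3)$ multiples of $\partial_\phi$ and $\rho_{\blup}(f_1)$ supplying the radial generator. One caveat: carrying out this computation with the paper's conventions actually yields $\rho_{\blup}(f_1)=r(r+2)\partial_r-(r+1)\tilde{x}_3\tilde{\partial}_3$, i.e.\ the coefficient is $(r+1)$ rather than the $(r-1)$ you quote from the lemma statement (a discrepancy already present between the paper's displayed polar formulas and its stated generator), so you should double-check that coefficient, although it does not affect the structure of your argument.
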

	
	Setting $ s=\frac{r}{(r+2)^3} $ one can simplify \eqref{sl2R:eq:foliation_in_polar} in the sense that the image of the anchor is then spanned by
	\begin{equation}\label{sl2R:eq:better_foliation_in_polar}
	\{ \partial_\phi, 2s\partial_s+\tilde{x}_3\tilde{\partial}_3 \}.
	\end{equation}
	
	\begin{remark}
		From Lemma~\ref{sl2R:lemma:foliation_in_polar} we also find that $ \tilde{A}_{\field P} $ is an abelian extension
		\begin{equation}
		\begin{tikzpicture}[baseline=(current bounding box.center)]
		\node (1) at (0,0) {$ 0 $};
		\node (2) at (1.4,0) {$ L $};
		\node (3) at (2.8,0) {$ \phantom{\tilde{A}} $};
		\node (a) at (2.8,0.04) {$ \tilde{A}_{\field P} $};
		\node (4) at (5.2,0) {$ \blup(T\field P,TZ) $};
		\node (5) at (7.4,0) {$ 0 $};
		
		\path[->]
		(1) edge node[]{$  $} (2)
		(2) edge node[above]{$  $} (3)
		(3) edge node[above]{$  $} (4)
		(4) edge node[]{$  $} (5);
		;
		\end{tikzpicture}
		\end{equation}
		where $ (L\subseteq \ker\rho_{\blup})_{\field P} $ is an abelian Lie algebroid of rank $ 1 $. 
	\end{remark}
	
However, the generating sections~\eqref{sl2R:eq:better_foliation_in_polar} of $ \rho_{\blup}(\tilde{A}_U) $ show that we cannot remove the singularity by blowing up further. There will always remain at least one leaf of dimension $ 1 $.
	
	\begin{lemma}\label{lemma:when_blowups_no_help}
		Let $ U\subseteq \field R^n $ be open with $ 0\in U $ and $ a_1,\dots,a_n\in \Cinfty(U) $. Then the  section 
  \begin{equation}\label{eq:when_blowups_no_help}
      \sum_i x_ia_i\partial_i\in \vfield(U)
  \end{equation}
induces a section $ s\in \Gamma^\infty(\blup(TU,\{0\})) $ of the Lie algebroid blowup $ \blup(TU,\{0\}) $, and $ \rho_\blup(s) $ vanishes on the subset $ Z=\{ [1:0:\dots :0],\dots,[0:\dots :0:1] \} $. 

Moreover, around points in $Z$ the vector field $\rho_\blup(s)$ is again of the form \eqref{eq:when_blowups_no_help}.
		\begin{proof}
			Fix $ i\in \{1,\dots,n\} $. Writing $ p\colon \blup(U,\{0\})\to U $ for the blow-down map, we have inside the chart $(U_i,\tilde{x})$
			\begin{equation*}
			\rho_\blup(s)\at{U_i}=\tilde{x}_ip^\ast(a_i)\tilde{\partial}_i+\sum_{j\neq i}\tilde{x}_j(p^\ast(a_j)-p^\ast(a_i))\tilde{\partial}_j.
			\end{equation*}
			This section vanishes in the origin of $ U_i $, i.e.\ the point $ [0:,\dots,:\underbrace{1}_{i}:\dots:0] $, and is of the form \eqref{eq:when_blowups_no_help}.
		\end{proof}
	\end{lemma}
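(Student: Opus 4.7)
The plan is to unpack the statement using the universal property of the blowup and then verify the two geometric claims by an explicit computation in the standard charts for $\blup(U,\{0\})$ from Remark~\ref{remark:charts_for_blup}. First, since each term of $X=\sum_i x_i a_i \partial_i$ carries a factor of $x_i$, the vector field vanishes at $0\in U$, so $X\in \Gamma^\infty(TU,0_{\{0\}})=\Gamma^\infty(TU,T\{0\})$. By Lemma~\ref{lemma:sections_of_blup} the section $s:=\blup(X)\in \Gamma^\infty(\blup(TU,\{0\}))$ is therefore well-defined, and by Lemma~\ref{lemma:blup_of_anchored_vb} its image under the anchor is nothing but the unique lift $\widetilde{X}$ provided by Lemma~\ref{lemma:lifting_vf_to_the_blowup}. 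So everything reduces to computing this lift in charts.

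Next I would fix $i\in\{1,\dots,n\}$ and work in the chart $(U_i,\tilde x)$ of Remark~\ref{remark:charts_for_blup}, in which the blow-down map reads $p^\ast x_i=\tilde x_i$ and $p^\ast x_j=\tilde x_j \tilde x_i$ for $j\neq i$. A short computation of the differential $Tp$ on the frame $\{\tilde\partial_1,\dots,\tilde\partial_n\}$ gives, on $U_i\setminus\field P$,
\begin{equation*}
p^\ast\partial_i=\tilde\partial_i-\sum_{j\neq i}\tfrac{\tilde x_j}{\tilde x_i}\tilde\partial_j,\qquad p^\ast\partial_j=\tfrac{1}{\tilde x_i}\tilde\partial_j\quad(j\neq i).
\end{equation*}
Substituting these expressions together with $p^\ast x_i=\tilde x_i$ and $p^\ast x_j=\tilde x_i\tilde x_j$ into $p^\ast X$, the apparent $\tfrac{1}{\tilde x_i}$ singularities cancel and one is left with
\begin{equation*}
\rho_\blup(s)\at{U_i}=\tilde x_i\,p^\ast(a_i)\,\tilde\partial_i+\sum_{j\neq i}\tilde x_j\bigl(p^\ast(a_j)-p^\ast(a_i)\bigr)\tilde\partial_j,
\end{equation*}
which is in particular smooth across $\field P$, confirming the existence of the lift in this chart and providing the explicit formula.

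From this formula both remaining assertions are immediate. Setting all $\tilde x_j=0$ gives $\rho_\blup(s)=0$ at the origin of $U_i$, which is exactly the point $[0{:}\dots{:}0{:}1{:}0{:}\dots{:}0]\in Z$ with the $1$ in the $i$-th slot; running over all charts $U_1,\dots,U_n$ exhausts~$Z$. Moreover, defining $b_i:=p^\ast(a_i)$ and $b_j:=p^\ast(a_j)-p^\ast(a_i)$ for $j\neq i$, the expression above reads $\rho_\blup(s)\at{U_i}=\sum_j \tilde x_j b_j\,\tilde\partial_j$, showing that in the $\tilde x$-coordinates around any point of $Z$ the lifted vector field is again of the form~\eqref{eq:when_blowups_no_help}.

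There is no real obstacle; the only point to take care of is bookkeeping the cancellation that turns the a priori singular expression $p^\ast X$ into a smooth section of $\blup(TU,\{0\})$, which is the manifestation of Lemma~\ref{lemma:sections_of_blup} in these particular coordinates.
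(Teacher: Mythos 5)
Your proposal is correct and follows essentially the same route as the paper: the paper's proof simply states the chart formula $\rho_\blup(s)\at{U_i}=\tilde x_i\,p^\ast(a_i)\,\tilde\partial_i+\sum_{j\neq i}\tilde x_j\bigl(p^\ast(a_j)-p^\ast(a_i)\bigr)\tilde\partial_j$ and reads off the vanishing on $Z$ and the persistence of the form \eqref{eq:when_blowups_no_help}, exactly as you do. Your additional steps (well-definedness of $s=\blup(X)$ via Lemma~\ref{lemma:sections_of_blup} and the identification of $\rho_\blup(s)$ with the lift from Lemma~\ref{lemma:lifting_vf_to_the_blowup}, plus the explicit pullbacks of the coordinate vector fields) are correct and merely make explicit what the paper leaves implicit.
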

	
	\begin{corollary}
		Repeatedly blowing up the one-dimensional orbits will not result in a Lie algebroid that has a $ 2 $-dimensional orbit foliation. Every blowup along a single $ 1 $-dimensional orbit will increase the number of $ 1 $-dimensional orbits by $ 1 $.
		\begin{proof}
			After the first blowup the statement follows inductively from Lemma \ref{lemma:when_blowups_no_help}. Indeed, considering the sections \eqref{sl2R:eq:better_foliation_in_polar} that generate the image of the anchor map around the one-dimensional orbit, the section $2s\partial_s+\tilde{x}_3\tilde{\partial}_3$ satisfies the requirements of Lemma \ref{lemma:when_blowups_no_help}.
		\end{proof}
	\end{corollary}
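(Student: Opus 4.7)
The plan is an induction on the number of blowups, with inductive hypothesis: \emph{around every $1$-dimensional orbit $Z$ (in the current $3$-dimensional base), the image of the anchor is locally generated by a section tangential to and nowhere-vanishing on $Z$, together with a transverse section of the form $\sum_i x_i a_i \partial_i$ in the two normal coordinates}. The base case is supplied by Lemma \ref{sl2R:lemma:foliation_in_polar} and the simplification \eqref{sl2R:eq:better_foliation_in_polar}: near $Z\subset\field P$ the anchor image is generated by $\partial_\phi$ (tangential, nowhere-vanishing on $Z$) and the transverse field $2s\partial_s+\tilde{x}_3\tilde{\partial}_3$, which has exactly the form required by Lemma \ref{lemma:when_blowups_no_help} in the normal coordinates $(s,\tilde{x}_3)$.

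For the inductive step I blow up a $1$-dimensional orbit $Z_k$. Since $Z_k$ has codimension $2$ in the $3$-dimensional base, the exceptional divisor $\mathcal{E}$ is a bundle with fibre $\field{RP}^1$ over $Z_k$. Lemma \ref{lemma:when_blowups_no_help}, applied fiberwise in the normal coordinates, lifts the transverse generator to a section of the Lie algebroid blowup whose anchor-image vanishes on two disjoint smooth sections $Z'_1,Z'_2\subset\mathcal{E}$, and at each $Z'_i$ has again the form $\sum_i x_i a_i \partial_i$ in local normal coordinates. The tangential generator $\partial_\phi$, being tangent to $Z_k$, lifts uniquely and smoothly by Lemma \ref{lemma:lifting_vf_to_the_blowup} to a vector field on the blowup tangent to $\mathcal{E}$; a direct local check shows that this lift is tangent to and nowhere-vanishing on each $Z'_i$. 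Hence the anchor has rank exactly $1$ along $Z'_1\cup Z'_2$ and rank $2$ on $\mathcal{E}\setminus(Z'_1\cup Z'_2)$, so blowing up $Z_k$ replaces it by the two new $1$-dimensional orbits $Z'_1$ and $Z'_2$, at which the inductive hypothesis is again satisfied. Since the blow-down map is a diffeomorphism outside $Z_k$, all other orbits are unaffected, so the count of $1$-dimensional orbits increases by exactly one.

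The main point requiring care is confirming that the tangential-plus-transverse normal form really propagates into the next stage: the transverse half is the ``moreover'' clause of Lemma \ref{lemma:when_blowups_no_help}, while the tangential half follows from Lemma \ref{lemma:lifting_vf_to_the_blowup} together with an explicit local computation (analogous to the ones in Lemma \ref{sl2R:lemma:foliation_in_polar}) showing that the lift of $\partial_\phi$ keeps the same qualitative form in suitable coordinates around each $Z'_i$. Once this is in place the corollary is immediate, since the number of $1$-dimensional orbits is at least one after the first blowup and strictly grows with each subsequent blowup along such an orbit.
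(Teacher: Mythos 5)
Your route is essentially the paper's: induct via Lemma~\ref{lemma:when_blowups_no_help}, starting from the generators \eqref{sl2R:eq:better_foliation_in_polar}, with the tangential generator $\partial_\phi$ carried along as a parameter and the ``moreover'' clause propagating the normal form of the transverse generator. For the corollary's main assertion (no iterated blowup produces a $2$-dimensional orbit foliation) this is exactly the paper's argument, spelled out in more detail.

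The one point where you overclaim is the strengthened inductive step: that the lifted transverse generator vanishes on \emph{exactly} two disjoint sections $Z_1',Z_2'$ of the exceptional divisor and that the anchor has rank $2$ on its complement. Lemma~\ref{lemma:when_blowups_no_help} only asserts vanishing \emph{at} the coordinate points, and the stronger statement fails one step into the iteration. Tracking the constant coefficients, $2s\partial_s+\tilde{x}_3\tilde{\partial}_3$ has $(a_1,a_2)=(2,1)$, and the two charts of the first blowup produce transverse generators with coefficients $(2,-1)$ and $(1,1)$ at the two new $1$-dimensional orbits. The $(1,1)$-orbit carries an Euler-like transverse generator, and by Corollary~\ref{corollary:lift_of_Evf_is_Evf} its lift is the Euler vector field of the tautological line bundle, which vanishes on the \emph{entire} exceptional divisor of that blowup; there the anchor image is spanned by the lift of $\partial_\phi$ alone, so this blowup creates a one-parameter family of $1$-dimensional orbits rather than exactly two, and the rank is nowhere $2$ on that exceptional divisor. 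This does not endanger the non-regularity claim (the rank-$1$ locus only grows), but it breaks the precise ``two new orbits, rank $2$ elsewhere'' bookkeeping your induction relies on --- a caveat that, to be fair, equally concerns the ``$+1$ per blowup'' sentence as formulated in the paper's own terse proof. If you aim only at the first sentence of the corollary, weaken your inductive hypothesis to: near some point of every exceptional divisor the transverse part of the anchor image is generated by a field of the form \eqref{eq:when_blowups_no_help}; that is all Lemma~\ref{lemma:when_blowups_no_help} supplies and all the non-regularity statement needs.
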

	
	In conclusion, blowing up further will only increase the number of singularities.

	\appendix
\section{A Gysin sequence for Lie algebroids}
We develope a Gysin sequence for Lie algebroids, which we made use of in Section \ref{section:blow_up_of_transversals}. To be able to formulate the result we first introduce a notion of fibre integration for Lie algebroids.

	\subsection{Integration along fibres}\label{sec:integration_along_fibres}
	Throughout this section let $ \pr\colon F\to N $ denote a locally trivial fibre bundle with orientable and connected fibres and $ B\laover N $ a Lie algebroid. We aim to define a notion of integrating along fibres 
	\begin{equation}
	(\pi^!)_\ast\colon \homology^\bullet_\mathrm{cv}(\pi^! B)\to \homology^{\bullet-\rank F}(B,o(F)),
	\end{equation}
	where $ \homology^\bullet_\mathrm{cv}(\pi^! B) $ denotes compact vertical cohomology, see Definition~\ref{def:compact_vertical_cohomology}.
	Here, $ o(F)\to N $ denotes the orientation bundle of $ F $. This is standard for vector bundles, but can also be made sense of in our more general situation. Since the fibres are orientable we can construct a double cover $ \tilde{N} $ of $ N $ by
	\begin{equation}
	\tilde{N}_p=\{ \lambda\colon \lambda\text{ is an orientation on the manifold }F_p \}
	\end{equation}
	with the obvious smooth structure, which we call the \textit{orientation double cover}. On the trivial bundle $ \tilde{N}\times \field R\to \tilde{N} $ we have a $ \field Z_2 $ action given by
	\begin{equation}
	(-1).(\lambda,t)=(\bar{\lambda},-t),
	\end{equation}
	where $ \bar{\lambda} $ denotes the opposite orientation. Then
	\begin{equation}\label{eq:definition_orientation_bundle}
	o(F)=\tilde{N}\times \field R/\field Z_2 \to N.
	\end{equation}
	The bundle \eqref{eq:definition_orientation_bundle} is trivial if and only if there is a globally consistent way of choosing orientations for the fibres of $ F $. In this case, we call the fibre bundle $ F\to N $ \textit{orientable}.
	\begin{remark}
		If $ F\to N $ is a fibre bundle such that $ o(F) $ is not the trivial bundle, pulling back $ F $ to $ \tilde{N} $ will result in an orientable fibre bundle.
	\end{remark}
	
	The orientation bundle carries a canonical flat connection of $ TN  $ (induced by the trivial $ T\tilde{N} $-connection on $ \tilde{N}\times\field R $) analogous to \cite[Chapter 7]{stokesfordensities} in the case of vector bundles, and the representation of $ B $ on $ o(F) $ is induced by this connection using the anchor map. 
	
	To define fibre integration we first consider the case $ B=0 $, i.e.\ $ \pi^! B=\foliation{F}(\pi) $ is the Lie algebroid corresponding to the foliation of $ F $ into the fibres of $ \pi $. 
	
	\begin{definition}
		Let $ \pi\colon F\to N $ be a locally trivial fibre bundle of rank $ k $ with orientable and connected fibres. Then fibre integration on $ \Omega^\bullet_\mathrm{cv}(\foliation{F}(\pi)) $ is defined by
		\begin{equation}\label{eq:def_fibre_integration}
		\int_{\foliation{F}(\pi)} \omega =\begin{cases}
		\Big(N\ni p\mapsto \int_{F_p}\iota_p^\ast \omega\Big)\in \Gamma^\infty(o(F)) &\text{ if }\omega\in \Omega^k_\mathrm{cv}(\foliation{F}(\pi))\\
		0 &\text{ otherwise}.
		\end{cases}
		\end{equation}
		Here, $ \iota_p\colon F_p\to F $ denotes the inclusion of the fibre at $ p\in N $.
	\end{definition}
	
	\begin{remark}
		Fibre integration is well defined and coincides with the standard notion of fibre integration in the following sense: Since $ \foliation{F}(\pi)\hookrightarrow TF $ we can choose a vector bundle complement (a horizontal subbundle) $ C $, i.e.\ $ TF=\foliation{F}\oplus C $, to extend foliated forms to forms on $ TF $. The result of the ordinary fibre integration will not depend on the chosen complement (since it is a top-degree form of $ \foliation{F}(\pi) $) and coincides with~\eqref{eq:def_fibre_integration}.
	\end{remark}
	
	\begin{proposition}\label{prop:integration_foliation_cohomology}
		Fibre integration descends to a map
		\begin{equation}\label{eq:integration_foliation_cohomology}
		\int_{\foliation{F}(\pi)} \colon \homology^k_\mathrm{cv}(\foliation{F}(\pi))\to \Gamma^\infty(o(F)).
		\end{equation}
		\begin{proof}
			We only need to check that integration vanishes on exact forms. But, since we have for foliated forms $ \omega\in \Omega^\bullet(\foliation{F}(\pi)) $ that
			\begin{equation*}
			\D_{F_p}\iota^\ast_p \omega=\iota^\ast_p \D_{\foliation{F}} \omega,
			\end{equation*} 
			this follows from Stoke's theorem.
		\end{proof}
	\end{proposition}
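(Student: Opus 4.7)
The target $\Gamma^\infty(o(F))$ carries no differential, so to see that fibre integration descends to cohomology I only need to verify two things at the chain level: the assignment $\omega\mapsto\int_{\foliation{F}(\pi)}\omega$ produces a \emph{smooth} section of $o(F)$, and it vanishes on coboundaries. The first is the classical smoothness of parameter-dependent integrals dressed up with an orientation twist; the second is Stokes' theorem on each fibre, once one observes that $\iota_p^\ast$ intertwines the foliated and the fibrewise de Rham differentials.

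First I would establish smoothness. Because $\pi$ is locally trivial, any $p_0\in N$ admits a neighborhood $U$ with $\pi^{-1}(U)\simeq U\times F_{p_0}$. Shrinking $U$ if necessary and, if $o(F)$ is non-trivial, pulling back along the orientation double cover $\tilde N\to N$ of \eqref{eq:definition_orientation_bundle} to trivialize the orientation bundle, vertical compact support of $\omega\in\Omega^k_{\mathrm{cv}}(\foliation{F}(\pi))$ ensures that the integrand of $p\mapsto\int_{F_p}\iota_p^\ast\omega$ is supported in a set proper over $U$. Smoothness in the parameter $p$ is then standard differentiation under the integral sign. The equivariance of this local assignment under the $\field{Z}_2$-action on $\tilde{N}\times\field{R}$ (reversing orientation reverses the sign of the integral) reassembles the local data into a well-defined smooth section of $o(F)\to N$ rather than of the trivial line bundle.

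For vanishing on coboundaries, foliated forms of degree $\neq k$ are sent to zero by definition, so it suffices to take $\omega\in\Omega^{k-1}_{\mathrm{cv}}(\foliation{F}(\pi))$ and to show $\int_{\foliation{F}(\pi)}\D_{\foliation{F}(\pi)}\omega=0$. The key point is that $F_p$ is a leaf of $\foliation{F}(\pi)$, so the inclusion $\iota_p\colon F_p\hookrightarrow F$ underlies a Lie algebroid morphism $TF_p\to\foliation{F}(\pi)$; therefore $\iota_p^\ast$ is a chain map and
\begin{equation*}
\iota_p^\ast \D_{\foliation{F}(\pi)}\omega=\D_{F_p}\iota_p^\ast\omega.
\end{equation*}
Since $\iota_p^\ast\omega$ is compactly supported on the boundaryless manifold $F_p$, Stokes' theorem gives $\int_{F_p}\D_{F_p}\iota_p^\ast\omega=0$ pointwise in $p$, hence $\int_{\foliation{F}(\pi)}\D_{\foliation{F}(\pi)}\omega=0$ as a section of $o(F)$.

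No step is a genuine obstacle. The only mildly technical point is the book-keeping that turns the assignment $p\mapsto\int_{F_p}\iota_p^\ast\omega$ into a section of $o(F)$ rather than of the trivial line bundle, and this is handled systematically by working on the orientation double cover and invoking $\field{Z}_2$-equivariance.
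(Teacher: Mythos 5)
Your proposal is correct and follows essentially the same route as the paper: the key step is that restriction to a fibre intertwines the foliated and fibrewise de Rham differentials, so exact forms integrate to zero by Stokes' theorem on the compact-support boundaryless fibre. The additional smoothness/orientation-bundle bookkeeping you supply is handled in the paper by the preceding remark comparing foliated fibre integration with the standard one, so it is consistent with, not divergent from, the paper's argument.
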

	
	Recall that in case of an orientable vector bundle $ F\to N $, ordinary fibre integration of differential forms yields an isomorphism
	\begin{equation}
	\homology_\mathrm{cv}^\bullet(F)\simeq \homology^{\bullet-k}(N),
	\end{equation}
	where the pre-image of $ 1\in \homology^0(N) $ is the \textit{Thom class} $ \theta\in \homology^k_\mathrm{cv}(F) $ (see~\cite[(12.2.1)]{stokesfordensities}). 
 In our setting we obtain a similar statement in Lemma~\ref{lemma:Thom_for_Liealgebroids}, which in the current situation is the following. 
	
	\begin{lemma}\label{lemma:invintegr_foliation}
		Let $ \pi\colon F\to N $ be a vector bundle of rank $ k $. 
Then \begin{equation}\label{eq:cv_cohomology}
    \homology^n_\mathrm{cv}(\foliation{F} (\pi))=\begin{cases}
        \Gamma^\infty(o(F)) &\text{ if }n=k\\
        0 &\text{ otherwise},
    \end{cases}
\end{equation}
where we identify the $\Cinfty(N)$-modules
		\begin{equation}\label{eq:integration_is_iso}
		\int_{\foliation{F}(\pi)} \colon \homology^k_\mathrm{cv}(\foliation{F}(\pi))\xrightarrow{\sim} \Gamma^\infty(o(F))
		\end{equation}
If $ F $ is oriented then for $ 1\in \Cinfty(N)=\Gamma^\infty(o(F)) $ we have
		\begin{equation}
		\left(\int_{\foliation{F}(\pi)}\right)^{-1}(1) = \rho_{\foliation{F}}^\ast \theta,
		\end{equation}
		where $ \theta\in \homology^k_\mathrm{cv}(F) $ denotes the Thom class and $ \rho_{\foliation{F}} $ denotes the anchor of $ \foliation{F}(\pi) $.
		\begin{proof}
			We first show \eqref{eq:integration_is_iso}. Note that integrating is indeed a map of $ \Cinfty(N) $-modules, where the module structure on $ \homology^\bullet_{\mathrm{cv}}(\foliation{F}(\pi)) $ is given by multiplying with pullbacks. 
            The statement in the non-orientable case follows from the orientable one by pulling everything back to a double cover $ \tilde{N}\to N $ which trivialises $ o(F) $. Thus, suppose $ F $ is orientable and let $ \theta\in \homology^k_\mathrm{cv}(F) $ denote the Thom class of the orientable vector bundle. We can pick any representative of the Thom class to compute its integral, and for the computation we only need the contributions that are $ k $-tangent to the foliation since the rest is mapped to zero anyway. But those are given by $ \rho_{\foliation{F}}^\ast\theta $, so we get 
			\begin{equation*}
			\int_{\foliation{F}(\pi)} \rho_{\foliation{F}}^\ast\theta=1\in \Cinfty(N)
			\end{equation*}
			immediately. Since integrating is a module morphism, this implies surjectivity. For injectivity it is enough to argue locally, since we can exploit the $\Cinfty(N)$-module structure and the existence of a partition of unity on $N$. Thus, let $F=N\times \field R^k$ with coordinates $(x,y)$ and 
   \begin{equation*}
    \omega=f \D y_1\wedge \cdots \wedge\D y_k\in \Omega^k_\mathrm{cv}(\foliation F(\pi))
   \end{equation*}
   be given, where $f\in \Cinfty_\mathrm{cv}(F)$. Suppose that $\int_{\foliation{F}(\pi)} \omega=0$, i.e.\ 
   \begin{equation*}
       \int f(x,y)dy=0
   \end{equation*}
   for every $x\in N$. One can adapt the proof of \cite[Lemma 2.4]{gutt.rawnsley:2002a} to show that there exist functions $g_1,\ldots,g_k\in \Cinfty_\mathrm{cv}(F)$ such that 
   \begin{equation*}
       f=\sum_{i=1}^k \frac{\partial g_i}{\partial y_i}.
   \end{equation*}
   Then $\eta\in \Omega^{k-1}_\mathrm{cv}(\foliation F(\pi))$ defined by
   \begin{equation*}
       \eta=\sum_{i=1}^k (-1)^{i+1}g_i \D y_1\wedge\cdots \D y_{i-1}\wedge \D y_{i+1}\wedge\cdots\wedge \D y_k
   \end{equation*}
   is a primitive for $\omega$.

To show that $\homology^n_\mathrm{cv}(\foliation F(\pi))=0$ if $n\neq k$, first note that for $n=0$ the statement is clear. 
For $1\leq n<k$ note that again $\homology^n_\mathrm{cv}(\foliation F(\pi))$ is a $\Cinfty(N)$-module. 
Hence it is enough to show that the cohomology vanishes over a small enough open subsets of $N$, where we utilise the proof of the well-known Poincaré-Lemma for compact support. 
Let $U\subset N$ be relatively compact such that $F$ is trivial over an open neighbourhood of the closure of $U$. 
Let $\omega\in \Omega^n_\mathrm{cv}(\foliation F(\pi))$ be closed. 
By definition of $\Omega^n_\mathrm{cv}(\foliation F(\pi))$ and by compactness of $K$ there exists a compact subset $A\subseteq \field R^k$ such that
   \begin{equation*}
       \supp \omega\at{\pi^{-1}(U)}\subseteq A\times U.
   \end{equation*}
    Denote by $i\colon \field R^k\hookrightarrow \field S^k$ the embedding of $\field R^k$ into the $k$-sphere via the stereographic projection of the north pole $\mathcal{N}$, and by
   \begin{equation*}
       i_\ast\colon \Omega^n_\mathrm{cv}(\foliation (\pi)_{\pi^{-1}(U)})\to \Omega_{\foliation F}^n(\field S^k\times U)
   \end{equation*}
   the extension by $0$ as a foliated form on $S^k\times U\to U$. 
   Then, since $\homology^n_{\foliation F}(\field S^k\times U)=0$ by \cite[Lemma 4.8]{marcut.schuessler:2024a}, and $\D i_\ast \omega=0$, there exists $\eta\in \Omega^{n-1}_{\foliation F}(\field S^k\times U)$ with $\D_{\foliation F} \eta=i_\ast\omega$. 
   Let $O\subset \field S^k\setminus i(A)$ be a contractible neighbourhood of $\mathcal{N}$ and $\chi$ a bump function which is $1$ in a neighbourhood of $\mathcal{N}$ and supported on $O$. On $O\times U$, we have 
   \begin{equation*}\tag{$\ast$}
       \D_{\foliation F} \eta\at{O\times U}=i_\ast\omega\at{O\times U}=0.
   \end{equation*}
   Thus, if $n>1$, there exists $\tilde{\eta}\in \Omega^{n-2}(O\times U)$ with $\D_{\foliation F} \tilde{\eta}=\eta\at{O\times U}$, again by \cite[Lemma 4.8]{marcut.schuessler:2024a}. Then 
   \begin{equation*}
       i^\ast(\eta-\D_{\foliation F} (\chi\tilde{\eta}))\in \Omega_\mathrm{cv}^{n-1}(\field R^k\times U)
   \end{equation*}
   is a well-defined primitive of $\omega$.

   If $n=1$, then $(\ast)$ implies that $\eta\at{O\times U}$ is the pullback of a function $f\in \Cinfty(U)$. Hence, in this case $i^\ast(\eta-\pi_{\field S^k\times U}^\ast f)\in \Cinfty_{\mathrm{cv}}(\field R^k\times U)$ is a primitive of $\omega$.
		\end{proof}
	\end{lemma}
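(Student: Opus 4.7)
The plan is to reduce to the oriented case, establish the isomorphism at degree $k$ by exhibiting a preimage of $1$ via the Thom class, and handle the vanishing at $n\neq k$ by a local-to-global argument on $N$ combined with a parametric Poincaré lemma for compactly supported foliated forms on the fibres.

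Since $F$ is a vector bundle its fibres are always orientable, so one can pull back along the orientation double cover $\tilde{N}\to N$ on which $F$ becomes an oriented vector bundle and $o(F)$ trivialises; the eigenspace decomposition of Lemma~\ref{lemma:lapullback_cohomology} then transfers the oriented result to the general case. So I may assume $F$ is oriented and $\Gamma^\infty(o(F))=\Cinfty(N)$. Fibre integration is $\Cinfty(N)$-linear with respect to multiplication by $\pi^\ast$-pullbacks, so to obtain surjectivity together with the explicit formula it suffices to exhibit a closed preimage of $1\in \Cinfty(N)$. For this I take a representative $\theta\in \Omega^k_\mathrm{cv}(F)$ of the Thom class; the foliated pullback $\rho_{\foliation{F}}^\ast\theta$ retains exactly the part of $\theta$ tangent to the fibres, which is the only piece that contributes to fibre integration, so the defining property of the Thom class immediately yields $\int_{\foliation{F}(\pi)}\rho_{\foliation{F}}^\ast\theta=1$.

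Injectivity at degree $k$ and vanishing at $n\neq k$ are both local in $N$. Using the $\Cinfty(N)$-module structure and a partition of unity on the base, the problem reduces to the case $N=U$ open in $\field R^m$ and $F=U\times\field R^k$ with fibre coordinates $y$, where one must prove a parametric compactly supported Poincaré lemma. In top degree, writing $\omega=f\,\D y_1\wedge\cdots\wedge \D y_k$ with vanishing fibre integral at every $x\in U$, I would iterate the standard one-variable antiderivative trick adjusted by bump functions to produce $g_i\in \Cinfty_\mathrm{cv}(U\times \field R^k)$ with $f=\sum_i\partial_{y_i}g_i$; smoothness in $x\in U$ is preserved at every step by Leibniz, giving the primitive $\eta=\sum_i(-1)^{i+1}g_i\,\D y_1\wedge\cdots\widehat{\D y_i}\cdots\wedge \D y_k$ in $\Omega^{k-1}_\mathrm{cv}(\foliation{F}(\pi))$.

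For $n<k$ my preferred approach is sphere-compactification: embed $\field R^k\hookrightarrow \field S^k$ via stereographic projection from a north pole $\mathcal{N}$ and extend $\omega$ by zero to a closed foliated form on $\field S^k\times U\to U$. Since $\homology^n_{\foliation{F}}(\field S^k\times U)=0$ for $0<n<k$ by \cite[Lemma 4.8]{marcut.schuessler:2024a}, one obtains a primitive $\eta$ on $\field S^k\times U$. The main obstacle I anticipate is adjusting $\eta$ so that it vanishes in a neighbourhood of $\mathcal{N}\times U$, as is needed to descend it to a compactly vertically supported form on $\field R^k\times U$. Since $\omega$ vanishes near $\mathcal{N}\times U$, on a contractible neighbourhood $O$ of the north pole $\eta$ is itself closed and can be written either as $\D_{\foliation{F}}\tilde\eta$ (by the same foliated Poincaré lemma, when $n-1\geq 1$) or as the pullback of a function from $U$ (when $n-1=0$); subtracting $\D_{\foliation{F}}(\chi\tilde\eta)$ for a cut-off $\chi$ supported near $\mathcal{N}$ produces the required compactly supported primitive and completes the argument.
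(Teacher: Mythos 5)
Your proposal is correct and follows essentially the same route as the paper: reduction to the oriented case via the orientation double cover, surjectivity in degree $k$ via the foliated pullback $\rho_{\foliation{F}}^\ast\theta$ of the Thom class, injectivity via a local parametric compactly supported Poincaré lemma writing $f=\sum_i\partial_{y_i}g_i$ (the paper cites an adaptation of a lemma of Gutt--Rawnsley for this step), and vanishing for $0<n<k$ by the same sphere-compactification argument with the correction of the primitive near the north pole using the foliated cohomology of $\field S^k\times U$.
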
 

If the fibres of $\pi\colon F\to N$ are compact, orientable, and connected the situation, we obtain a similar statement for the top degree foliated cohomology.
\begin{lemma}
    Let $\pi\colon F\to N$ be a locally trivial fibre bundle with typical fibre a compact, orientable, and connected manifold of dimension $k$. Then fibre integration yields an isomorphism
\begin{equation}
    \int_{\foliation{F}(\pi)} \colon \homology^k(\foliation{F}(\pi))\xrightarrow{\sim} \Gamma^\infty(o(F))
\end{equation}
    \begin{proof}
        By \cite[Lemma 4.8]{marcut.schuessler:2024a}, the foliated cohomology $\homology^k(\foliation{F} (\pi))$ are sections of a line bundle over $N$ with fibres $\homology^k(\pi^{-1}(x))$ for all $x\in N$, which can be readily identified with $o(F)$ via fibre integration.
    \end{proof}
\end{lemma}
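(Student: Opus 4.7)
The plan is to reduce to the foliated Poincar\'e-type statement of \cite[Lemma 4.8]{marcut.schuessler:2024a} and then identify the resulting line bundle with $o(F)$ via integration on the individual fibres.

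First I would invoke \cite[Lemma 4.8]{marcut.schuessler:2024a} to obtain a vector bundle $V\to N$ with fibres $V_x=\homology^k(\pi^{-1}(x))$ such that the top foliated cohomology is given by $\homology^k(\foliation{F}(\pi))\simeq \Gamma^\infty(V)$. Since each fibre $\pi^{-1}(x)$ is compact, connected and orientable of dimension $k$, classical de Rham theory gives $\dim_{\field R}\homology^k(\pi^{-1}(x))=1$, so $V$ is a line bundle over $N$.

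Next, I would construct a canonical vector bundle isomorphism $\Psi\colon V\to o(F)$ by fibrewise integration. For $x\in N$, recall that $o(F)_x=\tilde{N}_x\times\field R/\field Z_2$, where $\tilde{N}_x$ consists of the two orientations of $\pi^{-1}(x)$ and $\field Z_2$ acts by $(\bar\lambda,-t)\sim(\lambda,t)$. Given $[\omega]\in V_x$, set
\begin{equation*}
\Psi_x([\omega])=\Big[\Big(\lambda,\int_{(\pi^{-1}(x),\lambda)}\omega\Big)\Big]\in o(F)_x,
\end{equation*}
which is well-defined because reversing the orientation flips the sign of the integral, and which is an isomorphism of one-dimensional spaces because a volume form on an oriented compact connected manifold has nonzero integral. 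To see that $\Psi$ is smooth it suffices to work in a local trivialisation $\pi^{-1}(U)\simeq U\times \Sigma$ with $\Sigma$ a compact oriented connected $k$-manifold: over $U$, both $V$ and $o(F)$ are canonically trivialised (by a global volume class on $\Sigma$ and by the chosen orientation of $\Sigma$, respectively), and $\Psi$ becomes multiplication by the constant $\int_\Sigma \mathrm{vol}_\Sigma\neq 0$.

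Finally I would verify that, under the identifications $\homology^k(\foliation F(\pi))\simeq \Gamma^\infty(V)$ and $\Psi\colon V\xrightarrow{\simeq}o(F)$, the integration map $\int_{\foliation F(\pi)}$ of \eqref{eq:def_fibre_integration} is exactly $\Gamma^\infty(\Psi)$. This is essentially tautological: evaluating $\int_{\foliation F(\pi)}[\omega]$ at $x$ returns $\int_{F_x}\iota_x^\ast\omega$ viewed as an element of $o(F)_x$, which matches $\Psi_x([\iota_x^\ast\omega])$. The main (minor) obstacle is checking the smoothness and orientation bookkeeping in a consistent way; once $V$ is realised as the line bundle from Lemma 4.8 and $\Psi$ is shown to be a smooth bundle isomorphism on each chart of a trivialisation of $\pi$, the conclusion follows, giving the desired isomorphism with $\Gamma^\infty(o(F))$.
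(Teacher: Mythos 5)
Your proposal is correct and follows essentially the same route as the paper: both invoke \cite[Lemma 4.8]{marcut.schuessler:2024a} to realise $\homology^k(\foliation{F}(\pi))$ as sections of a line bundle with fibres $\homology^k(\pi^{-1}(x))$ and then identify that line bundle with $o(F)$ by fibrewise integration. Your write-up simply spells out the orientation bookkeeping and smoothness checks that the paper leaves implicit.
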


	This concludes the discussion for $ \pi^! B $, where $ B\laover N $ is the zero Lie algebroid. For a general Lie algebroid we can first decompose the forms on $ \pi^!B $ according to their vertical part.
	
	\begin{lemma}\label{lemma:decomp_pi^!A}
		Let $ F\to N $ be a locally trivial fibre bundle and $ B\laover N $ a Lie algebroid. Picking a connection on $ F $ leads to a decomposition
		\begin{equation}\label{eq:decomp_pi^!A}
		\pi^! B = \Ver(F)\oplus \pi^\sharp B
		\end{equation}
		with the property that the anchor is given by the identity on vertical vectors and maps $ \pi^\sharp a\mapsto \rho(a)^\hor $ for $ a\in \Gamma^\infty(B) $, and $ \pi^!\colon \pi^! B\to B $ is given by $ \pi^! =\pi^\sharp \circ (\mathrm{pr}_{\pi^! B \to \pi^\sharp B})$.
		\begin{proof}
			Given a connection on $ E $ and local frames $ \{ s_\alpha \}_\alpha $ of $ F_U $ and $ \{ a_i \}_i $ of $ B_U $ for some open $ U\subseteq N $, the collection
			\begin{equation*}
			\{ (0,s_\alpha^\ver) \}_\alpha \cup \{ (\pi^\sharp a,\rho(a_i)^\hor) \}_i
			\end{equation*}
			yields a local frame for $ \pi^! B_{F_U} $. Then the statements follow immediately.
		\end{proof}
	\end{lemma}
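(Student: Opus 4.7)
The plan is to build the decomposition fibrewise using the splitting of $TF$ induced by the connection, and then read off the anchor and the canonical map $\pi^!$ under this identification. Recall that by definition a fibre of $\pi^!B$ at $f\in F$ consists of pairs $(a,v)\in B_{\pi(f)}\times T_fF$ with $\rho(a)=T_f\pi(v)$. Hence there is a tautological short exact sequence of vector bundles over $F$
\begin{equation*}
0\to \Ver(F)\to \pi^!B\to \pi^\sharp B\to 0,
\end{equation*}
where the first map is $v\mapsto (0,v)$ (well-defined since $T\pi v=0=\rho(0)$) and the second is $(a,v)\mapsto \pi^\sharp a$. This sequence exists canonically, before making any choice.

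A connection on $F$ furnishes a horizontal distribution $H\subseteq TF$ complementary to $\Ver(F)$ such that $T_f\pi\colon H_f\xrightarrow{\simeq} T_{\pi(f)}N$ is an isomorphism for each $f\in F$. Using it I would define a splitting of the above sequence by
\begin{equation*}
\pi^\sharp B\to \pi^!B,\qquad \pi^\sharp a\mapsto (a,\rho(a)^\hor),
\end{equation*}
where $\rho(a)^\hor$ denotes the (unique) horizontal lift of $\rho(a)$. This map is well-defined by the defining equation $T\pi(\rho(a)^\hor)=\rho(a)$, smooth since horizontal lifts depend smoothly on the input, and $C^\infty(F)$-linear. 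Together with the inclusion $\Ver(F)\hookrightarrow \pi^!B$ it yields an isomorphism of vector bundles $\pi^!B\cong \Ver(F)\oplus \pi^\sharp B$; indeed, any $(a,v)\in (\pi^!B)_f$ admits the unique decomposition $v=v^{\ver}+\rho(a)^\hor$ with $v^\ver\in \Ver(F)_f$, corresponding to $(0,v^\ver)+(a,\rho(a)^\hor)$.

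Under this identification, the anchor of $\pi^!B$, which is the projection $\rho_{\pi^!B}(a,v)=v$ onto the $TF$-factor, restricts to the identity on $\Ver(F)$ and sends $\pi^\sharp a$ to $\rho(a)^\hor$, as desired. Likewise, the canonical Lie algebroid morphism $\pi^!\colon \pi^!B\to B$ given on fibres by $(a,v)\mapsto a$ kills $\Ver(F)$ and sends $(a,\rho(a)^\hor)$ to $a$, which is precisely $\pi^\sharp \circ \mathrm{pr}_{\pi^\sharp B}$ once one recalls that $\pi^\sharp\colon \pi^\sharp B\to B$ covers $\pi$ and sends $\pi^\sharp a$ to $a$. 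The whole argument is essentially formal once one has the splitting $TF=\Ver(F)\oplus H$; there is no real obstacle, and the only verification of substance is smoothness of the two component maps, which is inherited from smoothness of the horizontal distribution.
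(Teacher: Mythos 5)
Your proof is correct and takes essentially the same approach as the paper: the connection's horizontal lift is used to split $\pi^! B$ into $\Ver(F)\oplus \pi^\sharp B$, and the descriptions of the anchor and of $\pi^!$ are read off directly from the pointwise model $\{(a,v):\rho(a)=T\pi(v)\}$. The only cosmetic difference is that you phrase the splitting globally via the tautological short exact sequence $0\to\Ver(F)\to\pi^!B\to\pi^\sharp B\to 0$, whereas the paper exhibits the corresponding adapted local frames $\{(0,s_\alpha^\ver)\}\cup\{(\pi^\sharp a_i,\rho(a_i)^\hor)\}$.
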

	
	Thus, by choosing a connection on $ F $ we obtain a decomposition
	\begin{equation}
	\Omega_\mathrm{cv}^\bullet(\pi^! B)=\bigoplus_{p+q=\bullet} \Omega^p_\mathrm{cv}(\foliation{F}(\pi),\pi^\sharp\Lambda^q  B^\ast).
	\end{equation}
	This allows to define fibre integration for forms on $ \pi^! B $ by just integrating out the fibre components. More precisely, for $ \omega=\sum_{i,j} \eta_{i,j}\tensor \pi^\sharp \alpha_{i,j} $, where $ \eta_{i,j}\in \Omega^j_\mathrm{cv} (\foliation{F}(\pi)) $ and $ \alpha_{i,j}\in \Omega^\bullet(B) $, we define
	\begin{equation}\label{eq:def_integral_with_tensors}
	(\pi^!)_\ast \Big(\sum_{i,j} \eta_{i,j}\tensor \pi^\sharp \alpha_{i,j}\Big)= \sum_i\Big( \int_{\foliation{F}(\pi)} \eta_{i,k} \Big)\alpha_{i,k}.
	\end{equation}
	It is clear that \eqref{eq:def_integral_with_tensors} yields a map
	\begin{equation}
	(\pi^!)_\ast \colon \Omega_\mathrm{cv}^\bullet(\pi^! B)\to \Omega^{\bullet-k}(B,o(F)).
	\end{equation}
	Since only contributions of $ \Omega^k(\foliation{F}(\pi)) $ (i.e.\ of top degree) matter in computing the integral, it does not depend on the chosen connection, which is also emphasised by the following description. Consider $ \omega\in \Omega^\bullet_\mathrm{cv} (\pi^! B) $ and fix $ p\in N $. Then we can define the $ k $\textit{-fold restriction} $ \omega\at{F_p}^k\in \Omega^k_c( F_p, \Lambda^{\bullet-k}B_p^\ast ) $ of $ \omega $ to $ F_p $ in the following way: For $ X_1,\dots,X_k\in \Gamma^\infty(TF_p) $ we define
	\begin{equation}
	\omega\at{F_p}^k (X_1,\dots,X_k)\colon F_p\to \Lambda^{\bullet-k}(\pi^\sharp B^\ast)\at{F_p}=\Lambda^{\bullet-k}(B^\ast_p)
	\end{equation}
	in the obvious way, as $ \pi^! B / \foliation{F}(\pi) = \pi^\sharp B $. Then one integrates this form along $ F_p $, which yields the same result as~\eqref{eq:def_integral_with_tensors} (note that the integral will vanish if $ \omega $ is not of foliated degree $ k $).
	
	\begin{lemma}
		Let $ F\to N $ be a locally trivial fibre bundle of rank $ k $ with orientable and connected fibres with trivial orientation bundle, and $ B\laover  N $ a Lie algebroid. Then $ (\pi^!)_\ast\colon \Omega^\bullet_\mathrm{cv}(\pi^! B)\to \Omega^{\bullet-k}(B) $ is, up to a sign depending on the degree, a morphism of cochain complexes and induces a map
		\begin{equation}
		(\pi^!)_\ast\colon \homology^\bullet_\mathrm{cv}(\pi^! B)\to \homology^{\bullet-k}(B).
		\end{equation}
	\end{lemma}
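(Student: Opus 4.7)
The plan is to exploit the bigrading on $\Omega^\bullet_\mathrm{cv}(\pi^! B)$ induced by a choice of connection on $F\to N$. By Lemma~\ref{lemma:decomp_pi^!A} this yields the decomposition $\pi^! B = \Ver(F) \oplus \pi^\sharp B$ and hence
\begin{equation*}
\Omega^\bullet_\mathrm{cv}(\pi^! B) = \bigoplus_{p+q=\bullet}\Omega^{p,q}, \qquad \Omega^{p,q} := \Omega^p_\mathrm{cv}\bigl(\foliation{F}(\pi),\pi^\sharp\Lambda^q B^\ast\bigr).
\end{equation*}
By construction, $(\pi^!)_\ast$ vanishes on $\Omega^{p,q}$ for $p\neq k$ and on $\Omega^{k,q}$ is fibrewise integration of the foliated factor tensored with the identity on $\pi^\sharp\Lambda^q B^\ast$. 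Since $(\pi^!)_\ast$, $\D^{\pi^! B}$ and $\D^B$ are all compatible with multiplication by pulled-back functions on $N$, a partition-of-unity argument on $N$ reduces the chain map property to the case in which $F$ trivialises as $F|_U \cong U \times G$ and $B|_U$ admits a frame; over such a $U$ I would equip $F|_U$ with the flat product connection.

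In this flat trivialisation the pullback Lie algebroid splits as a product of Lie algebroids $\pi^! B|_{F_U} \cong B|_U \times TG$, so that the Lie algebroid differential factorises, up to graded signs, as $\D^{\pi^! B} = \D^B \otimes \id + \id \otimes \D_G$. Every form in $\Omega^{p,q}$ is locally a finite $\Cinfty$-combination of decomposables $\eta \wedge \pi^!\alpha$ with $\eta \in \Omega^p_\mathrm{cv}(\foliation{F}(\pi)|_{F_U})$ and $\alpha \in \Omega^q(B|_U)$. Combining the graded Leibniz rule with the identity $\D^{\pi^! B}\pi^!\alpha = \pi^!\D^B\alpha$ (valid since $\pi^!$ is a Lie algebroid morphism) shows that the only contribution of $\D^{\pi^! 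B}(\eta \wedge \pi^!\alpha)$ in vertical degree $k$ is the sum of a fibrewise $\D_G$-derivative of $\eta$ wedged with $\pi^!\alpha$ and a term $\pm\eta \wedge \pi^!\D^B\alpha$; every other component has vertical degree strictly less than $k$ and therefore vanishes under $(\pi^!)_\ast$.

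Applying $(\pi^!)_\ast$ and invoking Proposition~\ref{prop:integration_foliation_cohomology} (fibrewise Stokes's theorem) to kill the $\D_G$-exact contribution produces the identity $(\pi^!)_\ast \circ \D^{\pi^! B} = (-1)^k\, \D^B \circ (\pi^!)_\ast$ on local decomposables, and hence on all forms by the localisation. The induced map on cohomology is then automatic since the global sign simply rescales classes. The main obstacle will be the bookkeeping in the flat local model: verifying that $\pi^! B|_{F_U} \cong B|_U \times TG$ really is an isomorphism of Lie algebroids (which requires checking, via the bracket formula following Lemma~\ref{lemma:decomp_pi^!A}, that the vertical vector fields on $G$ and the horizontal lifts along the flat connection have vanishing cross-brackets) and that the ensuing graded splitting of $\D^{\pi^! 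B}$ correctly isolates the $\D_G$-exact piece so that Stokes's theorem indeed eliminates it after fibre integration.
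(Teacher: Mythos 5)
Your strategy is the same as the paper's (localise over $N$, pass to a flat product model so the differential splits into a foliated and a $B$-part, kill the foliated-exact contribution by fibrewise Stokes via Proposition~\ref{prop:integration_foliation_cohomology}), but one step in your verification fails as stated, and it is precisely the step that makes the chain-map identity close. You claim that, besides the fibrewise $\D_G$-derivative term and the term $\pm\eta\wedge\pi^!{}^\ast\D^B\alpha$, ``every other component has vertical degree strictly less than $k$ and therefore vanishes under $(\pi^!)_\ast$.'' This is false when the vertical degree is already top: if $\eta$ has foliated degree $k$ (or, equivalently, once you absorb the $\Cinfty(F_U)$-coefficients of your combination into $\eta$), the differential of its extension to $\pi^!B$ also contains the horizontal (anchor-direction) derivative of its coefficients, a component of bidegree $(k,1)$; this has vertical degree exactly $k$ and does \emph{not} die under fibre integration. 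On the other side, $\D^B\bigl((\pi^!)_\ast(\eta\wedge\pi^!{}^\ast\alpha)\bigr)=\D^B\bigl((\int_{\foliation{F}(\pi)}\eta)\,\alpha\bigr)$ contains, by the Leibniz rule, the term $\D^B\bigl(\int_{\foliation{F}(\pi)}\eta\bigr)\wedge\alpha$, which is nonzero whenever the fibre integral of $\eta$ is a nonconstant function on $N$. Discarding the bidegree-$(k,1)$ term, as your accounting does, makes the two sides of $(\pi^!)_\ast\circ\D^{\pi^!B}=\pm\,\D^B\circ(\pi^!)_\ast$ differ by exactly this quantity; note also that the identity is not $\Cinfty(F_U)$-linear, so you cannot evade the problem by checking it only on decomposables with constant coefficients.

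The repair is the standard differentiation-under-the-integral-sign argument, which is how the paper organises the computation: in the product model the differential splits as $\D_{\foliation{F}(\pi)}\otimes\id+(-1)^j\,\id\otimes\D_B$, where the second summand also differentiates the foliated factor in the base direction; fibre integration annihilates the image of $\D_{\foliation{F}(\pi)}$ by Stokes and \emph{commutes} with the horizontal summand, the surviving $(k,1)$-contribution being exactly the Leibniz term $\D^B\bigl(\int_{\foliation{F}(\pi)}\eta\bigr)\wedge\alpha$ on the base side. Once you keep that term and match it in this way, the rest of your argument (locality over $N$, flatness of the product connection, vanishing cross-brackets between horizontal lifts and vertical fields) goes through as in the paper.
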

	\begin{proof}
Since $ (\pi^!)_\ast $, $ \D_{\pi^! B} $ and $ \D_B $ are local in the sense that to calculate them in a point $p\in N$ we only need information about the form on $ F_U $ and $ U $, respectively, where $U$ is a neighbourhood of $p$. 
Thus, suppose that $\pi\colon F=\tilde{F}\times N\to N \tilde{F}$ is a product bundle. 
Then the canonical flat connection induces a decomposition
\begin{equation*}
    \Omega^n_\mathrm{cv}(\pi^! B)\simeq \bigoplus_{i+j=n} \Omega^i_\mathrm{cv}(\foliation{F}(\pi))\otimes_{\Cinfty(B)}\Omega^j(B).
\end{equation*}
Under this decomposition, we obtain the following.
\begin{enumerate}
    \item By flatness of the connection the differential on $\Omega^n_\mathrm{cv}(\pi^! B)$ splits into
    \begin{equation*}
        \D_{\pi^! B}= \D_{\foliation F(\pi)}\otimes \id +(-1)^j \id\otimes \D_B.
    \end{equation*}
    \item By definition of $(\pi^!)_\ast$ we have for $\omega\in\Omega^\bullet_\mathrm{cv}(\foliation{F}(\pi)) $ and $\alpha\in \Omega^\bullet(B)$ that
    \begin{equation*}
        (\pi^!)_\ast(\omega\tensor (\pi^!)^\alpha)=((\pi^!)_\ast\omega)\otimes (\pi^!)^\ast\alpha.
    \end{equation*}
    \item By Lemma \ref{lemma:invintegr_foliation} we have
    \begin{equation*}
        (\pi^!)_\ast \D_{\foliation F(\pi)}\Omega^\bullet_\mathrm{cv}(\foliation F(\pi))=0.
    \end{equation*}
\end{enumerate}
Together, this implies the statement.
\end{proof}

	With the fibre integration on cohomology defined, we get a Thom isomorphism for Lie algebroids as a consequence of Lemma~\ref{lemma:invintegr_foliation}.
	
	\begin{lemma}[Thom isomorphism for Lie algebroids]\label{lemma:Thom_for_Liealgebroids}
		Let $ \pi\colon E\to N $ be a vector bundle of rank $ k $ with orientation bundle $ o(E) $ and $ B\laover N $ a Lie algebroid. Then fibre integration
		\begin{equation}
		(\pi^!)_\ast\colon \homology_\mathrm{cv}^\bullet(\pi^!B)\to \homology^{\bullet-k}(B,o(E))
		\end{equation}
		is an isomorphism. If $ \theta\in \homology^k_\mathrm{cv}(E,\pi^\sharp o(E)^\ast) $ denotes the Thom class of the vector bundle, the inverse of $ (\pi^!)_\ast $ is given by
		\begin{equation}
		\homology^{\bullet-k}(B,o(E))\ni \omega\mapsto \rho_{\pi^! B}^\ast\theta\wedge (\pi^!)^\ast\omega\in \homology_\mathrm{cv}^\bullet(\pi^! B).
		\end{equation}
	\end{lemma}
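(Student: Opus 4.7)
The plan is to first verify that the proposed inverse $\omega\mapsto \rho_{\pi^! B}^\ast\theta\wedge (\pi^!)^\ast\omega$ composes with $(\pi^!)_\ast$ to the identity, and then to show bijectivity of $(\pi^!)_\ast$ by means of a spectral sequence associated to the filtration of $\Omega^\bullet_{\mathrm{cv}}(\pi^! B)$ by foliated (vertical) degree.

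For the first step, pick a connection on $E$ so that Lemma~\ref{lemma:decomp_pi^!A} furnishes the decomposition $\pi^! B = \Ver(E)\oplus\pi^\sharp B$. Under the induced bidegree, $\rho_{\pi^! B}^\ast\theta$ lies in foliated degree $k$ with coefficients in $\pi^\sharp o(E)^\ast$, while $(\pi^!)^\ast\omega$ is purely horizontal with coefficients in $\pi^\sharp o(E)$, so their wedge has foliated degree exactly $k$. By the pointwise description of $(\pi^!)_\ast$ via the $k$-fold restriction to a fibre,
\begin{equation*}
(\pi^!)_\ast\bigl(\rho_{\pi^! B}^\ast\theta\wedge (\pi^!)^\ast\omega\bigr)\at{p}=\Bigl(\int_{E_p}\theta\at{E_p}\Bigr)\cdot\omega\at{p}=\omega\at{p},
\end{equation*}
where the defining property $\pi_\ast\theta=1$ of the Thom class combined with Lemma~\ref{lemma:invintegr_foliation} gives the middle equality. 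Closedness of $\theta$ and $\omega$ forces the wedge to be closed, so $(\pi^!)_\ast$ admits a right inverse on cohomology.

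For bijectivity, filter $\Omega^\bullet_{\mathrm{cv}}(\pi^! B)$ by foliated degree,
\begin{equation*}
F^p\Omega^n_{\mathrm{cv}}(\pi^! B)=\bigoplus_{p'\geq p}\Omega^{p'}_{\mathrm{cv}}\bigl(\foliation{F}(\pi),\pi^\sharp\Lambda^{n-p'} B^\ast\bigr).
\end{equation*}
The associated spectral sequence converges to $\homology^\bullet_{\mathrm{cv}}(\pi^! B)$. On the $E_0$-page the differential is the fibrewise de Rham differential with values in the pullback bundle $\pi^\sharp\Lambda^\bullet B^\ast$, which is locally constant along fibres, so that Lemma~\ref{lemma:invintegr_foliation} applies with these coefficients and yields $E_1^{p,q}=0$ unless $p=k$, with fibre integration producing $E_1^{k,q}=\Omega^q(B,o(E))$. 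The remaining piece of the total differential descends on this row to $\pm\D_B$, so $E_2^{k,q}=\homology^q(B,o(E))$, and as this is concentrated in a single row the sequence collapses at $E_2$, giving the isomorphism $\homology^{k+q}_{\mathrm{cv}}(\pi^! B)\simeq \homology^q(B,o(E))$ induced by $(\pi^!)_\ast$.

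The main technical obstacle is identifying the $E_1$-differential with $\pm\D_B$ on $\Omega^\bullet(B,o(E))$. This can be checked locally using a flat trivialisation of $E$, where Lemma~\ref{lemma:decomp_pi^!A} splits the total differential into fibrewise, horizontal, and curvature pieces, and then globalised via partition of unity. If one prefers to avoid the twist by $o(E)$ in this step, one may first reduce to the orientable case by pulling back along the orientation double cover $\tilde N\to N$, run the argument for untwisted coefficients, and descend to $N$ via Lemma~\ref{lemma:lapullback_cohomology}, identifying the $(-1)$-eigenspace of the induced $\field Z_2$-action with $o(E)$-valued forms.
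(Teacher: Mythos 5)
Your proposal is correct and follows essentially the same route as the paper: a Serre-type spectral sequence associated to the vertical subalgebroid $\foliation{F}(\pi)\subseteq\pi^!B$, whose first page is computed via Lemma~\ref{lemma:invintegr_foliation}, is concentrated in fibre degree $k$ with coefficients $\Gamma^\infty(o(E))$, and hence collapses to yield the isomorphism induced by fibre integration, while your explicit check that $\omega\mapsto\rho_{\pi^!B}^\ast\theta\wedge(\pi^!)^\ast\omega$ is a right inverse makes concrete what the paper's proof leaves implicit. Two minor points: the filtration should be the canonical one by powers of the ideal of forms annihilating $\foliation{F}(\pi)$ (i.e.\ by horizontal degree), as in the paper's Serre spectral sequence, rather than by foliated degree relative to a chosen splitting, since the curvature component of $\D$ lowers foliated degree and so your stated filtration is not preserved; and $\rho_{\pi^!B}^\ast\theta$ is not purely of foliated degree $k$, although its lower-degree components are killed by fibre integration, so your pointwise computation of $(\pi^!)_\ast\bigl(\rho_{\pi^!B}^\ast\theta\wedge(\pi^!)^\ast\omega\bigr)=\omega$ still stands.
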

 \begin{proof}
     The statement follows from a spectral sequence argument. Consider the filtration on $\Omega^\bullet_\mathrm{cv}(\pi^!B)$ given by
     \begin{equation*}
         \mathcal{F}^p\Omega^\bullet_\mathrm{cv}(\pi^!B)=(\Omega_\mathrm{cv}(\foliation{F} (\pi))^{\wedge p}\wedge \Omega_\mathrm{cv}(\pi^!B))^\bullet
     \end{equation*}
     And the induced spectral sequence $\{ E_r^{p,q}\}_{r\geq 0}$. By standard arguments we obtain
\begin{equation*}
    E_1^{p,q}\simeq \homology^q(B,\homology^p_\mathrm{cv}(\filtration{F} (\pi))).
\end{equation*}
     Then by Lemma \ref{lemma:invintegr_foliation} the statement follows.
 \end{proof}

	\subsection{The Gysin sequence}
	
	 We prove a Gysin-like sequence, which we have used in Section \ref{section:blow_up_of_transversals}, for the cohomology of $ \pi^! B $, where $ B\laover N $ is a Lie algebroid and $ \pi\colon \field S\to N $ is a sphere bundle of rank $ k $, i.e.\ a locally trivial fibre bundle with a $ k $-dimensional sphere as typical fibre. In this case, by the Serre spectral sequence for fibre bundles \cite[Theorem 5.7]{marcut.schuessler:2024a} we obtain a spectral sequence converging to the Lie algebroid cohomology of $\pi^! B$ with second page given by
	\begin{equation}
	E_2^{p,q}=\Homology{B}{p}{,\mathcal{H}^q(\field S)}.
	\end{equation}
 Here, $\mathcal{H}^q(\field S)\to N$ denotes a smooth vector bundle with fibres given by
 $\mathcal{H}^q(\field S)_x=\mathrm{H}^q(\pi^{-1}(x))$ for $x\in N$. 
 Thus, $E_2^{p,q}$ has nontrivial entries only if $ q=0 $ or $ q=k $. 
 Therefore the next (and last) nontrivial differential is $ \D_{k+1} $. Recall that in case of $ B=TN $ and trivial orientation bundle it is given by $ \D_{k+1}=\wedge e $, where $ e\in \Homology{N}{k+1}{} $ is the Euler class of the sphere bundle~\cite[Chapter 11]{stokesfordensities}. If $ o(\field S) $ is nontrivial, we can consider the pullback to a trivialising double cover $ \tilde{N}\to N $ and find that $ e\in \Homology{\tilde{N}}{k+1}{}_-=\Homology{N}{k+1}{,o(\field S)} $ instead.

	\begin{theorem}\label{theorem:gysin_sequence_for_LA}
		Let $ B\laover N $ be a Lie algebroid with anchor $ \rho $ and $ \pi\colon \field S\to N $ a sphere bundle of rank $ k $. Then there is a long exact sequence
		\begin{equation}
		\begin{tikzpicture}[baseline=(current bounding box.center)]
		\node (1) at (0,0) {$ \dots $};
		\node (2) at (1.7,0) {$ \homology^\bullet(B) $};
		\node (3) at (4,0) {$ \homology^\bullet( \pi^! B ) $};
		\node (4) at (7,0) {$ \homology^{\bullet-k}(B,o(\field S)) $};
		\node (5) at (10.1,0) {$ \homology^{\bullet+1}(B) $};
		\node (6) at (12,0) {$ \dots $};
		
		\path[->]
		(1) edge node[above]{$  $} (2)
		(2) edge node[above]{$ (\pi^!)^\ast $} (3)
		(3) edge node[above]{$ (\pi^!)_\ast $} (4)
		(4) edge node[above]{$ \wedge \rho^\ast e $} (5)
		(5) edge node[above]{$  $} (6)
		
		;
		\end{tikzpicture}
		\end{equation} 
		Here, $ (\pi^!)_\ast  $ denotes fibre integration and $ e\in \homology^{k+1}(N,o(\field S)) $ is the Euler class of the sphere bundle.
		\begin{proof}
			If $ o(\field S) $ is nontrivial we can pull everything back to a trivialising double cover. Using Lemma~\ref{lemma:lapullback_cohomology} and noting that integration and $ \wedge e $ map $ \field Z_2 $-invariant classes to anti-invariant ones and vice versa, one then obtains the result in general. Thus, let $ o(\field S) $ be trivial. The differential $\D_{k+1}\colon E_2^{0,k}\to E_2^{k+1,0} $ can be computed by evaluating on a generator $ s $. We can write $ s=\sum_i \chi_i [\omega_i] $, where $ \omega_i\in \Omega^k(\field S_{U_i}) $ are closed with respect to the de Rham differential and $ \chi_i\in \Cinfty(N) $. Let $ \gamma\colon \pi^\sharp B\to \pi^! B $ be a splitting, $\omega=\sum_i \chi_i\omega_i$, and $ a_0,\dots,a_k\in \Gamma^\infty(B) $. Using that for $ B=TN $ we have $ \D_{\mathrm{dR}}\omega=\pi^\ast e$, by a standard calculation we obtain 
			\begin{equation*}
			\begin{aligned}
			\D_{k+1} \omega( \gamma&\pi^\sharp a_0,\dots,\gamma\pi^\sharp a_k )= 
			\sum_{i=0}^k (-1)^i \rho_{\pi^! A}(\gamma \pi^\sharp a_i)\omega(  \gamma\pi^\sharp a_0,\dots,\stackrel{i}{\wedge},\dots,\gamma\pi^\sharp a_k )
			\\&\phantom{\pi^\sharp a_0,\dots,\gamma\pi^\sharp a_k )=}+\sum_{\mathclap{0\leq i< j\leq k}}(-1)^{i+j}\omega( [\gamma\pi^\sharp a_i,\gamma\pi^\sharp a_j],  \gamma\pi^\sharp a_0,\dots,\stackrel{i}{\wedge},\dots,\stackrel{j}{\wedge},\dots,\gamma\pi^\sharp a_k ) \\
			=& \sum_{i=0}^k (-1)^i \rho_{\pi^! A}(\gamma \pi^\sharp a_i)\omega(  \rho_{\pi^! A}(\gamma\pi^\sharp a_0),\dots,\stackrel{i}{\wedge},\dots,\rho_{\pi^! A}(\gamma\pi^\sharp a_k) )
			\\&+\sum_{\mathclap{0\leq i< j\leq k}}(-1)^{i+j}\omega( \rho_{\pi^! A}([\gamma\pi^\sharp a_i,\gamma\pi^\sharp a_j]),  \rho_{\pi^! A}(\gamma\pi^\sharp a_0),\dots,\stackrel{i}{\wedge},\dots,\stackrel{j}{\wedge},\dots,\rho_{\pi^! A}(\gamma\pi^\sharp a_k) )\\
			=& \D_\mathrm{dR}\omega ( \rho_{\pi^! A}(\gamma\pi^\sharp a_0),\dots, \rho_{\pi^! A}(\gamma\pi^\sharp a_k) )\\
			=& \pi^\ast e (\rho_{\pi^! A}(\gamma\pi^\sharp a_0),\dots, \rho_{\pi^! A}(\gamma\pi^\sharp a_k))\\
			=&\pi^\ast ( \rho_A^\ast e ( a_0,\dots,a_k ) ).
			\end{aligned}
			\end{equation*}
			The rest then follows analogously to the proof of \cite[Proposition 14.33]{stokesfordensities}.
		\end{proof}
	\end{theorem}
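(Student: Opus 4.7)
My plan is to obtain the long exact sequence as the degeneration of a suitable spectral sequence and then identify the connecting map with wedging by the Euler class by a direct cochain-level computation.

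First, I would invoke the Serre spectral sequence for the fibre bundle $\pi\colon \field S\to N$ applied to the Lie algebroid $\pi^! B$, available from \cite{marcut.schuessler:2024a}. This yields a spectral sequence converging to $\homology^\bullet(\pi^! B)$ with second page
\begin{equation*}
E_2^{p,q}=\homology^p(B,\mathcal{H}^q(\field S)).
\end{equation*}
Since the fibres are $k$-spheres, $\mathcal{H}^q(\field S)$ is zero unless $q=0$ (in which case it is trivial) or $q=k$ (in which case it is the orientation bundle $o(\field S)$, the local system of top cohomology of the fibres). Thus only the rows $q=0$ and $q=k$ are nonzero, separated by a gap that forces all differentials to vanish except $d_{k+1}\colon E_{k+1}^{\bullet,k}\to E_{k+1}^{\bullet+k+1,0}$. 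A standard two-row spectral sequence argument then produces a long exact sequence of the desired shape, with the outer maps being the edge morphisms and the connecting map being $d_{k+1}$.

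Next, I would identify the edge morphisms. The edge map $\homology^\bullet(B)=E_\infty^{\bullet,0}\hookrightarrow \homology^\bullet(\pi^! B)$ comes from the bottom-row inclusion and agrees with $(\pi^!)^\ast$ by naturality. The edge map $\homology^\bullet(\pi^! B)\twoheadrightarrow E_\infty^{\bullet-k,k}\subseteq \homology^{\bullet-k}(B,o(\field S))$ is the restriction-to-fibres map, which by the definition of fibre integration (Section~\ref{sec:integration_along_fibres}, in particular the $k$-fold restriction description) coincides with $(\pi^!)_\ast$ once we identify $\mathcal{H}^k(\field S)\simeq o(\field S)$ via integration along fibres.

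The main obstacle is identifying $d_{k+1}$ with $\wedge\rho^\ast e$. If the orientation bundle is trivial, then over a trivialising cover I can choose closed local $k$-forms $\omega_i\in \Omega^k(\field S_{U_i})$ representing a fibrewise generator and glue them with a partition of unity to obtain a global $\omega$ whose class at each point generates the fibre cohomology. Choosing a splitting $\gamma\colon \pi^\sharp B\to \pi^! B$ of the decomposition~\eqref{eq:decomp_pi^!A}, one computes $d_{k+1}\omega$ by evaluating the Lie algebroid differential on horizontal lifts $\gamma\pi^\sharp a_i$, and the Cartan-type formula collapses by the anchor property to
\begin{equation*}
d_{k+1}\omega(\gamma\pi^\sharp a_0,\ldots,\gamma\pi^\sharp a_k)=\D_{\mathrm{dR}}\omega(\rho_{\pi^! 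B}(\gamma\pi^\sharp a_0),\ldots,\rho_{\pi^! B}(\gamma\pi^\sharp a_k)).
\end{equation*}
The right-hand side is then identified with $\pi^\ast \rho^\ast e(a_0,\ldots,a_k)$ by the classical de Rham Gysin computation for $B=TN$ (where $\D_{\mathrm{dR}}\omega=\pi^\ast e$). The non-orientable case, where $o(\field S)$ is nontrivial, is reduced to the orientable one by pulling back along the orientation double cover and decomposing cohomology into $\field Z_2$-(anti-)invariant parts using Lemma~\ref{lemma:lapullback_cohomology}, noting that fibre integration and $\wedge\rho^\ast e$ both interchange invariant and anti-invariant classes so that the sequences on the two eigenspaces assemble into the stated one.
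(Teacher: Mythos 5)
Your proposal is correct and follows essentially the same route as the paper: the Serre spectral sequence for $\pi^!B$ with only the rows $q=0$ and $q=k$ surviving, the identification of $d_{k+1}$ by evaluating on a partition-of-unity generator via a splitting $\gamma\colon\pi^\sharp B\to\pi^!B$ and collapsing the Lie algebroid differential through the anchor to the classical de Rham identity $\D_{\mathrm{dR}}\omega=\pi^\ast e$, and the reduction of the non-orientable case to the orientable one via the orientation double cover and Lemma~\ref{lemma:lapullback_cohomology}. Your additional explicit identification of the edge morphisms with $(\pi^!)^\ast$ and $(\pi^!)_\ast$ is a welcome elaboration of a step the paper leaves to the cited argument in \cite{stokesfordensities}.
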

	
	If $ \field S\to N $ actually comes from a vector bundle $ \pr\colon V\to N $, we can rewrite the Gysin sequence using the Thom isomorphism on $ \homology^{\bullet-(k-1)}(B,o(\field S)) $.
	
	\begin{lemma}\label{lemma:thom_euler_diagram}
		Let $ B\laover N $ be a Lie algebroid with anchor $ \rho $ and $ \pr\colon V\to N $ a vector bundle of rank $ k $. Then, under the isomorphism $ \homology^\bullet(B)=\homology^\bullet(\pr^! B) $ induced by $ (\iota^!)^\ast $, the diagram
		\begin{equation*}
		\begin{tikzpicture}[baseline=(current bounding box.center)]
		\node (o1) at (0,2) {$ \homology^{\bullet-k}(B,o(\field S)) $};
		\node (o2) at (3,2) {$\homology^\bullet(B)  $};
		\node (u1) at (0,0) {$ \homology_\mathrm{cv}^\bullet(\pr^! B) $};
		\node (u2) at (3,0) {$ \homology^\bullet(\pr^! B) $};
		
		\path[->]
		(o1) edge node[above]{$  \rho^\ast e\wedge $} (o2)
		(o1) edge node[left]{$ \Phi $} node[right]{$ \cong $} (u1)
		(u1) edge node[above]{$ i $} (u2)
		(u2) edge node[right]{$ (\iota^!)^\ast $} node[left]{$ \cong $} (o2)
		
		;
		\end{tikzpicture}
		\end{equation*}
		commutes, where $ \Phi\colon \homology^{\bullet-k}(B,o(\field S))\to \homology^\bullet_\mathrm{cv}(\pr^! B) $ denotes the Thom isomorphism from Lemma \ref{lemma:Thom_for_Liealgebroids}, $ \iota^!\colon B\to \pr^! B $ is the inclusion of Lie algebroids over the zero section $ \iota\colon N\to V $, and $ i $ denotes the natural map of considering a compact vertical form as just a form on $ \pr^! B $.
		\begin{proof}
			First note that $ o(V)=o(\field S) $ since the sphere bundle is associated to $ V $ \cite[Proposition 11.2]{stokesfordensities}. Again, we will only proof the statement for orientable vector bundles. Writing $ \theta\in \homology_\mathrm{cv}^k(V) $ for the Thom class, for any $ \omega\in \homology^\bullet(B) $ we have
			\begin{equation*}
			(\iota^!)^\ast \Phi(\omega)= (\iota^!)^\ast \rho_{\pr^! B}^\ast \theta \wedge \omega = (\rho_{\pr^! B}\circ \iota ^!)^\ast \theta\wedge \omega = (T\iota\circ \rho)^\ast \theta\wedge \omega = \rho^\ast e\wedge \omega
			\end{equation*}
			as the pullback of $ \theta $ by the zero section is the Euler class.
		\end{proof}
	\end{lemma}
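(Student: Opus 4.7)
The plan is to unwind the definition of $\Phi$ given in Lemma \ref{lemma:Thom_for_Liealgebroids} and use functoriality. First, I would reduce to the orientable case: if $o(V)$ is nontrivial, I pull everything back along the trivialising double cover $\tilde N\to N$. By Lemma~\ref{lemma:lapullback_cohomology}, the statement on $N$ follows from the corresponding statement on $\tilde N$ by taking the appropriate $\field Z_2$-eigenspaces (anti-invariant on both sides for the $o(\field S)$-coefficient). Since $o(V)=o(\field S)$ by \cite[Proposition 11.2]{stokesfordensities}, on the double cover we may freely identify sections of the orientation bundle with functions and $\homology^\bullet(B,o(\field S))$ with $\homology^\bullet(B)$.

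Now assume $V\to N$ is oriented with Thom class $\theta\in \homology^k_\mathrm{cv}(V)$. Let $\omega\in \homology^{\bullet-k}(B)$ be given. By the explicit form of the Thom isomorphism from Lemma \ref{lemma:Thom_for_Liealgebroids},
\begin{equation*}
  \Phi(\omega)=\rho_{\pr^!B}^\ast\theta\wedge (\pr^!)^\ast\omega\in\homology_\mathrm{cv}^\bullet(\pr^!B).
\end{equation*}
Applying $i$ only regards this as a class in $\homology^\bullet(\pr^!B)$, so the issue reduces to computing $(\iota^!)^\ast(\rho_{\pr^!B}^\ast\theta\wedge (\pr^!)^\ast\omega)$.

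The key step is then purely functorial. Since $\iota\colon N\hookrightarrow V$ is the zero section and $\pr\circ\iota=\id_N$, on Lie algebroid level $\pr^!\circ \iota^!=\id_B$, and hence $(\iota^!)^\ast(\pr^!)^\ast\omega=\omega$. It therefore only remains to identify $(\iota^!)^\ast\rho_{\pr^!B}^\ast\theta$ with $\rho^\ast e$. For this, I would use the commutative square
\begin{equation*}
\begin{tikzcd}
B \arrow[r,"\iota^!"] \arrow[d,"\rho"'] & \pr^!B \arrow[d,"\rho_{\pr^!B}"] \\
TN \arrow[r,"T\iota"'] & TV
\end{tikzcd}
\end{equation*}
which is just the defining compatibility of the anchor of a pullback Lie algebroid with respect to the base map. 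This yields $(\iota^!)^\ast\rho_{\pr^!B}^\ast\theta=\rho^\ast(T\iota)^\ast\theta$, and the defining property of the Thom class is that its pullback along the zero section of $V$ is the Euler class, i.e.\ $(T\iota)^\ast\theta=e\in \homology^{k}(N)$ (for the relevant sign degree, this is the rank $k$ Euler class as stated). Assembling,
\begin{equation*}
  (\iota^!)^\ast\circ i\circ \Phi(\omega)=\rho^\ast e\wedge \omega,
\end{equation*}
which is the commutativity claim. No step presents a substantive obstacle: the main point is to make sure the sign/degree conventions in the Thom isomorphism and the identification $o(V)=o(\field S)$ are consistent, after which the proof is just a two-line chase using $\pr\circ\iota=\id$ and the naturality of the anchor under pullback.
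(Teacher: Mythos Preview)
Your proposal is correct and follows essentially the same approach as the paper's own proof: reduce to the orientable case, then compute $(\iota^!)^\ast\circ i\circ\Phi(\omega)$ by unwinding $\Phi(\omega)=\rho_{\pr^!B}^\ast\theta\wedge(\pr^!)^\ast\omega$, using $\pr\circ\iota=\id_N$ and the anchor compatibility $\rho_{\pr^!B}\circ\iota^!=T\iota\circ\rho$ to identify $(\iota^!)^\ast\rho_{\pr^!B}^\ast\theta=\rho^\ast e$. Your write-up is in fact more explicit about the two intermediate steps (the identity $(\iota^!)^\ast(\pr^!)^\ast\omega=\omega$ and the commuting anchor square) than the paper, which compresses the computation into a single line.
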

	
	\begin{corollary}\label{Cor:gysin_for_LA_withcompactsupport}
		Let $ B\laover N $ be a Lie algebroid, and $ \pr\colon V\to N $ a vector bundle with induced sphere bundle $ \pi\colon \field S\to N $. Then there is a long exact sequence
		\begin{equation}
		\begin{tikzpicture}[baseline=(current bounding box.center)]
		\node (1) at (0,0) {$ \dots $};
		\node (2) at (2,0) {$ \homology^\bullet(\pr^!B) $};
		\node (3) at (4.4,0) {$ \homology^\bullet( \pi^! B ) $};
		\node (4) at (7,0) {$ \homology^{\bullet+1}_\mathrm{cv}(\pr^!B) $};
		\node (5) at (9.8,0) {$ \homology^{\bullet+1}(\pr^!B) $};
		\node (6) at (11.9,0) {$ \dots $};
		
		\path[->]
		(1) edge node[above]{$  $} (2)
		(2) edge node[above]{$ (\pi^!)^\ast $} (3)
		(3) edge node[above]{$ (\pi^!)_\ast $} (4)
		(4) edge node[above]{$ i $} (5)
		(5) edge node[above]{$  $} (6)

		;
		\end{tikzpicture}
		\end{equation}
	\end{corollary}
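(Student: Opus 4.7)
The plan is to derive this corollary as a direct consequence of the Gysin sequence (Theorem \ref{theorem:gysin_sequence_for_LA}) by applying two identifications: the homotopy invariance isomorphism $(\iota^!)^\ast \colon \homology^\bullet(\pr^! B) \xrightarrow{\sim} \homology^\bullet(B)$ induced by the zero section $\iota \colon N \hookrightarrow V$, and the Thom isomorphism $\Phi \colon \homology^{\bullet-(k+1)}(B, o(V)) \xrightarrow{\sim} \homology^\bullet_\mathrm{cv}(\pr^! B)$ from Lemma \ref{lemma:Thom_for_Liealgebroids} (noting that $V$ has rank $k+1$ since $\pi \colon \field S \to N$ has fibres of dimension $k$, and that $o(V) = o(\field S)$). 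Since shifting $\bullet \mapsto \bullet+1$ gives $\homology^{\bullet-k}(B, o(\field S)) \simeq \homology^{\bullet+1}_\mathrm{cv}(\pr^! B)$, the Thom isomorphism provides exactly the degree shift needed to match the corollary's statement.

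First I would invoke Theorem \ref{theorem:gysin_sequence_for_LA} to obtain the long exact sequence
\begin{equation*}
\dots \to \homology^\bullet(B) \xrightarrow{(\pi^!)^\ast} \homology^\bullet(\pi^! B) \xrightarrow{(\pi^!)_\ast} \homology^{\bullet-k}(B, o(\field S)) \xrightarrow{\wedge \rho^\ast e} \homology^{\bullet+1}(B) \to \dots
\end{equation*}
and then replace the first and third terms using the two isomorphisms above. Under $(\iota^!)^\ast$, the pullback map $(\pi^!)^\ast \colon \homology^\bullet(B) \to \homology^\bullet(\pi^! B)$ corresponds to the pullback by the inclusion $j^!\colon \pi^! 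B \hookrightarrow \pr^! B$ (using the factorisation $\pi^! = \pr^! \circ j^!$ together with $(\pr^!)^\ast = (\iota^!)^{-\ast}$, since $\pr \circ \iota = \id_N$); by the abuse of notation also used in the statement of Theorem \ref{theorem:gysin_sequence_for_LA} this is still denoted $(\pi^!)^\ast$. Similarly, precomposing the original fibre integration with $\Phi$ on the target yields the map labelled $(\pi^!)_\ast$ in the corollary.

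The only nontrivial compatibility to verify is that under these identifications the connecting map $\wedge \rho^\ast e$ becomes the natural inclusion $i \colon \homology^{\bullet+1}_\mathrm{cv}(\pr^! B) \to \homology^{\bullet+1}(\pr^! B)$. But this is precisely the content of the commutative diagram in Lemma \ref{lemma:thom_euler_diagram}: the diagram states that $(\iota^!)^\ast \circ i \circ \Phi = \rho^\ast e \wedge$, which is exactly the translation required. Combining these three identifications rewrites the Gysin sequence as the claimed one.

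The main (and essentially only) obstacle is bookkeeping with degree shifts and the direction of isomorphisms, ensuring that the sign/grading conventions in fibre integration and the Thom isomorphism line up correctly so that the connecting homomorphism genuinely matches $i$ rather than differing by an automorphism. Since all the heavy lifting has been done in Theorem \ref{theorem:gysin_sequence_for_LA}, Lemma \ref{lemma:Thom_for_Liealgebroids}, and Lemma \ref{lemma:thom_euler_diagram}, the proof reduces to assembling these pieces.
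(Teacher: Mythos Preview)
Your proposal is correct and matches the paper's approach exactly: the corollary is stated without proof immediately after Lemma~\ref{lemma:thom_euler_diagram}, and is obtained precisely by substituting the Thom isomorphism (Lemma~\ref{lemma:Thom_for_Liealgebroids}) and the homotopy equivalence $(\iota^!)^\ast$ into the Gysin sequence of Theorem~\ref{theorem:gysin_sequence_for_LA}, with Lemma~\ref{lemma:thom_euler_diagram} supplying the identification of the connecting map with $i$. Your degree bookkeeping and the factorisation of $(\pi^!)^\ast$ through $(\pr^!)^\ast$ are also correct.
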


	
	
	%
	%

\bibliography{literature}
	\bibliographystyle{alpha}
	
\Addresses
\end{document}